\newcommand{\R}{{\mathbb R}}
\newcommand{\N}{{\mathbb N}}
\newcommand{\Q}{{\mathbb Q}}
\newcommand{\Z}{{\mathbb Z}}
\newcommand{\conv}{{\rm conv}}
\newcommand{\cone}{{\rm cone}}
\newcommand{\ri}{\mathrm{ri}}
\newcommand{\intt}{\mathrm{int}}
\newcommand{\spann}{\mathrm{span}}
\title{An analysis of mixed integer linear sets based on lattice point
free convex sets}
\keywords{mixed integer set  ; lattice point free convex set  ;
cutting plane ; split closure  }
\begin{document}
\maketitle

\begin{abstract}
Split cuts are cutting planes for mixed integer programs 
whose validity is derived from maximal lattice point free 
polyhedra of the form $S:=\{ x : \pi_0 \leq \pi^T x \leq \pi_0+1 \}$ 
called split sets. The set obtained by adding all split 
cuts is called the split closure, and the split closure 
is known to be a polyhedron. A split set $S$ has 
max-facet-width equal to one in the sense that 
$\max\{ \pi^T x : x \in S \}-\min\{ \pi^T x : x \in S \} \leq 1$. 

In this paper we consider using general lattice point 
free rational polyhedra to derive 
valid cuts for mixed integer linear sets. We say 
that lattice point free polyhedra with max-facet-width 
equal to $w$ have width size $w$. 
A split cut of width size $w$ is then a valid inequality 
whose validity follows from a lattice point 
free rational polyhedron of width size $w$. 
The $w^{\textrm{th}}$ split closure is the set 
obtained by adding all valid inequalities of width 
size at most $w$.

In general, a relaxation of a mixed integer set can be 
obtained by adding \emph{any} family of valid inequalities 
to the linear relaxation. Our main result is a sufficient 
condition for the addition of a family of rational inequalities 
to result in a polyhedral relaxation. We then show that a 
corollary is that the $w^{\textrm{th}}$ split closure 
is a polyhedron.

Given this result, a natural question is which width size $w^*$ is 
required to design a finite cutting plane proof for the validity of 
an inequality. Specifically, for this value $w^*$, a finite 
cutting plane proof exists that uses lattice point free 
rational polyhedra of width size at most 
$w^*$, but no finite cutting plane proof that only uses 
lattice point free rational polyhedra of width size smaller than
$w^*$. We characterize $w^*$ based on the faces of 
the linear relaxation.
\end{abstract}
\normalsize

\section{Introduction.}  \label{sect1}

We consider a polyhedron in $\R^n$ of the form
\begin{equation} \label{eq:1}
P:=\conv(\{ v^i {\}}_{i \in V}) + \cone( \{ r^j {\}}_{j \in E} ),
\end{equation}
where $V$ and $E$ are finite index sets, $\{ v^i {\}}_{i \in V}$ 
denotes the vertices of $P$ and 
$\{ r^j {\}}_{j \in E}$ denotes the 
extreme rays of $P$. We assume $P$ is rational, {\it i.e.}, 
we assume $\{ r^j {\}}_{j \in E} \subset \Z^n$ and 
$\{ v^i {\}}_{i \in V} \subset \Q^n$. 
%

We are interested in points in $P$ that have 
integer values on certain coordinates. 
For simplicity assume 
the first $p>0$ coordinates must have integer values, 
and let $q:=n-p$. The set $N_I:=\{1,2,\ldots,p\}$ is used to 
index the integer constrained variables and the set 
$P_I:=\{ x \in P : x_j \in \Z\textrm{ for all }j \in N_I \}$ denotes 
the mixed integer points in $P$. 

The following concepts from convex analysis are needed 
(see \cite{Rockafellar} for a presentation 
of the theory of convex analysis). 
For a convex set $C \subseteq \R^n$, the interior of $C$ 
is denoted $\intt(C)$, 
and the relative interior of $C$ is denoted $\ri(C)$ 
(where $\ri(C)=\intt(C)$ when $C$ is full dimensional). 

We consider the generalization of \emph{split sets} 
(see \cite{CooKanSch90}) to lattice point free rational polyhedra 
(see \cite{lovasz}). A split set is of the form 
$S^{(\pi,\pi_0)} := \{ x \in \R^p : \pi_0 \leq \pi^T x \leq \pi_0+1 \}$, 
where $(\pi,\pi_0) \in \Z^{p+1}$ and $\pi \neq 0$. 
Clearly a split set does not have integer points in 
its interior. In general, a lattice 
point free convex set is a convex set that does 
not contain integer points in its relative interior. 
Lattice point free convex sets that are maximal wrt. inclusion 
are known 
to be polyhedra. We call lattice point free 
rational polyhedra that are maximal wrt. inclusion for 
\emph{split polyhedra}. A split polyhedron 
is full dimensional and can be written as the sum of a 
polytope $\mathcal{P}$ and 
a linear space $\mathcal{L}$. 

A lattice point free convex set is an object that assumes 
integrality of \emph{all} coordinates. For \emph{mixed} 
integrality in $\R^{p+q}$, 
we use a lattice point free convex set $C^x \subset \R^p$ to form a 
\emph{mixed} integer lattice point free convex set $C \subset \R^n$ of the 
form $C:=\{ (x,y) \in \R^p \times \R^q : x \in C^x \}$. A 
\emph{mixed integer split polyhedron} is then a polyhedron of the form 
$L:=\{ (x,y) \in \R^p \times \R^q : x \in L^x \}$, 
where $L^x$ is a split polyhedron in $\R^p$.

An important measure in this paper of the size of a mixed integer split 
polyhedron $L$ is the \emph{facet width} of $L$. The facet width measures 
how wide a mixed integer split polyhedron is parallel to a given facet. 
Specifically, given any facet $\pi^T x \geq \pi_0$ of a 
mixed integer split polyhedron $L$, the width of $L$ along 
$\pi$ is defined to be the number 
$w(L,\pi):=\max_{x \in L} \pi^T x - \min_{x \in L} \pi^T x$. 
The \emph{max-facet-width} of a mixed integer split polyhedron 
$L$ measures how wide $L$ is along any facet of $L$, 
{\it i.e.}, the max-facet-width $w_f(L)$ of $L$ is defined to be the 
largest of the numbers $w(L,\pi)$ over \emph{all} facet defining 
inequalities $\pi^T x \geq \pi_0$ for $L$. 

Any mixed integer lattice point free convex set $C \subseteq \R^n$ 
gives a relaxation of $\conv(P_I)$ 
\begin{alignat}{2}
R(C,P) & :=\conv(\{x \in P : x \notin \ri(C) \})\notag
\end{alignat}
that satisfies $\conv(P_I) \subseteq R(C,P) \subseteq P$. 
The set $R(C,P)$ might exclude fractional points in 
$\ri(C) \cap P$ and give a 
tighter approximation of $\conv(P_I)$ than $P$. 

Mixed integer split polyhedra $L$ give as 
tight relaxations of $P_I$ of the form above as possible. 
Specifically, if $C,C' \subseteq \R^n$ are 
mixed integer lattice point free convex sets that satisfy $C \subseteq C'$, 
then $R(C',P) \subseteq R(C,P)$. 
For a general mixed integer lattice point 
free convex set $C$, 
the set $R(C,P)$ may not be a polyhedron. However, 
it is sufficient to consider mixed integer split polyhedra, and we 
show $R(L,P)$ is a polyhedron when $L$ is a mixed 
integer split polyhedron (Lemma \ref{poly_lem}).

Observe that the set of mixed integer split polyhedra with 
max-facet-width equal to one are exactly the split sets 
$S^{(\pi,\pi_0)} = \{ x \in \R^n : \pi_0 \leq \pi^T x \leq \pi_0+1 \}$, 
where $(\pi,\pi_0) \in \Z^{n+1}$, $\pi_j = 0$ for $j>p$ 
and $\pi \neq 0$. In \cite{CooKanSch90}, Cook et. al. considered 
the set of split sets
$${\cal{L}}^1:= \{ L \subseteq \R^n : L\textrm{ is a mixed integer split polyhedron satisfying }w_f(L) \leq 1 \}$$
and showed that the \emph{split closure}
$$\textrm{SC}^1 := \cap_{L \in {\cal{L}}^1} R(L,P)$$
is a polyhedron. A natural generalization of the split closure is to allow 
for mixed integer split polyhedra that have max-facet-width larger than one. 
For any $w>0$, define the set of mixed integer split polyhedra 
$${\cal{L}}^w:= \{ L \subseteq \R^n : L\textrm{ is a mixed integer split polyhedron satisfying }w_f(L) \leq w \}$$
with max-facet-width at most $w$. We define 
the \emph{$w^{\textrm{th}}$ split closure} to be the set
$$\textrm{SC}^w := \cap_{L \in {\cal{L}}^w} R(L,P).$$

We prove that for any family 
${\bar{\cal{L}}} \subseteq {\cal{L}}^w$ of mixed 
integer split polyhedra with bounded max-facet-width 
$w>0$, the set $\cap_{L \in \bar{{\cal{L}}}} R(L,P)$ 
is a polyhedron (Theorem \ref{poly_spl_cl}). 
The proof is based on an analysis of cutting planes from an inner 
representation of the linear relaxation $P$. In fact, our proof does 
not use an outer description of $P$ at all. Many of our 
arguments are obtained by 
generalizing results of 
Andersen et. al. \cite{andcorli} from the first 
split closure to the $w^{\textrm{th}}$ split closure.

Given a family 
$\{ (\delta^l)^T x \geq \delta^l_0 {\}}_{i \in I}$ of rational cutting 
planes, we provide a sufficient condition for the set 
$\{ x \in P : (\delta^l)^T x \geq \delta^l_0 \textrm{ for all }l \in I
\}$ to be a polyhedron (Theorem \ref{pol_thm}). This condition 
(Assumption \ref{bas_ass}) concerns the number of intersection 
points between hyperplanes 
defined from the cuts 
$\{ (\delta^l)^T x \geq \delta^l_0 {\}}_{i \in I}$ and 
line segments either of the form 
$\{ v^i+ \alpha r^j : \alpha\geq 0 \}$, or of the form 
$\{ \beta v^i + (1-\beta) v^k : \beta \in [0,1] \}$, where 
$i,k \in V$ denote two vertices of $P$ 
and $j \in E$ denotes an extreme ray of $P$. 
We then show that this condition is 
satisfied by the collection of facets of the sets $R(L,P)$ for 
$L \in \bar{{\cal{L}}}$ for any 
family $\bar{{\cal{L}}} \subseteq {\cal{L}}^w$ 
of split polyhedra with bounded 
max-facet-width $w>0$. It follows 
that the $w^{\textrm{th}}$ split 
closure is a polyhedron. 

Finite cutting plane proofs for the validity of an inequality for
$P_I$ can be designed by using mixed integer split polyhedra. 
A measure of the complexity of a finite cutting plane 
proof is the max-facet-width of the mixed integer split polyhedron 
with the largest max-facet-width in the proof. A measure of the 
complexity of a valid inequality $\delta^T x \geq \delta_0$ for $P_I$ 
is the smallest integer $w(\delta,\delta_0)$ for which there exists 
a finite cutting plane proof of validity of 
$\delta^T x \geq \delta_0$ for $P_I$ only using mixed integer split 
polyhedra with max-facet-width at most $w(\delta,\delta_0)$. We give a 
formula for $w(\delta,\delta_0)$ (Theorem \ref{split_dim_ineq}) that 
explains geometrically why mixed integer split polyhedra 
of large width size can be necessary.

The remainder of the paper is organized as follows. In Sect. 2 
we present the main results on lattice point free convex sets that 
are needed in the remainder of the paper. We also present 
the construction of polyhedral relaxations of $P_I$ from 
mixed integer split polyhedra. Most 
results in Sect. 2 can also be found in a paper of Lov\'asz \cite{lovasz}. 
In Sect. 3 we discuss cutting planes from the viewpoint of an 
inner representation of $P$. The main result in Sect. 3 is a 
sufficient condition for a set obtained by adding an infinite 
family of cutting planes to be a polyhedron. The structure 
of the relaxation $R(L,P)$ of $P_I$ obtained from 
a given mixed integer split polyhedron $L$ is characterized in 
Sect. 4. The main outcome is that the 
$w^{\textrm{th}}$ split closure is a polyhedron. Finally, 
in Sect. 5, we discuss the complexity of finite cutting 
plane proofs for the validity of an inequality for $P_I$.

\section{Lattice point free convex sets and polyhedral relaxations}

We now discuss the main object of this paper, namely
lattice point free convex sets, which are defined as follows
\begin{definition} (Lattice point free convex sets)\newline
Let $L\subseteq \R^p$ be a convex set. If 
$\ri(L)\cap \Z^p = \emptyset$, then $L$ is called 
\emph{lattice point free}.
\end{definition}

The discussion of lattice point free convex sets 
in this section is based on a paper of Lov\'asz \cite{lovasz}. 
We are mainly interested in lattice point free convex sets 
that are maximal wrt. inclusion. Our point of departure is the 
following characterization of maximal lattice point free convex 
sets.
\begin{lemma}\label{splitbody_box_rational_polytope}
Every maximal lattice point free convex set $L \subseteq \R^p$ is a polyhedron.
\end{lemma}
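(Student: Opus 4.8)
The plan is to prove the contrapositive-flavored structural statement directly: take a maximal lattice point free convex set $L \subseteq \R^p$ and show it must be a polyhedron. First I would observe that $L$ may be assumed to be closed and full-dimensional: the closure of a lattice point free convex set is again lattice point free (since passing to the closure does not change the relative interior in the full-dimensional case, and if $L$ were not full-dimensional it would be contained in a rational hyperplane only if its affine hull is rational — otherwise it can be enlarged, contradicting maximality; a non-full-dimensional $L$ lying in an irrational hyperplane is contained in a full-dimensional lattice point free set, again contradicting maximality). So by maximality $L$ is closed, full-dimensional, and $\intt(L) \cap \Z^p = \emptyset$.

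The heart of the argument is a separation/supporting-hyperplane construction. For each lattice point $z \in \Z^p$, since $z \notin \intt(L)$ and $L$ is closed and convex, there is a hyperplane separating $z$ from $\intt(L)$; I would like to choose these separating hyperplanes consistently so that only finitely many distinct ones arise. The key step is: consider the recession cone $\rec(L)$ and the lineality space $\mathrm{lin}(L)$. One shows that $L$ can be written as $\mathcal{P} + \mathcal{L}$ where $\mathcal{L}$ is a rational linear subspace and $\mathcal{P}$ is bounded in the orthogonal complement — this is essentially Lov\'asz's observation and the place where rationality enters. Then, working modulo $\mathcal{L}$, the problem reduces to the full-dimensional bounded case: a maximal lattice point free convex set that is bounded in the relevant directions. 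Here one uses that the integer points near $\partial L$ that "block" further expansion are finite in number after quotienting by the lattice translations in the directions of $\mathcal{L}$, and each such blocking point $z$ yields a facet — a maximal face of $L$ through (or near) $z$ on which $L$ cannot be pushed out. Collecting these finitely many supporting halfspaces gives a polyhedron $Q \supseteq L$ that is still lattice point free, so by maximality $L = Q$ is a polyhedron.

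Concretely, the steps in order are: (1) reduce to $L$ closed and full-dimensional using maximality; (2) show the lineality space of $L$ is a rational subspace and split off $L = \mathcal{P} + \mathcal{L}$ accordingly, reducing to a set that is bounded transverse to $\mathcal{L}$; (3) for each relevant lattice point not in $\intt(L)$, produce a supporting hyperplane of $L$, and argue — using boundedness transverse to $\mathcal{L}$ together with the periodicity of $\Z^p$ under translation by $\mathcal{L} \cap \Z^p$ — that only finitely many supporting hyperplanes are needed to "account for" the maximality of $L$ in every direction; (4) intersect these finitely many halfspaces to get a lattice point free polyhedron containing $L$, and conclude equality by maximality.

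The main obstacle I anticipate is step (3): making precise why finitely many supporting hyperplanes suffice. The danger is an infinite "staircase" of facets; ruling this out requires genuinely using that $L$ is maximal (so every facet is "tight" against some lattice point) together with boundedness in the non-lineality directions, so that the relevant lattice points form finitely many orbits under $\mathcal{L} \cap \Z^p$. A secondary subtlety is showing the lineality space is rational — if it were irrational, its closure under the lattice could be dense in a larger subspace, which would again let one enlarge $L$, contradicting maximality; I would phrase this carefully since it is the step that ultimately forces the polyhedron to be of the "polytope plus rational subspace" form used later in the paper.
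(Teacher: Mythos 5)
The paper does not actually prove this lemma --- it is imported from Lov\'asz's paper with a citation --- so there is no in-paper argument to compare against; judged on its own terms, your proposal has two genuine gaps.

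First, step (1) is wrong. Maximal lattice point free convex sets need \emph{not} be full dimensional, and your claim that ``a non-full-dimensional $L$ lying in an irrational hyperplane is contained in a full-dimensional lattice point free set'' is precisely what fails: the paper's own example two sentences after the lemma, $C=\{(x_1,x_2): x_2=\sqrt{2}\,x_1,\ x_1\ge 0\}$, is a one-dimensional maximal lattice point free set in $\R^2$. Any full-dimensional convex set containing this ray contains a half-strip of positive width receding in the direction $(1,\sqrt{2})$, and such a strip has integer points in its interior by equidistribution of $n\sqrt{2}\bmod 1$; so $C$ cannot be thickened, yet it lies in no rational hyperplane either. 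Hence your reduction to the full-dimensional case is unavailable, and the lower-dimensional case --- which the lemma also asserts --- needs its own nontrivial analysis of how $\Z^p$ meets an irrational affine subspace.

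Second, even granting full dimensionality, step (3) is the entire content of the theorem and you explicitly defer it. Two things are missing: (a) a reason why the ``blocking'' lattice points give only finitely many distinct supporting hyperplanes (ruling out the infinite staircase you yourself flag), and (b) a reason why the polyhedron $Q$ obtained by intersecting those halfspaces is still lattice point free --- $Q$ strictly contains $L$ a priori, so a new lattice point could appear in $\intt(Q)$, and without excluding that you cannot invoke maximality to conclude $L=Q$. The known proofs (Lov\'asz; Basu--Conforti--Cornu\'ejols--Zambelli) settle (a) and (b) with a delicate induction on lattice points outside $L$ together with a parity argument bounding the number of facets by $2^p$; none of that machinery appears here. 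Step (2) is right in spirit (and is the part the paper itself re-derives for rational polyhedra before Lemma \ref{inclusion_property_unbounded}), but as written your proposal is a plan rather than a proof.
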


As mentioned in the introduction, 
we call maximal lattice point free rational 
polyhedra for \emph{split polyhedra}. Maximal 
lattice point free polyhedra are not necessarily rational. 
The polyhedron $C=\{ (x_1,x_2) : x_2 = \sqrt{2} x_1, x_1 \geq 0 \}$ 
is an example of a maximal lattice point free set which is 
\emph{not} a rational polyhedron. However, we will only 
use maximal lattice point free convex sets to describe (mixed) integer 
points in rational polyhedra, and for this purpose split 
polyhedra suffice.


We next argue that the recession cone $0^+(L)$ of a 
split polyhedron $L$ must be a linear space. This fact 
follows from the following operation to enlarge 
any lattice point free convex set $C \subseteq \R^p$. 
Let $r \in 0^+(C) \cap \Q^p$ be a rational vector 
in the recession cone of $C$. 
We claim that also $C'=C+\spann(\{ r \})$ is lattice 
point free. Indeed, if $\bar{x}-\mu r \in \ri(C')$ is 
integer with 
$\mu > 0$ and $\bar{x} \in \ri(C)$, then there exists a 
positive integer $\mu^I > \mu$ such that 
$\bar{x}-\mu r + \mu^I r=\bar{x}+(\mu^I-\mu) r \in \ri(C)\cap \Z^p$, 
which contradicts that $C$ is lattice point free. Since the 
recession cone of a split polyhedron is rational, we therefore 
have
\begin{lemma}\label{inclusion_property_unbounded}
Let $L \subseteq \R^p$ be a split polyhedron. Then $L$ can be 
written in the form 
$L=\mathcal{P}+\mathcal{L}$, where $\mathcal{P} \subseteq\R^p$ 
is a rational polytope and 
$\mathcal{L} \subseteq \R^p$ is a linear space with 
an integer basis.
\end{lemma}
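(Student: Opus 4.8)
The plan is to combine Lemma~\ref{splitbody_box_rational_polytope} (a split polyhedron is a polyhedron) with the enlargement operation described just before the statement. First I would note that since $L$ is a rational polyhedron, its recession cone $0^+(L)$ is a rational polyhedral cone, so it is generated by finitely many rational vectors $r^1,\ldots,r^m$. Applying the preceding observation inductively, $L' := L + \spann(\{r^1\}) + \cdots + \spann(\{r^m\})$ is again lattice point free: at each step we add the span of a rational recession direction of the current set, and the argument given in the text shows lattice point freeness is preserved. Since $L$ is \emph{maximal} lattice point free and $L \subseteq L'$ with $L'$ lattice point free, we must have $L = L'$. Hence $0^+(L)$ already contains the linear space $\mathcal{L} := \spann(\{r^1,\ldots,r^m\})$, and since $0^+(L) = \cone(\{r^1,\ldots,r^m\}) \subseteq \mathcal{L}$, in fact $0^+(L) = \mathcal{L}$ is a linear space.

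Next I would extract the polytope part. Since $\mathcal{L}$ is a rational linear subspace, it has a basis consisting of integer vectors (clear the denominators of any rational basis); call its dimension $d$. Decompose $\R^p = \mathcal{L} \oplus \mathcal{M}$ where $\mathcal{M}$ is, say, the orthogonal complement (or any rational complement). Writing $L$ in terms of its finitely many vertices and the recession cone, $L = \conv(\{v^1,\ldots,v^k\}) + \mathcal{L}$, and the vertices may be taken rational since $L$ is a rational polyhedron. Setting $\mathcal{P} := \conv(\{v^1,\ldots,v^k\})$ gives a rational polytope with $L = \mathcal{P} + \mathcal{L}$, which is the desired form. (If one wants $\mathcal{P} \subseteq \mathcal{M}$ one can instead project each $v^i$ onto $\mathcal{M}$ along $\mathcal{L}$; this keeps things rational and does not change the Minkowski sum.)

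The only real subtlety, and the step I would be most careful about, is the inductive application of the enlargement lemma: one must check that when passing from $C$ to $C + \spann(\{r^1\})$ the vector $r^2$ is still a rational recession direction of the new set (it is, since $0^+(C + \spann(\{r^1\})) \supseteq 0^+(C)$), so the hypotheses of the lemma remain satisfied at every step, and the finite induction terminates with a set whose recession cone contains all of $\mathcal{L}$. Everything else — rationality of $0^+(L)$, existence of an integer basis for a rational subspace, the vertex-plus-cone decomposition of a rational polyhedron — is standard polyhedral theory and needs no grinding. I would therefore present the proof in three short moves: (1) $0^+(L)$ is a linear space by maximality plus the enlargement argument; (2) choose an integer basis of that space; (3) read off $\mathcal{P}$ from the vertices.
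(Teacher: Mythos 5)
Your argument is correct and is essentially the paper's own: the paper proves this lemma by exactly the enlargement observation ($C+\spann(\{r\})$ stays lattice point free for rational $r\in 0^+(C)$) combined with maximality to force $0^+(L)$ to be a linear space, and then reads off the decomposition from the standard Minkowski--Weyl representation of a rational polyhedron. The only cosmetic caveat is that when $\mathcal{L}$ is nontrivial $L$ has no vertices, so one should take $\mathcal{P}=\conv(V)$ for a finite rational set $V$ from the decomposition theorem rather than literally ``the vertices,'' which does not affect the argument.
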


Observe that Lemma \ref{inclusion_property_unbounded} implies that 
every split polyhedron $L \subseteq \R^p$ is 
full dimensional. Indeed, if this was 
not the case, then we would have 
$L \subseteq \{ x : \R^p : \pi^T x = \pi_0 \}$ for some 
$(\pi,\pi_0) \in \Z^{p+1}$ which implies 
$L \subseteq \{ x : \R^p : \pi_0 \leq \pi^T x \leq \pi_0+1 \}$, and 
this contradicts that $L$ is maximal and lattice point free.

\begin{lemma}\label{splitbody_bounded_full_rational}
Every split polyhedron $L$ in $\R^p$ is full 
dimensional.
\end{lemma}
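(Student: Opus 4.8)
The plan is a short proof by contradiction that leans entirely on the maximality built into the definition of a split polyhedron. Suppose $L$ is not full dimensional. Then $\aff(L)$ is a proper affine subspace of $\R^p$; since $L$ is a rational polyhedron (indeed, by Lemma \ref{inclusion_property_unbounded} it is the sum of a rational polytope and a linear space with an integer basis), $\aff(L)$ is rational, so after clearing denominators there is a pair $(\pi,\pi_0) \in \Z^{p+1}$ with $\pi \neq 0$ such that $L \subseteq \{ x \in \R^p : \pi^T x = \pi_0 \}$.

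Next I would exhibit a strictly larger lattice point free convex set containing $L$, namely the split set $S^{(\pi,\pi_0)} = \{ x \in \R^p : \pi_0 \leq \pi^T x \leq \pi_0+1 \}$. The hyperplane $\{ x : \pi^T x = \pi_0 \}$ is contained in $S^{(\pi,\pi_0)}$, so $L \subseteq S^{(\pi,\pi_0)}$, and this inclusion is strict because $S^{(\pi,\pi_0)}$ is full dimensional whereas $L$ lies in a hyperplane. Moreover $S^{(\pi,\pi_0)}$ is lattice point free: being full dimensional, its relative interior equals its interior $\{ x : \pi_0 < \pi^T x < \pi_0+1 \}$, and no integer point $z$ can satisfy $\pi_0 < \pi^T z < \pi_0+1$, since $\pi^T z \in \Z$ while $\pi_0$ and $\pi_0+1$ are consecutive integers.

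Putting these together, $L$ is properly contained in the lattice point free convex set $S^{(\pi,\pi_0)}$, contradicting the maximality of $L$. Hence $L$ is full dimensional. The argument has essentially no obstacle; the only step needing a word of justification is that a hyperplane containing a lower-dimensional $L$ can be chosen with integer data, which is exactly where the rationality of $L$ (hence of $\aff(L)$), guaranteed by Lemma \ref{inclusion_property_unbounded}, is used.
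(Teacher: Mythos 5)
Your proof is correct and follows essentially the same route as the paper: the paper also derives full dimensionality by observing that a lower-dimensional $L$ would lie in an integral hyperplane $\pi^T x = \pi_0$ and hence in the strictly larger lattice point free split set $\{x : \pi_0 \leq \pi^T x \leq \pi_0+1\}$, contradicting maximality. You merely spell out the details (rationality of $\aff(L)$, strictness of the inclusion, lattice point freeness of the split set) that the paper leaves implicit.
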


We are interested in using split polyhedra to characterize 
\emph{mixed} integer sets. Let $L^x \subseteq \R^p$ be 
a split polyhedron. We can then use the set 
$L:=\{ (x,y) \in \R^p \times \R^q : x \in L^x \}$ for 
mixed integer sets. We call $L$ a 
\emph{mixed integer split polyhedron}.

We now consider how to measure the size of a 
mixed integer split polyhedron. 
Let $L \subseteq \R^n$ be a mixed integer split polyhedron 
in $\R^n$ written in 
the form 
$$L:=\{ x \in \R^n : (\pi^k)^T x \geq \pi^k_0 \textrm{ for }k \in N_f(L) \},$$ 
where $N_f(L):=\{1,2,\ldots,n_f(L) \}$, 
$n_f(L)$ denotes 
the number of facets of $L$, $(\pi^k,\pi^k_0) \in \Z^{n+1}$ 
for $k \in N_f(L)$ and $\pi^k_j = 0$ for $j \notin N_I$. 
We assume that for every $k \in N_f(L)$, 
$\pi^k_0$ does not have a common divisor 
with all the integers $\pi^k_j$ for $j=1,2,\ldots,p$. Note that, 
since $L$ is full dimensional, the representation of $L$ 
under this assumption is unique.

Given a vector $v \in \Z^n$ that satisfies $v_j = 0$ for $j \notin N_I$, 
the number of parallel hyperplanes $v^T x = v_0$ 
that intersect a mixed integer split polyhedron 
$L \subseteq \R^n$ for varying $v_0 \in \R$ gives a 
measure of how wide $L$ is along 
the vector $v$. Define 
$Z(N_I):=\{ v \in \Z^n : v_j = 0 \textrm{ for all } j \notin N_I \}$. 
The \emph{width} of $L$ along a vector $v \in Z(N_I)$ is 
defined to be the number 
$$w(L,v):=\max\{ v^T x : x \in L \} - \min\{ v^T x : x \in L \}.$$
By considering the width of $L$ along all the facets of $L$, and 
choosing the largest of these numbers, we obtain a measure of how 
wide $L$ is.

\begin{definition}
(The max-facet-width of a mixed integer split polyhedron).\\
Let $L \subseteq \R^n$ be a mixed integer split polyhedron, and let 
$(\pi^k)^T x \geq \pi^k_0$ 
denote the facets of $L$, where 
$k \in N_f(L)$, $(\pi^k,\pi^k_0) \in \Z^{n+1}$ and $\pi^k \in Z(N_I)$. 
The \emph{max-facet-width} of $L$ is defined to be the number 
$$w_f(L):=\max\{ w(L,\pi^k) : k \in N_f(L) \}.$$
\end{definition}


The max-facet-width measures the 
size of a mixed integer split polyhedron. We now 
use this measure to also measure the 
size of a general mixed integer lattice point free rational polyhedron. 
For this, we use the following result 
proven in \cite{farkas} : for every 
mixed integer lattice point free rational polyhedron 
$Q \subseteq \R^n$, 
there exists a mixed integer split polyhedron 
$L \subseteq \R^n$ that satisfies 
$\ri(Q) \subseteq \intt(L)$. Hence there 
exists a mixed integer split polyhedron $L$ that excludes 
at least the same points as $Q$. A natural measure 
of the size of $Q$ is then the smallest max-facet-width 
of a mixed integer split polyhedron with this property.

\begin{definition}(Width size of any mixed integer lattice point free 
rational polyhedron)\newline
Let $Q \subseteq \R^n$ be a mixed integer lattice point free 
rational polyhedron. 
The \emph{width size} of $Q$ is defined to be the number 
\begin{alignat}{2}
\textrm{width-size}(Q) & :=\min\{ \textrm{max-facet-width}(L) : L \textrm{ is a mixed integer split polyhedron s.t. }\ri(Q) \subseteq \intt(L) \}.\notag
\end{alignat}
\end{definition}

\subsection{Polyhedral relaxations from mixed integer split polyhedra}

As mentioned in the introduction, 
any mixed integer lattice point free convex set $C \subseteq \R^n$ 
gives a relaxation of $\conv(P_I)$ 
\begin{alignat}{2}
R(C,P) & := \conv(\{x \in P : x \notin \ri(C) \})\notag
\end{alignat}
that satisfies $\conv(P_I) \subseteq R(C,P) \subseteq P$. 
Since mixed integer split polyhedra $L$ are maximal wrt. inclusion, 
the sets $R(L,P)$ for mixed integer split polyhedra $L$ are as 
tight relaxations as possible wrt. this operation. 
Figure 
\ref{first_fig} shows the set $R(L,P)$ for a 
polytope $P$ with five vertices and a split 
polyhedron $L$. 

\begin{figure}
\centering
\mbox{\subfigure[A polytope $P$ and a split polyhedron $L$]{\includegraphics[width=5cm]{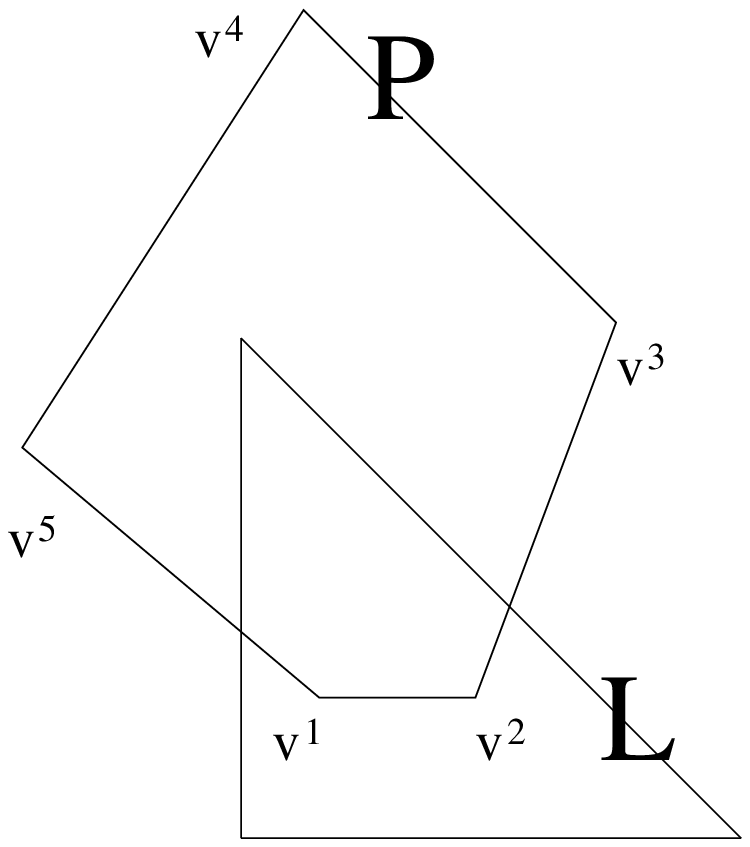}}\quad
\subfigure[The only cut that can be derived from $L$]{\includegraphics[width=5cm]{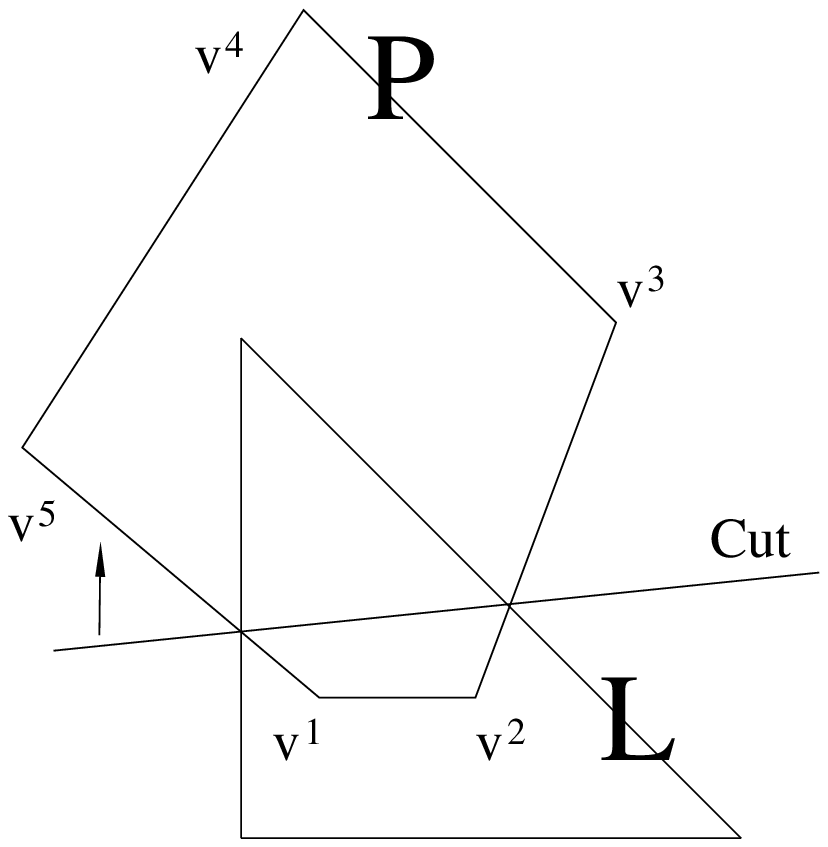}}
\subfigure[The strengthened relaxation of $P_I$]{\includegraphics[width=5cm]{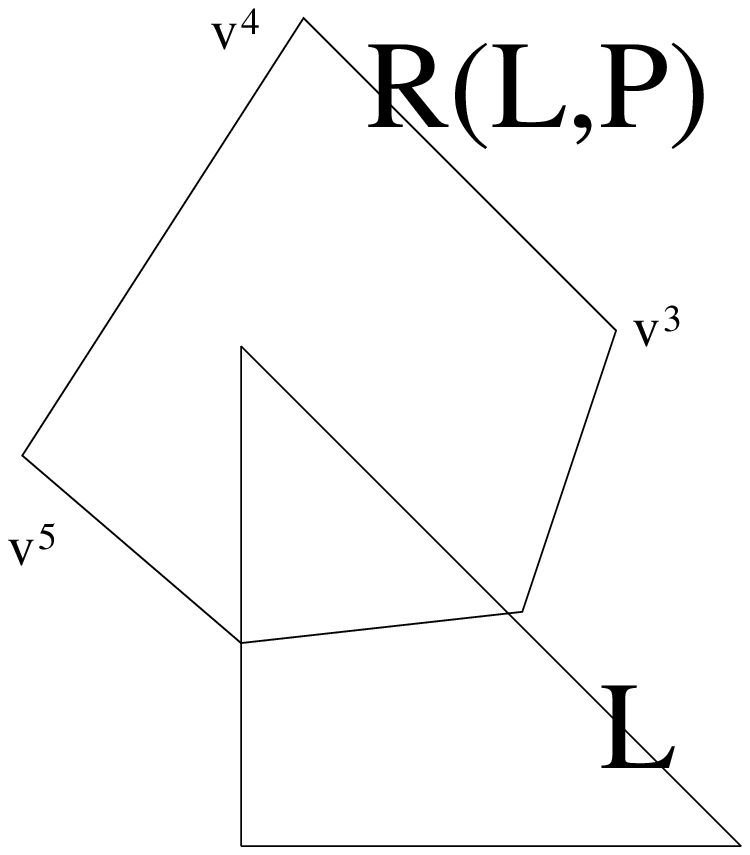}}}
\caption{Strengthening a linear relaxation $P$ by using a split
polyhedron $L$}
\label{first_fig}
\end{figure}

For the example in Figure \ref{first_fig}, the set 
$R(L,P)$ is a polyhedron. We now show that, in general, 
mixed integer split polyhedra give polyhedral relaxations 
$R(L,P)$ of $P_I$.
\begin{lemma}\label{poly_lem}
Let $L \subseteq \R^n$ be a full dimensional polyhedron whose recession 
cone $0^+(L)$ is a linear space. Then the following set $R(L,P)$ 
is a polyhedron.
$$R(L,P) := \conv(\{ x \in P : x \notin \intt(L) \}).$$
\end{lemma}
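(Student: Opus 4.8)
The plan is to show that $R(L,P)$ is the convex hull of finitely many points and rays, exploiting that $L$ is full dimensional with a linear lineality space as recession cone. Write $L=\{x : (\pi^k)^T x \geq \pi^k_0,\ k=1,\dots,m\}$ and note that $\intt(L)=\{x : (\pi^k)^T x > \pi^k_0 \text{ for all } k\}$, so a point $x\in P$ lies outside $\intt(L)$ exactly when $(\pi^k)^T x \leq \pi^k_0$ for some $k$. Thus $P\setminus\intt(L)=\bigcup_{k=1}^m P_k$ where $P_k:=\{x\in P : (\pi^k)^T x \leq \pi^k_0\}$ is a (rational) polyhedron, being the intersection of $P$ with a rational halfspace. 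Consequently $R(L,P)=\conv\bigl(\bigcup_{k=1}^m P_k\bigr)$.

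The key step is then to bound the recession cones. Since $0^+(L)$ is a linear space, every facet normal $\pi^k$ satisfies $(\pi^k)^T d = 0$ for all $d\in 0^+(L)$; but more to the point, for a direction $r\in 0^+(P)$ we have $(\pi^k)^T r \leq 0$ OR $(\pi^k)^T r \geq 0$, and I claim $r\in 0^+(P_k)$ whenever $(\pi^k)^T r \leq 0$. The crucial observation is that since $L$ is lattice-point-free-maximal-style (full dimensional, lineality-space recession cone) it is bounded in every direction $\pi^k$ — indeed $w(L,\pi^k)<\infty$ — so no recession direction $r$ of $P$ with $(\pi^k)^T r \neq 0$ can have $r\in 0^+(L)$; hence for each extreme ray direction $r$ of $P$ and each $k$, either $(\pi^k)^T r < 0$, or $(\pi^k)^T r > 0$, or $(\pi^k)^T r = 0$, and in the first and third cases $r\in 0^+(P_k)$. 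In the second case ($(\pi^k)^T r>0$ for ALL $k$), $r$ would be a recession direction pointing into $\intt(L)$ eventually — but then for $x\in P$ the ray $x+\alpha r$ lies in $\intt(L)$ for large $\alpha$, which is fine; the point is such an $r$ still lies in $0^+(P_{k'})$ for some $k'$ only if ... — here I need to argue that every extreme ray $r$ of $P$ lies in $0^+(P_k)$ for at least one $k$. This holds because $x+\alpha r \notin \intt(L)$ for $\alpha$ large cannot fail simultaneously for the whole ray unless the ray is eventually trapped in $\intt(L)$; and $r\in 0^+(L)$ would force (since $0^+(L)$ is a linear space, so $-r\in 0^+(L)$ too, and $L$ is full-dimensional) a full line in $L$ in direction $r$, contradicting $w(L,\pi^k)<\infty$ unless $(\pi^k)^T r = 0$ for all $k$, i.e. $r\in 0^+(L)$ genuinely — in which case $x+\alpha r \in \intt(L)$ for all large $\alpha$ only if $x$ is already "above" all facets, but then $x\notin P\setminus\intt(L)$ region contributes nothing. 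I will organize this carefully: the upshot is $0^+(P_k) = 0^+(P)\cap\{r : (\pi^k)^T r \leq 0\}$, and $\bigcup_k 0^+(P_k)$ together generate all of $0^+(P)$ except possibly directions in $0^+(L)$, which is handled separately.

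Granting the structural description, I finish as follows. Each $P_k$ is a rational polyhedron, so $P_k = \conv(W_k) + \cone(D_k)$ for finite sets $W_k$ (vertices) and $D_k$ (extreme ray directions). Then
$$R(L,P) = \conv\Bigl(\bigcup_{k=1}^m P_k\Bigr) = \conv\Bigl(\bigcup_{k=1}^m W_k\Bigr) + \cone\Bigl(\bigcup_{k=1}^m D_k\Bigr),$$
using the standard fact that the convex hull of a finite union of finitely generated polyhedra is finitely generated (its recession cone is the convex hull of the individual recession cones, which is again polyhedral). Since the right-hand side is the sum of a polytope and a finitely generated cone, it is a polyhedron, which proves the lemma.

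**Main obstacle.** The delicate point is the recession-cone analysis — specifically ruling out "bad" recession directions of $P$ that point into $\intt(L)$ and verifying that $\conv(\bigcup_k P_k)$ is closed (equivalently, that its recession cone equals $\conv(\bigcup_k 0^+(P_k))$ without needing a closure). I expect to lean on the hypothesis that $0^+(L)$ is a \emph{linear space}: this gives $w(L,\pi^k)<\infty$ for every facet $k$, which is exactly what forbids a recession direction $r$ of $P$ from satisfying $(\pi^k)^T r>0$ for all $k$ simultaneously while also being a recession direction of $L$, and it is what makes each $0^+(P_k)$ large enough that their union spans $0^+(P)$. Handling the edge case where an extreme ray $r$ of $P$ lies in $0^+(L)$ itself (so $(\pi^k)^T r = 0$ for all $k$, hence $r\in 0^+(P_k)$ for every $k$ trivially) is routine once stated.
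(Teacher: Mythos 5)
Your plan is sound and, at bottom, it is the same argument as the paper's. The paper also starts from $R(L,P)=\conv\bigl(\cup_{k} P_k\bigr)$ with $P_k=P\cap\{x : (l^k)^Tx\le l^k_0\}$, invokes the Balas/Cornu\'ejols result that the \emph{closure} of this disjunctive hull is the polyhedron given by an extended formulation, and then spends all of its effort showing that the hull is already closed --- which is exactly the ``main obstacle'' you flag. You work with finite generator sets $W_k,D_k$ instead of the lifted variables $(x^i,\lambda^i)$, but the mathematical content is identical: one must show that a ray $d\in 0^+(P_k)$ appearing in a generating combination \emph{without} an accompanying point of $P_k$ (the $\bar\lambda^{i}=0$ case in the paper) is a recession direction of $\conv(\cup_k P_k)$.

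Two remarks on your sketch of that step. First, the ``standard fact'' as you state it is false without a closure operator (a point together with a disjoint parallel halfline is the usual counterexample), so the identity $\conv(\cup_k P_k)=\conv(\cup_k W_k)+\cone(\cup_k D_k)$ is precisely what has to be proved, not quoted. Second, your middle paragraph chases a slightly wrong question: whether every extreme ray of $P$ lies in some $0^+(P_k)$ is easy and is not where closedness can fail; the real issue is that a generating combination may use a ray $d\in 0^+(P_k)$ attached only to points of some other $P_{k'}$, and one must verify $y+\alpha d\in\conv(\cup_j P_j)$ for all $\alpha\ge 0$ and all $y$ in the hull. The clean dichotomy is: either $(\pi^j)^Td<0$ for some $j$, in which case $y+Ad\in P_j$ for $A$ large and every intermediate point is a convex combination of $y$ and $y+Ad$; or $(\pi^j)^Td\ge 0$ for all $j$, i.e.\ $d\in 0^+(L)$, and then the hypothesis that $0^+(L)$ is a linear space forces $(\pi^j)^Td=0$ for all $j$, so $d$ is a recession direction of every nonempty $P_{k'}$. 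This is exactly the dichotomy in the paper's proof (there phrased as: either $(l^{i'})^T\bar x^{i_0}<0$ for some $i'$, or $\bar x^{i_0}\in 0^+(L)$). With that case analysis written out, your proof is complete.
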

\begin{proof}
Let 
$(l^i)^T x \geq \l^i_0$ for $i \in I$ denote 
the facets of $L$, where $I:=\{1,2,\ldots,n_f \}$ and 
$n_f$ denotes the number 
of facets of $L$. Also suppose $P=\{ x \in \R^n : D x \leq d \}$, 
where $D \in \Q^{m \times n}$ and $d \in \Q^m$. 
Observe that $L$ has the property 
that, if $x^r \in 0^+(L)$, 
then $(l^i)^T x^r = 0$ for all $i \in I$. This 
follows from the fact that the recession cone 
$0^+(L)$ of $L$ is a linear space. 
We claim $R(L,P)$ is the projection 
of the following polyhedron onto the space of 
$x$-variables.
\begin{alignat}{2}
  x  & = \sum_{i \in I} x^i, & \label{eqn1}\\
  D x^i & \leq \lambda^i d, & \textrm{ for }i \in I,\label{eqn2}\\
  (l^i)^T x^i & \leq \lambda^i l^i_0, & \textrm{ for }i \in I, \label{eqn3}\\
  \sum_{i \in I} \lambda^i & = 1, & \label{eqn4}\\
   \lambda^i & \geq 0, & \textrm{ for }i \in I.\label{eqn5}
\end{alignat}

The above construction was also 
used by Balas 
for disjunctive programming \cite{Balas_disj}. 
Let $S(L,P)$ denote the set of $x \in \R^n$ that can be 
represented in the form (\ref{eqn1})-(\ref{eqn5}) above. 
We need to prove $R(L,P)=S(L,P)$. A result in 
Cornu\'ejols \cite{Gerard} shows that 
$\textrm{cl}( \conv(\cup_{i \in I} P^i))=\textrm{cl}(R(L,P))=S(L,P)$, 
where $P^i:=\{ x \in \R^n : D x \leq d\textrm{ and }(l^i)^T x \leq
l^i_0 \}$ for $i \in I$. 
It follows that $R(L,P) \subseteq S(L,P)$, so 
we only have to show the other inclusion.

We now show $S(L,P) \subseteq R(L,P)$. Let 
$\bar{x} \in S(L,P)$. By definition this means 
there exists $\{ \bar{x}^i {\}}_{i \in I}$ 
and $\{ \bar{\lambda}^i {\}}_{i \in I}$ such that 
$\bar{x}$, 
$\{ \bar{x}^i {\}}_{i \in I}$ and 
$\{ \bar{\lambda}^i {\}}_{i \in I}$ satisfy 
(\ref{eqn1})-(\ref{eqn5}). Let 
$\bar{I}:=\{ i \in I : \bar{x}^i \neq 0 \}$. 
We can assume $|\bar{I}|$ is as small as possible. Furthermore 
we can assume $|\bar{I}| \geq 2$.

Let $\bar{I}^0:=\{ i \in \bar{I} : \bar{\lambda}^i = 0 \}$, 
and let $i_0 \in \bar{I}^0$ be arbitrary. 
We claim $\bar{x}^{i_0} \in 0^+(R(L,P))$. 
To show this, 
we first argue that there exists $i' \in I$ 
such that $(l^{i'})^T \bar{x}^{i_0} < 0$. Suppose, for a 
contradiction, that $(l^i)^T \bar{x}^{i_0} \geq 0$ for 
all $i \in I$. This implies $x^{i_0} \in 0^+(L)$, 
and therefore $(l^i)^T \bar{x}^{i_0} = 0$ for 
all $i \in I$. We now show this contradicts the 
assumption that $|\bar{I}|$ is as small as possible. 
Indeed, choose $\bar{i} \in \bar{I} \setminus \{i_0\}$ 
arbitrarily. Define 
$\tilde{x}^{\bar{i}}:=$
$\bar{x}^{i_0}+$
$\bar{x}^{\bar{i}}$, 
$\tilde{x}^{i_0}:=0$, 
$\tilde{x}^i:=\bar{x}^i$ 
for $i \in I \setminus \{ i_0,\bar{i} \}$ and 
$\tilde{\lambda}^i:=\bar{\lambda}^i$ for $i \in I$. We have 
that $\bar{x}$, 
$\{ \tilde{x}^i {\}}_{i \in I}$ and 
$\{ \tilde{\lambda}^i {\}}_{i \in I}$ satisfy 
(\ref{eqn1})-(\ref{eqn5}), and 
$\{ \tilde{x}^i {\}}_{i \in I}$ 
gives a representation of $\bar{x}$ with 
fewer non-zero vectors than 
$\{ \bar{x}^i {\}}_{i \in I}$. 
This contradicts the minimality of 
$|\bar{I}|$. Therefore there exists $i' \in I$ 
such that $(l^{i'})^T \bar{x}^{i_0} < 0$. 

We can now show $\bar{x}^{i_0} \in 0^+(R(L,P))$. 
Let $x^R \in R(L,P)$ be arbitrary and 
define $\bar{x}^{i_0}(\alpha):=$
$x^R+ \alpha \bar{x}^{i_0}$ 
for $\alpha\geq 0$. Since $(l^{i'})^T \bar{x}^{i_0} < 0$, 
there exists $\bar{\alpha}>0$ such that 
$(l^{i'})^T \bar{x}^{i'_0}(\alpha) \leq l^{i'}_0$ for 
all $\alpha \geq \bar{\alpha}$. This implies 
$\bar{x}^{i_0}(\alpha) \in R(L,P)$ 
for all $\alpha \geq \bar{\alpha}$. 
Hence $\bar{x}^{i_0} \in 0^+(R(L,P))$. 

We can now write $\bar{x}=$
$\sum_{i \in \bar{I}^{>0}}$ 
$\bar{\lambda}^i
\frac{\bar{x}^i}{\bar{\lambda}^i}+$
$\sum_{i \in \bar{I}^0} \bar{x}^i$, where 
$\bar{I}^{>0}:=\{ i \in \bar{I} : \bar{\lambda}^i > 0 \}$, 
$\frac{\bar{x}^i}{\bar{\lambda}^i} \in R(L,P)$ for $i \in \bar{I}^{>0}$, 
$\bar{x}^i \in 0^+(R(L,P))$ for $i \in \bar{I}^0$, 
$\sum_{i \in \bar{I}^{>0}} \bar{\lambda}^i = 1$ and 
$\bar{\lambda}^i > 0$ for $i \in \bar{I}^{>0}$. 
Therefore $\bar{x} \in R(L,P)$.
\end{proof}

Lemma \ref{poly_lem} implies that, for every 
\emph{finite} collection $\cal{L}$ of mixed integer 
split polyhedra, the set 
$$\textrm{Cl}(P,{\cal{L}}):=\cap_{L \in {\cal{L}}} R(L,P),$$
is a polyhedron. A next natural question is 
under which conditions the same is true for an 
\emph{infinite} collection of mixed integer 
split polyhedra. As mentioned, we will show that a 
sufficient condition for this to be the case is that it is 
possible to provide an upper bound $w^*$ on the 
max-facet-width of the mixed integer split polyhedra 
in an infinite collection $\cal{L}$ of mixed integer 
split polyhedra. Therefore, we consider the set of \emph{all} 
mixed integer split polyhedra whose max-facet-width is bounded 
by a given constant $w>0$
$${\cal{L}}^w:= \{ L \subseteq \R^n : L\textrm{ is a mixed integer split polyhedron satisfying
}w_f(L) \leq w \}.$$
An extension of the (first) split closure can 
now be defined.
\begin{definition}
(The $w^{th}$ split closure).\newline
Given $w>0$, the \emph{$w^{th}$ split closure} 
of $P$ is defined to be the set
$$\textrm{Cl}_w(P,{\cal{L}}^w):=\cap_{L \in {\cal{L}}^w} R(L,P).$$
\end{definition}

A natural question is which 
condition a mixed integer split polyhedron $L$ must satisfy in 
order to have $R(L,P) \neq P$. The following lemma shows 
that $R(L,P) \neq P$ exactly when there is a vertex 
of $P$ in the interior of $L$.
\begin{lemma}\label{int_vert_nec}
Let $L \subset \R^n$ be a mixed integer split polyhedron. 
Then $R(L,P) \neq P$ if and only if there is a vertex 
of $P$ in the interior of $L$.
\end{lemma}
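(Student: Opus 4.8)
The plan is to prove the two directions of the equivalence separately, with the nontrivial direction being the ``only if'' part.

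\medskip

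\textbf{The ``if'' direction.} Suppose some vertex $v^i$ of $P$ lies in $\intt(L)$. Then $v^i \in \ri(L)$ (since $L$ is full dimensional by Lemma \ref{splitbody_bounded_full_rational}), so $v^i \notin \{ x \in P : x \notin \ri(L) \}$. I would argue that $v^i \notin R(L,P)$. Since $v^i$ is a vertex of $P$, there is a valid inequality $c^T x \geq c_0$ for $P$ that is tight \emph{only} at $v^i$ among the points of $P$; equivalently $v^i$ is the unique minimizer of $c^T x$ over $P$. Every point of $\{ x \in P : x \notin \ri(L) \}$ is a point of $P$ different from $v^i$, hence satisfies $c^T x > c_0$; but this strict inequality need not be preserved under taking the convex hull in the presence of recession directions, so instead I would use that $v^i$ being a vertex means $\{v^i\}$ is a face of $P$ and of any polyhedron contained in $P$ that still contains points arbitrarily close to $v^i$. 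More cleanly: $R(L,P) \subseteq P$ and $R(L,P)$ omits a full-dimensional neighborhood of $v^i$ intersected with $P$ (because $\intt(L)$ is open and contains $v^i$, so it contains such a neighborhood, and none of those points other than possibly boundary contributions are in the generating set), so $v^i$ cannot be recovered as a convex combination of generators plus recession directions. This gives $R(L,P) \neq P$.

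\medskip

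\textbf{The ``only if'' direction.} Assume no vertex of $P$ lies in $\intt(L)$; I want to show $R(L,P) = P$, i.e., $\{ x \in P : x \notin \ri(L) \}$ already generates $P$. Since $R(L,P) \subseteq P$ always holds, it suffices to show every vertex $v^i$ and every extreme ray $r^j$ of $P$ belongs to the closed convex hull on the right. Each vertex $v^i$ is not in $\intt(L)$ by hypothesis, hence $v^i \in \{ x \in P : x \notin \ri(L) \} \subseteq R(L,P)$. For the extreme rays: fix $j \in E$ and a vertex $v^i$; consider the ray $\{ v^i + \alpha r^j : \alpha \geq 0 \}$. Because $L = \mathcal{P} + \mathcal{L}$ with $\mathcal{P}$ a polytope and $\mathcal{L}$ a linear space (Lemma \ref{inclusion_property_unbounded}), $L$ is ``bounded in the directions transversal to $\mathcal{L}$''; in particular either $r^j \in \mathcal{L} = 0^+(L)$, or the ray eventually leaves $L$ for all large $\alpha$. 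In the first case the whole ray, translated to start at $v^i \notin \intt(L)$, stays outside $\intt(L)$ (here I use that $0^+(L)$ being a linear space means $L + \spann(r^j) = L$, so $v^i + \alpha r^j \in \intt(L)$ would force $v^i \in \intt(L)$). In the second case, for all sufficiently large $\alpha$ the point $v^i + \alpha r^j$ is outside $\intt(L)$, hence lies in the generating set, so $r^j \in 0^+(R(L,P))$. Combining, $R(L,P)$ contains all vertices and all extreme rays of $P$, so $R(L,P) = P$.

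\medskip

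\textbf{Main obstacle.} The delicate point is handling the recession directions correctly: one must be careful that $R(L,P)$ is defined as a convex hull (not a closed convex hull) of a set that may itself be unbounded, so I should lean on Lemma \ref{poly_lem} to know $R(L,P)$ is already a polyhedron and argue directly about its vertices and recession cone rather than chasing limits. The other subtlety is the ``if'' direction: translating ``$v^i$ is a vertex'' and ``$v^i \in \intt(L)$'' into ``$v^i \notin R(L,P)$'' requires observing that $\intt(L)$ contains a neighborhood of $v^i$ relative to $\R^n$, so $P \setminus \intt(L)$ lies in $P \setminus \{v^i\}$ and moreover stays a bounded distance from $v^i$ along every direction feasible for $P$ at $v^i$; since $v^i$ is a vertex, the cone of feasible directions at $v^i$ is pointed, and no convex combination with recession directions of such points can return to $v^i$. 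I expect this to be the step needing the most care, and it is the natural place to invoke the vertex property of $v^i$ in $P$ explicitly.
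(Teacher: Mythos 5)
Your proof is correct and follows essentially the same route as the paper: the ``if'' direction uses that a vertex of $P$ is an extreme point and hence cannot be a convex combination of points of $P\setminus\{v^i\}$, and the ``only if'' direction shows that all vertices of $P$ survive in $R(L,P)$ and that every extreme ray of $P$ is a recession direction of $R(L,P)$. The only difference is that you explicitly justify the recession-cone claim (via the dichotomy $r^j\in 0^+(L)$ versus the ray eventually leaving $L$, using that $0^+(L)$ is a linear space), whereas the paper simply asserts that the extreme rays of $R(L,P)$ coincide with those of $P$; your ``if'' direction also detours through neighborhoods and feasible-direction cones where the bare extreme-point argument already suffices.
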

\begin{proof}
If $v^i$ is a vertex of $P$ 
in the interior of $L$, where $i \in V$, then 
$v^i$ can not be expressed as a convex combination of 
points in $P$ that are not in the interior of $L$, 
and therefore $v^i \notin R(L,P)$. 
Conversely, when $L$ does \emph{not} contain a vertex 
of $P$ in its interior, then 
$\delta^T v^i \geq \delta_0$ for every valid inequality 
$\delta^T x \geq \delta_0$ for $R(L,P)$ and $i \in V$. Since the extreme 
rays of $R(L,P)$ are the same as the extreme rays of $P$, 
we have $\delta^T r^j \geq 0$ for every extreme ray 
$j \in E$.
\end{proof}

\section{Cutting planes from inner representations of polyhedra}

The focus in this section is on analyzing the effect of adding 
cutting planes (or cuts) to the linear relaxation $P$ of $P_I$. 
We define cuts to be inequalities that cut off some 
vertices of $P$. In other words, 
we say an inequality 
$\delta^T x \ge \delta_0$ is a cut for $P$ 
if $\delta^T v^i < \delta_0$ 
for some $i \in V$. 
Let $V^c_{(\delta,\delta_0)}:=$
$\{ i \in V : \delta^T v^i < \delta_0 \}$ 
index the vertices of $P$ that are cut off 
by $\delta^T x \ge \delta_0$, and let 
$V^s_{(\delta,\delta_0)}:=$
$\{ i \in V : \delta^T v^i \geq \delta_0 \}$ 
index the vertices of $P$ that 
satisfy $\delta^T x \ge \delta_0$. 

A cut $\delta^T x \ge \delta_0$ is called 
\emph{non-negative} if $\delta^T r^j \geq 0$ 
for all $j \in E$. 
Throughout this section we only 
consider non-negative cutting planes. Observe that 
non-negativity is a 
necessary condition for valid cuts for a mixed 
integer set. Indeed, if $\delta^T x \ge \delta_0$ is a valid 
cut for the mixed integer points in $P$, and $j \in E$ 
is an arbitrary extreme ray of $P$, then the halfline 
$\{ x^I + \mu r^j : \mu \geq 0 \}$ contains an infinite 
number of mixed integer points for any mixed integer point 
$x^I \in P$. Therefore, if we had $\delta^T r^j < 0$ for some 
extreme ray $r^j$ of $P$, a 
contradiction to the validity of $\delta^T x \ge \delta_0$ 
for the mixed integer points in $P$ would be obtained.

\subsection{The vertices created by the addition of a cut}
Adding a non-negative cut $\delta^T x \ge \delta_0$ to the 
linear relaxation $P$ of $P_I$ creates a polyhedron with different 
vertices than $P$. We now analyze the new vertices that are 
created. For simplicity let 
$\Lambda:=\{ \lambda \in \R_+^{|V|} : \sum_{i \in V} \lambda_i = 1 \}$, 
$\Lambda^c_{(\delta,\delta_0)}:=\{ \lambda \in \Lambda : \sum_{i \in V^c_{(\delta,\delta_0)}} \lambda_i = 1 \}$ and 
$\Lambda^s_{(\delta,\delta_0)}:=\{ \lambda \in \Lambda : \sum_{k \in V^s_{(\delta,\delta_0)}} \lambda_k = 1 \}$. 
Also, for any $\lambda \in \Lambda$, define $v_{\lambda}:=\sum_{i \in V} \lambda_i v^i$, and for any $\mu \geq 0$, define 
$r_{\mu}:=\sum_{j \in E} \mu_j r^j$. 
We now argue that the new vertices that are created by adding 
the cut $\delta^T x \ge \delta_0$ to $P$ are 
\emph{intersection points} \cite{balasint}. 
Intersection points are defined as follows. 
Given an extreme 
ray $j \in E$ that satisfies 
$\delta^T r^j > 0$, and a convex combination $\lambda^c \in \Lambda^c_{(\delta,\delta_0)}$, the halfline 
$\{ v_{\lambda} + \alpha r^j : \alpha \geq 0 \}$ 
intersects the hyperplane 
$\{ x \in \R^n : \delta^T x = \delta_0 \}$. 
For $j \in E$ and $\lambda^c \in \Lambda^c_{(\delta,\delta_0)}$, define  

\begin{alignat}{2} \label{alpha-prime}
{\alpha}_j'(\delta,{\delta}_0,\lambda^c) & := \left\{ \begin{array}{ll}
\frac{\delta_0 - \delta^T v_{\lambda^c}}{\delta^T r^j} & \textrm{if   } 
\delta^T r^j > 0, \\
+\infty  & \textrm{otherwise. }
\end{array} \right.
\end{alignat}

\noindent The number ${\alpha}_j'(\delta,{\delta}_0,\lambda^c)$ is 
the value of $\alpha$ for which the point $v_{\lambda^c}+\alpha r^j$ 
is on the hyperplane ${\delta}^T x = {\delta}_0$. 
When there is no such point, we 
define ${\alpha}_j'(\delta,{\delta}_0,\lambda^c) = +\infty$. 
If ${\alpha}_j'(\delta,{\delta}_0,\lambda^c) < +\infty$, the point 
$v_{\lambda^c}+ {\alpha}_j'(\delta,{\delta}_0,\lambda^c) r^j$ is 
called the intersection point associated with the 
convex combination $\lambda^c \in \Lambda^c_{(\delta,\delta_0)}$ 
and the extreme ray $r^j$ of $P$.
Observe that ${\alpha}_j'(\delta,{\delta}_0,\lambda^c)$ is 
linear in $\lambda^c$.

Given a convex combination $\lambda^c \in \Lambda^c_{(\delta,\delta_0)}$, 
and a vertex $k \in V^s_{(\delta,\delta_0)}$, the line segment 
between $v_{\lambda}$ and $v^k$ 
intersects the hyperplane 
$\{ x \in \R^n : \delta^T x = \delta_0 \}$. 
For $k \in V$ and 
$\lambda^c \in \Lambda^c_{(\delta,\delta_0)}$, define  

\begin{alignat}{2} \label{beta-prime}
{\beta}_k'(\delta,{\delta}_0,\lambda^c) & := \left\{ \begin{array}{ll}
\frac{\delta_0 - \delta^T v_{\lambda^c}}{\delta^T ( v^k-v_{\lambda^c})} & \textrm{if   } 
k \in V^s_{(\delta,\delta_0)}, \\
+\infty  & \textrm{otherwise. }
\end{array} \right.
\end{alignat}

\noindent The number ${\beta}_k'(\delta,{\delta}_0,\lambda^c)$ denotes 
the value of $\beta$ for which the point 
$v_{\lambda^c}+\beta (v^k-v_{\lambda^c})$ 
is on the hyperplane ${\delta}^T x = {\delta}_0$. Observe that 
${\beta}_k'(\delta,{\delta}_0,\lambda^c) \in ]0,1]$ whenever 
${\beta}_k'(\delta,{\delta}_0,\lambda^c) < +\infty$. If 
${\beta}_k'(\delta,{\delta}_0,\lambda^c) < +\infty$, the point 
$v_{\lambda^c}+{\beta}_k'(\delta,{\delta}_0,\lambda^c)
(v^k-v_{\lambda^c})$ 
is called the intersection point associated with the 
convex combination $\lambda^c \in \Lambda^c_{(\delta,\delta_0)}$ 
and the vertex $v^k$ of $P$. For 
the polytope $P$ of Figure \ref{first_fig} and a cut 
${\delta}^T x \ge {\delta}_0$, Figure 
\ref{cut_fig} gives an example of how to compute the 
intersection points for a given 
convex combination $\lambda^c=(\frac{1}{2},\frac{1}{2})$.

\begin{figure}
\centering
\mbox{\subfigure[The polytope $P$ from Figure \ref{first_fig} and a cut with $V^c(\delta,\delta_0)=\{1,2\}$]{\includegraphics[width=5cm]{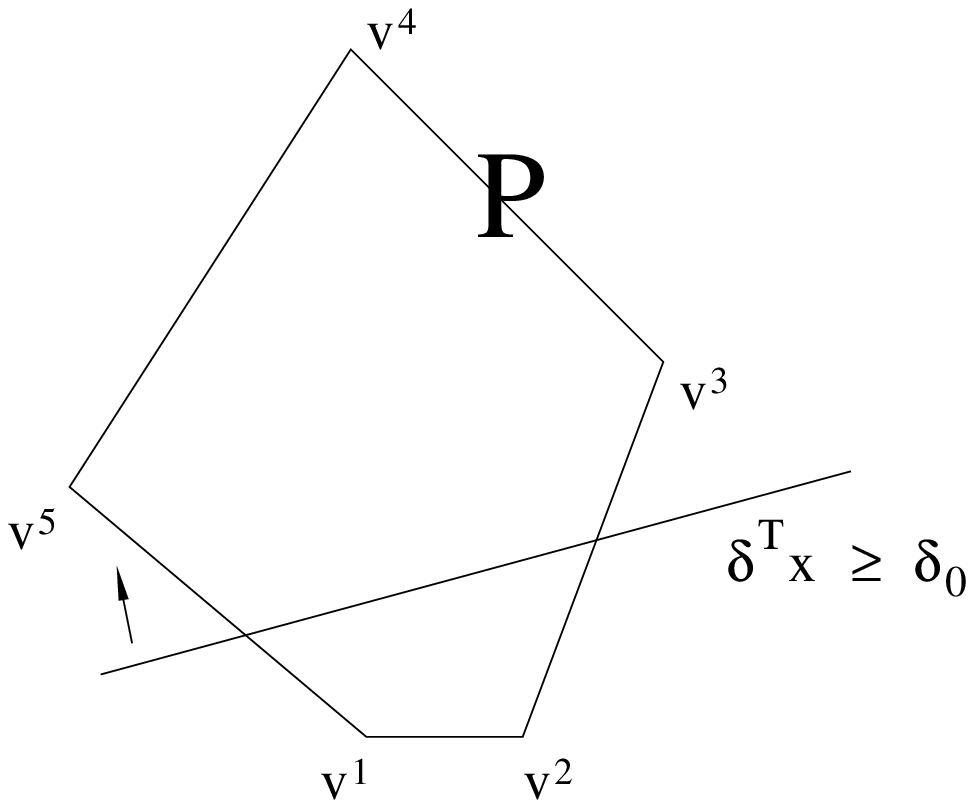}}\quad
\subfigure[The convex combination $v_{\lambda^c}$ of $v^1$ and $v^2$ for $\lambda^c=(\frac{1}{2},\frac{1}{2})$]{\includegraphics[width=5cm]{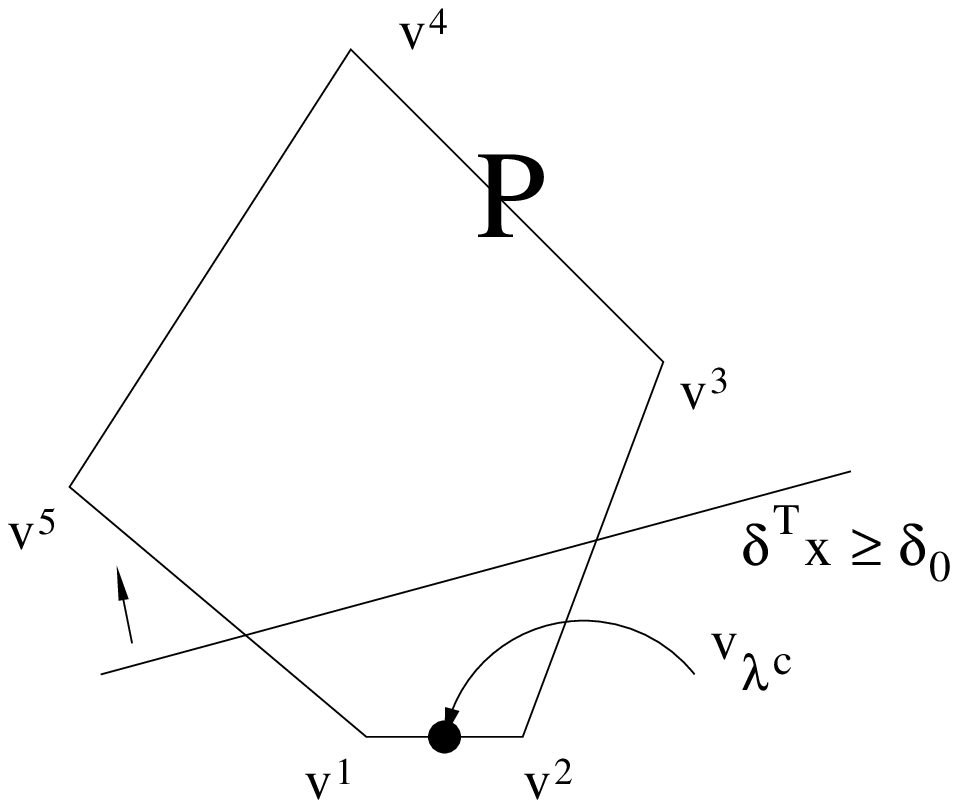}}
\subfigure[The line segments that determine 
$\beta_k'(\delta,\delta_0,\lambda^c)$ for $k=3,4,5$]{\includegraphics[width=5cm]{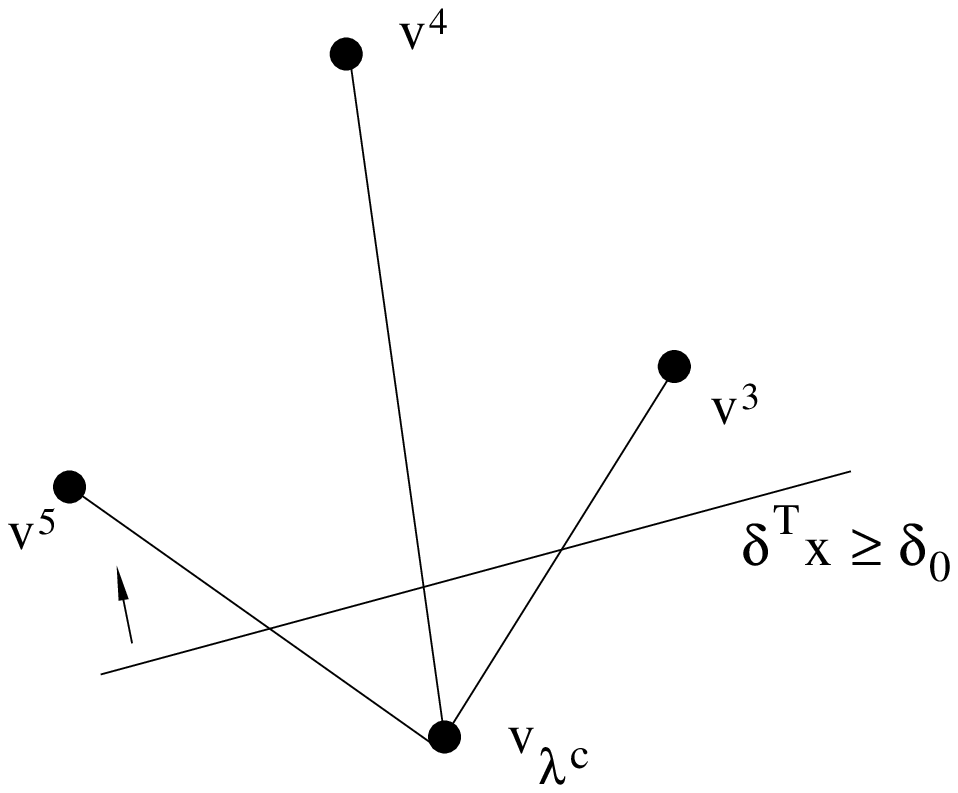}}}
\caption{Determining the intersection points from a polytope $P$ and a
cut ${\delta}^T x \geq {\delta}_0$}
\label{cut_fig}
\end{figure}

An important property of an intersection point of the form 
$v_{\lambda^c}+{\beta}_k'(\delta,{\delta}_0,\lambda^c) v^k$ for 
$k \in V^s_{(\delta,\delta_0)}$ and 
$\lambda^c \in \Lambda^c_{(\delta,\delta_0)}$ is the following. 
\begin{lemma}\label{beta_prime_properties}
Let $\delta^T x \geq \delta_0$ be a non-negative cut, and let $k \in V^s_{(\delta,\delta_0)}$. 
For every $\lambda^c \in \Lambda^c_{(\delta,\delta_0)}$, the intersection 
point 
$v_{\lambda^c}+\beta_k'(\delta,{\delta}_0,\lambda^c) (v^k-v_{\lambda^c})$ 
is a convex combination of the intersection points 
$v^i+\beta_k'(\delta,{\delta}_0,e^i) (v^k-v^i)$ 
for $i \in V^c_{(\delta,\delta_0)}$. 
\end{lemma}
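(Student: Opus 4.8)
The plan is to exploit the fact that $\beta_k'(\delta,\delta_0,\lambda^c)$ is the solution of a one-dimensional linear equation in which the unknown quantity enters through the affine function $\lambda^c\mapsto v_{\lambda^c}$. Write $\lambda^c=\sum_{i\in V^c_{(\delta,\delta_0)}}\lambda^c_i e^i$, so that $v_{\lambda^c}=\sum_i\lambda^c_i v^i$ with $\sum_i\lambda^c_i=1$ and $\lambda^c_i\ge0$. Set $p_i:=v^i+\beta_k'(\delta,\delta_0,e^i)(v^k-v^i)$ for $i\in V^c_{(\delta,\delta_0)}$; these are the "vertex–to–vertex" intersection points, and each lies on the hyperplane $\{\delta^Tx=\delta_0\}$ by definition of $\beta_k'$. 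The goal is to produce nonnegative multipliers $\gamma_i$ summing to $1$ with $v_{\lambda^c}+\beta_k'(\delta,\delta_0,\lambda^c)(v^k-v_{\lambda^c})=\sum_i\gamma_i p_i$.

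First I would record the explicit formulas. Since $k\in V^s_{(\delta,\delta_0)}$ we have $\delta^Tv^k\ge\delta_0$, and since each $i\in V^c_{(\delta,\delta_0)}$ satisfies $\delta^Tv^i<\delta_0$, the denominators $\delta^T(v^k-v^i)>0$ are strictly positive; likewise $\delta^T(v^k-v_{\lambda^c})=\sum_i\lambda^c_i\,\delta^T(v^k-v^i)>0$, so $\beta_k'(\delta,\delta_0,\lambda^c)$ is a well-defined number in $]0,1]$. Both the point $q:=v_{\lambda^c}+\beta_k'(\delta,\delta_0,\lambda^c)(v^k-v_{\lambda^c})$ and each $p_i$ lie on the affine line through $v^k$ in "direction" towards the cut face; more precisely each of $q,p_i$ can be written as $v^k-t(v^k-\cdot)$. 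I would then guess the natural candidate weights
$$\gamma_i:=\lambda^c_i\cdot\frac{\delta^T(v^k-v^i)}{\delta^T(v^k-v_{\lambda^c})},$$
which are nonnegative and sum to $1$, and verify the identity $q=\sum_i\gamma_i p_i$ by a direct computation: using $\delta^T p_i=\delta_0=\delta^T q$, it suffices to check the identity after subtracting $v^k$ from both sides and comparing the coefficient of $(v^k-v^i)$, which reduces to $\beta_k'(\delta,\delta_0,e^i)\delta^T(v^k-v^i)=\delta_0-\delta^Tv^i$, i.e. exactly the defining equation of $\beta_k'$. Expanding $q-v^k=(1-\beta_k'(\delta,\delta_0,\lambda^c))(v_{\lambda^c}-v^k)=-(1-\beta_k'(\delta,\delta_0,\lambda^c))\sum_i\lambda^c_i(v^k-v^i)$ and matching against $\sum_i\gamma_i(p_i-v^k)$ then closes the argument.

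The one genuine subtlety — and the step I would be most careful about — is the case distinction: if $V^c_{(\delta,\delta_0)}$ is empty there is nothing to prove, but one must confirm that for $\lambda^c\in\Lambda^c_{(\delta,\delta_0)}$ all the relevant $\beta_k'$ are finite (not $+\infty$), which is precisely where $k\in V^s_{(\delta,\delta_0)}$ is used, and that the weights $\gamma_i$ are supported on $V^c_{(\delta,\delta_0)}$ only (they are, since $\lambda^c$ is). The rest is the routine linear-algebra verification sketched above; no convexity input beyond $\lambda^c_i\ge0$ is needed, because the map $\lambda^c\mapsto q$ is a "projective" (fractional-linear) reparametrization of the simplex whose vertices map to the $p_i$, and such maps send the simplex into the convex hull of the images.
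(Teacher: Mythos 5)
Your proposal is correct and follows essentially the same route as the paper: the paper also reduces to the identity $\sum_{i}\lambda_i/\eta_{i,k}=1$ with $\eta_{i,k}=\frac{\delta^T v^k-\delta_0}{\delta^T(v^k-v^i)}$, which yields exactly your weights $\gamma_i=\lambda^c_i\,\delta^T(v^k-v^i)/\delta^T(v^k-v_{\lambda^c})$ after substituting $\lambda_i=(1-\beta_k'(\delta,\delta_0,\lambda^c))\lambda^c_i$. The only cosmetic difference is that the paper packages the computation as a characterization of the set $\{x\in\conv(\{v^k\}\cup\{v^i\}_{i\in V^c_{(\delta,\delta_0)}}):\delta^T x=\delta_0\}$ and dispatches the degenerate case $\delta^T v^k=\delta_0$ up front, which your formulas also handle.
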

\begin{proof}
Define 
$C:=\conv(\{ v^k \} \cup \{ v^i {\}}_{i \in V^c_{(\delta,\delta_0)}})$. 
Trivially we have 
$v_{\lambda^c}+\beta_k'(\delta,{\delta}_0,\lambda^c) (v^k-v_{\lambda^c}) \in C$. 
We will show the vertices of the polytope 
$\{ x \in C : \delta^T x = \delta_0 \}$ are given by the points 
$v^i+\beta_k'(\delta,{\delta}_0,e^i) (v^k-v^i)$ 
for $i \in V^c_{(\delta,\delta_0)}$ from which the result follows. 
If $\delta^T v^k = \delta_0$, the result is trivial, so we 
assume $\delta^T v^k > \delta_0$.

Therefore suppose $\bar{x} \in \{ x \in C : \delta^T x = \delta_0 \}$ is 
a vertex of $\{ x \in C : \delta^T x = \delta_0 \}$. We may write 
$\bar{x}=\lambda_0 v^k + \sum_{i \in V^c_{(\delta,\delta_0)}} \lambda_i v^i$, 
where $\lambda_0+\sum_{i \in V^c_{(\delta,\delta_0)}} \lambda_i =1$, 
$\lambda_0 \geq 0$ and $\lambda_i \geq 0$ for all $i \in V^c_{(\delta,\delta_0)}$. Using 
$\lambda_0=1-\sum_{i \in V^c_{(\delta,\delta_0)}} \lambda_i$, we can write 
$\bar{x}=v^k+\sum_{i \in V^c_{(\delta,\delta_0)}} \lambda_i (v^i-v^k)$. 
Multiplying with $\delta$ on both sides then gives 
$\sum_{i \in V^c_{(\delta,\delta_0)}} \frac{\lambda_i}{\eta_{i,k}}=1$, 
where $\eta_{i,k}:=\frac{\delta^T v^k-\delta_0}{\delta^T (v^k-v^i)}$. 

We can now write 
$\bar{x}=v^k+\sum_{i \in V^c_{(\delta,\delta_0)}} \lambda_i (v^i-v^k)=$
$\sum_{i \in V^c_{(\delta,\delta_0)}} \frac{\lambda_i}{\eta_{i,k}} v^k+$
$\sum_{i \in V^c_{(\delta,\delta_0)}} \lambda_i (v^i-v^k)=$
$\sum_{i \in V^c_{(\delta,\delta_0)}} \frac{\lambda_i}{\eta_{i,k}} (v^k+\eta_{i,k}(v^i-v^k))$. 
Since $v^k+\eta_{i,k}(v^i-v^k)=v^i+\beta_k'(\delta,{\delta}_0,e^i) (v^k-v^i)$ for 
$i \in V^c_{(\delta,\delta_0)}$, the result follows.
\end{proof} 

Lemma \ref{beta_prime_properties} shows that the only 
vectors $\lambda^c \in \Lambda^c_{(\delta,\delta_0)}$ 
for which the intersection points of the type 
$v_{\lambda^c}+{\beta}_k'(\delta,{\delta}_0,\lambda^c)
(v^k-v_{\lambda^c})$ can be vertices of 
$\{ x \in P : \delta^T x \geq \delta_0 \}$ are the 
unit vectors.

In order to characterize the vertices of 
$\{ x \in P : \delta^T x \geq \delta_0 \}$, we 
first give a representation of 
$\{ x \in P : \delta^T x \geq \delta_0 \}$ in 
a higher dimensional space. 
Note that any point which is a convex combination of the vertices of 
$P$ can be written as a convex combination of two points 
$v_{\lambda^s}$ and $v_{\lambda^c}$, where 
$\lambda^s \in \Lambda^s_{(\delta,\delta_0)}$ and 
$\lambda^c \in \Lambda^c_{(\delta,\delta_0)}$. We may 
therefore write $P$ in the form
$$P=\{ x \in \R^n : x= x^v+r_{\mu}, \mu \geq 0, \lambda^s \in
\Lambda^s_{(\delta,\delta_0)}, \lambda^c \in
\Lambda^c_{(\delta,\delta_0)}\textrm{ and }x^v \in \conv(v_{\lambda^c},v_{\lambda^s})\}.$$
Consider the set obtained from $P$ by fixing the convex combination 
$\lambda^c \in \Lambda^c_{(\delta,\delta_0)}$
$$P(\lambda^c):=\{ x \in \R^n : x= x^v +r_{\mu}, \mu \geq 0, \lambda^s
\in \Lambda^s_{(\delta,\delta_0)} \textrm{ and }x^v \in \conv(v_{\lambda^c},v_{\lambda^s})\}.$$
Observe that we may write $P(\lambda^c)$ in the form 
$$P(\lambda^c)=\{ x \in \R^n : x= v_{\lambda^c}+\sum_{k \in V^s_{(\delta,\delta_0)}} \epsilon_k (v^k-v_{\lambda^c})+r_{\mu}, \mu \geq 0 \textrm{ and } \epsilon \in \Lambda\}.$$

Now consider the set $P^l(\lambda^c)$ 
obtained from $P(\lambda^c)$ by also 
considering the multipliers on the 
vertices of $P$ indexed by $V^s_{(\delta,\delta_0)}$, and 
the extreme rays of $P$
$$P^l(\lambda^c):=\{ (x,\epsilon,\mu) \in \R^{n+|V|+|E|} : x=v_{\lambda}+\sum_{k \in V^s_{(\delta,\delta_0)}} \epsilon_k (v^k-v_{\lambda^c})+r_{\mu}, \mu \geq 0 \textrm{ and }\epsilon \in \Lambda\}.$$

The scalars 
${\alpha}_j'(\delta,{\delta}_0,\lambda^c)$ for $j \in E$ and 
${\beta}_i'(\delta,{\delta}_0,\lambda^c)$ for $i \in V$ 
give an alternative description of the set of points in 
$P$ that satisfy 
$\delta^T x \geq \delta_0$ in a higher dimensional space. 
\begin{lemma}\label{fix_lambda}
(Lemma 2 in \cite{andcorli}). 
Let ${\delta}^T x \ge {\delta}_0$ be a non-negative 
cut for $P$. For any $\lambda^c \in \Lambda^c_{(\delta,\delta_0)}$, 
we have
$$\{ (x,\epsilon,\mu) \in P^l(\lambda^c) : \delta^T x \geq \delta_0
\}= \{ (x,\epsilon,\mu) \in P^l(\lambda^c) : \sum_{j \in E}
\frac{\mu_j}{{\alpha}_j'(\delta,{\delta}_0,\lambda^c)}+ \sum_{k \in V} \frac{\epsilon_k}{{\beta}_k'(\delta,{\delta}_0,\lambda^c)} \geq 1 \}.$$
\end{lemma}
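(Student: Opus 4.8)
The plan is to prove the identity by unfolding the definition of $P^l(\lambda^c)$ and computing $\delta^T x$ directly in terms of the multipliers $\epsilon$ and $\mu$. First I would take an arbitrary $(x,\epsilon,\mu) \in P^l(\lambda^c)$, so that $x = v_{\lambda^c} + \sum_{k \in V^s_{(\delta,\delta_0)}} \epsilon_k (v^k - v_{\lambda^c}) + \sum_{j \in E} \mu_j r^j$ with $\mu \geq 0$ and $\epsilon \in \Lambda$. Applying $\delta^T$ to both sides and subtracting $\delta_0$ gives
\begin{alignat}{2}
\delta^T x - \delta_0 & = (\delta^T v_{\lambda^c} - \delta_0) + \sum_{k \in V^s_{(\delta,\delta_0)}} \epsilon_k \, \delta^T(v^k - v_{\lambda^c}) + \sum_{j \in E} \mu_j \, \delta^T r^j. \notag
\end{alignat}
Here $\delta^T v_{\lambda^c} - \delta_0 < 0$ because $\lambda^c \in \Lambda^c_{(\delta,\delta_0)}$ forces $v_{\lambda^c}$ to be a convex combination of cut-off vertices (a short convexity argument: each $v^i$ with $i \in V^c_{(\delta,\delta_0)}$ satisfies $\delta^T v^i < \delta_0$); write $d := \delta_0 - \delta^T v_{\lambda^c} > 0$.

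Next I would rewrite each term using the definitions of $\alpha_j'$ and $\beta_k'$. For $j \in E$ with $\delta^T r^j > 0$ we have $\alpha_j'(\delta,\delta_0,\lambda^c) = d / \delta^T r^j$, hence $\delta^T r^j = d / \alpha_j'(\delta,\delta_0,\lambda^c)$; for $j$ with $\delta^T r^j = 0$ the term vanishes and $\alpha_j' = +\infty$ makes $\mu_j / \alpha_j' = 0$, so in all cases $\mu_j \, \delta^T r^j = d \cdot \mu_j / \alpha_j'(\delta,\delta_0,\lambda^c)$ (using non-negativity of the cut to rule out $\delta^T r^j < 0$). Similarly, for $k \in V^s_{(\delta,\delta_0)}$ we have $\beta_k'(\delta,\delta_0,\lambda^c) = d / \delta^T(v^k - v_{\lambda^c})$, so $\delta^T(v^k - v_{\lambda^c}) = d / \beta_k'(\delta,\delta_0,\lambda^c)$, giving $\epsilon_k \, \delta^T(v^k - v_{\lambda^c}) = d \cdot \epsilon_k / \beta_k'(\delta,\delta_0,\lambda^c)$; and for $k \in V^c_{(\delta,\delta_0)}$ (where $\epsilon_k$ may be nonzero since $\epsilon$ ranges over all of $\Lambda$, not just $\Lambda^s$) we have $\beta_k' = +\infty$ so $\epsilon_k / \beta_k' = 0$. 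Wait — I should double-check the index set in the sum defining $P^l(\lambda^c)$: the displayed formula has $\sum_{k \in V^s_{(\delta,\delta_0)}} \epsilon_k(v^k - v_{\lambda^c})$ but $\epsilon \in \Lambda$ ranges over all of $V$; the lemma's right-hand side sums $\epsilon_k/\beta_k'$ over all $k \in V$, which is consistent precisely because $\beta_k' = +\infty$ for $k \notin V^s_{(\delta,\delta_0)}$ kills those terms. So substituting everything,
\begin{alignat}{2}
\delta^T x - \delta_0 & = -d + d \sum_{k \in V} \frac{\epsilon_k}{\beta_k'(\delta,\delta_0,\lambda^c)} + d \sum_{j \in E} \frac{\mu_j}{\alpha_j'(\delta,\delta_0,\lambda^c)}. \notag
\end{alignat}

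Finally, since $d > 0$, dividing through shows $\delta^T x \geq \delta_0$ holds if and only if $\sum_{k \in V} \epsilon_k/\beta_k'(\delta,\delta_0,\lambda^c) + \sum_{j \in E} \mu_j/\alpha_j'(\delta,\delta_0,\lambda^c) \geq 1$, which is exactly the claimed equivalence; the two sets therefore coincide. The main obstacle I anticipate is purely bookkeeping: being careful with the $+\infty$ conventions (reciprocals interpreted as $0$) and making sure the index mismatch between the sum over $V^s_{(\delta,\delta_0)}$ in the definition of $P^l(\lambda^c)$ and the sum over all of $V$ in the statement is handled correctly — but since $\epsilon \in \Lambda$ and $\beta_k' = +\infty$ outside $V^s_{(\delta,\delta_0)}$, there is no real gap, only something to state explicitly. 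A secondary point worth spelling out is the sign $d > 0$, which is where the hypothesis $\lambda^c \in \Lambda^c_{(\delta,\delta_0)}$ and non-negativity of the cut both get used; everything else is a linear computation. Since this is Lemma 2 of \cite{andcorli} I would also simply cite that source and present the above as the proof adapted to the present notation.
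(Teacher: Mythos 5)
Your proposal is correct and follows essentially the same route as the paper's own proof, which is likewise a direct chain of equivalences obtained by substituting the representation of $x$ from $P^l(\lambda^c)$, applying $\delta^T$, and rewriting via the definitions of $\alpha_j'$ and $\beta_k'$. Your explicit handling of the $+\infty$ conventions, the sign of $d=\delta_0-\delta^T v_{\lambda^c}$, and the index mismatch between $V^s_{(\delta,\delta_0)}$ and $V$ only makes explicit what the paper leaves implicit.
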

\begin{proof}
We have $(\bar{x},\bar{\epsilon},\bar{\mu}) \in P^l(\lambda^c)$ and 
$\delta^T \bar{x} \ge \delta_0 \iff$ 
$\bar{x}=v_{\lambda^c}+\sum_{k \in V^s_{(\delta,\delta_0)}}
\bar{\epsilon}_k (v^k-v_{\lambda^c})+r_{\bar{\mu}}$, where
$\bar{\epsilon},\bar{\mu} \ge 0$, 
$\sum_{k \in V^s_{(\delta,\delta_0)}} \bar{\epsilon}_k \leq 1$ 
and $\delta^T \bar{x} \ge \delta_0 \iff$
$\bar{x}=v_{\lambda^c}+\sum_{k \in V^s_{(\delta,\delta_0)}}
\bar{\epsilon}_k (v^k-v_{\lambda^c})+r_{\bar{\mu}}$,
$\bar{\epsilon},\bar{\mu} \ge 0$, $\sum_{k \in V^s_{(\delta,\delta_0)}} \bar{\epsilon}_k \leq 1$ and 
$\sum_{k \in V^s_{(\delta,\delta_0)}} \bar{\epsilon}_k \delta^T (v^k-v_{\lambda^c})+\sum_{j \in E} \bar{\mu}_j (\delta^T r^j) \ge $
$(\delta_0 - \delta^T v_{\lambda^c})\iff$
$(\bar{x},\bar{\epsilon},\bar{\mu}) \in P^l(\lambda^c)$ and 
$\sum_{k \in V} \bar{\epsilon}_k / {\beta}_k'(\delta,\delta_0,\lambda^c)+$
${\sum}_{j \in E} \bar{\mu}_j / {\alpha}_j'(\delta,\delta_0,\lambda^c)
\ge 1$.
\end{proof}

Based on the above result, we can now 
characterize the vertices of 
$\{ x \in P : \delta^T x \geq \delta_0 \}$. Specifically we show that 
every vertex of $\{ x \in P : \delta^T x \geq \delta_0 \}$ is either 
a vertex of $P$ that satisfies $\delta^T x \geq \delta_0$, or an 
intersection point obtained from a vertex of $P$ that violates 
$\delta^T x \geq \delta_0$.
\begin{lemma}\label{vert_prime_char}
Let $\delta^T x \geq \delta_0$ be a non-negative cut for $P$. 
The vertices of $\{ x \in P : \delta^T x \geq \delta_0 \}$ are:
\begin{enumerate}
\item[(i)] vertices $v^k$ of $P$ with $k \in V^s_{(\delta,\delta_0)}$,
\item[(ii)] intersection points $v^i+\beta_k'(\delta,{\delta}_0,e^i) (v^k-v^i)$, 
where $i \in V^c_{(\delta,\delta_0)}$ and $k \in V^s_{(\delta,\delta_0)}$, and 
\item[(iii)] intersection points $v^i+\alpha_j'(\delta,{\delta}_0,e^i) r^j$, 
where $i \in V^c_{(\delta,\delta_0)}$ and $j \in E$ satisfies 
$\delta^T r^j > 0$.
\end{enumerate}
\end{lemma}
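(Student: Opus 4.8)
The plan is to characterize the vertices of $P' := \{ x \in P : \delta^T x \geq \delta_0 \}$ by combining the higher-dimensional description in Lemma \ref{fix_lambda} with the convexity reduction in Lemma \ref{beta_prime_properties}. First I would observe that $P'$ is the union over all $\lambda^c \in \Lambda^c_{(\delta,\delta_0)}$ of the projections onto $x$-space of the sets $\{ (x,\epsilon,\mu) \in P^l(\lambda^c) : \delta^T x \geq \delta_0 \}$; equivalently, a point $\bar x$ is a vertex of $P'$ only if it is an extreme point of $P'$, hence cannot be written as a proper convex combination of other points of $P'$. So it suffices to show that every $\bar x \in P'$ is a convex combination of points of the three listed types together with the rays $r^j$ of $P$ (and that each listed point actually lies in $P'$, which is immediate since intersection points lie on the hyperplane $\delta^T x = \delta_0$ and inside $P$). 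The reverse direction — that the three types are indeed the \emph{only} candidates for vertices — then follows because a point that decomposes nontrivially cannot be a vertex.

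Concretely, I would take $\bar x \in P'$, write $\bar x = v_{\lambda^s} \cdot(1-t) + v_{\lambda^c}\cdot t + r_\mu$ in the inner representation, and split into cases. If $t = 0$, then $\bar x$ is a point of $P$ lying in $\conv(\{v^k\}_{k \in V^s}) + \cone(\{r^j\})$, so it is a convex combination of type-(i) vertices plus rays, and can only be a vertex of $P'$ if it equals some $v^k$ with $k \in V^s_{(\delta,\delta_0)}$. If $t > 0$, apply Lemma \ref{fix_lambda} with the fixed $\lambda^c$: $\bar x$ lies in $P^l(\lambda^c)$ projected down, subject to $\sum_j \mu_j/\alpha_j'(\delta,\delta_0,\lambda^c) + \sum_k \epsilon_k/\beta_k'(\delta,\delta_0,\lambda^c) \geq 1$. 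Here the key structural fact is that $\alpha_j'(\delta,\delta_0,\lambda^c)$ is \emph{linear} in $\lambda^c$ (noted right after \eqref{alpha-prime}), and that by Lemma \ref{beta_prime_properties} the intersection points $v_{\lambda^c} + \beta_k'(\delta,\delta_0,\lambda^c)(v^k - v_{\lambda^c})$ are themselves convex combinations of the unit-vector intersection points $v^i + \beta_k'(\delta,\delta_0,e^i)(v^k - v^i)$ for $i \in V^c_{(\delta,\delta_0)}$. Using these two facts I would push $\lambda^c$ out to a vertex of $\Lambda^c_{(\delta,\delta_0)}$, i.e., to a unit vector $e^i$; this replaces $v_{\lambda^c}$ by an honest vertex $v^i$ of $P$ with $i \in V^c_{(\delta,\delta_0)}$ without losing the property that $\bar x$ is a convex combination of the resulting points. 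At that stage $\bar x$ is expressed as a convex combination of points on the halflines $\{v^i + \alpha r^j\}$ and segments $\{v^i + \beta(v^k - v^i)\}$, evaluated at the points where these hit the hyperplane $\delta^T x = \delta_0$ (for those halflines/segments that actually cross it), together with the vertices $v^i$ themselves (which, being cut off, contribute with weight forcing a further decomposition) and the rays $r^j$ with $\delta^T r^j \leq 0$. Pruning the terms that cannot appear in a minimal convex representation of a boundary point of $P'$ yields exactly types (i)–(iii), plus recession directions which are not vertices.

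The main obstacle I anticipate is the bookkeeping in the case $t > 0$: one must argue carefully that replacing $\lambda^c$ by a unit vector $e^i$ can be done while \emph{keeping $\bar x$ inside} $\{x \in P : \delta^T x \geq \delta_0\}$ and while exhibiting $\bar x$ as a convex combination of the three stated point types — the subtlety is that when we move $\lambda^c$ to $e^i$, the associated intersection points move, and we need the linearity of $\alpha_j'$ and the Lemma \ref{beta_prime_properties} decomposition to commute correctly with taking convex combinations in $x$-space. A clean way to organize this is to first prove the "fixed $\lambda^c$, all of whose weight is on one vertex $v^i$" sub-statement (so $V^c$ is effectively a singleton), where Lemma \ref{fix_lambda} directly gives that the extreme points of $\{(x,\epsilon,\mu) \in P^l(e^i) : \delta^T x \geq \delta_0\}$ project to: $v^i$ itself, the points $v^i + \beta_k'(\delta,\delta_0,e^i)(v^k - v^i)$ for $k \in V^s$, the points $v^i + \alpha_j'(\delta,\delta_0,e^i)r^j$ for $j \in E$ with $\delta^T r^j > 0$, and the vertices $v^k$ ($k \in V^s$); note $v^i$ violates the cut and so is not in $P'$. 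Then assemble the general case by observing $P' = \conv\bigl(\bigcup_{i \in V^c_{(\delta,\delta_0)}} (\text{projection of } \{(x,\epsilon,\mu)\in P^l(e^i): \delta^T x \geq \delta_0\}) \cup \{v^k\}_{k\in V^s}\bigr) + \cone(\{r^j\}_{j\in E})$, using Lemma \ref{beta_prime_properties} to reduce arbitrary $\lambda^c$ to the unit vectors, and reading off that the vertices of this convex hull are precisely (i)–(iii).
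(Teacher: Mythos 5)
Your proposal is correct and follows essentially the same route as the paper's proof: pass to the lifted set $P^l(\lambda^c)$, use Lemma \ref{fix_lambda} to see that a vertex not in $V^s_{(\delta,\delta_0)}$ must be an intersection point for some $\lambda^c$, and then reduce $\lambda^c$ to a unit vector via the linearity of $\alpha_j'$ in $\lambda^c$ and Lemma \ref{beta_prime_properties}. Your reorganization (handling $\lambda^c=e^i$ first and then taking the convex hull of the union) is just the paper's two-step argument read in the opposite order.
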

\begin{proof}
Let $\bar{x} \in \{ x \in P : \delta^T x \geq \delta_0 \}$ be a vertex of 
$\{ x \in P : \delta^T x \geq \delta_0 \}$. 
Also let $\lambda^c \in \Lambda^c_{(\delta,\delta_0)}$ 
and $(\bar{\epsilon},\bar{\mu})$ 
be such that $(\bar{x},\bar{\epsilon},\bar{\mu}) \in P^l(\lambda^c)$ and 
$\bar{x}=v_{\lambda^c}+\sum_{k \in V^s_{(\delta,\delta_0)}} \bar{\epsilon_k} (v^k-v_{\lambda^c})+\sum_{j \in E} \bar{\mu}_j r^j$.
Since $P(\lambda^c) \subseteq P$, we must have that 
$\bar{x}$ is a vertex of $\{ x \in P(\lambda^c) : \delta^T x \geq
\delta_0 \}$. We first show that 
$\bar{x}$ must be either of the form: (a) a vertex $v^k$ of $P$ with 
$k \in V^s_{(\delta,\delta_0)}$, (b) an intersection point 
$v_{\lambda^c}+\beta_k'(\delta,{\delta}_0,\lambda^c) (v^k-v_{\lambda^c})$ with 
$k \in V^s_{(\delta,\delta_0)}$, or 
(c) an intersection point 
$v_{\lambda^c}+\alpha_j'(\delta,{\delta}_0,\lambda^c) r^j$ 
with $j \in E$ satisfying $\delta^T r^j > 0$. 

Clearly, if $\bar{x}$ is a vertex of 
$\{ x \in P(\lambda^c) : \delta^T x \geq \delta_0 \}$ which is 
\emph{not} a vertex of $P$, we must have 
that $\bar{x}$ satisfies $\delta^T x \geq \delta_0$ 
with equality. From $\delta^T \bar{x} = \delta_0$, it follows from Lemma 
\ref{fix_lambda} that 
$(\bar{x},\bar{\epsilon},\bar{\mu}) \in P^l(\lambda^c)$ and 
$$\sum_{j \in E}
\frac{\bar{\mu}_j}{{\alpha}_j'(\delta,{\delta}_0,\lambda^c)}+\sum_{k
\in V^s_{(\delta,\delta_0)}}
\frac{\bar{\epsilon}_k}{{\beta}_k'(\delta,{\delta}_0,\lambda^c)} =1.$$
We can now write 
$$\bar{x}=\sum_{j \in E \setminus E^0} \eta_j (v_{\lambda^c} + \alpha_j'(\delta,{\delta}_0,\lambda^c) r^j)+\sum_{k \in V^s_{(\delta,\delta_0)}} \gamma_k (v_{\lambda^c}+\beta_k'(\delta,{\delta}_0,\lambda^c)(v^k - v_{\lambda^c}))+\sum_{j \in E^0} \bar{\mu}_j r^j,$$ 
where $E^0:=\{ j \in E : \delta^T r^j = 0 \}$, 
$\eta_j:=\frac{\bar{\mu}_j}{\alpha_j'(\delta,{\delta}_0,\lambda^c)}$ for $j \in E \setminus E^0$, 
$\gamma_k:=\frac{\bar{\epsilon}_k}{{\beta}_k'(\delta,{\delta}_0,\lambda^c)}$ for 
$k \in V^s_{(\delta,\delta_0)}$ and 
$\sum_{j \in E \setminus E^0} \eta_j+\sum_{k \in V^s_{(\delta,\delta_0)}} \gamma_k = 1$. 
Hence $\bar{x}$ must be of one of the forms (a)-(c) above.

We now show (i)-(iii). If $\bar{x}$ is a vertex $v^k$ of $P$, where 
$k \in V^s_{(\delta,\delta_0)}$, we are done, so we may assume 
that either 
$\bar{x}=v_{\lambda^c}+\beta_k'(\delta,{\delta}_0,\lambda^c) (v^k-v_{\lambda^c})$, 
where $k \in V^s_{(\delta,\delta_0)}$, or 
$\bar{x}=v_{\lambda^c}+\alpha_j'(\delta,{\delta}_0,\lambda^c) r^j$, 
where $j \in E$ satisfies 
$\alpha_j'(\delta,{\delta}_0,\lambda^c) < +\infty$. If $\bar{x}$ is of the 
form $\bar{x}=v_{\lambda^c}+\alpha_j'(\delta,{\delta}_0,\lambda^c) r^j$, 
we may write 
$\bar{x}=v_{\lambda^c}+\alpha_j'(\delta,{\delta}_0,\lambda^c) r^j=$
$v_{\lambda^c}+\frac{\delta_0-\delta^T v_{\lambda^c}}{\delta^T r^j} r^j=$
$\sum_{i \in V^c_{(\delta,\delta_0)}} \lambda_i (v^i+\frac{\delta_0-\delta^T v^i}{\delta^T r^j} r^j)$. 
Since $\alpha_j'(\delta,{\delta}_0,e^i)=\frac{\delta_0-\delta^T v^i}{\delta^T r^j}$ 
and $\bar{x}$ is a vertex of $\{ x \in P : \delta^T x \geq \delta_0 \}$, 
this implies $\lambda_{\bar{i}}=1$ for some $\bar{i} \in V^c_{(\delta,\delta_0)}$. 
Finally, 
if $\bar{x}$ is of the form 
$\bar{x}=v_{\lambda^c}+\beta_k'(\delta,{\delta}_0,\lambda^c) (v^k-v_{\lambda^c})$, 
then Lemma \ref{beta_prime_properties} shows that $\bar{x}$ is of the form 
$v^{\bar{i}}+\beta_k'(\delta,{\delta}_0,e^{\bar{i}}) (v^k-v^{\bar{i}})$ for 
some $\bar{i} \in V^c_{(\delta,\delta_0)}$ and 
$k \in V^s_{(\delta,\delta_0)}$.
\end{proof}

Lemma \ref{vert_prime_char} motivates the following notation 
for those intersection points 
$v_{\lambda^c}+\alpha_j'(\delta,{\delta}_0,\lambda^c) r^j$ and 
$v_{\lambda^c}+\beta_k'(\delta,{\delta}_0,\lambda^c) (v^k-v_{\lambda^c})$, 
where $\lambda^c$ is a unit vector. Given 
$i \in V^c_{(\delta,\delta_0)}$ and $j \in E$, define 
$\alpha'_{i,j}(\delta,{\delta}_0):=\alpha_j'(\delta,{\delta}_0,e^i)$, and 
given $i \in V^c_{(\delta,\delta_0)}$ and $k \in V^s_{(\delta,\delta_0)}$, 
define 
$\beta'_{i,k}(\delta,{\delta}_0):=\beta_k'(\delta,{\delta}_0,e^i)$.

\subsection{Dominance and equivalence between cuts}

Given two non-negative cuts $(\delta^1)^T x \geq \delta^1_0$ 
and $(\delta^2)^T x \geq \delta^2_0$ for $P$, it is not clear how to 
compare them in the space of the $x$ variables. 
By including the multipliers on the 
extreme rays and on the satisfied vertices in the description, such a comparison 
is possible. We assume all non-negative cuts considered in 
this section all cut off exactly 
the same set of vertices $V^c \subseteq V$ of $P$. 
Our notion of dominance is the following.
\begin{definition}\label{dom_def}
Let 
$({\delta}^1)^T x \ge {\delta}^1_0$ and 
$({\delta}^2)^T x \ge {\delta}^2_0$ be two 
non-negative cuts for $P$ that cut off the 
same set om vertices 
$V^c=V^c_{(\delta^1,\delta_0^1)}=V^c_{(\delta^2,\delta_0^2)}$ 
of $P$. 
\begin{enumerate}
\item[(i)] The cutting plane $({\delta}^1)^T x \ge {\delta}^1_0$ 
\emph{dominates} $({\delta}^2)^T x \ge {\delta}^2_0$ on $P$ iff 
$\{ x \in P : ({\delta}^1)^T x \ge {\delta}^1_0 \} \subseteq$
$\{ x \in P : ({\delta}^2)^T x \ge {\delta}^2_0 \}$. 
\item[(ii)] If 
$({\delta}^1)^T x \ge {\delta}^1_0$ dominates 
$({\delta}^2)^T x \ge {\delta}^2_0$ on $P$, and 
$({\delta}^2)^T x \ge {\delta}^2_0$ dominates 
$({\delta}^1)^T x \ge {\delta}^1_0$ on $P$, we say 
$({\delta}^1)^T x \ge {\delta}^1_0$ and 
$({\delta}^2)^T x \ge {\delta}^2_0$ are \emph{equivalent} 
on $P$.
\end{enumerate}
\end{definition}

We now show that an equivalent definition of 
dominance between a pair of non-negative cuts 
is possible, which is based on intersection points. 
\begin{lemma}\label{dominance_lemma}
Let 
$({\delta}^1)^T x \ge {\delta}^1_0$ and 
$({\delta}^2)^T x \ge {\delta}^2_0$ be 
non-negative cuts for $P$ 
satisfying 
$V^c=V^c_{(\delta^1,\delta_0^1)}=V^c_{(\delta^2,\delta_0^2)}$. 
Then $({\delta}^1)^T x \ge {\delta}^1_0$ dominates 
$({\delta}^2)^T x \ge {\delta}^2_0$ on $P$ if and only if
\begin{enumerate}
\item[(i)] The inequality 
$\frac{1}{\alpha_{i,j}'(\delta^1,\delta_0^1)} \leq$
$\frac{1}{\alpha_{i,j}'(\delta^2,\delta_0^2)}$ holds for $j \in E$ and 
$i \in V^c$.\\
(The halfline $\{ v^i + \alpha r^j : \alpha \geq 0 \}$ is intersected later by 
$({\delta}^1)^T x \ge {\delta}^1_0$ than $({\delta}^2)^T x \ge
{\delta}^2_0$) 
\item[(ii)] The inequality $\frac{1}{\beta_{i,k}'(\delta^1,\delta_0^1)} \leq$
$\frac{1}{\beta_{i,k}'(\delta^2,\delta_0^2)}$ holds for $k \in V \setminus V^c$ and 
$i \in V^c$.\\
\mbox{(The halfline $\{ v^i + \beta (v^k-v^i) : \beta \geq 0 \}$ is intersected later by 
$({\delta}^1)^T x \ge {\delta}^1_0$ than $({\delta}^2)^T x \ge
{\delta}^2_0$)} 
\end{enumerate}
\end{lemma}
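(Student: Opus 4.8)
The plan is to reduce both implications to statements about a handful of explicit test points, using the elementary fact that for a line-free polyhedron $A=\conv(W)+\cone(R)$ and any polyhedron $B$ one has $A\subseteq B$ as soon as $W\subseteq B$ and $\cone(R)\subseteq 0^{+}(B)$. Write $P^{s}:=\{x\in P:(\delta^{s})^{T}x\ge\delta^{s}_{0}\}$ for $s\in\{1,2\}$ (we will apply the reduction with $A=P^{1}$, which is line-free as a subset of $P$). Since both cuts are non-negative, every $r^{j}$ satisfies $(\delta^{s})^{T}r^{j}\ge 0$, so $0^{+}(P^{s})=\cone(\{r^{j}\}_{j\in E})$; hence the recession cones are automatically nested and only the vertices of $P^{1}$ need to be examined. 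I use repeatedly that, since $V^{c}_{(\delta^{1},\delta^{1}_{0})}=V^{c}_{(\delta^{2},\delta^{2}_{0})}=V^{c}$, for $i\in V^{c}$ we have $\delta^{s}_{0}-(\delta^{s})^{T}v^{i}>0$ while for $k\in V\setminus V^{c}$ we have $(\delta^{s})^{T}v^{k}\ge\delta^{s}_{0}$, for both $s$.

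\emph{Necessity of (i)--(ii).} Suppose $(\delta^{1})^{T}x\ge\delta^{1}_{0}$ dominates $(\delta^{2})^{T}x\ge\delta^{2}_{0}$ on $P$, and fix $i\in V^{c}$. For (i), fix $j\in E$; if $(\delta^{1})^{T}r^{j}=0$ then $1/\alpha_{i,j}'(\delta^{1},\delta^{1}_{0})=0$ and there is nothing to prove, so assume $(\delta^{1})^{T}r^{j}>0$. The point $\bar x:=v^{i}+\alpha_{i,j}'(\delta^{1},\delta^{1}_{0})r^{j}$ lies in $P$ and satisfies $(\delta^{1})^{T}\bar x=\delta^{1}_{0}$, hence $\bar x\in P^{1}\subseteq P^{2}$, i.e. $(\delta^{2})^{T}v^{i}+\alpha_{i,j}'(\delta^{1},\delta^{1}_{0})(\delta^{2})^{T}r^{j}\ge\delta^{2}_{0}$. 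As the first term is $<\delta^{2}_{0}$ and $\alpha_{i,j}'(\delta^{1},\delta^{1}_{0})>0$, this forces $(\delta^{2})^{T}r^{j}>0$, and then the inequality rearranges to $\alpha_{i,j}'(\delta^{1},\delta^{1}_{0})\ge\alpha_{i,j}'(\delta^{2},\delta^{2}_{0})>0$, which is (i) after taking reciprocals. For (ii), fix $k\in V\setminus V^{c}$; the point $\bar x:=v^{i}+\beta_{i,k}'(\delta^{1},\delta^{1}_{0})(v^{k}-v^{i})$ lies in $P$ and has $(\delta^{1})^{T}\bar x=\delta^{1}_{0}$, so $\bar x\in P^{2}$; using $(\delta^{2})^{T}(v^{k}-v^{i})>0$ this rearranges to $\beta_{i,k}'(\delta^{1},\delta^{1}_{0})\ge\beta_{i,k}'(\delta^{2},\delta^{2}_{0})>0$, which is (ii).

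\emph{Sufficiency of (i)--(ii).} Assume (i) and (ii). By Lemma~\ref{vert_prime_char} the vertices of $P^{1}$ are of three types, and by the reduction above it suffices to show each of them lies in $P^{2}$. A vertex $v^{k}$ with $k\in V\setminus V^{c}$ satisfies $(\delta^{2})^{T}v^{k}\ge\delta^{2}_{0}$, so it lies in $P^{2}$. For a vertex $\bar x=v^{i}+\alpha_{i,j}'(\delta^{1},\delta^{1}_{0})r^{j}$ with $i\in V^{c}$ and $(\delta^{1})^{T}r^{j}>0$ we have $1/\alpha_{i,j}'(\delta^{1},\delta^{1}_{0})>0$, so (i) gives $1/\alpha_{i,j}'(\delta^{2},\delta^{2}_{0})>0$, whence $(\delta^{2})^{T}r^{j}>0$ and $\alpha_{i,j}'(\delta^{1},\delta^{1}_{0})\ge\alpha_{i,j}'(\delta^{2},\delta^{2}_{0})$; therefore $(\delta^{2})^{T}\bar x=(\delta^{2})^{T}v^{i}+\alpha_{i,j}'(\delta^{1},\delta^{1}_{0})(\delta^{2})^{T}r^{j}\ge(\delta^{2})^{T}v^{i}+\alpha_{i,j}'(\delta^{2},\delta^{2}_{0})(\delta^{2})^{T}r^{j}=\delta^{2}_{0}$. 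For a vertex $\bar x=v^{i}+\beta_{i,k}'(\delta^{1},\delta^{1}_{0})(v^{k}-v^{i})$ with $i\in V^{c}$ and $k\in V\setminus V^{c}$, the scalar $\beta_{i,k}'(\delta^{2},\delta^{2}_{0})$ is finite (since $k\notin V^{c}$), (ii) gives $\beta_{i,k}'(\delta^{1},\delta^{1}_{0})\ge\beta_{i,k}'(\delta^{2},\delta^{2}_{0})$, and using $(\delta^{2})^{T}(v^{k}-v^{i})>0$ one obtains $(\delta^{2})^{T}\bar x\ge\delta^{2}_{0}$ exactly as before. Since also $\cone(\{r^{j}\}_{j\in E})=0^{+}(P^{1})\subseteq 0^{+}(P^{2})$, we conclude $P^{1}\subseteq P^{2}$.

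\emph{Where the work is.} Neither implication is computationally heavy once the right objects are in place; the substance is (a) recognizing that domination should be tested against the intersection points of Lemma~\ref{vert_prime_char} rather than against arbitrary points of $P$, and (b) the case distinction for rays $r^{j}$ with $(\delta^{s})^{T}r^{j}=0$, where one must notice that condition (i) already rules out the degenerate possibility that $P^{1}$ acquires a vertex on a ray that $P^{2}$ does not truncate at all. I would deliberately avoid the alternative of fixing a general $\lambda^{c}\in\Lambda^{c}_{(\delta,\delta_{0})}$ and appealing to Lemma~\ref{fix_lambda}: although $\alpha_{j}'(\delta,\delta_{0},\lambda^{c})$ is linear in $\lambda^{c}$, the quantities $\beta_{k}'(\delta,\delta_{0},\lambda^{c})$ are ratios of linear forms in $\lambda^{c}$, and the desired reciprocal inequality for them does not follow from (ii) by a one-line convexity argument, so routing the proof through the vertex description is the cleaner path.
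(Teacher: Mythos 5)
Your proof is correct and follows essentially the same route as the paper: necessity by substituting the intersection points $v^i+\alpha'_{i,j}(\delta^1,\delta^1_0)r^j$ and $v^i+\beta'_{i,k}(\delta^1,\delta^1_0)(v^k-v^i)$ into the containment $Q^1\subseteq Q^2$, and sufficiency by checking, via the vertex classification of Lemma \ref{vert_prime_char}, that every vertex of $Q^1$ lies in $Q^2$ and that the two sets share the recession cone $\cone(\{r^j\}_{j\in E})$. Your extra care with the degenerate case $(\delta^1)^T r^j>0$, $(\delta^2)^T r^j=0$ is a detail the paper passes over silently, but it does not change the argument.
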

\begin{proof}
Define 
$Q^1:=\{ x \in P : ({\delta}^1)^T x \ge {\delta}^1_0 \}$ and 
$Q^2:=\{ x \in P : ({\delta}^2)^T x \ge {\delta}^2_0 \}$. 
First suppose $({\delta}^1)^T x \ge {\delta}^1_0$ 
dominates $({\delta}^2)^T x \ge {\delta}^2_0$ on $P$, {\it i.e.}, 
suppose 
$Q^1 \subseteq Q^2$. We will verify 
that (i) and (ii) are satisfied. First let $i \in V^c$ 
and $j \in E$ be arbitrary. If $\alpha'_{i,j}(\delta^1,\delta^1_0)=+\infty$, 
then trivially $0=\frac{1}{\alpha_{i,j}'(\delta^1,\delta_0^1)} \leq$ 
$\frac{1}{\alpha_{i,j}'(\delta^2,\delta_0^2)}$. If 
$\alpha'_{i,j}(\delta^1,\delta^1_0) < +\infty$, then the intersection 
point $\bar{y}:=v^i+\alpha'_{i,j}(\delta^1,\delta^1_0) r^j$ 
satisfies $({\delta}^1)^T \bar{y} = {\delta}^1_0$, and therefore 
$\bar{y} \in Q^1 \subseteq Q^2$. Hence we have 
$({\delta}^2)^T \bar{y} =$
$({\delta}^2)^T v^i + \alpha'_{i,j}(\delta^1,\delta^1_0) ({\delta}^2)^T r^j \geq {\delta}^2_0$, which implies 
$\frac{1}{\alpha_{i,j}'(\delta^1,\delta_0^1)} \leq$ 
$\frac{1}{\alpha_{i,j}'(\delta^2,\delta_0^2)}$.

Now let $i \in V^c$ and $k \in V \setminus V^c$ be arbitrary. 
The 
point $\bar{z}:=v^i+\beta'_{i,k}(\delta^1,\delta^1_0) (v^k-v^i)$ 
satisfies $({\delta}^1)^T \bar{z} = {\delta}^1_0$. Hence 
$\bar{z} \in Q^1 \subseteq Q^2$, and therefore 
$({\delta}^2)^T \bar{z} =$
$({\delta}^2)^T v^i + \beta'_{i,k}(\delta^1,\delta^1_0) ({\delta}^2)^T (v^k-v^i) \geq {\delta}^2_0$, which implies 
$\frac{1}{\beta_{i,k}'(\delta^1,\delta_0^1)} \leq$ 
$\frac{1}{\beta_{i,k}'(\delta^2,\delta_0^2)}$.

Conversely suppose (i) and (ii) are satisfied. Since 
$V^c=V^c_{(\delta^1,\delta_0^1)}=V^c_{(\delta^2,\delta_0^2)}$, 
every vertex $v^k$ of $P$ with $k \in V \setminus V^c$ is a vertex 
of both $Q^1$ and $Q^2$. Furthermore, (i) ensures that every vertex 
of $Q^1$ of the form $v^i+\alpha_{i,j}'(\delta^1,\delta_0^1) r^j$ 
belongs to $Q^2$, where $i \in V^c$, $j \in E$ and 
$\alpha_{i,j}'(\delta^1,\delta_0^1) < +\infty$. Finally, (ii) 
ensures that every vertex 
of $Q^1$ of the form $v^i+\beta_{i,k}'(\delta^1,\delta_0^1) (v^k-v^i)$ 
belongs to $Q^2$, where $i \in V^c$ and $k \in V \setminus V^c$. We have 
therefore shown that every vertex of $Q^1$ belongs to $Q^2$. 
Since 
the sets $Q^1$ and 
$Q^2$ have the same 
extreme rays $\{ r^j {\}}_{j \in E}$, we therefore have 
$Q^1 \subseteq Q^2$.
\end{proof}

Let $V^c \subseteq V$ be arbitrary, and let 
$\{ ({\delta}^1)^T x \ge {\delta}^1_0 {\}}_{l=1}^m$ 
a finite set of non-negative cuts. We assume 
$V^c(\delta^l,\delta^l_0)=V^c$ for all 
$l \in \{ 1,2,\ldots,m \}$. 
We now derive a dominance result for the 
polyhedron $Q(V^c)$ 
$$Q(V^c):=\{ x \in P : ({\delta}^l)^T x \ge {\delta}^l_0 
\textrm{ for } l=1,2,\ldots,m \}.$$


\begin{lemma} \label{lem3} 
(This lemma is a generalization of Lemma 3 in \cite{andcorli})\\
Assume $Q(V^c) \ne \emptyset$. 
Let $\delta^T x \ge \delta_0$ be a non-negative cut for $P$ 
satisfying $V^c(\delta,\delta_0)=V^c$. 
Then $\delta^T x \ge \delta_0$ is valid for $Q(V^c)$ iff 
there exists a non-negative 
cut $(\delta')^T x \ge \delta_0'$ for $P$ 
that satisfies
\begin{enumerate}
\item[(i)] $(\delta')^T x \ge \delta_0'$ is a 
convex combination of the inequalities 
$(\delta^l)^T x \ge \delta^l_0$ for $l=1,2,\ldots,m$, 
\item[(ii)] $(\delta')^T x \ge \delta_0'$ 
dominates $\delta^T x \ge \delta_0$ on $P$.
\end{enumerate}
\end{lemma}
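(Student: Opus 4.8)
The plan is to prove the two implications separately. The \emph{if} direction is immediate: each $(\delta^l)^T x \ge \delta^l_0$ is valid for $Q(V^c)$ by the very definition of $Q(V^c)$, so the convex combination $(\delta')^T x \ge \delta'_0$ of them is valid for $Q(V^c)$, whence $Q(V^c) \subseteq \{x \in P : (\delta')^T x \ge \delta'_0\}$; by the dominance hypothesis the latter set lies in $\{x \in P : \delta^T x \ge \delta_0\}$, and therefore $\delta^T x \ge \delta_0$ is valid for $Q(V^c)$.

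All the work is in the \emph{only if} direction, and I would do it by linear programming duality, staying entirely within the inner description of $P$ (so that, as the paper advertises, no outer description of $P$ is used). Write a generic point of $P$ as $x = v_\lambda + r_\mu$ with $\lambda \in \Lambda$ and $\mu \ge 0$. Then ``$\delta^T x \ge \delta_0$ is valid for $Q(V^c)$'' says precisely that the linear program of minimizing $\sum_{i \in V}\lambda_i\,\delta^T v^i + \sum_{j \in E}\mu_j\,\delta^T r^j$ over $\lambda \ge 0$, $\mu \ge 0$, $\sum_{i \in V}\lambda_i = 1$, and $\sum_{i \in V}\lambda_i\,(\delta^l)^T v^i + \sum_{j \in E}\mu_j\,(\delta^l)^T r^j \ge \delta^l_0$ for $l = 1,\dots,m$, has optimal value at least $\delta_0$. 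This program is feasible because $Q(V^c) \ne \emptyset$, and its value is finite, so strong LP duality applies: there are multipliers $\theta_l \ge 0$ for the cut constraints and a free multiplier $\sigma \in \R$ for the convexity constraint with
\[ \big(\delta - \textstyle\sum_{l}\theta_l \delta^l\big)^{T} v^i \ \ge\ \sigma \ \ (i \in V), \qquad \big(\delta - \textstyle\sum_{l}\theta_l \delta^l\big)^{T} r^j \ \ge\ 0 \ \ (j \in E), \qquad \sigma + \textstyle\sum_{l}\theta_l \delta^l_0 \ \ge\ \delta_0 . \]

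It remains to convert the non-negative combination $\sum_l \theta_l \delta^l$ into a \emph{convex} one and to check that it dominates $\delta^T x \ge \delta_0$. Put $\Theta := \sum_l \theta_l$. If $\Theta = 0$ then all $\theta_l = 0$, and the first and third displayed inequalities force $\delta^T v^i \ge \sigma \ge \delta_0$ for every $i \in V$; but $V^c \ne \emptyset$ since $\delta^T x \ge \delta_0$ is a cut, and any $i_0 \in V^c$ has $\delta^T v^{i_0} < \delta_0$ — a contradiction. Hence $\Theta > 0$, and I set $\delta' := \Theta^{-1}\sum_l \theta_l \delta^l$ and $\delta'_0 := \Theta^{-1}\sum_l \theta_l \delta^l_0$; this is a convex combination of the given cuts, establishing (i). It is a non-negative cut, because $(\delta')^T r^j = \Theta^{-1}\sum_l \theta_l (\delta^l)^T r^j \ge 0$, and $V^c(\delta',\delta'_0) = V^c$: for $i \in V^c$ we have $(\delta^l)^T v^i < \delta^l_0$ for every $l$ and $\Theta>0$, so $(\delta')^T v^i < \delta'_0$, while for $k \notin V^c$ each $(\delta^l)^T v^k \ge \delta^l_0$ gives $(\delta')^T v^k \ge \delta'_0$. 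Finally, the first two displayed inequalities say $(\delta - \Theta\delta')^T x \ge \sigma$ for all $x \in P$ (evaluate on $x = v_\lambda + r_\mu$), so any $x \in P$ with $(\delta')^T x \ge \delta'_0$ satisfies $\delta^T x = \Theta(\delta')^T x + (\delta - \Theta\delta')^T x \ge \Theta\delta'_0 + \sigma \ge \delta_0$; thus $\{x \in P : (\delta')^T x \ge \delta'_0\} \subseteq \{x \in P : \delta^T x \ge \delta_0\}$, which together with $V^c(\delta',\delta'_0) = V^c = V^c(\delta,\delta_0)$ is exactly dominance in the sense of Definition \ref{dom_def}, proving (ii).

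The two genuinely delicate points are carrying out the duality step against the inner representation of $P$ rather than an outer one, and the normalization: the dual certificate only yields a non-negative combination, and one must rule out $\Theta = 0$, which is precisely where the hypothesis that $\delta^T x \ge \delta_0$ cuts off a vertex of $P$ is used. If one preferred, dominance in (ii) could instead be verified through the intersection-point criterion of Lemma \ref{dominance_lemma}, but the direct set inclusion above seems shorter.
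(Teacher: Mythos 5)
Your proof is correct, and the two arguments run on the same engine: LP duality applied to an inner (vertex/ray) representation of $Q(V^c)$, followed by the identical normalization step in which the case $\sum_l \theta_l = 0$ is excluded using the hypothesis that $\delta^T x \ge \delta_0$ cuts off at least one vertex. Where you genuinely diverge is in the formulation of the primal and, consequently, in how dominance is certified. You parametrize $P$ leanly as $v_\lambda + r_\mu$ with $\lambda \in \Lambda$, $\mu \ge 0$, and then verify condition (ii) by establishing the set inclusion of Definition \ref{dom_def} directly: the aggregated dual inequality $(\delta - \Theta\delta')^T x \ge \sigma$, valid on all of $P$, combined with $\sigma + \Theta\delta'_0 \ge \delta_0$, immediately gives $\{x \in P : (\delta')^T x \ge \delta'_0\} \subseteq \{x \in P : \delta^T x \ge \delta_0\}$. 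The paper instead uses a redundant parametrization with variables $\lambda_i$ ($i \in V^c$), $\epsilon^i_k$, $\mu^i_j$ and the coupling constraints $\sum_k \epsilon^i_k \le \lambda_i$, precisely so that the dual constraints come out indexed by pairs $(i,k)$ and $(i,j)$; it then checks dominance via the intersection-point criterion of Lemma \ref{dominance_lemma}, bounding $\alpha'_{i,j}$ and $\beta'_{i,k}$ one pair at a time. Your route is shorter and cleaner for this lemma; the paper's pairwise form is the one that gets reused in the subsequent induction, but since Lemma \ref{dominance_lemma} states that the set inclusion and the intersection-point conditions are equivalent, nothing is lost by your choice. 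You also handle the two points that matter for rigor: that $V^c(\delta',\delta'_0)=V^c$ (without which Definition \ref{dom_def} does not even apply) and that the dual weights can be rescaled to a convex combination.
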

\begin{proof}
Consider the linear program (LP) 
given by $\min\{ {\delta}^T x : x \in Q(V^c) \}$. 
The assumption $Q(V^c) \ne \emptyset$ and 
the validity of $\delta^T x \ge \delta_0$ 
for $Q(V^c)$ implies that 
(LP) is feasible and bounded. We can formulate (LP) as follows.
\begin{alignat}{3}
\min & \,\,\, \delta^T x & \notag\\
x & = \sum_{i \in \textrm{V}^c} \lambda_i v^i + \sum_{i \in \textrm{V}^c} \sum_{k \in V \setminus \textrm{V}^c} \epsilon_k^i (v^k - v^i) + \sum_{i
  \in \textrm{V}^c} \sum_{j \in E} \mu_j^i r^j, & \qquad(u)\notag\\
(\delta^l)^T x & \geq \delta^l_0\textrm{ for all }l \in
\{1,2,\ldots,m\}, & \qquad(w_l)\notag\\
\sum_{k \in V \setminus \textrm{V}^c} \epsilon_k^i & \leq
\lambda_i\textrm{ for all }i \in \textrm{V}^c, & \qquad(z_i)\notag\\
\sum_{i \in \textrm{V}^c} \lambda_i & = 1, & \qquad(u_0)\notag\\
\epsilon^i, \mu^i, \lambda & \geq 0\textrm{ for all }i \in \textrm{V}^c.\notag
\end{alignat}

From the dual of (LP), we obtain 
$\bar{u} \in {\mathbb R}^n$, $\bar{w} \in {\mathbb R}^m$, 
$\bar{z} \in \R^{|\textrm{V}^c|}$ and $\bar{u}_0 \in \R$ 
that satisfy

\begin{enumerate}
\item[(i)] $\bar{u}_0 + $
$\sum_{l=1}^m \bar{w}_l {\delta}_0^l \ge \delta_0$, 
\item[(ii)] $-\bar{u} = \delta$, 
\item[(iii)] $\bar{u}^T v^i+\sum_{l=1}^m \bar{w}_l (\delta^l)^T
  v^i+\bar{z}_i+\bar{u}_0 \leq 0$ for all $i \in \textrm{V}^c$.
\item[(iv)] $\bar{u}^T (v^k-v^i)+\sum_{l=1}^m \bar{w}_l (\delta^l)^T
  (v^k-v^i)-\bar{z}_i \leq 0$ for all $i \in \textrm{V}^c$ and $k \in
  V \setminus \textrm{V}^c$.
\item[(v)] $\bar{u}^T r^j+\sum_{l=1}^m \bar{w}_l (\delta^l)^T r^j \leq
  0$ for all $j \in E$.
\item[(vi)] $\bar{w} \ge 0$ and $\bar{z} \geq 0$. 
\end{enumerate}

Let $\bar{\delta}:=\sum_{l=1}^m \bar{w}_l \delta^l$ and 
$\bar{\delta}_0:=\sum_{l=1}^m {\delta}_0^l \bar{w}_l$. Since 
$\bar{\delta}^T x \ge \bar{\delta}_0$ is a non-negative 
combination of the inequalities 
$\{ ({\delta}^l)^T x \ge {\delta}^l_0 {\}}_{l=1}^m$, 
we have that $\bar{\delta}^T x \ge \bar{\delta}_0$ is valid for 
$Q(V^c)$. 
Furthermore, the inequality $\bar{\delta}^T x \ge \bar{\delta}_0$ is 
a non-negative combination of non-negative 
cuts for $P$, and therefore $\bar{\delta}^T x \geq \bar{\delta}_0$ is also 
a non-negative cut for $P$. Finally, since 
$V^c(\delta^l,\delta^l_0)=V^c$ for all 
$l \in \{ 1,2,\ldots,m \}$, we have 
$V^c(\bar{\delta},\bar{\delta}_0)=V^c$. 
We will show that $\bar{\delta}^T x \ge \bar{\delta}_0$ dominates 
${\delta}^T x \ge {\delta}_0$ on $P$. The 
system (i)-(vi) implies the following inequalities.
\begin{enumerate}
\item[(a)] $\bar{u}_0 + \bar{\delta}_0 \geq \delta_0$.
\item[(b)] $-\delta^T v^i+\bar{\delta}^T v^i+\bar{z}_i + \bar{u}_0 \leq
  0$ for all $i \in \textrm{V}^c$.
\item[(c)] $-\delta^T (v^k-v^i)+\bar{\delta}^T (v^k-v^i)-\bar{z}_i
  \leq 0$ for all $i \in \textrm{V}^c$ and $k \in V \setminus \textrm{V}^c$.
\item[(d)] $-\delta^T r^j+\bar{\delta}^T r^j \leq 0$ for all $j \in E$.
\item[(e)] $\bar{w} \ge 0$ and $\bar{z} \geq 0$.
\end{enumerate}

We first show 
$\frac{1}{\alpha'_{i,j}(\bar{\delta},\bar{\delta}_0)} \leq$
$\frac{1}{\alpha'_{i,j}(\delta,\delta_0)}$ for all 
$i \in V^c$ and $j \in E$. 
Therefore let $\bar{i} \in V^c$ and 
$\bar{j} \in E$. If 
$\alpha'_{\bar{i},\bar{j}}(\delta,\delta_0)=+\infty$, then 
$\delta^T r^{\bar{j}}=0$, which by (d) implies 
that also $\bar{\delta}^T r^{\bar{j}}=0$, and therefore 
$0 = \frac{1}{\alpha'_{\bar{i},\bar{j}}(\bar{\delta},\bar{\delta}_0)} =$
$\frac{1}{\alpha'_{\bar{i},\bar{j}}(\delta,\delta_0)}$. Furthermore, 
if $\bar{\delta}^T r^{\bar{j}}=0$, then trivially 
$0=\frac{1}{\alpha'_{\bar{i},\bar{j}}(\bar{\delta},\bar{\delta}_0)} \leq$
$\frac{1}{\alpha'_{\bar{i},\bar{j}}(\delta,\delta_0)}$. We can 
therefore 
assume $\alpha'_{\bar{i},\bar{j}}(\delta,\delta_0)<+\infty$ and 
$\bar{\delta}^T r^{\bar{j}} > 0$. 
Multiplying the inequality 
of (d) corresponding to $\bar{j}$ 
with $\alpha'_{\bar{i},\bar{j}}(\delta,\delta_0)$ 
and adding the result to the 
inequality of (b) corresponding to $\bar{i}$ gives 
$-\delta^T (v^{\bar{i}}+\alpha'_{\bar{i},\bar{j}}(\delta,\delta_0) r^{\bar{j}})+$
$\bar{\delta}^T (v^{\bar{i}}+\alpha'_{\bar{i},\bar{j}}(\delta,\delta_0) r^{\bar{j}}) \leq$
$-\bar{u}_0-\bar{z}_{\bar{i}} \leq \bar{\delta}_0-\delta_0$. Since we have 
$\delta^T (v^{\bar{i}}+\alpha'_{\bar{i},\bar{j}}(\delta,\delta_0) r^{\bar{j}})=\delta_0$, 
this implies 
$\bar{\delta}^T (v^{\bar{i}}+\alpha'_{\bar{i},\bar{j}}(\delta,\delta_0) r^{\bar{j}}) \leq$
$\bar{\delta}_0$. Now, $\alpha'_{\bar{i},\bar{j}}(\bar{\delta},\bar{\delta}_0)$ is defined 
as the smallest value of $\alpha$ such that 
$\bar{\delta}^T (v^{\bar{i}}+\alpha r^{\bar{j}})=$
$\bar{\delta}_0$. Since 
$\bar{\delta}^T (v^{\bar{i}}+\alpha'_{\bar{i},\bar{j}}(\delta,\delta_0) r^{\bar{j}}) \leq$
$\bar{\delta}_0$, this means we must have 
$\alpha'_{\bar{i},\bar{j}}(\bar{\delta},\bar{\delta}_0) \geq$
$\alpha'_{\bar{i},\bar{j}}(\delta,\delta_0)$, and therefore 
$\frac{1}{\alpha'_{\bar{i},\bar{j}}(\bar{\delta},\bar{\delta}_0)} \leq$
$\frac{1}{\alpha'_{\bar{i},\bar{j}}(\delta,\delta_0)}$. Hence 
condition (i) of Lemma \ref{dominance_lemma} is satisfied.

We now show $\frac{1}{\beta'_{i,k}(\bar{\delta},\bar{\delta}_0)} \leq$
$\frac{1}{\beta'_{i,k}(\delta,\delta_0)}$ for all 
$i \in V^c$ and $k \in V \setminus V^c$.
Therefore let $\bar{i} \in V^c$ and 
$\bar{k} \in V \setminus V^c$. Multiplying the inequality 
of (c) corresponding to $(\bar{i},\bar{k})$ 
with $\beta'_{\bar{i},\bar{k}}(\delta,\delta_0)$ 
and adding the result to the 
inequality of (b) corresponding to $\bar{i}$ gives 
$-\delta^T (v^{\bar{i}}+\beta'_{\bar{i},\bar{k}}(\delta,\delta_0) (v^{\bar{k}}-v^{\bar{i}}))+$
$\bar{\delta}^T (v^{\bar{i}}+\beta'_{\bar{i},\bar{k}}(\delta,\delta_0)
(v^{\bar{k}}-v^{\bar{i}}) ) \leq$
$-\bar{u}_0-\bar{z}_{\bar{i}} \leq \bar{\delta}_0-\delta_0$.
Since 
$\delta^T (v^{\bar{i}}+\beta'_{\bar{i},\bar{k}}(\delta,\delta_0) (v^{\bar{k}}-v^{\bar{i}}))=\delta_0$, 
this implies 
$\bar{\delta}^T (v^{\bar{i}}+\beta'_{\bar{i},\bar{k}}(\delta,\delta_0) (v^{\bar{k}}-v^{\bar{i}})) \leq$
$\bar{\delta}_0$. We have that 
$\beta'_{\bar{i},\bar{k}}(\bar{\delta},\bar{\delta}_0)$ is defined 
as the smallest value of $\beta$ s.t. 
$\bar{\delta}^T (v^{\bar{i}}+\beta (v^{\bar{k}}-v^{\bar{i}}) )=$
$\bar{\delta}_0$, and since 
$\bar{\delta}^T (v^{\bar{i}}+\alpha'_{\bar{i},\bar{j}}(\delta,\delta_0) r^{\bar{j}}) \leq$
$\bar{\delta}_0$, this implies 
$\beta'_{\bar{i},\bar{k}}(\bar{\delta},\bar{\delta}_0) \geq$
$\beta'_{\bar{i},\bar{k}}(\delta,\delta_0)$. It follows that 
$\frac{1}{\beta'_{\bar{i},\bar{k}}(\bar{\delta},\bar{\delta}_0)} \leq$
$\frac{1}{\beta'_{\bar{i},\bar{k}}(\delta,\delta_0)}$. 
Hence condition (ii) of Lemma \ref{dominance_lemma} is also satisfied, 
and therefore $\bar{\delta}^T x \geq \bar{\delta}_0$ 
dominates $\delta^T x \geq \delta_0$ on $P$.

To finish the proof, we will argue that we can 
choose $\bar{\delta}^T x \geq \bar{\delta}_0$ to 
be a convex combination of the inequalities 
$\{ ({\delta}^l)^T x \ge {\delta}^l_0 {\}}_{l=1}^m$. 
Observe that, if $\sum_{l=1}^m \bar{w}_l \neq 0$, 
then the inequality $(\delta')^T x \geq \delta'_0$ 
defined by $(\delta',\delta'_0):=$
$\frac{1}{\sum_{l=1}^m \bar{w}_l}(\bar{\delta},\bar{\delta}_0)$ 
is a convex combination of the inequalities 
$\{ ({\delta}^l)^T x \ge {\delta}^l_0 {\}}_{l=1}^m$ and 
$(\delta')^T x \geq \delta'_0$ is 
equivalent to $\bar{\delta}^T x \geq \bar{\delta}_0$ on 
$P$. We therefore only have to show 
$\sum_{l=1}^m \bar{w}_l \neq 0$. 
If $\sum_{l=1}^m \bar{w}_l = 0$, 
then (i)-(iii) give $\bar{u}_0 \geq \delta_0$ and 
$-\delta^T v^i + \bar{z}_i+\bar{u}_0 \leq 0$ for all 
$i \in V^c$, which implies $\delta^T v^i \geq \delta_0$ for 
all $i \in V^c$. Furthermore, (iv) reads 
$-\delta^T (v^k - v^i)-\bar{z}_i \leq 0$ for all 
$i \in V^c$ and $k \in V \setminus V^c$. Given 
$\bar{i} \in V^c$ and $\bar{k} \in V \setminus V^c$, adding the 
inequality 
$-\delta^T (v^{\bar{k}} - v^{\bar{i}})-\bar{z}_{\bar{i}} \leq 0$ 
of (iv) to 
the inequality of (iii) corresponding to $\bar{i}$ gives 
$-\delta^T v^{\bar{k}} \leq -\bar{u}_0 \leq -\delta_0$. Hence 
$\delta^T x \geq \delta_0$ is satisfied by all vertices of 
$P$, which contradicts that ${\delta}^T x \ge {\delta}_0$ is 
a cut for $P$. Hence $\sum_{l=1}^m \bar{w}_l \neq 0$.
\end{proof}

\subsection{A sufficient condition for polyhedrality}

We now consider the addition of an \emph{infinite} family of non-negative 
cuts to the polyhedron $P$. Specifically, consider
the convex set 
$$X:=\{ x \in P : ({\delta}^l)^T x \ge {\delta}^l_0 
\textrm{ for } l \in I \},$$
where $I$ is now allowed to be an \emph{infinite} index set. The goal in 
this section is to provide a sufficient condition 
for $X$ to be a polyhedron. For this purpose, we can assume 
$V^c(\delta^l,\delta^l_0)=V^c$ for all $l \in I$, {\it i.e.}, we 
can assume all cuts cut off the same vertices. Indeed, 
if the cuts $l \in I$ do not cut off the same set 
of vertices, then define the set 
$$I^c(S):=\{ l \in I : V^c(\delta^l,\delta^l_0)=S \}$$
for every $S \subseteq V$, and let 
$\mathcal{S}:=\{ S \subseteq V : I^c(S) \neq \emptyset \}$. 
We can then write 
$$X=\cap_{S \in \mathcal{S}} \{ x \in P : ({\delta}^l)^T x \ge {\delta}^l_0 
\textrm{ for } l \in S \}$$
Since $\mathcal{S}$ is finite, we have that $X$ is a 
polyhedron if and only if $X$ is a polyhedron under the 
assumption that $V^c(\delta^l,\delta^l_0)=V^c$ for all $l \in I$. 

For simplicity let 
$\alpha'_{i,j,l}:=\alpha'_{i,j}(\delta^l,\delta^l_0)$ 
for all $(i,j,l) \in V^c \times E \times I$, and 
$\beta'_{i,k,l}:=\beta'_{i,k}(\delta^l,\delta^l_0)$ 
for all $(i,k,l) \in V^c \times (V \setminus V^c) \times I$. 
Furthermore, for any $l \in I$, let $\alpha'_{.l}$ denote the vector in 
$\R^{|V^c| \times |E|}$ whose coordinates are $\alpha'_{i,j,l}$ 
for $(i,j) \in V^c \times E$, and let $\beta'_{.l}$ denote the vector in 
$\R^{|V^c| \times |V \setminus V^c|}$ whose coordinates are 
 $\beta'_{i,k,l}$ for $(i,k) \in V^c \times (V \setminus V^c)$. 

We will show 
that $X$ is a polyhedron when the following assumption holds.
\begin{assumption}\label{bas_ass}
Let $\alpha^* > 0$ and $\beta^* \in ]0,1]$ be arbitrary.
\begin{enumerate}
\item[(1)] For all $(i,j) \in V^c \times E$, 
the set $\textrm{IP}^e_{(i,j)}(\alpha^*):=\{ \alpha'_{i,j,l} \geq \alpha^* : l \in I \}$ is finite\\
(There is only a finite number of intersection points 
between the inequalities $(\delta^l)^T x \geq \delta^l_0$ for $l \in I$ and 
the halfline $\{v^i + \alpha r^j : \alpha \geq \alpha^* \}$).
\item[(2)] For all $(i,k) \in V^c \times V \setminus V^c$, the set 
$\textrm{IP}^v_{(i,k)}(\beta^*):=\{ \beta'_{i,k,l} \geq \beta^* : l \in I \}$ is finite\\
(There is only a finite number of intersection points 
between the inequalities $(\delta^l)^T x \geq \delta^l_0$ for $l \in I$ and 
the halfline $\{v^i + \beta (v^k-v^i) : \beta \geq \beta^* \}$).
\end{enumerate}
\end{assumption}

The main theorem is the following.
\begin{theorem}\label{pol_thm}
Suppose $\{ (\delta^l)^T x \geq \delta^l_0 {\}}_{l \in I}$ is a family 
of non-negative cuts for $P$ that satisfies Assumption 
\ref{bas_ass}, and suppose 
$V^c=V^c(\delta^l,\delta^l_0)$ for all $l \in I$. Then 
the set $X$ is a polyhedron.
\end{theorem}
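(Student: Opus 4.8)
The plan is to show that $X$ has only finitely many vertices and finitely many extreme rays, which suffices since $X\subseteq P$ is a closed convex set whose recession cone equals that of $P$ (each cut is non-negative, so $0^+(X)=0^+(P)=\cone(\{r^j\}_{j\in E})$, already finitely generated). So the whole burden is to bound the number of vertices of $X$. The strategy is to use the dominance machinery of Lemma~\ref{lem3} to reduce, for each fixed finite subfamily, the description of $X$ to cuts that are ``deepest'', and then argue that Assumption~\ref{bas_ass} forces only finitely many candidate vertices to arise in the limit.

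First I would describe the candidate vertex set. By Lemma~\ref{vert_prime_char}, for any single cut $(\delta^l)^Tx\ge\delta^l_0$ the vertices of $\{x\in P:(\delta^l)^Tx\ge\delta^l_0\}$ are: the vertices $v^k$ with $k\in V\setminus V^c$ (a fixed finite set, independent of $l$); the intersection points $v^i+\beta'_{i,k,l}(v^k-v^i)$ for $i\in V^c$, $k\in V\setminus V^c$; and the intersection points $v^i+\alpha'_{i,j,l}r^j$ for $i\in V^c$, $j\in E$ with $\delta^{l\,T}r^j>0$. For a finite intersection $Q(V^c)=\{x\in P:(\delta^l)^Tx\ge\delta^l_0,\ l=1,\dots,m\}$, every vertex of $Q(V^c)$ is a vertex of $\{x\in P:(\delta^l)^Tx\ge\delta^l_0\}$ for \emph{some} single $l$ (since $Q(V^c)$ is the intersection and a vertex must be a vertex of at least one halfspace's slice) — more carefully, a vertex $\bar x$ of $Q(V^c)$ lies on the boundary of at least one of the added constraints, say constraint $l$, and is then a vertex of the bigger set $\{x\in P:(\delta^l)^Tx\ge\delta^l_0\}$ as well, because it is not cut further by the remaining constraints at that face. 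Hence the vertex set of $Q(V^c)$ is contained in
\[
\{v^k : k\in V\setminus V^c\}\ \cup\ \{v^i+\beta'_{i,k,l}(v^k-v^i)\}\ \cup\ \{v^i+\alpha'_{i,j,l}r^j\},
\]
ranging over the finitely many $l$ used.

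Next I would pass to the infinite family. Fix $(i,k)\in V^c\times(V\setminus V^c)$ and consider the numbers $\{\beta'_{i,k,l}:l\in I\}\subseteq\,]0,1]$. If this set has an accumulation point it must be $0$ (since by Assumption~\ref{bas_ass}(2), for every $\beta^*>0$ only finitely many $\beta'_{i,k,l}$ exceed $\beta^*$); but $\beta'_{i,k,l}\to 0$ along the halfline $\{v^i+\beta(v^k-v^i):\beta\ge 0\}$ means the associated intersection points accumulate at $v^i$, and I claim such deep cuts cannot contribute new vertices to $X$ beyond what is already forced. The cleanest route: by Lemma~\ref{lem3}, any valid inequality for any finite subfamily is dominated by a convex combination of the cuts in that subfamily; taking a minimal-width representation and using that the ``deepest'' cut value in each coordinate direction $(i,j)$ resp.\ $(i,k)$ is attained by a \emph{sup} over $I$, define $\alpha^*_{i,j}:=\inf_l\alpha'_{i,j,l}$ (possibly $0$ is excluded since $\alpha'_{i,j,l}>0$ always, but the inf could be $0$) and $\beta^*_{i,k}:=\inf_l\beta'_{i,k,l}\ge 0$. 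The key observation is that a vertex of $X$ must be a limit of vertices of finite truncations $Q_m(V^c)$; by the list above each such vertex is one of the fixed $v^k$'s, or an intersection point with $\beta'$- or $\alpha'$-value in the \emph{closure} of $\{\beta'_{i,k,l}\}$ resp.\ $\{\alpha'_{i,j,l}\}$. By Assumption~\ref{bas_ass} that closure is finite together with the possible limit point $0$; a vertex with $\beta'$-value $0$ would coincide with $v^i$, which is cut off and hence not in $X$ — so in fact only the \emph{finitely many} values $\ge$ any fixed threshold survive, and one shows the threshold can be taken uniform because a vertex of $X$ that is an interior point of the halfline $\{v^i+\beta(v^k-v^i):\beta\in\,]0,1]\,\}$ at parameter $\bar\beta$ requires $\bar\beta\in\{\beta'_{i,k,l}:l\in I\}$ exactly (it is the \emph{unique} intersection of that halfline with the supporting hyperplane of the active cut), so $\bar\beta\ge\beta^*_{i,k}>0$ whenever it is a genuine vertex and not a limit — and the genuinely-vertex case gives $\bar\beta$ in a finite set by Assumption~\ref{bas_ass}(2) applied with $\beta^*=\beta^*_{i,k}$.

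Assembling: the vertex set of $X$ is contained in the finite set consisting of $\{v^k:k\in V\setminus V^c\}$ together with, for each $(i,k)\in V^c\times(V\setminus V^c)$, the finitely many points $v^i+\beta'_{i,k,l}(v^k-v^i)$ with $\beta'_{i,k,l}$ in the finite set $\mathrm{IP}^v_{(i,k)}(\beta^*_{i,k})$, and for each $(i,j)\in V^c\times E$ with $r^j$ not orthogonal to all cuts, the finitely many points $v^i+\alpha'_{i,j,l}r^j$ with $\alpha'_{i,j,l}\in\mathrm{IP}^e_{(i,j)}(\alpha^*_{i,j})$. Since $X$ has finitely many vertices and its recession cone $\cone(\{r^j\}_{j\in E})$ is finitely generated, $X$ is a polyhedron.

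\textbf{Main obstacle.} The delicate point is the reduction showing that a genuine vertex $\bar x$ of $X$ forces its defining parameter $\bar\beta$ (resp.\ $\bar\alpha$) to lie \emph{exactly} in $\{\beta'_{i,k,l}:l\in I\}$ rather than merely in its closure — i.e., ruling out a vertex arising ``at a limit'' of ever-deeper cuts. This is where one must use that $X$ is an intersection of \emph{finitely many} cuts would be false, so instead one argues: if $\bar x\in X$ is a vertex, then since all cuts are valid at $\bar x$ and $\bar x$ is exposed, there is a supporting halfspace of $X$ at $\bar x$ that is a nonnegative combination of finitely many of the cuts (by a limiting/Carathéodory argument on the normal cone, or by invoking Lemma~\ref{lem3} on the finitely many cuts tight at $\bar x$), reducing to the finite case already handled. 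Making this ``finitely many cuts are tight at a vertex'' step rigorous for an infinite family — it may fail naively, so one instead shows the normal cone of $X$ at $\bar x$ is finitely generated by directions coming from the fixed facets of $P$ plus the (finitely many, by the Assumption) relevant cut normals — is the crux of the argument.
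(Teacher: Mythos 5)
Your argument breaks at its very first substantive step: the claim that every vertex of a finite intersection $Q(V^c)=\{x\in P:(\delta^l)^Tx\ge\delta^l_0,\ l=1,\dots,m\}$ is a vertex of $\{x\in P:(\delta^l)^Tx\ge\delta^l_0\}$ for some single $l$, and hence lies among the intersection points of Lemma \ref{vert_prime_char}. This is false. A vertex of the intersection typically arises where two (or more) cut hyperplanes meet, and such a point is in general only a relative-interior point of a higher-dimensional face of each individual set $\{x\in P:(\delta^l)^Tx\ge\delta^l_0\}$. Concretely, take $P=[0,10]^2$ with $v^1=(0,0)$ and the two non-negative cuts $x_1+2x_2\ge 2$ and $3x_1+x_2\ge 3$, both of which cut off exactly $V^c=\{1\}$; their intersection with $P$ has the vertex $(4/5,3/5)$, which does not lie on any of the segments $\{v^1+\beta(v^k-v^1)\}$ (the two axes and the diagonal) and is therefore not an intersection point of any single cut with the distinguished halflines. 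So the candidate vertex set you propose does not contain all vertices of $X$ even for two cuts, and the entire finiteness count that follows (closures of the $\alpha'$- and $\beta'$-values, the threshold argument, the assembly at the end) rests on a containment that fails. Your concluding ``main obstacle'' paragraph locates the difficulty in the wrong place: the problem is not that vertices might arise at limit values of the parameters, but that vertices of $X$ need not lie on the distinguished halflines at all.

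This is precisely why the paper does not attempt to enumerate the vertices of $X$. Instead it proves that all but finitely many of the cuts are \emph{redundant}: it inducts on $|V\setminus V^c|+|E|$, uses the induction hypothesis to get a finite subfamily $\bar I$ describing the smaller sets $X^k$ and $X^j$, defines thresholds $\alpha^*_j,\beta^*_k$ from $\bar I$, and shows via the dominance machinery (Lemmas \ref{dominance_lemma}, \ref{lem3}, \ref{cat_lem}) that any cut whose intersection points all fall at or below these thresholds is already implied by $\bar I$; the remaining cuts are partitioned into finitely many classes by which of the finitely many surviving intersection points they pass through (this is where Assumption \ref{bas_ass} enters), and a secondary induction on the number of shared intersection points $|\textrm{SIP}(\cdot)|$ closes the argument. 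If you want to salvage your approach, you would have to replace the vertex-enumeration step by a redundancy argument of this kind; the intersection points control which cuts matter, not where the vertices of $X$ sit.
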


We will prove Theorem \ref{pol_thm} by induction on $|V \setminus V^c|+|E|$.

\subsubsection{The basic step of the induction}

We first consider 
the case when $|V \setminus V^c|+|E|=1$. The proof of Theorem \ref{pol_thm} in this 
special case is by induction on $|V^c|$, and this proof is essentially the same 
for both the case when $|V \setminus V^c|=1$, and the case when $|E|=1$. We therefore assume 
$E=\{ 1 \}$ and $|V \setminus V^c|=0$ in the remainder of this subsection. 
We first consider the case when $|V^c|=1$.

\begin{lemma}\label{basic_basic_step}
(Lemma 7 in \cite{andcorli}). 
Suppose $|V^c|=1$, $|V \setminus V^c|=0$ and $E=\{ 1 \}$. Then there 
exists $\bar{l} \in I$ such that 
$X=\{ x \in P : ({\delta}^{\bar{l}})^T x \ge {\delta}^{\bar{l}}_0 \}$. 
\end{lemma}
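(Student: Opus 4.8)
The plan is to observe that the hypotheses force the linear relaxation to be a single halfline, and then to reduce the claim to the assertion that a supremum of intersection coordinates is attained --- which is exactly the content of Assumption \ref{bas_ass}(1).

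First I would record the geometry. Since $|V^c|=1$ and $|V\setminus V^c|=0$ we have $V^c=V=\{1\}$, so $P$ has the single vertex $v:=v^1$, and since $E=\{1\}$ it has the single extreme ray $r:=r^1$; thus $P=\{\,v+\alpha r:\alpha\ge 0\,\}$. Because $V^c(\delta^l,\delta^l_0)=V^c$ for every $l\in I$, each cut satisfies $(\delta^l)^Tv<\delta^l_0$, and non-negativity of the cuts gives $(\delta^l)^Tr\ge 0$.

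Next I would dispose of the degenerate subcase. If some $\bar l\in I$ has $(\delta^{\bar l})^Tr=0$, then on $P$ the function $x\mapsto(\delta^{\bar l})^Tx$ is constantly equal to $(\delta^{\bar l})^Tv<\delta^{\bar l}_0$, so $\{x\in P:(\delta^{\bar l})^Tx\ge\delta^{\bar l}_0\}=\emptyset$; since $X$ is contained in this set, both are empty and the lemma holds with this $\bar l$. Otherwise $(\delta^l)^Tr>0$ for every $l\in I$, and then for $x=v+\alpha r\in P$ the inequality $(\delta^l)^Tx\ge\delta^l_0$ is equivalent to $\alpha\ge\alpha'_{1,1,l}=\frac{\delta^l_0-(\delta^l)^Tv}{(\delta^l)^Tr}>0$. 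Consequently $X=\{\,v+\alpha r:\alpha\ge\sup_{l\in I}\alpha'_{1,1,l}\,\}$ and, for each single $l$, $\{x\in P:(\delta^l)^Tx\ge\delta^l_0\}=\{\,v+\alpha r:\alpha\ge\alpha'_{1,1,l}\,\}$.

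Finally I would show that the supremum is attained. Fix any $l_0\in I$ and put $\alpha^*:=\alpha'_{1,1,l_0}>0$. By Assumption \ref{bas_ass}(1) the set $\textrm{IP}^e_{(1,1)}(\alpha^*)=\{\,\alpha'_{1,1,l}\ge\alpha^*:l\in I\,\}$ is finite, and it is nonempty since it contains $\alpha^*$; hence it has a maximum, attained at some $\bar l\in I$. Every $\alpha'_{1,1,l}$ not lying in this set is $<\alpha^*\le\alpha'_{1,1,\bar l}$, so $\alpha'_{1,1,\bar l}=\sup_{l\in I}\alpha'_{1,1,l}$, and therefore $X=\{\,v+\alpha r:\alpha\ge\alpha'_{1,1,\bar l}\,\}=\{x\in P:(\delta^{\bar l})^Tx\ge\delta^{\bar l}_0\}$, as required. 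There is no serious obstacle in this base case; the only points that need care are the parallel subcase $(\delta^l)^Tr=0$ and the elementary observation that Assumption \ref{bas_ass}(1) turns the relevant supremum into a maximum.
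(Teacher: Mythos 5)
Your proof is correct and follows essentially the same route as the paper's: reduce $P$ to the halfline $\{v^1+\alpha r^1:\alpha\ge 0\}$, express each cut as a lower bound $\alpha\ge\alpha'_{1,1,l}$, and use Assumption \ref{bas_ass}(1) to turn the supremum of these bounds into an attained maximum. Your explicit treatment of the degenerate case $(\delta^l)^T r^1=0$ (where $X=\emptyset$) is a slightly more careful spelling-out of what the paper dismisses in one line, but the argument is the same.
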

\begin{proof}
For simplicity assume $V^c=\{ 1 \}$. 
We have 
\begin{alignat}{2}
P & =\{ x \in {\mathbb R}^n : x=v^1+\mu_1 r^1\textrm{ and }\mu_1 \ge 0 \},\textrm{ and}\notag\\
\{ x \in P : ({\delta}^l)^T x \ge {\delta}^l_0 \} & = \{ x \in \R^n : x=v^1+\mu_1 r^1,\mu_1 \ge 0\textrm{ and }\frac{\mu_1}{\alpha'_{1,1}(\delta^l,\delta_0^l)} \geq 1 \}\notag 
\end{alignat}
for all $l \in I$. 
Defining 
$\alpha^*_{1,1} := \sup \{ \alpha'_{1,1,l} : l \in I \}$ then gives
\begin{alignat}{2}
X & = \{ x \in \R^n : x=v^1+\mu_1 r^1,\mu_1 \ge 0\textrm{ and }\frac{\mu_1}{\alpha^*_{1,1}} \geq 1 \}.\notag 
\end{alignat}
Hence the only issue that needs to be verified is that 
the value $\alpha^*_{1,1}$ is attained for some $l \in I$. 
If there exists $l \in I$ satisfying $(\delta^l)^T r^1 = 0$, 
we are done, so we may assume $\alpha'_{1,1,l} < +\infty$ 
for all $l \in I$. 
Choosing $l' \in I$ arbitrarily, Assumption \ref{bas_ass}.(i) shows the 
set $\textrm{IP}^e_{(1,1)}(\alpha_{1,1,l'})$ is finite. 
Therefore the supremum is achieved.
\end{proof}

The induction hypothesis is as follows. For every $i \in V^c$, 
define 
\begin{alignat}{2}
P^i & :=\conv(\{ v^{i'} {\}}_{i' \in V^c \setminus \{ i \}})+\cone(\{ r^1 \}),\textrm{ and}\notag\\
X^i & :=\{ x \in P^i : ({\delta}^l)^T x \ge {\delta}^l_0\textrm{ for all } l \in I \}.\notag
\end{alignat}
The induction hypothesis is that $X^i$ is a polyhedron 
for all $i \in V^c$. Hence, for every $i \in V^c$, we 
can choose a \emph{finite} subset $I^i \subseteq I$ such that 
$X^i=\{ x \in P^i : ({\delta}^l)^T x \ge {\delta}^l_0\textrm{ for all } l \in I^i \}$. 

Let $\bar{I}:=\cup_{i \in V^c} I^i$ denote the set of 
\emph{all} inequalities needed to describe the sets $X^i$ 
for $i \in V^c$. Also let 
$\overline{X}:=\{ x \in P : ({\delta}^l)^T x \ge {\delta}^l_0 \textrm{ for } l \in \bar{I} \}$ be 
the approximation of $X$ obtained from the finite set of inequalities 
indexed by $\bar{I}$, and define the numbers 
\begin{alignat}{2}
\alpha^*_i & := \min \{ \alpha'_{i,1,l} : l \in
\bar{I} \} > 0\textrm{ for all }i \in V^c.\notag
\end{alignat}
The number $\alpha^*_i$ gives the intersection point 
$v^i+\alpha'_{i,1,l} r^1$ 
which is closest 
to $v^i$ over all inequalities $l \in \bar{I}$. Based 
on the induction hypothesis, we now show that $X$ is a polyhedron 
when $|V \setminus V^c|=0$ and $E= \{1 \}$.
\begin{lemma}
(Lemma 8 in \cite{andcorli}). 
If $|V \setminus V^c|=0$ and $E= \{ 1 \}$, then $X$ is a polyhedron. 
\end{lemma}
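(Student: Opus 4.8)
The plan is to describe $X$ as the intersection of $P$ with the cuts indexed by $\bar I$ together with just one more inequality that "summarizes" the infinitely many remaining cuts near the vertices of $P$. Recall $P=\conv(\{v^i\}_{i\in V^c})+\cone(\{r^1\})$, so every point of $P$ lies on some halfline $\{v_{\lambda^c}+\mu_1 r^1:\mu_1\ge 0\}$ with $\lambda^c\in\Lambda$. By Lemma~\ref{fix_lambda} (applied here with $V\setminus V^c=\emptyset$), a point $x=v_{\lambda^c}+\mu_1 r^1$ satisfies $(\delta^l)^Tx\ge\delta^l_0$ iff $\mu_1/\alpha'_{1}(\delta^l,\delta^l_0,\lambda^c)\ge 1$, and $\alpha'_1(\delta^l,\delta^l_0,\lambda^c)=\sum_{i\in V^c}\lambda^c_i\,\alpha'_{i,1,l}$ is linear in $\lambda^c$. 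So the issue is purely one-dimensional along each halfline: on the halfline through $v_{\lambda^c}$ the set $X$ is cut at $\mu_1=\sup_{l\in I}\alpha'_1(\delta^l,\delta^l_0,\lambda^c)=:\alpha^*(\lambda^c)$, and we must show that $\overline X$ together with finitely many extra inequalities captures this supremum function.

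The key steps, in order, are as follows. First I would fix attention on the region of $P$ where some $\mu_1\ge\alpha^*_i$ bound is "active", i.e.\ points $v_{\lambda^c}+\mu_1 r^1$ with $\mu_1<\max_i\alpha^*_i$: here $\overline X$ already restricts $x$ to lie within a \emph{bounded} slab of $P$ (a polytope $\mathcal P$ plus possibly the ray, but the relevant part is bounded because each $v^i$ is cut off and $\alpha^*_i<\infty$). Second, on this bounded region I invoke Assumption~\ref{bas_ass}.(1): pick $\alpha^*:=\min_i\alpha^*_i>0$; then $\mathrm{IP}^e_{(i,1)}(\alpha^*)$ is finite for each $i\in V^c$, so only finitely many cuts $l\in I$ have $\alpha'_{i,1,l}\ge\alpha^*$ for \emph{some} $i$. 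The remaining cuts $l$ have $\alpha'_{i,1,l}<\alpha^*\le\alpha^*_i$ for all $i$, hence by Lemma~\ref{dominance_lemma}(i) each such cut is \emph{dominated on $P$} by the corresponding cut from $\bar I$ achieving $\alpha^*_i$ — or more carefully, the halfline-wise supremum $\alpha^*(\lambda^c)$ over these "small" cuts is everywhere $\le$ the value already enforced by $\overline X$, so they add nothing. Third, I add the finitely many cuts from $\mathrm{IP}^e$ to $\bar I$; call the enlarged finite set $\bar I'$. Then $X=\{x\in P:(\delta^l)^Tx\ge\delta^l_0\text{ for }l\in\bar I'\}$, which is a polyhedron. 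One should also handle the degenerate cases where some $\alpha'_{i,1,l}=+\infty$ (the cut is parallel to $r^1$), exactly as in Lemma~\ref{basic_basic_step}.

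The main obstacle I expect is the bookkeeping in the "remaining cuts add nothing" step: dominance on the \emph{full} $P$ (Lemma~\ref{dominance_lemma}) requires comparing $\alpha'_{i,1,l}$ against a \emph{single} dominating cut for all $i\in V^c$ simultaneously, whereas the natural candidate is the pointwise-minimum envelope of the $\bar I$-cuts, which need not be one of them. The clean way around this is to argue halfline-by-halfline using the linearity of $\alpha'_1(\delta^l,\delta^l_0,\cdot)$ in $\lambda^c$ and the induction hypothesis, rather than through a single dominance relation: for a fixed $\lambda^c$, if $\lambda^c=e^i$ is a unit vector the induction hypothesis on $X^i$ (which is $P^i$ with the same cuts) together with Lemma~\ref{basic_basic_step}-style reasoning pins down $\alpha^*(e^i)=\alpha^*_i$ as attained; for general $\lambda^c$ one uses that $\alpha^*(\lambda^c)$ is a supremum of linear functions and that on the relevant bounded region only finitely many of them matter by Assumption~\ref{bas_ass}.(1). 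Making the reduction from "general $\lambda^c$" to "the unit vectors plus finitely many cuts" precise — i.e.\ showing the supremum envelope is piecewise linear with finitely many pieces on the bounded region — is the crux, and it is exactly what the induction on $|V^c|$ is set up to deliver.
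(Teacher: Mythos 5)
Your overall scaffolding (induction on $|V^c|$, the finite set $\bar I$ supplied by the induction hypothesis, the thresholds $\alpha^*_i$, and Assumption \ref{bas_ass} to control the surviving intersection points) matches the paper, but the central dichotomy you propose is wrong, and the difficulty you flag in your last paragraph is real and left unresolved. You split $I\setminus\bar I$ into cuts with $\alpha'_{i,1,l}\ge\alpha^*:=\min_i\alpha^*_i$ for \emph{some} $i$ (claimed finite) and cuts with $\alpha'_{i,1,l}<\alpha^*$ for \emph{all} $i$ (claimed redundant). The second class is indeed redundant: any single cut of $\bar I$ dominates such a cut on $P$ via Lemma \ref{dominance_lemma}, since every $l''\in\bar I$ has $\alpha'_{i,1,l''}\ge\alpha^*_i\ge\alpha^*$ at every vertex. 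But the first class is \emph{not} finite up to equivalence: Assumption \ref{bas_ass}.(1) bounds the number of intersection \emph{values} above the threshold on each halfline, and says nothing about the values $\alpha'_{i',1,l}<\alpha^*$ that a cut in this class takes at the \emph{other} vertices. A family of cuts all meeting the halfline from $v^1$ at one common point with parameter $\ge\alpha^*$, but meeting the halfline from $v^2$ at $\alpha^*/2,\alpha^*/3,\dots$, lies entirely in your ``kept'' class and consists of pairwise non-equivalent cuts; and if the common value at $v^1$ exceeds $\alpha'_{1,1,l''}$ for every $l''\in\bar I$, none of them is dominated by any single member of $\bar I$. So neither branch of your dichotomy disposes of the cuts that are large at some vertices and small at others, and these are exactly the hard ones.

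The paper closes this hole with Lemma \ref{lem3}, which you never invoke. The correct dichotomy uses the per-vertex thresholds with the quantifiers reversed relative to yours: a cut $l'$ with $\alpha'_{i',1,l'}\le\alpha^*_{i'}$ for \emph{some} $i'$ is valid for $X^{i'}$ (induction hypothesis), hence by Lemma \ref{lem3} it is dominated on $P^{i'}$ by a \emph{convex combination} $\bar\delta^Tx\ge\bar\delta_0$ of the cuts in $\bar I$; because $\delta_0-\delta^Tv^{i'}$ and $\delta^Tr^1$ combine linearly, that convex combination automatically satisfies $\alpha'_{i',1}(\bar\delta,\bar\delta_0)\ge\alpha^*_{i'}\ge\alpha'_{i',1,l'}$, so the dominance extends from $P^{i'}$ to all of $P$ and $l'$ is valid for $\overline X$. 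The cuts that survive are those with $\alpha'_{i,1,l}>\alpha^*_i$ for \emph{every} $i$; for these, every intersection value lies in the finite set $\textrm{IP}^e_{(i,1)}(\alpha^*_i)$, so by Lemma \ref{dominance_lemma} there are only finitely many equivalence classes. Your proposed repair, showing that the supremum envelope $\alpha^*(\lambda^c)$ is piecewise linear with finitely many pieces, is not something the induction hypothesis ``delivers''; it is a restatement of the polyhedrality you are trying to prove.
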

\begin{proof}
Consider an inequality $l' \in I \setminus \bar{I}$. 
We will show that 
$({\delta}^{l'})^T x \ge {\delta}^{l'}_0$ is valid for 
$\overline{X}$ if there exists $i' \in V^c$ such that 
$\alpha_{i',1,l'} \leq \alpha^*_{i'}$. 
This implies that it is sufficient to consider inequalities 
$l \in I \setminus \bar{I}$ that satisfy 
$\alpha_{i,1,l} > \alpha^*_i$ for 
all $i \in V^c$ to obtain $X$ from $\overline{X}$. 
Since the sets $\textrm{IP}^e_{(i,1)}(\alpha^*_i)$ 
for $i \in V^c$ are finite (Assumption \ref{bas_ass}.(i)), 
and since two inequalities 
with exactly the same intersection points are equivalent 
(Lemma \ref{dominance_lemma}), 
this shows that only a finite number of inequalities 
from $I \setminus \bar{I}$ are needed to obtain 
$X$ from $\overline{X}$. 

Therefore suppose $l' \in I \setminus \bar{I}$ 
and $i' \in V^c$ satisfies 
$\alpha_{i',1,l'} \leq \alpha^*_{i'}$. For simplicity 
let $(\delta',\delta'_0):=({\delta}^{l'},{\delta}^{l'}_0)$. 
Since $(\delta')^T x \ge \delta'_0$ is a 
non-negative cut for $P^{i'}$ that is valid for $X^{i'}$ (the induction
hypothesis), 
Lemma \ref{lem3} shows there exists an inequality $\bar{\delta}^T x \ge $
$\bar{\delta}_0$ that dominates $(\delta')^T x \ge \delta'_0$ 
on $P^{i'}$, and that $\bar{\delta}^T x \ge \bar{\delta}_0$ 
can be chosen as a 
convex combination of the inequalities 
$({\delta}^l)^T x \ge {\delta}^l_0$ for 
$l \in \bar{I}$. We therefore have 
\begin{alignat}{2}
\bar{\delta} & = \sum_{l \in \bar{I}} \lambda_{l} \delta^l,\textrm{ and}\notag\\
\bar{\delta}_0 & = \sum_{l \in \bar{I}} \lambda_{l} \delta^l_0,\textrm{ where}\notag\\
\sum_{l \in \bar{I}} \lambda_{l} =
1 & \textrm{ and }\lambda_{l} \ge 0 \textrm{ for all }l \in
\bar{I}.\notag
\end{alignat}

We will show that $\bar{\delta}^T x \ge $
$\bar{\delta}_0$ dominates 
$(\delta')^T x \ge \delta'_0$ on $P$ by 
verifying that condition (i) of Lemma \ref{dominance_lemma} is satisfied. 
We know $\bar{\delta}^T x \ge \bar{\delta}_0$ dominates 
$(\delta')^T x \ge \delta'_0$ on $P^{i'}$. 
Lemma \ref{dominance_lemma} therefore gives 
\begin{alignat}{2}
\frac{1}{\alpha'_{i,1}(\bar{\delta},\bar{\delta}_0)} & \leq
\frac{1}{\alpha'_{i,1}({\delta'},\delta'_0)} \textrm{ for all }i
\in V^c \setminus \{ i' \}.\notag
\end{alignat}
To finish the proof, we will show
$\frac{1}{\alpha'_{i',1}(\bar{\delta},\bar{\delta}_0)} \leq
\frac{1}{\alpha'_{i',1}({\delta'},\delta'_0)}$.
The definition of $\alpha_{i'}^*$ gives 
$$\alpha^*_{i'} \leq \alpha'_{i',1,l} = \frac{\delta^l_0 - (\delta^l)^T v^{i'}}{(\delta^l)^T
  r^1}$$
for all $l \in \bar{I}$. Since 
$\bar{\delta}_0 - \bar{\delta}^T v^{i'} = $
$\sum_{l \in \bar{I}} \lambda_{l} $
$(\delta^l_0 - (\delta^l)^T v^{i'})$ and
$\bar{\delta}^T r^1 = $
$\sum_{l \in \bar{I}} \lambda_{l}$
$(\delta^l)^T r^1$, we obtain 
$\bar{\delta}_0 - \bar{\delta}^T v^{i'} \ge \alpha^*_{i'}
\bar{\delta}^T r^1$, and therefore 
$\alpha_{i'}^* \leq \alpha'_{i',1}(\bar{\delta},\bar{\delta}_0)$. 
The choice of 
$\alpha_{i'}^*$ and 
$(\delta')^T x \geq \delta'_0$ gives 
$\alpha_{i',1}(\delta',\delta'_0) \leq$ 
$\alpha_{i'}^*$. Hence 
$\alpha_{i',1}'(\delta',\delta'_0) \leq $
$\alpha_{i',1}'(\bar{\delta},\bar{\delta}_0)$, which implies 
$1/{\alpha}_{i',1}'(\bar{\delta},\bar{\delta}_0) \leq $
$1/{\alpha}_{i',1}'(\delta',\delta'_0)$.
\end{proof}

\subsubsection{The induction hypothesis}

We now present the induction hypothesis. 
Given a vertex $v^{k}$ of $P$ with $k \in V \setminus V^c$, consider the 
polyhedron obtained from $P$ by deleting $v^{k}$
$$P^{k}:=\conv(\{ v^i {\}}_{i \in V \setminus \{ k \}})+\cone(\{ r^j {\}}_{j \in E}),$$
and given an extreme ray $r^{j}$ of $P$ with $j \in E$, consider the 
polyhedron obtained from $P$ by deleting $r^j$
$$P^{j}:=\conv(\{ v^i {\}}_{i \in V})+\cone(\{ r^{j'}
{\}}_{j' \in E \setminus \{ j \}}).$$
From the inequalities $(\delta^l)^T x \geq \delta^l_0$ for $l \in I$, 
and the polyhedra $P^k$ and $P^j$, we can define 
the following subsets of $X$. 
$$X^{k}:=\{ x \in P^k : ({\delta}^l)^T x \ge {\delta}^l_0 
\textrm{ for } l \in I \},\textrm{ and}$$
$$X^{j}:=\{ x \in P^j : ({\delta}^l)^T x \ge {\delta}^l_0 
\textrm{ for } l \in I \},$$
The induction hypothesis is that the sets $X^k$ and $X^j$ are 
polyhedra for all $k \in V \setminus V^c$ and $j \in E$. This implies that 
for every $k \in V \setminus V^c$, there exists a finite set $I^k \subseteq I$ such that 
$$X^k=\{ x \in P^k : ({\delta}^l)^T x \ge {\delta}^l_0 \textrm{ for }
l \in I^k \},$$
and for every $j \in E$, there exists a finite set $I^j \subseteq I$ such that 
$$X^j=\{ x \in P^j : ({\delta}^l)^T x \ge {\delta}^l_0 \textrm{ for }
l \in I^j \}.$$
Define $\bar{I}:=(\cup_{k \in V \setminus V^c} I^k) \cup (\cup_{j
  \in E} I^j)$ to be the set of \emph{all} inequalities involved above.
The set $\bar{I}$ gives the following 
approximation $\overline{X}$ of $X$.
\begin{alignat}{2}
\overline{X} & :=\{ x \in P : (\delta^l)^T x \geq \delta^l_0 \textrm{ for all }l \in \bar{I} \}\notag
\end{alignat}

\subsubsection{The inductive proof}

We now use the induction hypothesis to prove that $X$ is a polyhedron. 
The idea of the proof is based on counting 
the number $|\textrm{SIP}(I')|$ 
of intersection points that are 
shared by \emph{all} cuts in a family 
$I' \subseteq I$ of cutting planes. This number is given by 
$|\textrm{SIP}(I')|=|\textrm{SIP}^e(I')|+|\textrm{SIP}^v(I')|$, where 
the sets $\textrm{SIP}^e(I')$ 
and $\textrm{SIP}^v(I')$ are defined by
\begin{alignat}{2}
\textrm{SIP}^e(I') & :=\{ (i,j) \in V^c \times E : \alpha'_{i,j,l_1} =
\alpha'_{i,j,l_2}\textrm{ for all }l_1,l_2 \in I' \},\textrm{ and}\notag\\
\textrm{SIP}^v(I') & :=\{ (i,k) \in V^c \times (V \setminus V^c) : \beta'_{i,k,l_1}=
\beta'_{i,k,l_2}\textrm{ for all }l_1,l_2 \in I' \}.\notag
\end{alignat}

Clearly we have $0 \leq |\textrm{SIP}(I')| \leq $
$|V^c \times E|+|V^c \times (V \setminus V^c)|$ for all $I' \subseteq
I$. 
Furthermore, if $|\textrm{SIP}(I')| =$
$|V^c \times E|+|V^c \times (V \setminus V^c)|$, then 
\emph{all} cuts 
indexed by $I'$ share \emph{all}
intersection points with the halflines 
$\{ v^i + \alpha r^j : \alpha \geq 0 \}$ and 
$\{ v^i + \beta (v^k-v^i) : \beta \geq 0 \}$ for 
$i \in V^c$, $j \in E$ and $k \in V \setminus V^c$. This 
then implies that all cuts 
indexed by $I'$ are equivalent on $P$ (Lemma \ref{dominance_lemma}). 
Therefore, if $|\textrm{SIP}(I')| =$
$|V^c \times E|+|V^c \times (V \setminus V^c)|$, 
then the set $X'$ given by 
$X':=\{ x \in P : (\delta^l)^T x \geq \delta^l_0\textrm{ for all }l \in I' \}$ 
is a polyhedron that can be described with exactly one cut 
from the family $I'$.

The main idea of our proof can now be presented. 
Clearly we can assume that the family $\bar{I}$ does 
not give a complete description 
of $X$ (otherwise there is nothing to prove). We will 
show the following lemma.
\begin{lemma}\label{main_goal}
Assume the sets $\{ X^k {\}}_{k \in V \setminus V^c}$ and 
$\{ X^j {\}}_{j \in E}$ are polyhedra. There exists a 
covering of $I$ into a finite number of subsets 
$\{ I^q {\}}_{q=1}^{\textrm{ns}}$ such that
$$\textrm{ for all }q \in \{ 1,2,\ldots,\textrm{ns} \},  \textrm{
either }I^q \subseteq \bar{I},\textrm{ or }|\textrm{SIP}(I^q)|>|\textrm{SIP}(I)|,$$ 
where $\textrm{ns}$ denotes the number of subsets in this 
covering.
\end{lemma}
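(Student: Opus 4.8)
The goal is to split the infinite index set $I$ into finitely many pieces, each of which is either already contained in the finite set $\bar I$, or strictly increases the number of shared intersection points. Once Lemma \ref{main_goal} is established, the inductive proof of Theorem \ref{pol_thm} will close quickly: the ``deletion'' polyhedra $P^k,P^j$ have strictly smaller parameter $|V\setminus V^c|+|E|$, so by the outer induction hypothesis the sets $X^k,X^j$ are polyhedra, which is exactly the hypothesis of Lemma \ref{main_goal}; and for each piece $I^q$ with $|\mathrm{SIP}(I^q)|>|\mathrm{SIP}(I)|$ we recurse (this recursion terminates because $|\mathrm{SIP}|$ is bounded above by $|V^c\times E|+|V^c\times(V\setminus V^c)|$, and when that bound is hit all cuts in the piece are equivalent, so that piece is described by a single cut).

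\textbf{Construction of the covering.} First I would dispose of the cuts that are ``locally dominated'' near some deleted vertex or ray. Fix $k\in V\setminus V^c$. Any cut $(\delta^{l'})^Tx\ge\delta^{l'}_0$ with $l'\in I$ is, restricted to $P^k$, a non-negative cut valid for $X^k$; since $X^k$ is described by the finite family $\{I^{l}\}_{l\in I^k}\subseteq\bar I$, Lemma \ref{lem3} produces a convex combination $\bar\delta^Tx\ge\bar\delta_0$ of cuts in $\bar I$ that dominates it \emph{on $P^k$}. As in the base case, this domination-on-$P^k$ upgrades to domination on all of $P$ provided the one ``missing'' direction — the edge to $v^k$, i.e. the ratio $\beta'_{i,k,l'}$ for $i\in V^c$ — already satisfies $1/\beta'_{i,k}(\bar\delta,\bar\delta_0)\le 1/\beta'_{i,k,l'}$, and this holds as soon as $\beta'_{i,k,l'}\le\beta^*_{i,k}$ for the threshold $\beta^*_{i,k}:=\min\{\beta'_{i,k,l}:l\in\bar I\}>0$ (the same computation with a convex combination that appears in the $E=\{1\}$ base case). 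Symmetrically, for each $j\in E$ there is a threshold $\alpha^*_{i,j}:=\min\{\alpha'_{i,j,l}:l\in\bar I\}>0$, and any cut $l'$ with $\alpha'_{i,j,l'}\le\alpha^*_{i,j}$ for some $i\in V^c$ is dominated on $P$ by a convex combination of $\bar I$, hence valid for $\overline X$ and discardable. So the cuts we still need to handle are exactly those $l'\in I\setminus\bar I$ for which $\alpha'_{i,j,l'}>\alpha^*_{i,j}$ for all $(i,j)\in V^c\times E$ and $\beta'_{i,k,l'}>\beta^*_{i,k}$ for all $(i,k)\in V^c\times(V\setminus V^c)$.

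\textbf{Finitely many pieces, each raising $\mathrm{SIP}$.} By Assumption \ref{bas_ass}, for each $(i,j)$ the set $\mathrm{IP}^e_{(i,j)}(\alpha^*_{i,j})$ of values $\alpha'_{i,j,l}$ that are $\ge\alpha^*_{i,j}$ is finite, and likewise each $\mathrm{IP}^v_{(i,k)}(\beta^*_{i,k})$ is finite. Hence among the remaining cuts, the tuple $(\alpha'_{\cdot l},\beta'_{\cdot l})$ takes only finitely many values; group the remaining cuts into classes $\{I^q\}$ by this tuple (plus one class $I^0:=\bar I$). Each non-trivial class $I^q$ consists of cuts that pairwise agree on \emph{every} coordinate $\alpha'_{i,j,\cdot}$ and $\beta'_{i,k,\cdot}$, so $\mathrm{SIP}(I^q)=V^c\times E\,\cup\,V^c\times(V\setminus V^c)$ has maximal size; in particular $|\mathrm{SIP}(I^q)|\ge|\mathrm{SIP}(I)|$, with equality only if $\mathrm{SIP}(I)$ was already maximal — but in that case, by the remark preceding Lemma \ref{main_goal}, \emph{all} cuts in $I$ are equivalent on $P$, $X$ is described by one cut, and there is nothing to prove. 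So we may assume $|\mathrm{SIP}(I)|$ is not maximal, and then $|\mathrm{SIP}(I^q)|>|\mathrm{SIP}(I)|$ for every non-trivial class, while $I^0=\bar I\subseteq\bar I$; together with $I^0$ these classes cover $I$, giving the required finite covering.

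\textbf{Main obstacle.} The delicate point is the upgrade from ``dominates on $P^k$'' (resp.\ $P^j$) to ``dominates on $P$'': Lemma \ref{dominance_lemma} requires checking the ratio inequality for \emph{all} $(i,j)\in V^c\times E$ and \emph{all} $(i,k)\in V^c\times(V\setminus V^c)$, whereas domination on $P^k$ only gives it for the pairs not involving the deleted vertex/ray. Closing this gap is precisely where the threshold hypotheses $\alpha'_{i,j,l'}>\alpha^*_{i,j}$, $\beta'_{i,k,l'}>\beta^*_{i,k}$ enter, and the argument mirrors the convex-combination estimate already carried out in the $E=\{1\}$, $V\setminus V^c=\emptyset$ base lemma — one writes $\bar\delta_0-\bar\delta^Tv^i$ and $\bar\delta^Tr^j$ (or $\bar\delta^T(v^k-v^i)$) as the corresponding convex combinations of the $\bar I$-data and reads off the inequality. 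I would also double-check the edge case where the convex combination from Lemma \ref{lem3} has zero total weight, which that lemma's proof already rules out by contradiction with $\delta^Tx\ge\delta_0$ being a genuine cut. Everything else is bookkeeping: verifying the classes are finite in number (Assumption \ref{bas_ass}) and that they cover $I$.
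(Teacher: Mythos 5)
Your overall architecture is the paper's: use the induction hypothesis plus Lemma \ref{lem3} to discard cuts that are already dominated by $\bar I$, then use Assumption \ref{bas_ass} to split the survivors into finitely many classes with strictly larger $\textrm{SIP}$. But your discard criterion has the wrong quantifier, and the error is structural, not cosmetic. To discard a cut $l'$ you must exhibit a convex combination of $\bar I$-cuts dominating it on all of $P$. The only tool available is Lemma \ref{lem3} applied to one of the deleted polyhedra $P^{\bar j}$ or $P^{\bar k}$; this yields domination on $P^{\bar j}$, i.e.\ the ratio inequalities of Lemma \ref{dominance_lemma} for every pair \emph{except all of} $(i,\bar j)$, $i\in V^c$ — deleting the ray $r^{\bar j}$ loses that entire column of pairs at once. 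The threshold estimate $\alpha'_{i,\bar j}(\bar\delta,\bar\delta_0)\ge\alpha^*_{i,\bar j}$ recovers the missing inequality at $(i,\bar j)$ only for those $i$ with $\alpha'_{i,\bar j,l'}\le\alpha^*_{i,\bar j}$ (or with $(i,\bar j)\in\textrm{SIP}^e(I)$, where one has equality). For an $i$ with $\alpha'_{i,\bar j,l'}>\alpha^*_{i,\bar j}$ you have no control over $\alpha'_{i,\bar j}(\bar\delta,\bar\delta_0)$ versus $\alpha'_{i,\bar j,l'}$, and no other deletion supplies the inequality at $(i,\bar j)$ while preserving the one you did recover. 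So ``some single pair below threshold'' does not let you conclude validity for $\overline X$; the correct condition (the paper's Lemma \ref{cat_lem}) is that for \emph{one} $\bar j$ (or $\bar k$), \emph{every} $i\in V^c$ with $(i,\bar j)\notin\textrm{SIP}^e(I)$ is below threshold.

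This gap propagates into your second paragraph and invalidates its main simplification. With the correct discard criterion, a surviving cut is only guaranteed to exceed the threshold at \emph{one} pair $(i,j)$ per ray $j$ and one pair $(i,k)$ per vertex $k$ (with those pairs outside $\textrm{SIP}(I)$); its other coordinates may lie below threshold and hence outside the finite sets of Assumption \ref{bas_ass}. Grouping by the pattern of above-threshold intersection points therefore does not produce classes of mutually equivalent cuts with maximal $\textrm{SIP}$; it only produces classes $I^q$ whose cuts all share those particular above-threshold values, whence $\textrm{SIP}(I^q)\supsetneq\textrm{SIP}(I)$ — which is exactly the paper's weaker conclusion, and why the recursion on $|\textrm{SIP}|$ in the derivation of Theorem \ref{pol_thm} from Lemma \ref{main_goal} cannot be skipped. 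The fact that your version would describe $X$ by adding only one cut per class to $\overline X$, proving the theorem without any recursion, is itself a warning sign that the discard step is claiming too much.
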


The goal of the remainder of this section is to prove 
Lemma \ref{main_goal}. We first argue that Lemma 
\ref{main_goal} implies that $X$ is a polyhedron. 
The fact that $\{ I^q {\}}_{q=1}^{\textrm{ns}}$ 
is a covering of $I$ implies 
$$X = \cap_{q=1}^{\textrm{ns}} X(I^q),$$
where 
$X(I^q):=\{ x \in P :$ 
$(\delta^l)^T x \geq \delta^l_0 \textrm{ for all }l \in I^q \}$. 
Therefore $X$ is a polyhedron if $X(I^q)$ is a polyhedron for 
all $q \in \{ 1,2,\ldots,\textrm{ns} \}$. Since 
$|\textrm{SIP}(I^q)|$ is larger than 
$|\textrm{SIP}(I)|$ for all $q \in \{ 1,2,\ldots,\textrm{ns} \}$
satisfying $I^q \nsubseteq \bar{I}$, 
recursively applying Lemma \ref{main_goal} will create a tree of subcases, where 
the sets corresponding to the leaves of this tree must be polyhedra. 
It then follows that $X$ is a polyhedron. 

We now proceed to prove Lemma \ref{main_goal}. 
The covering of $I$ is based 
on the following positive numbers that measure how close the cuts 
$\{ (\delta^l)^T x \geq \delta^l_0 {\}}_{l \in \bar{I}}$ cut to a vertex 
$v^i$ of $P$.
\begin{alignat}{2}
\alpha^*_j & := \min \{ \alpha'_{i,j,l} : i \in V^c\textrm{ and }l \in
\bar{I} \}\textrm{ for }j \in E,\textrm{ and}\notag\\
\beta^*_k & := \min \{ \beta'_{i,k,l} : i \in V^c\textrm{ and }l \in
\bar{I} \} \textrm{ for } k \in V \setminus V^c.\notag
\end{alignat}

Given $\bar{j} \in E$, the number $\alpha^*_{\bar{j}}$ corresponds to a 
vertex $v^{\bar{i}}$ of $P$ 
and a cut 
$(\delta^{\bar{l}})^T x \geq \delta^{\bar{l}}_0$ for which the 
intersection point 
$v^{\bar{i}}+\alpha'_{\bar{i},\bar{j},\bar{l}} r^{\bar{j}}$ 
is as close to $v^{\bar{i}}$ as possible. Similarly, given 
$\bar{k} \in V \setminus V^c$, the number $\beta^*_{\bar{k}}$
corresponds to a 
vertex $v^{\bar{i}}$ of $P$ and a cut 
$(\delta^{\bar{l}})^T x \geq \delta^{\bar{l}}_0$ for which the 
intersection point 
$v^{\bar{i}}+$
$\beta'_{\bar{i},\bar{k},\bar{l}} (v^{\bar{k}}-v^{\bar{i}})$ 
is as close to $v^{\bar{i}}$ as possible.

The numbers $\{ \alpha^*_j {\}}_{j \in E}$ and 
$\{ \beta^*_k {\}}_{k \in V \setminus V^c}$ allow us to provide 
the following conditon that the 
cuts $(\delta^l)^T x \geq \delta^l_0$ for $l \in I \setminus \bar{I}$ 
must satisfy in order to cut off a region of 
$\overline{X}$. Clearly cuts that are valid for 
$\overline{X}$ can be removed from $I \setminus \bar{I}$, since 
they do not contribute 
anything further to the description of $X$ than the cuts 
indexed by $\bar{I}$.

\begin{lemma}\label{cat_lem}
(Lemma 8 in \cite{andcorli}). 
Assume the sets $\{ X^k {\}}_{k \in V \setminus V^c}$ and 
$\{ X^j {\}}_{j \in E}$ are polyhedra, and let $\bar{l} \in I \setminus \bar{I}$ 
be arbitrary. If either 
\begin{enumerate}
\item[(i)] There exists $\bar{j} \in E$ such that $\max\{ \alpha'_{i,\bar{j},\bar{l}} : i \in
  V^c \textrm{ and }(i,\bar{j}) \notin \textrm{SIP}^e(I) \} \leq \alpha^*_{\bar{j}}$\\
(The cut $(\delta^{\bar{l}})^T x \geq \delta^{\bar{l}}_0$ does not cut
off any point of the form $v^i+\alpha^*_{\bar{j}} r^{\bar{j}}$ which is not an
intersection point that is shared by all cuts in $I$), or 
\item[(ii)] There exists $\bar{k} \in V \setminus V^c$ such that $\max\{ \beta'_{i,\bar{k},\bar{l}} : i \in
  V^c\textrm{ and }(i,\bar{k}) \notin \textrm{SIP}^v(I) \} \leq \beta^*_{\bar{k}}$\\
(The cut $(\delta^{\bar{l}})^T x \geq \delta^{\bar{l}}_0$ does not cut
off any point of the form $v^i+\beta^*_{\bar{k}} (v^{\bar{k}}-v^i)$ which is not an
intersection point that is shared by all cuts in $I$),

\end{enumerate}
then the cut 
$({\delta}^{\bar{l}})^T x \ge {\delta}^{\bar{l}}_0$ is valid for $\overline{X}$.
\end{lemma}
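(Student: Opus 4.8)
The plan is to treat cases (i) and (ii) symmetrically, so I will describe the argument for case (i); case (ii) is entirely analogous with $\beta$ replacing $\alpha$ and the halflines $\{v^i+\beta(v^{\bar k}-v^i):\beta\ge 0\}$ replacing $\{v^i+\alpha r^{\bar j}:\alpha\ge 0\}$. So assume there is $\bar j\in E$ with $\max\{\alpha'_{i,\bar j,\bar l}: i\in V^c,\ (i,\bar j)\notin\textrm{SIP}^e(I)\}\le\alpha^*_{\bar j}$. The goal is to show $(\delta^{\bar l})^Tx\ge\delta^{\bar l}_0$ is valid for $\overline X$, i.e.\ it holds at every vertex and along every extreme ray of $\overline X$. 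Since $(\delta^{\bar l})^Tx\ge\delta^{\bar l}_0$ is a non-negative cut with $V^c(\delta^{\bar l},\delta^{\bar l}_0)=V^c$, it is automatically valid along all extreme rays $r^j$ of $P$ (hence of $\overline X$), and it is satisfied by every vertex $v^k$ of $P$ with $k\in V\setminus V^c$. By Lemma \ref{vert_prime_char} applied to the finite system defining $\overline X$, the remaining vertices of $\overline X$ are intersection points of the two types appearing there, built from a vertex $v^i$ with $i\in V^c$ and either an extreme ray $r^j$ or a satisfied vertex $v^k$. So it suffices to show $(\delta^{\bar l})^Tx\ge\delta^{\bar l}_0$ holds at each such point.

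The core of the argument is to show that each such vertex $\bar y$ of $\overline X$ lies in $P^k$ for some $k\in V\setminus V^c$ or in $P^j$ for some $j\in E$, and is in fact a point of the corresponding polyhedron $X^k$ or $X^j$; since $X^k$ (resp.\ $X^j$) is cut out by $\{(\delta^l)^Tx\ge\delta^l_0\}_{l\in I}$ and in particular by the cut $\bar l\in I$, validity at $\bar y$ follows. First consider a vertex $\bar y=v^i+\alpha'_{i,j,l}r^j$ of $\overline X$ with $i\in V^c$, $l\in\bar I$, and $j\ne\bar j$: then $\bar y\in P^{\bar j}=\conv(\{v^{i'}\}_{i'\in V})+\cone(\{r^{j'}\}_{j'\ne\bar j})$, so $\bar y\in X^{\bar j}$, which is contained in $\{x:(\delta^{\bar l})^Tx\ge\delta^{\bar l}_0\}$. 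Likewise any vertex $\bar y=v^i+\beta'_{i,k,l}(v^k-v^i)$ with $i\in V^c$, $k\in V\setminus V^c$, $l\in\bar I$ lies in $P^{k'}$ for any $k'\in V\setminus V^c$ with $k'\ne k$ — provided $|V\setminus V^c|\ge 2$; the boundary bookkeeping when $|V\setminus V^c|\le 1$ or $|E|\le 1$ is handled by the basic step already proved. So the only vertices of $\overline X$ not automatically handled are the intersection points $v^i+\alpha'_{i,\bar j,l}r^{\bar j}$ along the single ray $r^{\bar j}$, with $i\in V^c$ and $l\in\bar I$.

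For those remaining points $\bar y=v^i+\alpha'_{i,\bar j,l}r^{\bar j}$, the hypothesis of case (i) does the work. Fix such an $i\in V^c$ and $l\in\bar I$. If $(i,\bar j)\in\textrm{SIP}^e(I)$, then by definition $\alpha'_{i,\bar j,\bar l}=\alpha'_{i,\bar j,l'}$ for all $l',l''\in I$, in particular the cut $\bar l$ intersects the halfline $\{v^i+\alpha r^{\bar j}:\alpha\ge 0\}$ at exactly the same point as every cut in $\bar I$, so $(\delta^{\bar l})^T\bar y=\delta^{\bar l}_0\ge\delta^{\bar l}_0$ for every $\bar y$ of this form sitting on that halfline, and for the ones with a different $i$ this reduces to the previous paragraph. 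If $(i,\bar j)\notin\textrm{SIP}^e(I)$, then by hypothesis $\alpha'_{i,\bar j,\bar l}\le\alpha^*_{\bar j}\le\alpha'_{i,\bar j,l}$, where the last inequality is the definition of $\alpha^*_{\bar j}$ as the minimum over $i\in V^c$ and $l\in\bar I$. Since $\delta^{\bar l}$ is non-negative, $(\delta^{\bar l})^Tr^{\bar j}\ge 0$; if it is zero then $\alpha'_{i,\bar j,\bar l}=+\infty$, contradicting $\alpha'_{i,\bar j,\bar l}\le\alpha^*_{\bar j}<+\infty$, so $(\delta^{\bar l})^Tr^{\bar j}>0$ and $(\delta^{\bar l})^T(v^i+\alpha r^{\bar j})$ is strictly increasing in $\alpha$. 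Hence $(\delta^{\bar l})^T(v^i+\alpha'_{i,\bar j,l}r^{\bar j})\ge(\delta^{\bar l})^T(v^i+\alpha'_{i,\bar j,\bar l}r^{\bar j})=\delta^{\bar l}_0$, which is precisely validity of the cut $\bar l$ at $\bar y$. This covers all vertices of $\overline X$, so $(\delta^{\bar l})^Tx\ge\delta^{\bar l}_0$ is valid for $\overline X$.

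The step I expect to be the main obstacle is the case analysis over which vertices of $\overline X$ fail to live in one of the smaller polyhedra $X^k$ or $X^j$: one must check carefully, using Lemma \ref{vert_prime_char} for the \emph{finite} system $\bar I$, that the only such "exposed" vertices are exactly the intersection points along the distinguished ray $r^{\bar j}$ (or the distinguished edge toward $v^{\bar k}$ in case (ii)), and that the degenerate counts $|V\setminus V^c|,|E|\le 1$ are already subsumed by the basic step. The inequality chasing with $\alpha^*_{\bar j}$ at the end is then routine given non-negativity of the cut.
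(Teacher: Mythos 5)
Your overall strategy---verify the cut $(\delta^{\bar l})^Tx\ge\delta^{\bar l}_0$ at every vertex of $\overline X$---founders on the very first step: Lemma \ref{vert_prime_char} characterizes the vertices of $\{x\in P:\delta^Tx\ge\delta_0\}$ for a \emph{single} cut, and its conclusion is false for the polyhedron $\overline X$ obtained by adding the whole finite family $\bar I$. When $|\bar I|\ge 2$, $\overline X$ generally has vertices lying at intersections of several cut hyperplanes that sit on none of the halflines $\{v^i+\alpha r^j:\alpha\ge 0\}$ and none of the segments $[v^i,v^k]$. For instance, take $P=\conv\{(0,0),(4,0),(0,4)\}$ with $V^c$ indexing $(0,0)$ and the two cuts $x_1+3x_2\ge 3$ and $3x_1+x_2\ge 3$: the point $(3/4,3/4)$ is a vertex of $\overline X$, lies on neither edge emanating from $(0,0)$, and lies in no $P^k$ or $P^j$. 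Such a vertex falls under none of your cases, and validity of a new cut at all the ``standard'' intersection points does \emph{not} imply validity at it (the point $(3/4,3/4)$ is not in the convex hull of the edge intersection points and the satisfied vertices). So your case analysis does not cover all vertices of $\overline X$, and the proof is incomplete exactly at the step you flagged as the main obstacle.

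The paper avoids enumerating vertices of $\overline X$ altogether. It applies Lemma \ref{lem3} (an LP-duality argument) on $P^{\bar j}$ to produce a \emph{single} inequality $\bar\delta^Tx\ge\bar\delta_0$ that is a convex combination of the cuts indexed by $\bar I$ and dominates $(\delta^{\bar l})^Tx\ge\delta^{\bar l}_0$ on $P^{\bar j}$; being a convex combination of inequalities valid for $\overline X$, it is itself valid for $\overline X$. Dominance between \emph{two} single cuts on $P$ is then checkable purely through intersection points (Lemma \ref{dominance_lemma}), and the only condition not inherited from $P^{\bar j}$ is the one along $r^{\bar j}$, which is exactly where hypothesis (i) and the bound $\alpha^*_{\bar j}\le\alpha'_{i,\bar j}(\bar\delta,\bar\delta_0)$ (preserved under convex combinations) enter---this is the same inequality chase you perform at the end, but applied to the dominating combination rather than to individual vertices. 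To repair your proof you would need to replace the vertex enumeration of $\overline X$ by this reduction to pairwise dominance, i.e.\ essentially reinstate Lemma \ref{lem3}.
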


The proof of Lemma \ref{cat_lem} will be given 
at the end of this section. We first argue that 
Lemma \ref{cat_lem} can be used to prove Lemma \ref{main_goal}, 
which thereby finishes the proof of Theorem \ref{pol_thm}.

Lemma \ref{cat_lem} shows we can partition the cuts 
$\{ (\delta^l)^T x \geq \delta^l_0 {\}}_{l \in I}$ into 
three categories.
\begin{enumerate}
\item[(1)] The cuts indexed by $\bar{I}$ that define $\bar{X}$.
\item[(2)] The cuts, indexed by some set 
$I^r \subseteq I \setminus \bar{I}$, that satisfy either 
Lemma \ref{cat_lem}.(i) or Lemma \ref{cat_lem}.(ii), and these 
cuts are valid for $\overline{X}$.
\item[(3)] The remainder of the cuts indexed by
$I \setminus (\bar{I} \cup I^r)$. Every cut 
$l \in I \setminus (\bar{I} \cup I^r)$ satisfies: 
\begin{enumerate}
\item[(i)] For all $j \in E$, the cut 
$(\delta^l)^T x \geq \delta^l_0$ cuts off some intersection point of
the form $v^i+\alpha^*_j r^{\bar{j}}$, which is not an
intersection point that is shared by all cuts in $I$.
\item[(ii)] For all $k \in V \setminus V^c$, the cut 
$(\delta^l)^T x \geq \delta^l_0$ cuts off some intersection point of
the form $v^i+\beta^*_k (v^k-v^i)$, which is not an
intersection point that is shared by all cuts in $I$.
\end{enumerate}
\end{enumerate}

Clearly we can assume $I^r=\emptyset$. 
Let $i \in V^c$, $j \in E$ and $k \in V \setminus V^c$ 
be arbitrary. Recall that the sets 
$\textrm{IP}^e_{(i,j)}(\alpha^*_j)$ and 
$\textrm{IP}^v_{(i,k)}(\beta^*_k)$ identify the 
intersection points between the hyperplanes 
$\{ (\delta^l)^T x = \delta^l_0 {\}}_{l \in I}$ and 
the halflines $\{ v^i+\alpha r^j : \alpha \geq \alpha^*_j \}$ and 
$\{ v^i+\beta (v^k-v^i) : \beta \geq \beta^*_k \}$ 
respectively. Hence 
we may write $\textrm{IP}^e_{(i,j)}(\alpha^*_j)$ and 
$\textrm{IP}^v_{(i,k)}(\beta^*_k)$ in the form
\begin{alignat}{2}
\textrm{IP}^e_{(i,j)}(\alpha^*_j) & = \{ \alpha_{i,j}^1,
\alpha_{i,j}^2,\ldots,\alpha_{i,j}^{n^e(i,j)} \}\textrm{ and}\notag\\
\textrm{IP}^v_{(i,k)}(\beta^*_k) & = \{ \beta_{i,k}^1,
\beta_{i,k}^2,\ldots,\beta_{i,k}^{n^v(i,k)} \},\notag
\end{alignat}
where the numbers $n^e(i,j):=|\textrm{IP}^e_{(i,j)}(\alpha^*_j)|$ and 
$n^v(i,k):=|\textrm{IP}^v_{(i,k)}(\beta^*_k)|$ denote the sizes 
of the two sets. 
For simplicity let 
$N^e_{(i,j)}:=\{1,2,\ldots,n^e(i,j) \}$ and 
$N^v_{(i,k)}:=\{1,2,\ldots,n^v(i,k) \}$ index the 
intersection points. 

All intersection points between a hyperplane 
$(\delta^l)^T x = \delta^l_0$ with $l \in I \setminus \bar{I}$ and a halfline either of 
the form $\{ v^i+\alpha r^j : \alpha \geq \alpha^*_j \}$ , 
or of the form 
$\{ v^i+\beta (v^k-v^i) : \beta \geq \beta^*_k \}$, can be 
identified with elements of the index sets
\begin{alignat}{2}
\textrm{AIP}^e & :=\{ (i,j,q) : i \in V^c, j \in E\textrm{ and }q \in
N^e_{(i,j)} \},\textrm{ and}\notag\\
\textrm{AIP}^v & :=\{ (i,k,q) : i \in V^c, k \in V \setminus
V^c\textrm{ and }q \in N^v_{(i,k)} \}.\notag
\end{alignat}

For a specific cut $(\delta^l)^T x \geq \delta^l_0$ with $l \in I$,
let the sets 
\begin{alignat}{2}
\textrm{IP}^e(l) & :=\{ (i,j,q) \in \textrm{AIP}^e : \alpha'_{i,j,l}=\alpha^q_{i,j} \}\notag\\
\textrm{IP}^v(l) & :=\{ (i,k,q) \in \textrm{AIP}^v : \beta'_{i,k,l}=\beta^q_{i,k} \}.\notag
\end{alignat}
index the intersection points between $(\delta^l)^T x = \delta^l_0$ 
and the halflines $\{ v^i+\alpha r^j : \alpha \geq \alpha^*_j \}$ and 
$\{ v^i+\beta (v^k-v^i) : \beta \geq \beta^*_k \}$ for $i \in V^c$, 
$j \in E$ and $k \in V \setminus V^c$.

Observe that, from the definitions of $\alpha^*_j$ and $\beta^*_k$ for 
$j \in E$ and $k \in V \setminus V^c$, we have $\textrm{IP}^e(l) \neq
\emptyset$ and $\textrm{IP}^v(l) \neq \emptyset$ for all $l \in
\bar{I}$. Furthermore, property (3) above and the assumption 
$I^r =\emptyset$ ensures that $\textrm{IP}^e(l) \neq \emptyset$ and 
$\textrm{IP}^v(l) \neq \emptyset$ for all 
$l \in I \setminus \bar{I}$. Hence we have 
$\textrm{IP}^e(l) \neq \emptyset$ and 
$\textrm{IP}^v(l) \neq \emptyset$ for all $l \in I$.

Given a pair 
$(S^e,S^v) \subseteq \textrm{AIP}^e \times \textrm{AIP}^v$, the sets 
$S^e$ and $S^v$ may or may not denote the index sets for all intersection points 
between a specific hyperplane $(\delta^l)^T x = \delta^l_0$ and the halflines 
$\{ v^i+\alpha r^j : \alpha \geq \alpha^*_j \}$ and 
$\{ v^i+\beta (v^k-v^i) : \beta \geq \beta^*_k \}$ for $i \in V^c$, 
$j \in E$, $k \in V \setminus V^c$ and $l \in I$. Let 
\begin{alignat}{2}
{\cal{S}}^* & := \{ (S^e,S^v) \subseteq \textrm{AIP}^e \times \textrm{AIP}^v : S^e=\textrm{IP}^e(l) \textrm{ and } S^v=\textrm{IP}^v(l) \textrm{ for some }l \in I \}\notag
\end{alignat}
denote the set of \emph{all} pairs $(S^e,S^v)$ that 
describe the index sets for the 
intersection points for \emph{some} cutting plane $l \in I$. 
For a given pair $(S^e,S^v) \in {\cal{S}}^*$, let
\begin{alignat}{2}
\textrm{CA}(S^e,S^v) & := \{ l \in I : S^e=\textrm{IP}^e(l) \textrm{ and } S^v=\textrm{IP}^e(l) \}\notag
\end{alignat}
denote the set of \emph{all} cuts associated with the pair
$(S^e,S^v)$, {\it i.e.}, the set of all cuts whose intersection 
points with the halflines $\{ v^i+\alpha r^j : \alpha \geq \alpha^*_j \}$ and 
$\{ v^i+\beta (v^k-v^i) : \beta \geq \beta^*_k \}$ for $i \in V^c$, 
$j \in E$ and $k \in V \setminus V^c$ are characterized by the pair 
$(S^e,S^v)$.

We claim that the finite number of sets $\{ \textrm{CA}(S^e,S^v) {\}}_{(S^e,S^v) \in
{\cal{S}}^*}$ provides the covering of $I$ that is claimed 
to exist in Lemma \ref{main_goal}. Indeed, the fact that $\textrm{IP}^e(l) \neq
\emptyset$ and $\textrm{IP}^v(l) \neq \emptyset$ for all $l \in I$ 
implies that every cut $l \in I$ belongs to \emph{some} set 
$\textrm{CA}(S^e,S^v)$ with $(S^e,S^v) \in {\cal{S}}^*$. 
Hence 
$\{ \textrm{CA}(S^e,S^v) {\}}_{(S^e,S^v) \in {\cal{S}}^*}$ 
is a covering of $I$. 

Let $(S^e,S^v) \in {\cal{S}}^*$ be arbitrary. If 
$\textrm{CA}(S^e,S^v) \subseteq \bar{I}$, then clearly 
the condition in Lemma \ref{main_goal} is satisfied for $(S^e,S^v)$, so we may assume 
$\textrm{CA}(S^e,S^v)$ contains cuts from $I \setminus \bar{I}$. 
Furthermore, 
we clearly have 
$\textrm{SIP}(I) \subseteq \textrm{SIP}(\textrm{CA}(S^e,S^v))$, 
since $\textrm{CA}(S^e,S^v)$ 
is a subset of $I$. To finish the proof of Lemma \ref{main_goal}, 
we need to show that $|\textrm{SIP}(\textrm{CA}(S^e,S^v))| >$
$|\textrm{SIP}(I)|$.

Lemma \ref{cat_lem}.(i) shows 
that for every $l \in I \setminus \bar{I}$ and 
$j \in E$ , there exists 
$i \in V^c$ such that $(i,j) \notin \textrm{SIP}^e(I)$ and 
$\alpha'_{i,j}(\delta^l,\delta^l_0) > \alpha^*_j$. 
Furthermore, \ref{cat_lem}.(ii) shows 
that for every $l \in I \setminus \bar{I}$ and 
$k \in V \setminus V^c$ , there exists 
$i \in V^c$ such that $(i,k) \notin \textrm{SIP}^v(I)$ and 
$\beta'_{i,k}(\delta^l,\delta^l_0) > \beta^*_k$.
This shows the existence of a cut 
$\bar{l} \in \textrm{CA}(S^e,S^v)$ that satisfies 
$\bar{l} \notin \textrm{SIP}(I)$, and therefore 
$|\textrm{SIP}(\textrm{CA}(S^e,S^v))| >$
$|\textrm{SIP}(I)|$. This completes the proof of Theorem \ref{pol_thm}.\\ 

\begin{proofof}{Lemma \ref{cat_lem}} The proof of (ii) is the 
same as the proof of (i), so we only show (i). 
Therefore suppose the cut $\bar{l} \in I \setminus \bar{I}$ 
and the extreme ray $\bar{j} \in E$ satisfies the inequality 
$\max\{ \alpha'_{i,\bar{j},\bar{l}} : i \in
  V^c \textrm{ and }(i,\bar{j}) \notin \textrm{SIP}^e(I)\} \leq \alpha^*_{\bar{j}}$. For simplicity 
let $(\delta',\delta'_0):=({\delta}^{l'},{\delta}^{l'}_0)$. 
 
Since $(\delta')^T x \geq \delta'_0$ is a non-negative cut for 
$P^{\bar{j}}$, Lemma \ref{lem3} shows there exists an inequality $\bar{\delta}^T x \ge $
$\bar{\delta}_0$ that dominates $(\delta')^T x \ge \delta'_0$ 
on $P^{\bar{j}}$, and that this inequality can be chosen to be a 
convex combination of the inequalities 
$({\delta}^l)^T x \ge {\delta}^l_0$ for 
$l \in \bar{I}$. Hence 
\begin{alignat}{2}
\bar{\delta} & = \sum_{l \in \bar{I}} \lambda_{l} \delta^l,\notag\\
\bar{\delta}_0 & = \sum_{l \in \bar{I}} \lambda_{l} \delta^l_0,\textrm{ where}\notag\\
\sum_{l \in \bar{I}} \lambda_{l} =
1 & \textrm{ and }\lambda_{l} \ge 0 \textrm{ for all }l \in
\bar{I}.\notag
\end{alignat}

We will show that $\bar{\delta}^T x \ge $
$\bar{\delta}_0$ also dominates 
$(\delta')^T x \ge \delta'_0$ on $P$ by 
verifying that conditions (i) and (ii) of Lemma \ref{dominance_lemma} are satisfied. 
Since $\bar{\delta}^T x \ge \bar{\delta}_0$ dominates 
$(\delta')^T x \ge \delta'_0$ on $P^{\bar{j}}$, 
we have 
\begin{alignat}{2}
\frac{1}{{\alpha}_{i,j}'(\bar{\delta},\bar{\delta}_0)} & \leq
\frac{1}{{\alpha}_{i,j}'({\delta'},\delta'_0)} \textrm{ for all }i
\in V^c\textrm{ and }j \in E \setminus \{ \bar{j} \},\textrm{ and}\notag\\
\frac{1}{{\beta}_{i,k}'(\bar{\delta},\bar{\delta}_0)} & \leq
\frac{1}{{\alpha}_{i,k}'({\delta'},\delta'_0)} \textrm{ for all }i
\in V^c\textrm{ and }k \in V \setminus V^c.\notag
\end{alignat}
We also know 
\begin{alignat}{2}
\frac{1}{\alpha'_{i,\bar{j}}(\bar{\delta},\bar{\delta}_0)} & =
\frac{1}{\alpha'_{i,\bar{j}}({\delta'},\delta'_0)} \textrm{ for all }i
\in V^c\textrm{ such that }(i,\bar{j}) \in \textrm{SIP}^e(I).\notag
\end{alignat}
To finish the proof, it suffices to show 
$$\frac{1}{{\alpha}_{i,\bar{j}}'(\bar{\delta},\bar{\delta}_0)} \leq
\frac{1}{{\alpha}_{i,\bar{j}}'({\delta'},\delta'_0)}\textrm{ for all
}i \in V^c\textrm{ such that }(i,\bar{j}) \notin \textrm{SIP}^e(I).$$
From the definition of $\alpha_{\bar{j}}^*$, we have the inequality 
$$\alpha^*_{\bar{j}} \leq \alpha'_{i,\bar{j},l} = \frac{\delta^l_0 - (\delta^l)^T v^{i}}{(\delta^l)^T
  r^{\bar{j}}}$$
for all $l \in \bar{I}$ and $i \in V^c$. The equalities 
$\bar{\delta}_0 - \bar{\delta}^T v^{i} = $
$\sum_{l \in \bar{I}} \lambda_{l} $ 
$(\delta^l_0 - (\delta^l)^T v^{i})$ for all $i \in V^c$, and 
$\bar{\delta}^T r^{\bar{j}} = $
$\sum_{l \in \bar{I}} \lambda_{l}$
$(\delta^l)^T r^{\bar{j}}$ imply 
$\bar{\delta}_0 - \bar{\delta}^T v^{i} \ge \alpha^*_{\bar{j}}
\bar{\delta}^T r^{\bar{j}}\textrm{ for all }i \in V^c$, 
and therefore 
$\alpha_{\bar{j}}^* \leq
{\alpha}_{i,\bar{j}}'(\bar{\delta},\bar{\delta}_0)$ for all $i \in V^c$. 
The definition of 
$\alpha_{\bar{j}}^*$ and the choice of the cut 
$(\delta')^T x \geq \delta'_0$ imply 
$\alpha_{\bar{i},\bar{j}}'(\delta',\delta'_0) \leq$ 
$\max\{ \alpha'_{i,\bar{j}}(\delta',\delta'_0) : i \in
  V^c \textrm{ such that }(i,\bar{j}) \notin \textrm{SIP}^e(I) \} \leq
\alpha_{\bar{j}}^*$ for all $\bar{i} \in V^c$ such that
$(\bar{i},\bar{j}) \notin \textrm{SIP}^e(I)$, and 
therefore $\alpha_{i,\bar{j}}'(\delta',\delta'_0) \leq $
$\alpha_{i,\bar{j}}'(\bar{\delta},\bar{\delta}_0)$ for all $i \in V^c$ 
such that $(i,\bar{j}) \notin \textrm{SIP}^e(I)$. Hence 
$1/{\alpha}_{i,\bar{j}}'(\bar{\delta},\bar{\delta}_0) \leq $
$1/{\alpha}_{i,\bar{j}}'(\delta',\delta'_0)$ for all $i \in V^c$ 
satisfying $(i,\bar{j}) \notin \textrm{SIP}^e(I)$, which 
gives that $\bar{\delta}^T x \geq \bar{\delta}_0$ 
dominates $(\delta')^T x \geq \delta'_0$ on $P$.
\end{proofof}

\section{The structure of polyhedral relaxations obtained from mixed integer split polyhedra}

We now describe the polyhedral structure 
of the polyhedron $R(L,P)$ for a 
mixed integer split polyhedron $L$. Throughout this section, 
$L$ denotes an arbitrary mixed integer split polyhedron. 
Also, $V^{\textrm{in}}(L):= \{ i \in V : v^i \in \intt(L) \}$ denotes the 
vertices of $P$ in the interior of $L$ and 
$V^{\textrm{out}}(L):= V \setminus V^{\textrm{in}}(L)$ denotes the 
vertices of $P$ that are \emph{not} in the interior of $L$. We assume 
$V^{\textrm{in}}(L) \neq \emptyset$, since otherwise $R(L,P)=P$ 
(Lemma \ref{int_vert_nec}). The set 
$\Lambda:=\{ \lambda \in \R^{|V|} :$
$\lambda\geq 0\textrm{ and }\sum_{i \in V} \lambda_i = 1 \}$ is used 
to form convex combinations of the vertices of $P$, and the set 
$\Lambda^{\textrm{in}}(L):=\{ \lambda \in \Lambda :$ 
$\sum_{i \in V^{\textrm{in}}(L)} \lambda_i = 1 \}$ is used to 
form convex combinations 
of the vertices in $V^{\textrm{in}}(L)$. 

\subsection{Intersection points}

Now 
consider possible intersection points between a halfline of the form 
$\{ v_{\lambda^{\textrm{in}}}+\alpha r^j : \alpha\geq 0 \}$ 
and the boundary of $L$, where 
$\lambda^{\textrm{in}} \in \Lambda^{\textrm{in}}(L)$ and $j \in E$. Given 
$\lambda^{\textrm{in}} \in \Lambda^{\textrm{in}}(L)$ and $j \in E$, define:
\begin{alignat}{2}
\alpha_j(L,\lambda^{\textrm{in}}) & := \sup\{ \alpha : v_{\lambda^{\textrm{in}}}+\alpha r^j \in L \}.\label{alpha_def2}
\end{alignat}

The number $\alpha_j(L,\lambda^{\textrm{in}}) > 0$ 
determines the closest point 
$v_{\lambda^{\textrm{in}}}+\alpha_j(L,\lambda^{\textrm{in}}) r^j$ 
(if any) to $v_{\lambda^{\textrm{in}}}$ 
on the halfline $\{ v_{\lambda^{\textrm{in}}}+\alpha r^j : \alpha\geq 0 \}$ 
which is \emph{not} in the interior of $L$. Observe that if 
$\{ v_{\lambda^{\textrm{in}}}+\alpha r^j : \alpha\geq 0 \} \subseteq \intt(L)$, 
then $\alpha_j(L,\lambda^{\textrm{in}})=+\infty$. When 
$\alpha_j(L,\lambda^{\textrm{in}}) < +\infty$, 
the point $v_{\lambda^{\textrm{in}}}+$ 
$\alpha_j(L,\lambda^{\textrm{in}}) r^j$ is called an 
\emph{intersection point}. 

The value $\alpha_j(L,\lambda^{\textrm{in}})$ 
is a function of $\lambda^{\textrm{in}}$. This function 
has the following important property. Given any 
convex set $C \subseteq \R^{n+1}$, it is well known 
(see Rockafellar \cite{Rockafellar}) that the function 
$f : \R^n \rightarrow \R$ defined by 
$$f(x):=\sup\{ \mu : (x,\mu) \in C \}$$
is a concave function. Now, given any $\lambda^{\textrm{in}} \in \Lambda^{\textrm{in}}(L)$ 
and $j \in E$, we may write 
$$\alpha_j(L,\lambda^{\textrm{in}})=\sup\{ \alpha : (\lambda^{\textrm{in}},\alpha) \in \tilde{P}(L) \},$$
where $\tilde{P}(L)$ is the convex polyhedron 
$\tilde{P}(L):=\{ (\lambda^{\textrm{in}},\alpha) \in \R^{|V^{\textrm{in}}(L)|+1} : v_{\lambda^{\textrm{in}}}+\alpha r^j \in L \}$. We therefore have that 
the function 
$\alpha_j(L,\lambda^{\textrm{in}})$ has the following property.
\begin{lemma}\label{alpha_concave}
Let $L$ be a mixed integer split polyhedron satisfying 
$V^{\textrm{in}}(L) \neq \emptyset$, and let $j \in E$. 
The function $\alpha_j(L,\lambda^{\textrm{in}})$ is concave 
in $\lambda^{\textrm{in}}$, 
{\it i.e.}, for every $\lambda^1, \lambda^2 \in \Lambda^{\textrm{in}}(L)$ and 
$\mu \in [0,1]$, we have 
$\alpha_j(L,\mu \lambda^1 + (1-\mu)\lambda^2) \geq$
$\mu \alpha_j(L,\lambda^1)+(1-\mu) \alpha_j(L,\lambda^2)$.
\end{lemma}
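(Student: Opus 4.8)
The plan is to read the lemma off the general fact, recalled immediately before the statement, that for any convex set $C$ in a Euclidean space the function $f(x) := \sup\{\mu : (x,\mu) \in C\}$ is concave. The paper has already done the key reformulation: $\alpha_j(L,\lambda^{\textrm{in}}) = \sup\{\alpha : (\lambda^{\textrm{in}},\alpha) \in \tilde{P}(L)\}$, with $\tilde{P}(L) := \{(\lambda^{\textrm{in}},\alpha) : v_{\lambda^{\textrm{in}}} + \alpha r^j \in L\}$. So the only things left to check are (a) that $\tilde{P}(L)$ is convex, and (b) the bookkeeping needed because $\alpha_j(L,\cdot)$ may take the value $+\infty$.

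For (a): the map $(\lambda^{\textrm{in}},\alpha) \mapsto v_{\lambda^{\textrm{in}}} + \alpha r^j = \sum_{i \in V^{\textrm{in}}(L)} \lambda^{\textrm{in}}_i v^i + \alpha r^j$ is affine, and $L$ is a polyhedron, so $\tilde{P}(L)$ is the preimage of $L$ under an affine map, hence again a polyhedron and in particular convex. Invoking the general fact with $C := \tilde{P}(L)$ then gives concavity of $\alpha_j(L,\cdot)$; here one also uses that $\Lambda^{\textrm{in}}(L)$ is convex, so that $\mu\lambda^1 + (1-\mu)\lambda^2$ stays in the domain. Spelled out: for $\lambda^1,\lambda^2 \in \Lambda^{\textrm{in}}(L)$, $\mu \in [0,1]$, and any feasible $\alpha^1, \alpha^2$ (that is, $(\lambda^1,\alpha^1),(\lambda^2,\alpha^2) \in \tilde{P}(L)$), convexity of $\tilde{P}(L)$ gives $(\mu\lambda^1 + (1-\mu)\lambda^2, \, \mu\alpha^1 + (1-\mu)\alpha^2) \in \tilde{P}(L)$, hence $\alpha_j(L,\mu\lambda^1 + (1-\mu)\lambda^2) \geq \mu\alpha^1 + (1-\mu)\alpha^2$; taking the supremum over $\alpha^1$ and then over $\alpha^2$ yields the claimed inequality.

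For (b): since $\intt(L)$ is convex and contains every $v^i$ with $i \in V^{\textrm{in}}(L)$, we have $v_{\lambda^{\textrm{in}}} \in \intt(L) \subseteq L$, so $\alpha = 0$ is always feasible and $\alpha_j(L,\lambda^{\textrm{in}}) \geq 0$ is never $-\infty$; the only extended-real value to handle is $+\infty$. If $\alpha_j(L,\lambda^1) = +\infty$ and $\mu > 0$, then in the displayed inequality $\alpha^1$ may be chosen arbitrarily large, forcing $\alpha_j(L,\mu\lambda^1 + (1-\mu)\lambda^2) = +\infty$ and the stated inequality holds; the cases $\mu = 0$ and $\alpha_j(L,\lambda^2) = +\infty$ are trivial or symmetric. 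I do not expect a genuine obstacle — the lemma is essentially the concavity of a support-type function, pulled back along an affine map, and the finiteness bookkeeping above is the only mild subtlety.
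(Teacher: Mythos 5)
Your proof is correct and follows exactly the route the paper takes: the paper proves this lemma precisely by observing that $\alpha_j(L,\lambda^{\textrm{in}})=\sup\{\alpha : (\lambda^{\textrm{in}},\alpha)\in\tilde{P}(L)\}$ with $\tilde{P}(L)$ a convex polyhedron, and invoking the standard fact that such a partial supremum of a convex set is concave. Your additional checks (convexity of $\tilde{P}(L)$ as an affine preimage of $L$, and the $+\infty$ bookkeeping) are the details the paper leaves implicit, and they are handled correctly.
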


Given a convex combination 
$\lambda^{\textrm{in}} \in \Lambda^{\textrm{in}}(L)$, 
and a vertex $k \in V^{\textrm{out}}(L)$, the line 
between $v_{\lambda^{\textrm{in}}}$ and $v^k$ 
intersects the boundary of $L$. 
For $k \in V^{\textrm{out}}(L)$ and 
$\lambda^{\textrm{in}} \in \Lambda^{\textrm{in}}(L)$, 
define  
\begin{alignat}{2} \label{beta_def2}
\beta_k(L,\lambda^{\textrm{in}}) & := \sup\{ \beta : v_{\lambda^{\textrm{in}}}+\beta(v^k-v_{\lambda^{\textrm{in}}}) \in L \}.
\end{alignat}
\noindent The number ${\beta}_k(L,\lambda^{\textrm{in}})$ denotes 
the value of $\beta$ for which the point 
$v_{\lambda^{\textrm{in}}}+\beta (v^k-v_{\lambda^{\textrm{in}}})$ 
is on the boundary of $L$. The point 
$v_{\lambda^{\textrm{in}}}+\beta (v^k-v_{\lambda^{\textrm{in}}})$ is 
also called an intersection point, and we 
observe that 
$\beta_k(L,\lambda^{\textrm{in}}) \in ]0,1]$. The intersection point 
$v_{\lambda^{\textrm{in}}}+\beta_k(L,\lambda^{\textrm{in}}) (v^k-v_{\lambda^{\textrm{in}}})$ 
has the following important property.
\begin{lemma}\label{beta_properties}
Let $L$ be a mixed integer split polyhedron satisfying 
$V^{\textrm{in}}(L) \neq \emptyset$, and let $k \in V^{\textrm{out}}(L)$. 
For every $\lambda^{\textrm{in}} \in \Lambda^{\textrm{in}}(L)$, the intersection 
point 
$v_{\lambda^{\textrm{in}}}+\beta_k(L,\lambda^{\textrm{in}}) (v^k-v_{\lambda^{\textrm{in}}})$ 
is a convex combination of $v^k$ and the intersection points 
$v^i+\beta_k(L,e^i) (v^k-v^i)$ 
for $i \in V^{\textrm{in}}(L)$. 
\end{lemma}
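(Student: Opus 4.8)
The plan is to follow the pattern of Lemma~\ref{beta_prime_properties}: write the intersection point $\bar{x} := v_{\lambda^{\textrm{in}}} + \beta_k(L,\lambda^{\textrm{in}})(v^k - v_{\lambda^{\textrm{in}}})$ explicitly as a convex combination of $v^k$ and the points $y^i := v^i + \beta_k(L,e^i)(v^k - v^i)$ for $i \in V^{\textrm{in}}(L)$. Abbreviate $\beta^* := \beta_k(L,\lambda^{\textrm{in}})$ and $\beta_i := \beta_k(L,e^i)$, so $\bar{x} = (1-\beta^*)v_{\lambda^{\textrm{in}}} + \beta^* v^k$ and $y^i = (1-\beta_i)v^i + \beta_i v^k$. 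First I would dispose of the trivial case $v^k \in L$: then, since $v^i \in \intt(L)$ for $i \in V^{\textrm{in}}(L)$ and $L$ is closed and convex, every segment $[v^i,v^k]$ and the segment $[v_{\lambda^{\textrm{in}}},v^k]$ lie in $L$, so $\beta^* = \beta_i = 1$, $\bar{x} = v^k$, and there is nothing to prove. Hence I may assume $v^k \notin L$, which forces $\beta^*,\beta_i \in \,]0,1[$ (the strict upper bounds because $v^k \notin L$ and $L$ is closed, the strict lower bounds because $v_{\lambda^{\textrm{in}}}$ and the $v^i$ lie in $\intt(L)$, as noted in the text).

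Next I would define the candidate coefficients $\mu_i := \frac{(1-\beta^*)\lambda^{\textrm{in}}_i}{1-\beta_i} \ge 0$ for $i \in V^{\textrm{in}}(L)$ and $\mu_0 := 1 - \sum_{i \in V^{\textrm{in}}(L)}\mu_i$. Using the identity $\mu_i(1-\beta_i) = (1-\beta^*)\lambda^{\textrm{in}}_i$ one checks directly that $\mu_0 v^k + \sum_{i \in V^{\textrm{in}}(L)}\mu_i y^i = (1-\beta^*)v_{\lambda^{\textrm{in}}} + \beta^* v^k = \bar{x}$ and that these coefficients are nonnegative and sum to one, provided $\mu_0 \ge 0$. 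So the whole proof reduces to establishing the single inequality $\frac{\beta^*}{1-\beta^*} \ge \sum_{i \in V^{\textrm{in}}(L)} \lambda^{\textrm{in}}_i \frac{\beta_i}{1-\beta_i}$, which is exactly $\mu_0 \ge 0$ after rearrangement.

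To prove that inequality I would show that $\lambda^{\textrm{in}} \mapsto \frac{\beta_k(L,\lambda^{\textrm{in}})}{1-\beta_k(L,\lambda^{\textrm{in}})}$ is concave on $\Lambda^{\textrm{in}}(L)$, by the same homogenisation argument used for Lemma~\ref{alpha_concave}. Let $\hat{L} := \cone(L \times \{1\}) \subseteq \R^{n+1}$, which is a convex cone. Substituting $t := \beta/(1-\beta)$, one has $(v_{\lambda^{\textrm{in}}},1) + t(v^k,1) = (1+t)\bigl((1-\beta)v_{\lambda^{\textrm{in}}} + \beta v^k,\;1\bigr)$, so, because $\hat{L}$ is a cone, the point $(1-\beta)v_{\lambda^{\textrm{in}}} + \beta v^k$ lies in $L$ if and only if $(v_{\lambda^{\textrm{in}}},1) + t(v^k,1) \in \hat{L}$. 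Consequently $\frac{\beta_k(L,\lambda^{\textrm{in}})}{1-\beta_k(L,\lambda^{\textrm{in}})} = \sup\{\,t : (\lambda^{\textrm{in}},t) \in \hat{P}\,\}$, where $\hat{P} := \{(\lambda^{\textrm{in}},t) \in \R^{|V^{\textrm{in}}(L)|+1} : (v_{\lambda^{\textrm{in}}},1) + t(v^k,1) \in \hat{L}\}$ is convex, being the preimage of the convex set $\hat{L}$ under the map $(\lambda^{\textrm{in}},t) \mapsto (\sum_i \lambda^{\textrm{in}}_i v^i + t v^k,\;1+t)$, which is affine in $(\lambda^{\textrm{in}},t)$ because the direction $(v^k,1)$ does not depend on $\lambda^{\textrm{in}}$. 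By the fact from Rockafellar~\cite{Rockafellar} already invoked for Lemma~\ref{alpha_concave}, this supremum is concave in $\lambda^{\textrm{in}}$; evaluating concavity at $\lambda^{\textrm{in}} = \sum_i \lambda^{\textrm{in}}_i e^i$ yields $\frac{\beta^*}{1-\beta^*} \ge \sum_i \lambda^{\textrm{in}}_i \frac{\beta_i}{1-\beta_i}$ (each term being finite precisely because $v^k \notin L$ gives $\beta_i < 1$), which is what remained.

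The main obstacle is the homogenisation: the naive map $(\lambda^{\textrm{in}},\beta) \mapsto (1-\beta)v_{\lambda^{\textrm{in}}} + \beta v^k$ is bilinear in $(\lambda^{\textrm{in}},\beta)$, not affine, so the concavity result of Lemma~\ref{alpha_concave} cannot be applied to it directly; the change of variable $t = \beta/(1-\beta)$ combined with passing to the cone $\hat{L}$ is what simultaneously makes the base point affine in $\lambda^{\textrm{in}}$ and keeps the ray direction fixed. The only other point requiring care is to separate out the case $v^k \in L$ at the start, since otherwise $1-\beta_i = 0$ and the quantities $\mu_i$ and $\beta_i/(1-\beta_i)$ are undefined.
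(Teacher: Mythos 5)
Your proof is correct, but it follows a genuinely different route from the paper's. The paper defines $C:=\conv(\{ v^i+\beta_k(L,e^i)(v^k-v^i) \}_{i \in V^{\textrm{in}}(L)})$, uses an LP-duality (Farkas) argument to show that the halfline from $v_{\lambda^{\textrm{in}}}$ toward $v^k$ must meet $C$ at some parameter $\beta^*$, and then argues geometrically that the actual intersection point with $\partial L$ lies in $\conv(C \cup \{v^k\})$ according to whether the meeting point is interior to $L$ or on its boundary. You instead write down the convex multipliers explicitly, $\mu_i = (1-\beta^*)\lambda^{\textrm{in}}_i/(1-\beta_i)$ and $\mu_0 = 1-\sum_i \mu_i$, and reduce the whole lemma to the single inequality $\beta^*/(1-\beta^*) \geq \sum_i \lambda^{\textrm{in}}_i\,\beta_i/(1-\beta_i)$, which you obtain as a concavity statement for $\lambda^{\textrm{in}} \mapsto \beta_k(L,\lambda^{\textrm{in}})/(1-\beta_k(L,\lambda^{\textrm{in}}))$ via the homogenization $\hat L = \cone(L\times\{1\})$ and the substitution $t=\beta/(1-\beta)$. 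Both arguments are sound; I checked that your multipliers reproduce $\bar x$ and sum to one, that the map $(\lambda^{\textrm{in}},t)\mapsto(v_{\lambda^{\textrm{in}}}+tv^k,\,1+t)$ is indeed affine so that $\hat P$ is convex, and that the slice of $\cone(L\times\{1\})$ at height one is exactly $L$, so no closedness of $\hat L$ is needed. Your version is more constructive and has the conceptual advantage of exhibiting $\beta_k/(1-\beta_k)$ as the exact analogue of $\alpha_j$ under a projective change of coordinates sending $v^k$ to infinity, which unifies this lemma with Lemma \ref{alpha_concave}; the cost is the preliminary case split on $v^k \in L$ (needed to keep $1-\beta_i$ away from zero), which the paper's convex-hull argument avoids entirely.
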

\begin{proof}
Define 
$C:=\conv(\{ v^i+\beta_k(L,e^i) (v^k-v^i) {\}}_{i \in V^{\textrm{in}}(L)})$. 
We first show that the halfline 
$\{ v_{\lambda^{\textrm{in}}} + \beta (v^k - v_{\lambda^{\textrm{in}}}) : \beta \geq 0 \}$ 
intersects $C$ for some $\beta^* > 0$. We have that 
$\{ v_{\lambda^{\textrm{in}}} + \beta (v^k - v_{\lambda^{\textrm{in}}}) : \beta \geq 0 \} \cap C \neq \emptyset$ if and only if the following $\textrm{LP}$ is feasible.
\begin{alignat}{2}
\min & \,\,\, 0 & \notag\\
& \sum_{i \in V^{\textrm{in}}(L)} \eta_i (v^i+\beta_k(L,e^i) (v^k - v^i))+\beta (v_{\lambda^{\textrm{in}}} - v^k) = v_{\lambda^{\textrm{in}}},\label{beta_proof1}\\
& \sum_{i \in V^{\textrm{in}}(L)} \eta_i = 1,\label{beta_proof2}\\
& \eta,\beta \geq 0.\label{beta_proof3}
\end{alignat}
The dual of this $\textrm{LP}$ is given by
\begin{alignat}{2}
\max & \,\,\, \delta^T v_{\lambda^{\textrm{in}}} - \delta_0 & \notag\\
& \delta^T (v_{\lambda^{\textrm{in}}}-v^k) \leq 0,\label{beta_proof4}\\
& \delta^T (v^i+\beta_k(L,e^i) (v^k - v^i)) - \delta_0 \leq 0,\textrm{ for all }i \in V^{\textrm{in}}(L).\label{beta_proof5}
\end{alignat}
Let $(\bar{\delta},\bar{\delta}_0)$ be a solution to 
(\ref{beta_proof4})-(\ref{beta_proof5}). Suppose, for a 
contradiction, that $\bar{\delta}^T v_{\lambda^{\textrm{in}}} - \bar{\delta}_0 > 0$. 
Adding (\ref{beta_proof4}) to the inequality of (\ref{beta_proof5}) 
corresponding to $\bar{i} \in V^{\textrm{in}}(L)$ gives 
$\bar{\delta}^T v_{\lambda^{\textrm{in}}} - \bar{\delta}_0+(1-\beta_k(L,e^{\bar{i}}))\bar{\delta}^T (v^{\bar{i}}-v^k) \leq 0$. 
Since by assumption $\bar{\delta}^T v_{\lambda^{\textrm{in}}} - \bar{\delta}_0 > 0$, this 
implies $\bar{\delta}^T (v^{\bar{i}}-v^k) < 0$. Hence we have $\bar{\delta}^T (v^i-v^k) < 0$ 
for all $i \in V^{\textrm{in}}(L)$. Now, for all $i \in V^{\textrm{in}}(L)$, 
inequality (\ref{beta_proof5}) gives $\bar{\delta}_0-\bar{\delta}^T v^i \geq$
$\beta_k(L,e^i) \bar{\delta}^T (v^k - v^i)$. Since 
$\bar{\delta}^T (v^k - v^i) > 0$ for all $i \in V^{\textrm{in}}(L)$, this 
implies $\bar{\delta}_0-\bar{\delta}^T v^i > 0$ for all $i \in V^{\textrm{in}}(L)$. 
Multiplying each of the inequalities 
$\bar{\delta}_0-\bar{\delta}^T v^i > 0$ for $i \in V^{\textrm{in}}(L)$ 
with $\lambda^{\textrm{in}}_i$ and adding the resulting inequalities together then gives 
$\bar{\delta}_0-\bar{\delta}^T v_{\lambda^{\textrm{in}}} > 0$. This 
contradicts our initial assumption that 
$\bar{\delta}^T v_{\lambda^{\textrm{in}}} - \bar{\delta}_0 > 0$.

Therefore there exists $\beta^* \geq 0$ s.t. 
$v_{\lambda^{\textrm{in}}}+\beta^* (v^k-v_{\lambda^{\textrm{in}}}) \in C$. 
Observe that, since $v^i+\beta_k(L,e^i) (v^k-v^i) \in L$ for all 
$i \in V^{\textrm{in}}(L)$, we have 
$v_{\lambda^{\textrm{in}}}+\beta^* (v^k-v_{\lambda^{\textrm{in}}}) \in L$. 
If 
$v_{\lambda^{\textrm{in}}} + \beta^* (v^k - v_{\lambda^{\textrm{in}}}) \in \intt(L)$, 
then $\beta_k(L,\lambda^{\textrm{in}}) > \beta^*$, and therefore 
$v_{\lambda^{\textrm{in}}}+$
$\beta_k(L,\lambda^{\textrm{in}}) (v^k-v_{\lambda^{\textrm{in}}}) \in \conv(C \cup \{ v^k \})$. 
If $v_{\lambda^{\textrm{in}}} + \beta^* (v^k - v_{\lambda^{\textrm{in}}})$ is on 
the boundary of $L$, then $\beta_k(L,\lambda^{\textrm{in}}) = \beta^*$, which 
implies 
$v_{\lambda^{\textrm{in}}}+\beta_k(L,\lambda^{\textrm{in}})
(v^k-v_{\lambda^{\textrm{in}}}) \in \conv(C \cup \{ v^k \})$.
\end{proof}

\begin{figure}
\centering
\mbox{\subfigure[The polytope $P$ and the split polyhedron $L$ from Figure
\ref{first_fig}]{\includegraphics[width=5cm]{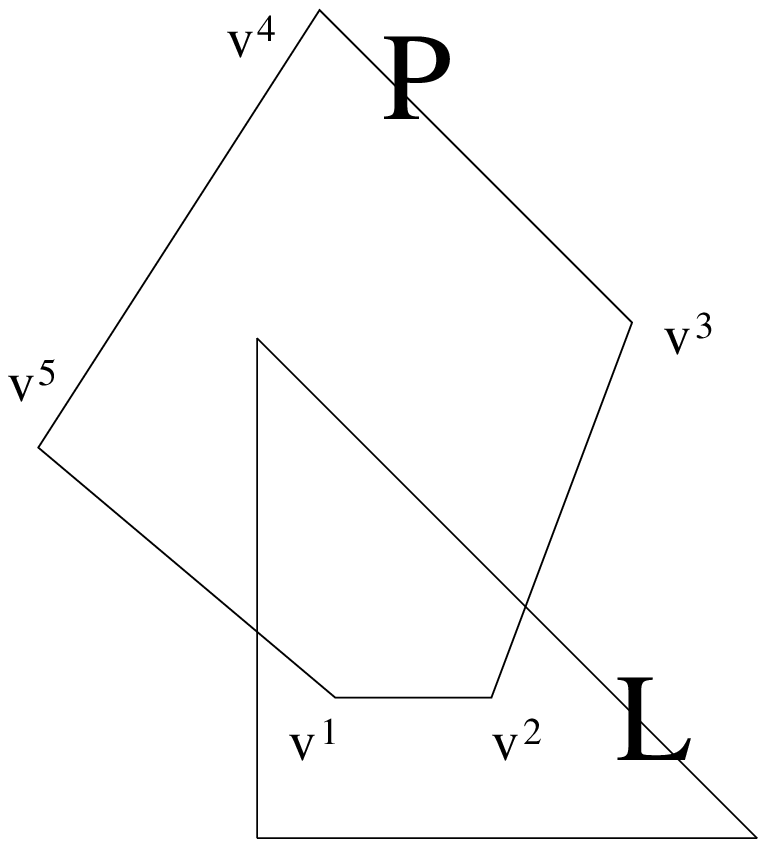}}\quad
\subfigure[Intersection points from $v^1$]{\includegraphics[width=5cm]{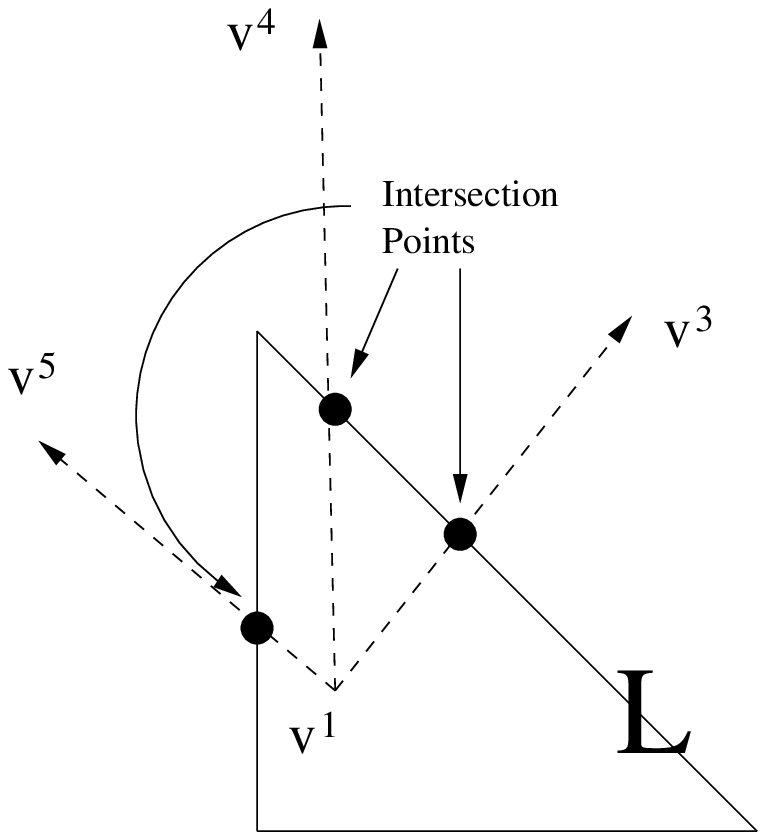}}
\subfigure[Intersection points from $v^2$]{\includegraphics[width=5cm]{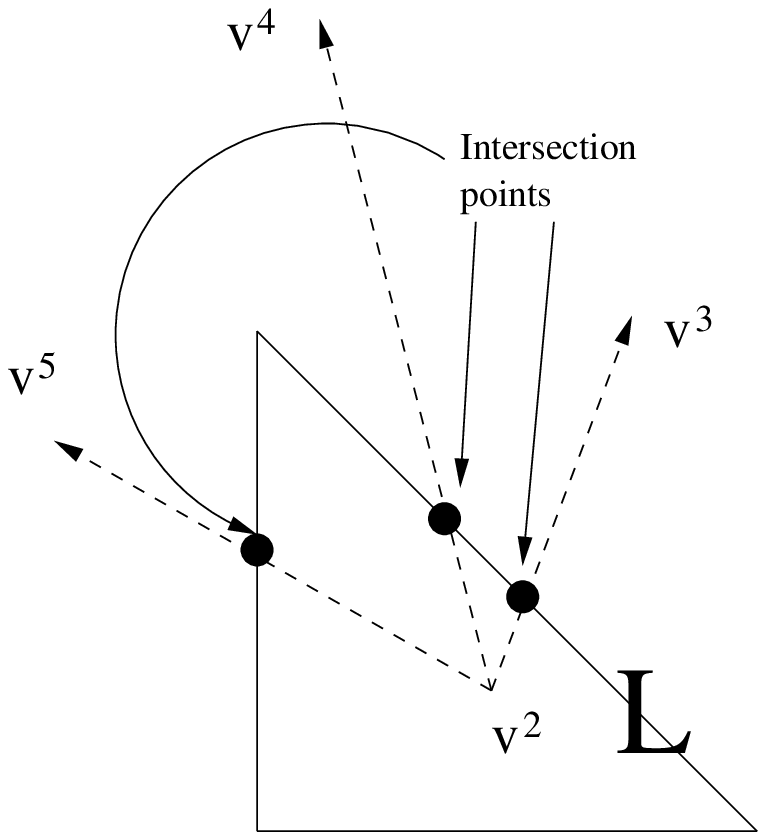}}}
\caption{Determining the intersection points from a linear relaxation $P$ and a split
polyhedron $L$}
\label{inter_points}
\end{figure}

Lemma \ref{beta_properties} shows that the only intersection 
points of the form
$v_{\lambda^{\textrm{in}}}+\beta_k(L,\lambda^{\textrm{in}})
(v^k-v_{\lambda^{\textrm{in}}})$ that can be vertices of 
$R(L,P)$ are those where $\lambda^{\textrm{in}}$ is a 
unit vector. Figure \ref{inter_points} gives all the 
intersection points which can potentially be vertices 
of $R(L,P)$ for the example of Figure \ref{first_fig}.

\subsection{The intersection cut}

In \cite{balasint}, Balas considered a mixed integer set defined 
from the translate of a polyhedral cone, and 
a mixed integer split polyhedron was used to 
derive a valid inequality for this set called 
the \emph{intersection cut}. We 
now consider a subset $P(\lambda^{\textrm{in}})$ 
of $P$ defined from a fixed convex combination 
$\lambda^{\textrm{in}} \in \Lambda^{\textrm{in}}(L)$ of 
the vertices in the interior of $L$, and we show that 
the intersection cut gives a complete 
description of the set $R(L,P(\lambda^{\textrm{in}}))$ 
in a higher dimensional space. Specifically, 
given any fixed convex combination 
$\lambda^{\textrm{in}} \in \Lambda^{\textrm{in}}(L)$, 
we have the following subset $P(\lambda^{\textrm{in}})$ of $P$ 
$$P(\lambda^{\textrm{in}})=\{ x \in \R^n : x= v_{\lambda^{\textrm{in}}}+\sum_{k \in V^{\textrm{out}}(L)} \epsilon_k (v^k-v_{\lambda^{\textrm{in}}})+\sum_{j \in E} \mu_j r^j, \mu \geq 0 \textrm{ and } \epsilon \in \Lambda^{\textrm{out}}_{\leq}\},$$
where $\Lambda^{\textrm{out}}_{\leq}:=$
$\{ \lambda \in \R^{|V|} : \sum_{k \in V^{\textrm{out}}} \lambda_k \leq 1\textrm{ and }\lambda\geq 0 \}$. The corresponding lifted image $P^l(\lambda^{\textrm{in}})$ of 
$P(\lambda^{\textrm{in}})$ in $(x,\epsilon,\mu)$ space is given by 
$$P^l(\lambda^{\textrm{in}})=\{ (x,\epsilon,\mu) \in \R^{n+|V|+|E|} : x= v_{\lambda^{\textrm{in}}}+\sum_{k \in V^{\textrm{out}}(L)} \epsilon_k (v^k-v_{\lambda^{\textrm{in}}})+\sum_{j \in E} \mu_j r^j, \mu \geq 0 \textrm{ and } \epsilon \in \Lambda^{\textrm{out}}_{\leq}\}.$$

The set $P(\lambda^{\textrm{in}})$ and the mixed integer 
split polyhedron $L$ gives a relaxation 
$R(L,P(\lambda^{\textrm{in}}))$ of the set of mixed integer 
points in $P(\lambda^{\textrm{in}})$ 
$$R(L,P(\lambda^{\textrm{in}}))=\conv( \{ x \in P(\lambda^{\textrm{in}}) : x \notin \intt(L) \}).$$
The lifted version 
$R^l(L,P(\lambda^{\textrm{in}}))$ of $R(L,P(\lambda^{\textrm{in}}))$ 
in $(x,\epsilon,\mu)$ space is then defined to be the 
set 
$R^l(L,P(\lambda^{\textrm{in}})):=$
$\conv( \{ (x,\epsilon,\mu) \in P^l(\lambda^{\textrm{in}}) :$
$x \notin \intt(L) \})$. 
Given $\lambda^{\textrm{in}} \in \Lambda^{\textrm{in}}(L)$, 
and the corresponding intersection points, Balas \cite{balasint} derived the 
\emph{intersection cut}
\begin{alignat}{2}
\sum_{j \in E} \frac{\mu_j}{\alpha_j(L,\lambda^{\textrm{in}})}+ \sum_{k \in V^{\textrm{out}}(L)} \frac{\epsilon_k}{\beta_k(L,\lambda^{\textrm{in}})}& \geq 1\label{int_cut}
\end{alignat}
and showed that the intersection cut is valid for $R^l(L,P(\lambda^{\textrm{in}}))$. 
We now show that, in fact, the intersection cut gives a 
complete description of $R^l(L,P(\lambda^{\textrm{in}}))$.
\begin{theorem}\label{fixed_lambda_lem}
Let $L$ be a mixed integer split polyhedron 
satisfying $V^{\textrm{in}}(L) \neq \emptyset$, and let 
$\lambda^{\textrm{in}} \in \Lambda^{\textrm{in}}(L)$. 
$$R^l(L,P(\lambda^{\textrm{in}}))=\{ (x,\epsilon,\mu) \in P^l(\lambda^{\textrm{in}}) : {\sum_{j \in E}} \frac{\mu_j}{\alpha_j(L,\lambda^{\textrm{in}})}+ \sum_{k \in V^{\textrm{out}}(L)} \frac{\epsilon_k}{\beta_k(L,\lambda^{\textrm{in}})} \geq 1\}.$$
\end{theorem}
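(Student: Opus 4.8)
The statement is an equality of two sets, and I would prove the two inclusions separately. The inclusion ``$\subseteq$'' is immediate from what precedes the theorem: Balas showed that the intersection cut~(\ref{int_cut}) is valid for $R^l(L,P(\lambda^{\textrm{in}}))$, and $R^l(L,P(\lambda^{\textrm{in}}))\subseteq P^l(\lambda^{\textrm{in}})$ by definition, so every point of $R^l(L,P(\lambda^{\textrm{in}}))$ lies in the right-hand set. Before attacking the reverse inclusion I would record two facts for later use. First, $v_{\lambda^{\textrm{in}}}\in\intt(L)$, since $v_{\lambda^{\textrm{in}}}$ is a convex combination of the vertices $v^i$ with $i\in V^{\textrm{in}}(L)$, all of which lie in $\intt(L)$; consequently $\alpha_j(L,\lambda^{\textrm{in}})>0$ and $\beta_k(L,\lambda^{\textrm{in}})\in \mbox{}]0,1]$. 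Second, $R^l(L,P(\lambda^{\textrm{in}}))$ is itself a polyhedron: it is exactly $R(L',P^l(\lambda^{\textrm{in}}))$ in the sense of Lemma~\ref{poly_lem}, where $L':=\{(x,\epsilon,\mu):x\in L\}$ is full dimensional with recession cone the linear space $0^+(L)\times\R^{|V|+|E|}$ (recall that $0^+(L^x)$ is a linear space, Lemma~\ref{inclusion_property_unbounded}); in particular $R^l(L,P(\lambda^{\textrm{in}}))$ is closed and stable under addition of its recession directions, which I would use freely below.

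For the reverse inclusion, take $(\bar x,\bar\epsilon,\bar\mu)\in P^l(\lambda^{\textrm{in}})$ with $\Sigma:=\sum_{j\in E}\bar\mu_j/\alpha_j(L,\lambda^{\textrm{in}})+\sum_{k\in V^{\textrm{out}}(L)}\bar\epsilon_k/\beta_k(L,\lambda^{\textrm{in}})\ge 1$ (a term with $\alpha_j(L,\lambda^{\textrm{in}})=+\infty$ read as $0$); I would exhibit it as a convex combination, plus a nonnegative combination of recession directions, of the following elements of $R^l(L,P(\lambda^{\textrm{in}}))$, each of which lies there because its $x$-component avoids $\intt(L)$ (using the definitions of the intersection parameters together with $v_{\lambda^{\textrm{in}}}\in\intt(L)$): the vertices $A_k:=(v^k,e^k,0)$ for $k\in V^{\textrm{out}}(L)$; the intersection points $B_k:=(v_{\lambda^{\textrm{in}}}+\beta_k(L,\lambda^{\textrm{in}})(v^k-v_{\lambda^{\textrm{in}}}),\ \beta_k(L,\lambda^{\textrm{in}})e^k,\ 0)$ for $k\in V^{\textrm{out}}(L)$; and the points $E_j(s):=(v_{\lambda^{\textrm{in}}}+s\,r^j,\ 0,\ s\,e^j)$ for $j\in E$ and $s\ge\alpha_j(L,\lambda^{\textrm{in}})$. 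The recession directions used are the $(r^j,0,e^j)$, $j\in E$; these are recession directions of $R^l(L,P(\lambda^{\textrm{in}}))$, via the ray $\{E_j(\alpha_j(L,\lambda^{\textrm{in}})+t):t\ge0\}$ when $\alpha_j(L,\lambda^{\textrm{in}})<+\infty$, and via $r^j\in 0^+(L)$ when $\alpha_j(L,\lambda^{\textrm{in}})=+\infty$. The construction proceeds in two steps. First, for each $j\in E$ with $\alpha_j(L,\lambda^{\textrm{in}})=+\infty$ and $\bar\mu_j>0$ I subtract $\bar\mu_j(r^j,0,e^j)$; this stays in $P^l(\lambda^{\textrm{in}})$, does not change $\Sigma$, and (since $R^l(L,P(\lambda^{\textrm{in}}))$ absorbs such directions) reduces the problem to the case where $\bar\mu_j>0$ implies $\alpha_j(L,\lambda^{\textrm{in}})<+\infty$. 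Second, I split each coordinate: write $\bar\epsilon_k e^k=g_ke^k+h_k\beta_k(L,\lambda^{\textrm{in}})e^k$ as a mixture of $A_k$ and $B_k$ with $g_k+h_k\beta_k(L,\lambda^{\textrm{in}})=\bar\epsilon_k$, and $\bar\mu_je^j=m_js_je^j$ as a multiple of $E_j(s_j)$ with $m_js_j=\bar\mu_j$ and $s_j\ge\alpha_j(L,\lambda^{\textrm{in}})$. Since $\beta_k(L,\lambda^{\textrm{in}})\le1$, the total weight $\sum_k(g_k+h_k)+\sum_j m_j$ equals $\Sigma\ge1$ when $g_k=0$ and $s_j=\alpha_j(L,\lambda^{\textrm{in}})$, and decreases continuously to $\sum_k\bar\epsilon_k\le1$ as the $g_k$ are raised to $\bar\epsilon_k$ and the $s_j$ to $+\infty$; an intermediate-value argument then produces a choice of parameters for which the total weight of the bounded points is exactly $1$, the leftover being carried by recession directions $(r^j,0,e^j)$. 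Because the $x$-coordinate is a fixed affine function of $(\epsilon,\mu)$ on $P^l(\lambda^{\textrm{in}})$, matching the $\epsilon$- and $\mu$-coordinates of the combination with $(\bar\epsilon,\bar\mu)$ automatically matches the $x$-coordinate with $\bar x$, so the combination equals $(\bar x,\bar\epsilon,\bar\mu)$ and hence $(\bar x,\bar\epsilon,\bar\mu)\in R^l(L,P(\lambda^{\textrm{in}}))$.

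The delicate part will be the bookkeeping of the second step together with the degenerate configurations it must cover — notably $\sum_{k\in V^{\textrm{out}}(L)}\bar\epsilon_k=1$ and indices with infinite intersection parameters — which is precisely where the recorded facts ($v_{\lambda^{\textrm{in}}}\in\intt(L)$, $0^+(L)$ a linear space, and $R^l(L,P(\lambda^{\textrm{in}}))$ a closed polyhedron) do the work. A cleaner variant of the same idea, which I would also consider, is to observe that the right-hand side $Y$ is a polyhedron and to identify its faces directly: the only vertices of $P^l(\lambda^{\textrm{in}})$ satisfying the cut are the $A_k$; every vertex of $Y$ lying on the cut hyperplane is forced, by the active bound constraints of $P^l(\lambda^{\textrm{in}})$ together with the cut, to be one of the $B_k$ or $E_j(\alpha_j(L,\lambda^{\textrm{in}}))$; and the extreme rays of $Y$ are the $(r^j,0,e^j)$. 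Since $R^l(L,P(\lambda^{\textrm{in}}))$ is a polyhedron containing all these points and ray directions, $Y\subseteq R^l(L,P(\lambda^{\textrm{in}}))$, which together with the inclusion ``$\subseteq$'' proves the theorem.
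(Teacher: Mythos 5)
Your proposal is correct, and the easy inclusion and the choice of building blocks (the intersection points $v_{\lambda^{\textrm{in}}}+\beta_k(L,\lambda^{\textrm{in}})(v^k-v_{\lambda^{\textrm{in}}})$ and $v_{\lambda^{\textrm{in}}}+\alpha_j(L,\lambda^{\textrm{in}})r^j$, the vertices $v^k$ with $k\in V^{\textrm{out}}(L)$, and the recession directions $(r^j,0,e^j)$) coincide with the paper's. Where you differ is in how the reverse inclusion is organized: the paper splits into four cases --- cut tight; cut strict with $\sum_k\bar\epsilon_k=1$; cut strict with $0<\sum_k\bar\epsilon_k<1$; and $\sum_k\bar\epsilon_k=0$ --- handling the first by an explicit convex combination and the others by scaling $\bar x$ along the line through $v_{\lambda^{\textrm{in}}}$ until the tight case or the face $\sum_k\epsilon_k=1$ is reached. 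You instead set up a single parametrized decomposition ($g_k,h_k$ splitting each $\bar\epsilon_k$ between $A_k$ and $B_k$; $m_j,s_j,t_j$ splitting each $\bar\mu_j$ between a point $E_j(s_j)$ beyond the boundary and a pure recession term) and invoke the intermediate value theorem to drive the total weight of the bounded generators from $\Sigma\ge 1$ down to $\sum_k\bar\epsilon_k\le 1$, together with the observation that on $P^l(\lambda^{\textrm{in}})$ the $x$-coordinate is an affine function of $(\epsilon,\mu)$, so matching $(\epsilon,\mu)$ suffices. This uniform treatment absorbs all four of the paper's cases at once; the two endpoints of your interpolation are precisely the paper's cases (1) and (2), and the degenerate configuration where the infimum $\sum_k\bar\epsilon_k=1$ is approached but not attained is exactly why you need the recession slack $t_j$, which you do include. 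Your preliminary observation that $R^l(L,P(\lambda^{\textrm{in}}))$ is a closed polyhedron (via Lemma \ref{poly_lem} applied to the cylinder over $L$) is what legitimizes reading off $(r^j,0,e^j)$ as a recession direction from the contained halfline $\{E_j(\alpha_j(L,\lambda^{\textrm{in}})+t):t\ge 0\}$; the paper asserts the analogous recession facts without this detour. Your closing ``cleaner variant'' (enumerating the vertices and extreme rays of the right-hand polyhedron directly) would need an extra computation to rule out vertices supported on two $\epsilon$-coordinates with $\sum_k\epsilon_k=1$ tight, but since it is offered only as an alternative, this does not affect the correctness of your main argument.
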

\begin{proof} 
Since (\ref{int_cut}) 
is valid for $R^l(L,P(\lambda^{\textrm{in}}))$, we have 
$$R^l(L,P(\lambda^{\textrm{in}})) \subseteq \{ (x,\epsilon,\mu) \in P^l(\lambda^{\textrm{in}}) : {\sum_{j \in E}} \frac{\mu_j}{\alpha_j(L,\lambda^{\textrm{in}})}+ \sum_{k \in V^{\textrm{out}}(L)} \frac{\epsilon_k}{\beta_k(L,\lambda^{\textrm{in}})} \geq 1\}.$$
Conversely suppose 
$(\bar{x},\bar{\epsilon},\bar{\mu}) \in P^l(\lambda^{\textrm{in}})$ and 
$\sum_{j \in E} \frac{\bar{\mu}_j}{\alpha_j(L,\lambda^{\textrm{in}})}+$
$\sum_{k \in V^{\textrm{out}}(L)} \frac{\bar{\epsilon}_k}{\beta_k(L,\lambda^{\textrm{in}})} \geq 1$. We 
will show that $(\bar{x},\bar{\epsilon},\bar{\mu}) \in$ 
$R^l(L,P(\lambda^{\textrm{in}}))$. 
Define $E^{\infty}:=\{ j \in E : \alpha_j(L,\lambda^{\textrm{in}})=+\infty \}$. 
We distinguish four cases.
\begin{enumerate}
\item[(1)] First suppose 
$\sum_{j \in E} \frac{\bar{\mu}_j}{\alpha_j(L,\lambda^{\textrm{in}})}+$ 
$\sum_{k \in V^{\textrm{out}}(L)} \frac{\bar{\epsilon}_k}{\beta_k(L,\lambda^{\textrm{in}})}=1$. 
We 
can write 
\begin{alignat}{2}
\left( \begin{array}{c} \bar{x} \\ \bar{\epsilon} \\ \bar{\mu} \end{array} \right)= & \sum_{k \in V^{\textrm{out}}(L)} \bar{\eta}_k \left( \begin{array}{c} v_{\lambda^{\textrm{in}}}+\beta_k(L,\lambda^{\textrm{in}}) (v^k-v_{\lambda^{\textrm{in}}}) \\ \beta_k(L,\lambda^{\textrm{in}}) e^k \\ 0 \end{array} \right)+\notag\\
& \sum_{j \in E \setminus E^{\infty}}
\bar{\kappa}_j \left( \begin{array}{c} v_{\lambda^{\textrm{in}}}+\alpha_j(L,\lambda^{\textrm{in}}) r^j \\ 0 \\ \alpha_j(L,\lambda^{\textrm{in}}) e^j \end{array} \right)+\sum_{j \in E^{\infty}} \bar{\mu}_j \left( \begin{array}{c} r^j \\ 0 \\ e^j \end{array} \right),\notag
\end{alignat}
where
$\bar{\kappa}_j:=\frac{\bar{\mu}_j}{\alpha_j(L,\lambda^{\textrm{in}})}$ for 
$j \in E \setminus E^{\infty}$ and 
$\bar{\eta}_k:=\frac{\bar{\epsilon}_k}{\beta_k(L,\lambda^{\textrm{in}})}$ 
for $k \in V^{\textrm{out}}(L)$. 
Since $v_{\lambda^{\textrm{in}}}+$
$\alpha_j(L,\lambda^{\textrm{in}}) r^j \notin$
$\intt(L)$ for 
$j \in E \setminus E^{\infty}$, 
$v_{\lambda^{\textrm{in}}}+$
$\beta_k(L,\lambda^{\textrm{in}}) (v^k-v_{\lambda^{\textrm{in}}}) \notin$
$\intt(L)$ for $k \in V^{\textrm{out}}(L)$ and 
$(r^j,0,e^j)$ is in the 
recession cone of $R^l(L,P(\lambda^{\textrm{in}}))$ for $j \in E^{\infty}$, 
we have $(\bar{x},\bar{\epsilon},\bar{\mu}) \in R^l(L,P(\lambda^{\textrm{in}}))$.
\item[(2)] Now suppose 
$\sum_{j \in E} \frac{\bar{\mu}_j}{\alpha_j(L,\lambda^{\textrm{in}})}+$
$\sum_{k \in V^{\textrm{out}}(L)} \frac{\bar{\epsilon}_k}{\beta_k(L,\lambda^{\textrm{in}})}>1$ 
and $\sum_{k \in V^{\textrm{out}}(L)} \bar{\epsilon}_k = 1$. 
This implies $\bar{x}=\sum_{k \in V^{\textrm{out}}(L)} \bar{\epsilon}_k v^k+$
$\sum_{j \in E} \bar{\mu}_j r^j$. Since $(r^j,0,e^j)$ is in the recession cone 
of $R^l(L,P,\lambda^{\textrm{in}})$ for $j \in E$, and since 
$v^k \notin \intt(L)$ for $k \in V^{\textrm{out}}(L)$, 
we have $(\bar{x},\bar{\epsilon},\bar{\mu}) \in R^l(L,P(\lambda^{\textrm{in}}))$.
\item[(3)] Next suppose 
$\sum_{j \in E} \frac{\bar{\mu}_j}{\alpha_j(L,\lambda^{\textrm{in}})}+$
$\sum_{k \in V^{\textrm{out}}(L)} \frac{\bar{\epsilon}_k}{\beta_k(L,\lambda^{\textrm{in}})}>1$ 
and $0 < \sum_{k \in V^{\textrm{out}}(L)} \bar{\epsilon}_k < 1$. 
Let $\bar{\delta} \in ]0,1[$ be such that 
$\bar{y}:=\bar{\delta} v_{\lambda^{\textrm{in}}} + (1-\bar{\delta}) \bar{x}=$ 
$v_{\lambda^{\textrm{in}}}+$
$\sum_{k \in V^{\textrm{out}}(L)}$ 
$(1-\bar{\delta}) \bar{\epsilon}_k (v^k-v_{\lambda^{\textrm{in}}})+$
$\sum_{j \in E} (1-\bar{\delta}) \bar{\mu}_j r^j$ satisfies 
$\sum_{j \in E} \frac{(1-\bar{\delta})\bar{\mu}_j}{\alpha_j(L,\lambda^{\textrm{in}})}+$
$\sum_{k \in V^{\textrm{out}}(L)} \frac{(1-\bar{\delta})\bar{\epsilon}_k}{\beta_k(L,\lambda^{\textrm{in}})}=1$. 
It follows from (1) that 
$(\bar{y},(1-\bar{\delta})\bar{\epsilon},(1-\bar{\delta})\bar{\mu}) \in R^l(L,P(\lambda^{\textrm{in}}))$. 
Let $d:=\bar{y}-v_{\lambda^{\textrm{in}}}$, 
and consider the halfline $\{ v_{\lambda^{\textrm{in}}} + \alpha d : \alpha \geq 0 \}$. For 
$\alpha_{\bar{y}}:=1$, we have $v_{\lambda^{\textrm{in}}} + \alpha_{\bar{y}} d=\bar{y}$, and 
for $\alpha_{\bar{x}}:=\frac{1}{1-\bar{\delta}}$, we have 
$v_{\lambda^{\textrm{in}}} + \alpha_{\bar{x}} d=\bar{x}$. Consider the 
point $\bar{z}:=v_{\lambda^{\textrm{in}}} + \alpha_{\bar{z}} d$, where 
$\alpha_{\bar{z}}:=$
$\frac{1}{(1-\bar{\delta})\sum_{k \in V^{\textrm{out}}(L)} \bar{\epsilon}_k}$. 
Since 
$\sum_{k \in V^{\textrm{out}}(L)} \bar{\epsilon}_k \in ]0,1[$, 
we have $\alpha_{\bar{y}} < \alpha_{\bar{x}} < \alpha_{\bar{z}}<+\infty$. Hence 
$\bar{x}$ is a convex combination of $\bar{y}$ 
and $\bar{z}$. We may write 
$\bar{z}=v_{\lambda^{\textrm{in}}} + \alpha_{\bar{z}} d=$
$v_{\lambda^{\textrm{in}}}+$
$\sum_{k \in V^{\textrm{out}}(L)}$ 
$\alpha_{\bar{z}} (1-\bar{\delta}) \bar{\epsilon}_k (v^k - v_{\lambda^{\textrm{in}}})+$
$\sum_{j \in E} \alpha_{\bar{z}} (1-\bar{\delta}) \bar{\mu}_j r^j$. Observe that 
$\sum_{k \in V^{\textrm{out}}(L)}$ 
$\alpha_{\bar{z}} (1-\bar{\delta}) \bar{\epsilon}_k=1$. Hence we can 
write 
$\bar{z}=\sum_{k \in V^{\textrm{out}}(L)} \bar{\eta}_k v^k+$
$\sum_{j \in E} \alpha_{\bar{z}} \bar{\mu}_j r^j$, where 
$\bar{\eta}_k:=\alpha_{\bar{z}} (1-\bar{\delta}) \bar{\epsilon}_k$ for 
$k \in V^{\textrm{out}}(L)$ and 
$\sum_{k \in V^{\textrm{out}}(L)} \bar{\eta}_k =1$. 
Since $r^j$ is in the recession cone 
of $R(L,P(\lambda^{\textrm{in}}))$ for $j \in E$, and since 
$v^k \in R(L,P(\lambda^{\textrm{in}}))$ for $k \in V^{\textrm{out}}(L)$, 
we have $\bar{z} \in R(L,P(\lambda^{\textrm{in}}))$. Since $\bar{x}$ is 
a convex combination of $\bar{y} \in R(L,P(\lambda^{\textrm{in}}))$ and 
$\bar{z} \in R(L,P(\lambda^{\textrm{in}}))$, we have 
$\bar{x} \in R(L,P(\lambda^{\textrm{in}}))$.
\item[(4)] Finally suppose $\sum_{k \in V^{\textrm{out}}(L)} \bar{\epsilon}_k = 0$ 
and $\sum_{j \in E} \frac{\bar{\mu}_j}{\alpha_j(L,\lambda^{\textrm{in}})}>1$. 
As in (3), let $\bar{\delta} \in ]0,1[$ be s.t. 
$\bar{y}:=\bar{\delta} v_{\lambda^{\textrm{in}}} + (1-\bar{\delta}) \bar{x}=$ 
$v_{\lambda^{\textrm{in}}}+\sum_{j \in E} (1-\bar{\delta}) \bar{\mu}_j r^j$ satisfies 
$\sum_{j \in E} \frac{(1-\bar{\delta}) \bar{\mu}_j}{\alpha_j(L,\lambda)}=1$. 
From (1) we have 
$(\bar{y},0,(1-\bar{\delta}) \bar{\mu}) \in R^l(L,P(\lambda^{\textrm{in}}))$, 
and since $\bar{y}=\bar{\delta} v_{\lambda^{\textrm{in}}} + (1-\bar{\delta})
\bar{x}$, we have $\bar{x}=v_{\lambda^{\textrm{in}}}+$
$\bar{\sigma}(\bar{y}-v_{\lambda^{\textrm{in}}})$, where 
$\bar{\sigma}:=\frac{1}{1-\bar{\delta}}$. Since 
$(\bar{y},0,\frac{\bar{\mu}}{\bar{\sigma}}) \in$ 
$R^l(L,P(\lambda^{\textrm{in}}))$ satisfies 
$\sum_{j \in E} \frac{(1-\bar{\delta})\bar{\mu}_j}{\alpha_j(L,\lambda^{\textrm{in}})}=1$, 
(1) shows 
\begin{alignat}{2}
\left( \begin{array}{c} \bar{y} \\ 0 \\ \frac{\bar{\mu}}{\bar{\sigma}} \end{array}\right) = & \sum_{j \in E \setminus E^{\infty}} \bar{\kappa}_j \left( \begin{array}{c} v_{\lambda^{\textrm{in}}}+\alpha_j(L,\lambda^{\textrm{in}}) r^j \\ 0 \\ \alpha_j(L,\lambda^{\textrm{in}}) e^j \end{array} \right)+\sum_{j \in E^{\infty}} \bar{\gamma}_j \left( \begin{array}{c} r^j \\ 0 \\ e^j \end{array} \right),\notag
\end{alignat}
where 
$\sum_{j \in E \setminus E^{\infty}} \bar{\kappa}_j = 1$, 
$\bar{\kappa}_j \geq 0$ for $j \in E \setminus E^{\infty}$ 
and 
$\bar{\gamma}_j \geq 0$ for $j \in E^{\infty}$. We can now write 
\begin{alignat}{2}
\left( \begin{array}{c} \bar{x} \\ 0 \\ \bar{\mu} \end{array}\right) = & \sum_{j \in E \setminus E^{\infty}} \bar{\kappa}_j \left( \begin{array}{c} v_{\lambda^{\textrm{in}}}+\bar{\sigma} \alpha_j(L,\lambda^{\textrm{in}}) r^j \\ 0 \\  \bar{\sigma} \alpha_j(L,\lambda^{\textrm{in}}) e^j \end{array} \right)+\sum_{j \in E^{\infty}} \bar{\sigma}  \bar{\gamma}_j \left( \begin{array}{c} r^j \\ 0 \\ e^j \end{array} \right).\notag
\end{alignat}
\end{enumerate}
\end{proof}

\subsection{The vertices of $R(L,P)$}

The proof of Theorem \ref{fixed_lambda_lem} allows us to 
characterize the vertices of $R(L,P)$. Observe that in the 
proof of Theorem \ref{fixed_lambda_lem}, 
every point in $R(L,P(\lambda^{\textrm{in}}))$ is expressed 
in terms of intersection points, 
vertices of $P$ that are not in the interior of $L$ and the extreme 
rays $r^j$ of $R(L,P(\lambda^{\textrm{in}}))$ for $j \in E$. 
Hence the proof of 
Theorem \ref{fixed_lambda_lem} provides a characterization 
of the vertices of $R(L,P(\lambda^{\textrm{in}}))$. 
\begin{corollary}\label{vert_char_lambda}
Let $L$ be a mixed integer split polyhedron 
satisfying $V^{\textrm{in}}(L) \neq \emptyset$, 
and let $\lambda^{\textrm{in}} \in \Lambda^{\textrm{in}}(L)$. 
Define $E^{\infty}(\lambda^{\textrm{in}}):=$
$\{ j \in E : \alpha_j(L,\lambda^{\textrm{in}})=+\infty \}$.
A vertex of $R(L,P(\lambda^{\textrm{in}}))$ is of one of the 
following forms.\pagebreak
\begin{enumerate}
\item[(i)] A vertex $v^k$ of $P$, where $k \in V^{\textrm{out}}(L)$,
\item[(ii)] An intersection point $v_{\lambda^{\textrm{in}}}+$
$\beta_k(L,\lambda^{\textrm{in}})(v^k-v_{\lambda^{\textrm{in}}})$, 
where $k \in V^{\textrm{out}}(L)$, or 
\item[(iii)] An intersection point 
$v_{\lambda^{\textrm{in}}}+\alpha_j(L,\lambda^{\textrm{in}}) r^j$, where 
$j \in E \setminus E^{\infty}(\lambda^{\textrm{in}})$.
\end{enumerate}
\end{corollary}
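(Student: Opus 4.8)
The plan is to extract from the proof of Theorem \ref{fixed_lambda_lem} an explicit ``inner'' description of $R(L,P(\lambda^{\textrm{in}}))$ as the convex hull of finitely many points plus a finitely generated cone, and then to invoke the elementary fact that the vertices of such a polyhedron are among its generating points. Set
$$
\mathcal{G} := \{ v^k : k \in V^{\textrm{out}}(L) \} \cup \{ v_{\lambda^{\textrm{in}}}+\beta_k(L,\lambda^{\textrm{in}})(v^k - v_{\lambda^{\textrm{in}}}) : k \in V^{\textrm{out}}(L) \} \cup \{ v_{\lambda^{\textrm{in}}}+\alpha_j(L,\lambda^{\textrm{in}}) r^j : j \in E \setminus E^{\infty}(\lambda^{\textrm{in}}) \},
$$
so that $\mathcal{G}$ is exactly the set of points of the forms (i)--(iii), and let $R := \cone(\{ r^j \}_{j \in E})$. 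The goal is to show $R(L,P(\lambda^{\textrm{in}})) = \conv(\mathcal{G}) + R$.

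For the inclusion $\supseteq$ I would note that each element of $\mathcal{G}$ lies in $P(\lambda^{\textrm{in}})$ (take $\epsilon = e^k$, $\epsilon = \beta_k(L,\lambda^{\textrm{in}}) e^k$, or $\mu = \alpha_j(L,\lambda^{\textrm{in}}) e^j$ respectively in the definition of $P(\lambda^{\textrm{in}})$) and is not in $\intt(L)$ (a vertex in $V^{\textrm{out}}(L)$ by definition, a boundary point of $L$ in the other two cases because $v_{\lambda^{\textrm{in}}} \in \intt(L)$ and $\beta_k(L,\lambda^{\textrm{in}})$, $\alpha_j(L,\lambda^{\textrm{in}})$ are the suprema of the membership parameters); hence $\mathcal{G} \subseteq R(L,P(\lambda^{\textrm{in}}))$, and each $r^j$ is a recession direction of $R(L,P(\lambda^{\textrm{in}}))$, as already used in the proof of Theorem \ref{fixed_lambda_lem}. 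For $\subseteq$, take $\bar x \in R(L,P(\lambda^{\textrm{in}}))$. By Theorem \ref{fixed_lambda_lem}, $\bar x$ together with suitable $(\bar\epsilon,\bar\mu)$ satisfies the intersection cut, so it falls into one of the four cases of that proof. In cases (1)--(3) the displayed decompositions write $\bar x$ directly as a convex combination of points of $\mathcal{G}$ plus a nonnegative combination of the $r^j$ (in case (3) one combines the representations obtained for $\bar y$ via case (1) and for $\bar z$ directly). In case (4) the rescaled points $v_{\lambda^{\textrm{in}}}+\bar\sigma\,\alpha_j(L,\lambda^{\textrm{in}}) r^j$ that appear satisfy $v_{\lambda^{\textrm{in}}}+\bar\sigma\,\alpha_j(L,\lambda^{\textrm{in}}) r^j = \bigl(v_{\lambda^{\textrm{in}}}+\alpha_j(L,\lambda^{\textrm{in}}) r^j\bigr) + (\bar\sigma-1)\alpha_j(L,\lambda^{\textrm{in}}) r^j$ with $\bar\sigma = 1/(1-\bar\delta) \geq 1$, i.e. a type-(iii) generator plus a recession direction, so again $\bar x \in \conv(\mathcal{G}) + R$. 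This establishes the claimed description; in particular $R(L,P(\lambda^{\textrm{in}}))$ is a polyhedron.

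Finally I would apply the standard fact: every vertex of $\conv(\mathcal{G}) + R$ with $\mathcal{G}$ finite and $R$ finitely generated lies in $\mathcal{G}$. If $\bar x$ is a vertex, pick $c$ with $c^T \bar x < c^T y$ for all $y \in R(L,P(\lambda^{\textrm{in}})) \setminus \{\bar x\}$; applying this to $y = \bar x + r^j$ gives $c^T r^j > 0$ for every $j \in E$. Writing $\bar x = \sum_{g \in \mathcal{G}} \theta_g g + \sum_{j \in E} \rho_j r^j$ with $\theta_g \geq 0$, $\sum_g \theta_g = 1$, $\rho_j \geq 0$ yields $c^T \bar x = \sum_g \theta_g c^T g + \sum_j \rho_j c^T r^j \geq \sum_{g:\theta_g>0} \theta_g c^T g \geq c^T \bar x$, so equality holds throughout, forcing $c^T g = c^T \bar x$ and hence $g = \bar x$ for every $g$ with $\theta_g > 0$; since some $\theta_g > 0$, we get $\bar x \in \mathcal{G}$, i.e. $\bar x$ has one of the forms (i)--(iii). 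The only step needing genuine care is case (4) of the proof of Theorem \ref{fixed_lambda_lem}: there the intersection points along rays show up rescaled beyond the boundary of $L$, and the observation above is what keeps them inside $\conv(\mathcal{G}) + R$; the rest is bookkeeping over the four cases together with the vertex characterization of a $\conv + \cone$.
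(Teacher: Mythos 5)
Your proof is correct and follows essentially the same route as the paper: the paper derives the corollary by observing that the case analysis in the proof of Theorem \ref{fixed_lambda_lem} expresses every point of $R(L,P(\lambda^{\textrm{in}}))$ as a convex combination of the points (i)--(iii) plus a nonnegative combination of the rays $r^j$, which is exactly the decomposition $R(L,P(\lambda^{\textrm{in}}))=\conv(\mathcal{G})+\cone(\{r^j\}_{j\in E})$ you make explicit. Your additional bookkeeping (the rescaling in case (4) and the standard argument that a vertex of $\conv(\mathcal{G})+\cone(\{r^j\}_{j\in E})$ must lie in $\mathcal{G}$) just fills in details the paper leaves implicit.
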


By using the properties of $\alpha_j(L,\lambda^{\textrm{in}})$ 
and $\beta_k(L,\lambda^{\textrm{in}})$ for 
$\lambda^{\textrm{in}} \in \Lambda^{\textrm{in}}(L)$ 
given in Lemma \ref{alpha_concave} and 
Lemma \ref{beta_properties}, 
we can use Corollary \ref{vert_char_lambda} to characterize 
the vertices of $R(L,P)$. In the following, for simplicity 
let $\alpha_{i,j}(L):=\alpha_j(L,e^i)$ and 
$\beta_{i,k}(L):=\beta_k(L,e^i)$ for $i \in V^{\textrm{in}}(L)$, 
$j \in E$ and $k \in V^{\textrm{out}}(L)$. Also let 
$E^{\infty}(L):=$ 
$\{ j \in E : \alpha_{i,j}(L)=+\infty \textrm{ for some } i \in V^{\textrm{in}}(L)\}$ 
denote those extreme rays of $P$ that are also rays of $L$. 

\begin{lemma}\label{rel_descr}
Let $L$ be a mixed integer split polyhedron 
satisfying $V^{\textrm{in}}(L) \neq \emptyset$. 
Every vertex of $R(L,P)$ is of one of the following the forms.
\begin{enumerate}
\item[(i)] A vertex $v^k$ of $P$, where $k \in V^{\textrm{out}}(L)$,
\item[(ii)] An intersection point $v^i+$
$\beta_{i,k}(L)(v^k-v^i)$, 
where $i \in V^{\textrm{in}}(L)$ and $k \in V^{\textrm{out}}(L)$, or 
\item[(iii)] An intersection point 
$v^i+\alpha_{i,j}(L) r^j$, where 
$i \in V^{\textrm{in}}(L)$ and $j \in E \setminus E^{\infty}(L)$.
\end{enumerate}
\end{lemma}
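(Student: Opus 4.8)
The plan is to reduce the statement to the description of $R(L,P(\lambda^{\textrm{in}}))$ already obtained in Corollary \ref{vert_char_lambda}: first I would write $R(L,P)$ as the convex hull of the sets $R(L,P(\lambda^{\textrm{in}}))$ over all $\lambda^{\textrm{in}}\in\Lambda^{\textrm{in}}(L)$, deduce that every vertex of $R(L,P)$ is a vertex of some $R(L,P(\bar\lambda^{\textrm{in}}))$, and then push each of the three types listed in Corollary \ref{vert_char_lambda} into one of the forms (i)--(iii) by means of Lemma \ref{beta_properties} and Lemma \ref{alpha_concave}.

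\textbf{Step 1: a covering identity.} I would first prove that
$$R(L,P) = \conv\Big( \bigcup_{\lambda^{\textrm{in}} \in \Lambda^{\textrm{in}}(L)} R(L,P(\lambda^{\textrm{in}})) \Big).$$
The inclusion ``$\supseteq$'' is clear from $P(\lambda^{\textrm{in}})\subseteq P$. For ``$\subseteq$'' one checks that every $x\in P$ lies in $P(\lambda^{\textrm{in}})$ for a suitable $\lambda^{\textrm{in}}$: writing $x=\sum_{i\in V}\lambda_i v^i+\sum_{j\in E}\mu_j r^j$ and setting $t:=\sum_{i\in V^{\textrm{in}}(L)}\lambda_i$, take $\lambda^{\textrm{in}}_i:=\lambda_i/t$ if $t>0$ (and $\lambda^{\textrm{in}}$ arbitrary if $t=0$) and $\epsilon_k:=\lambda_k$ for $k\in V^{\textrm{out}}(L)$; a direct substitution into the definition of $P(\lambda^{\textrm{in}})$ recovers $x$. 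Hence $\{x\in P:x\notin\intt(L)\}=\bigcup_{\lambda^{\textrm{in}}}\{x\in P(\lambda^{\textrm{in}}):x\notin\intt(L)\}$, and taking convex hulls gives the identity. Since $R(L,P)$ is a polyhedron (Lemma \ref{poly_lem}), any vertex $\bar x$ is an extreme point; being a finite convex combination of points drawn from the sets $R(L,P(\lambda^{\textrm{in}}))$, it must equal one such point, so $\bar x\in R(L,P(\bar\lambda^{\textrm{in}}))$ for some $\bar\lambda^{\textrm{in}}\in\Lambda^{\textrm{in}}(L)$. As $R(L,P(\bar\lambda^{\textrm{in}}))\subseteq R(L,P)$ and $\bar x$ is extreme in the larger set, $\bar x$ is a vertex of $R(L,P(\bar\lambda^{\textrm{in}}))$, so Corollary \ref{vert_char_lambda} applies.

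\textbf{Step 2: dispatching the three types.} If $\bar x=v^k$ with $k\in V^{\textrm{out}}(L)$ we are already in case (i). If $\bar x=v_{\bar\lambda^{\textrm{in}}}+\beta_k(L,\bar\lambda^{\textrm{in}})(v^k-v_{\bar\lambda^{\textrm{in}}})$ with $k\in V^{\textrm{out}}(L)$, then Lemma \ref{beta_properties} writes $\bar x$ as a convex combination of $v^k$ and the points $v^i+\beta_{i,k}(L)(v^k-v^i)$, $i\in V^{\textrm{in}}(L)$; each such point lies in $P$ and on the boundary of $L$, hence in $R(L,P)$, so extremality of $\bar x$ forces $\bar x$ to equal one of them, i.e.\ case (i) or (ii). If $\bar x=v_{\bar\lambda^{\textrm{in}}}+\alpha_j(L,\bar\lambda^{\textrm{in}})r^j$ with $j\in E\setminus E^{\infty}(\bar\lambda^{\textrm{in}})$, concavity (Lemma \ref{alpha_concave}) gives $\alpha_j(L,\bar\lambda^{\textrm{in}})\ge\sum_{i}\bar\lambda^{\textrm{in}}_i\alpha_{i,j}(L)$, so
$$\bar x=\sum_{i\in V^{\textrm{in}}(L)}\bar\lambda^{\textrm{in}}_i\big(v^i+\alpha_{i,j}(L)r^j\big)+c\,r^j,\qquad c:=\alpha_j(L,\bar\lambda^{\textrm{in}})-\sum_{i\in V^{\textrm{in}}(L)}\bar\lambda^{\textrm{in}}_i\alpha_{i,j}(L)\ge 0.$$
Since $r^j\notin 0^+(L)$ (because $j\notin E^{\infty}(\bar\lambda^{\textrm{in}})$ and $0^+(L)$ is a lineality space, so a ray of $L$ from one interior point is a ray of $L$ from all of them), each $\alpha_{i,j}(L)$ is finite, $j\in E\setminus E^{\infty}(L)$, and the points $v^i+\alpha_{i,j}(L)r^j$ lie in $R(L,P)$; moreover $r^j\in 0^+(R(L,P))$. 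As a vertex of a polyhedron cannot be written as a convex combination of polyhedron points plus a nonzero recession direction, $c=0$ and $\bar x=v^i+\alpha_{i,j}(L)r^j$ for some $i\in V^{\textrm{in}}(L)$, which is case (iii).

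\textbf{Expected obstacle.} The delicate bookkeeping is all in the third case: one must verify that $c\,r^j$ is a genuine recession direction of $R(L,P)$ — using that the extreme rays of $R(L,P)$ coincide with those of $P$ (as in the proof of Lemma \ref{int_vert_nec}) together with the fact that translating a non-interior point of $L$ by an element of the lineality space $0^+(L)$ keeps it non-interior — and that $E^{\infty}(\bar\lambda^{\textrm{in}})$ and $E^{\infty}(L)$ both coincide with $\{j\in E:r^j\in 0^+(L)\}$, so the index restriction on $j$ transfers. The remaining steps are routine extreme-point manipulations.
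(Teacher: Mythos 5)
Your proposal is correct and follows essentially the same route as the paper: reduce a vertex of $R(L,P)$ to a vertex of some $R(L,P(\bar\lambda^{\textrm{in}}))$, apply Corollary \ref{vert_char_lambda}, and then use Lemma \ref{beta_properties} for the $\beta$-type points and Lemma \ref{alpha_concave} for the $\alpha$-type points to force the convex multiplier onto a single unit vector. You are in fact slightly more careful than the paper on two points it leaves implicit, namely the covering identity in Step~1 and the identification $E^{\infty}(\bar\lambda^{\textrm{in}})=E^{\infty}(L)=\{j\in E: r^j\in 0^+(L)\}$.
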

\begin{proof}
Let $\bar{x} \in R(L,P)$ be a vertex of 
$R(L,P)$, and let $(\bar{\lambda}^{\textrm{in}},\bar{\epsilon},\bar{\mu}) \in \R^{|V|+|E|}$ satisfy 
$\bar{x}=v_{\bar{\lambda}^{\textrm{in}}}+$
$\sum_{k \in V^{\textrm{out}}(L)} \bar{\epsilon}_k (v^k-v_{\bar{\lambda}^{\textrm{in}}})+$
$\sum_{j \in E} \bar{\mu}_j r^j$, $\bar{\epsilon} \geq 0$, 
$\bar{\mu} \geq 0$, $\bar{\lambda}^{\textrm{in}} \in \Lambda^{\textrm{in}}(L)$ 
and $\sum_{k \in V^{\textrm{out}}(L)} \bar{\epsilon}_k \leq 1$. Now, we have 
$\bar{x} \in R(L,P(\bar{\lambda}^{\textrm{in}}))$, and since 
$R(L,P(\bar{\lambda}^{\textrm{in}})) \subseteq R(L,P)$, 
we must have that $\bar{x}$ is 
a vertex of $R(L,P(\bar{\lambda}^{\textrm{in}}))$. It follows that $\bar{x}$ is of one of the forms Corollary \ref{vert_char_lambda}.(i)-(iii). If $\bar{x}$ is of the form 
$\bar{x}=v^k$ for some $k \in V^{\textrm{out}}(L)$, we are done. 
Furthermore, if 
$\bar{x}=v_{\bar{\lambda}^{\textrm{in}}}+\beta_k(L,\bar{\lambda}^{\textrm{in}})(v^k-v_{\bar{\lambda}^{\textrm{in}}})$ 
for some $k \in V^{\textrm{out}}(L)$, then Lemma \ref{beta_properties} shows 
that either $\bar{x}=v^k$, or 
$\bar{x}=v^{\bar{i}}+\beta_{\bar{i},k}(L)(v^k-v^{\bar{i}})$ for some 
$\bar{i} \in V^{\textrm{out}}(L)$. 

Finally consider the case when $\bar{x}$ is of the form 
$\bar{x}=v_{\bar{\lambda}^{\textrm{in}}}+$
$\alpha_{\bar{j}}(L,\bar{\lambda}^{\textrm{in}}) r^{\bar{j}}$ 
for some $\bar{j} \in E \setminus E^{\infty}(L)$. 
Since $\alpha_{\bar{j}}(L,\bar{\lambda}^{\textrm{in}})$ 
is concave in $\bar{\lambda}^{\textrm{in}}$, 
we have $\alpha_{\bar{j}}(L,\bar{\lambda}^{\textrm{in}}) \geq$
$\sum_{i \in V^{\textrm{in}}(L)} \bar{\lambda}^{\textrm{in}}_i \alpha_{i,\bar{j}}(L)$. 
Let $\delta\geq 0$ 
satisfy $\alpha_{\bar{j}}(L,\bar{\lambda}^{\textrm{in}})=$
$\sum_{i \in V^{\textrm{in}}(L)} \bar{\lambda}^{\textrm{in}}_i (\alpha_{i,\bar{j}}(L)+\delta)$. We can 
now write $\bar{x}$ in the form 
$\bar{x}=v_{\bar{\lambda}^{\textrm{in}}}+\alpha_{\bar{j}}(L,\bar{\lambda}^{\textrm{in}}) r^{\bar{j}}=$
$\sum_{i \in V^{\textrm{in}}(L)} \bar{\lambda}^{\textrm{in}}_i (v_i+(\alpha_{i,\bar{j}}(L)+\delta) r^{\bar{j}})$. 
Since $v_i+(\alpha_{i,\bar{j}}(L)+\delta) r^{\bar{j}} \notin \intt(L)$ 
for all $i \in V^{\textrm{in}}(L)$, 
and $\bar{x}$ is a vertex of $R(L,P)$, we must have $\delta=0$ and 
$\bar{\lambda}^{\textrm{in}}_{\bar{i}}=1$ for some 
$\bar{i} \in V^{\textrm{in}}(L)$.
\end{proof}

An important consequence of Lemma \ref{rel_descr} is 
the following. For two mixed integer 
split polyhedra $L^1$ and $L^2$, 
if $V^{\textrm{in}}(L^1)=V^{\textrm{in}}(L^2)$, and 
if all the halflines $\{ v^i+\alpha r^j : \alpha \geq 0 \}$ and 
$\{ v^i + \beta (v^k-v^i) \}$ for 
$i \in V^{\textrm{in}}(L^1)=V^{\textrm{in}}(L^2)$, $j \in E$ and 
$k \in V^{\textrm{out}}(L^1)=V^{\textrm{out}}(L^2)$ intersect the boundaries of 
$L^1$ and $L^2$ at the 
same points, then $R(L^1,P)=R(L^2,P)$. In other words, the relaxation 
of $P_I$ obtained from $L^1$ is the same as the relaxation 
of $P_I$ obtained from $L^2$. 
\begin{corollary}
Let $L^1$ and $L^2$ be mixed integer split polyhedra satisfying 
$V^{\textrm{in}}(L^1)=V^{\textrm{in}}(L^2) \neq \emptyset$. 
If $\alpha_{i,j}(L^1)=\alpha_{i,j}(L^2)$ and 
$\beta_{i,k}(L^1)=\beta_{i,k}(L^2)$
for all 
$i \in V^{\textrm{in}}(L^1)=V^{\textrm{in}}(L^2)$, $j \in E$ and 
$k \in V^{\textrm{out}}(L^1)=V^{\textrm{out}}(L^2)$, then $R(L^1,P)=R(L^2,P)$.
\end{corollary}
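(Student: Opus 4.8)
The plan is to deduce the corollary directly from the vertex characterization in Lemma~\ref{rel_descr}. First observe that the hypothesis $\alpha_{i,j}(L^1)=\alpha_{i,j}(L^2)$ for all $i\in V^{\textrm{in}}(L^1)$ and $j\in E$ forces $E^{\infty}(L^1)=E^{\infty}(L^2)$, since $E^{\infty}(L)$ is by definition the set of $j\in E$ for which some $\alpha_{i,j}(L)$ equals $+\infty$. Consequently the three families of candidate vertices supplied by Lemma~\ref{rel_descr} for $L^1$ and for $L^2$ are literally the same list of points: the vertices $v^k$ with $k\in V^{\textrm{out}}(L^1)=V^{\textrm{out}}(L^2)$, the points $v^i+\beta_{i,k}(L^1)(v^k-v^i)=v^i+\beta_{i,k}(L^2)(v^k-v^i)$, and the points $v^i+\alpha_{i,j}(L^1)r^j=v^i+\alpha_{i,j}(L^2)r^j$ with $j\notin E^{\infty}(L^1)=E^{\infty}(L^2)$. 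Since $P$ is pointed, each $R(L^i,P)$ is pointed, hence equals the convex hull of its vertices plus its recession cone, so it suffices to check (a) every vertex of $R(L^1,P)$ lies in $R(L^2,P)$ and conversely, and (b) $0^+(R(L^1,P))=0^+(R(L^2,P))$.

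For (a) I would take a vertex $\bar x$ of $R(L^1,P)$, which by Lemma~\ref{rel_descr} is one of the three forms above, and check in each case that the same point lies in $R(L^2,P)$. If $\bar x=v^k$ then $k\in V^{\textrm{out}}(L^2)$, so $v^k\notin\intt(L^2)$. If $\bar x=v^i+\beta_{i,k}(L^2)(v^k-v^i)$, then since $\beta_{i,k}(L^2)=\sup\{\beta:v^i+\beta(v^k-v^i)\in L^2\}$ and $L^2$ is closed, $\bar x$ lies on the boundary of $L^2$ and hence not in $\intt(L^2)$. If $\bar x=v^i+\alpha_{i,j}(L^2)r^j$ with $j\notin E^{\infty}(L^1)=E^{\infty}(L^2)$ (so $\alpha_{i,j}(L^2)=\alpha_{i,j}(L^1)<+\infty$), then likewise $\bar x$ is on the boundary of $L^2$. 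In all three cases $\bar x\in P$ — it is a convex combination of two vertices of $P$, or a vertex of $P$ plus a nonnegative multiple of an extreme ray of $P$, or a vertex of $P$ — and $\bar x\notin\intt(L^2)$, so $\bar x\in R(L^2,P)$. Exchanging the roles of $L^1$ and $L^2$ gives the reverse inclusion of vertices.

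The one point needing care is (b). Here I would use that for every mixed integer split polyhedron $L$ one has $0^+(R(L,P))=0^+(P)=\cone(\{r^j\}_{j\in E})$: the inclusion $\subseteq$ is immediate from $R(L,P)\subseteq P$, while for $\supseteq$ one uses that $0^+(L)$ is a linear space (Lemma~\ref{inclusion_property_unbounded}), so that a point of $P$ lying outside $\intt(L)$ never re-enters $\intt(L)$ when moved along a recession direction of $L$, and eventually leaves $L$ along any other $r^j$; together with the closedness of the polyhedron $R(L,P)$ this shows each $r^j\in 0^+(R(L,P))$. This is exactly the fact already invoked in the proof of Lemma~\ref{int_vert_nec} (and underlying Lemma~\ref{poly_lem}), so I would cite it rather than reprove it. Then $0^+(R(L^1,P))=0^+(P)=0^+(R(L^2,P))$, and combining (a) and (b) yields $R(L^1,P)=R(L^2,P)$. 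The main obstacle is thus purely the correct handling of the recession cone; the remainder is a routine bookkeeping application of Lemma~\ref{rel_descr}.
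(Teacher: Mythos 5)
Your proof is correct and follows exactly the route the paper intends: the corollary is stated as an immediate consequence of Lemma~\ref{rel_descr}, and your argument simply fills in the details (identical candidate vertex lists, each candidate point lying in $P\setminus\intt(L^2)$ and hence in $R(L^2,P)$, and equality of the recession cones with $\cone(\{r^j\}_{j\in E})$, a fact the paper already uses in Lemma~\ref{int_vert_nec} and Corollary~\ref{x_space_char}). Your care in arguing via ``every vertex of $R(L^1,P)$ lies in $R(L^2,P)$'' rather than claiming the two vertex sets coincide is a sound way to handle the fact that Lemma~\ref{rel_descr} only lists candidates for the vertices.
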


Another consequence of Lemma \ref{rel_descr} is that it is 
possible to write $R(L,P)$ as the convex hull of the union 
of the polyhedra $R(L,P(e^i))$ for $i \in V^{\textrm{in}}(L)$. 

\begin{corollary}\label{x_space_char}
Let $L$ be a mixed integer split polyhedron 
satisfying $V^{\textrm{in}}(L) \neq \emptyset$. 
We have
$$R(L,P)=\conv(\cup_{i \in V^{\textrm{in}}(L)} R(L,P(e^i)).$$
\end{corollary}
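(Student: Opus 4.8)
The plan is to prove the two inclusions separately, using that $R(L,P)$ is a polyhedron (Lemma~\ref{poly_lem}) and the vertex description of Lemma~\ref{rel_descr}. The inclusion $\conv(\cup_{i \in V^{\textrm{in}}(L)} R(L,P(e^i))) \subseteq R(L,P)$ is immediate: for each $i \in V^{\textrm{in}}(L)$ we have $P(e^i) \subseteq P$, hence $R(L,P(e^i)) = \conv(\{x \in P(e^i) : x \notin \intt(L)\}) \subseteq \conv(\{x \in P : x \notin \intt(L)\}) = R(L,P)$, and since $R(L,P)$ is convex it contains the convex hull of the union.

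For the reverse inclusion, I would first note that each $R(L,P(e^i))$ is a polyhedron with recession cone $\cone(\{r^j\}_{j \in E})$: the inclusion $0^+(R(L,P(e^i))) \subseteq 0^+(P(e^i)) = \cone(\{r^j\}_{j\in E})$ is clear from the description of $P(e^i)$, and conversely each $r^j$ is a recession direction of $R(L,P(e^i))$, as established in the proof of Theorem~\ref{fixed_lambda_lem}. Since these finitely many polyhedra share the recession cone $\cone(\{r^j\}_{j\in E})$, their convex hull $Y := \conv(\cup_{i \in V^{\textrm{in}}(L)} R(L,P(e^i)))$ is again a polyhedron --- equal to the convex hull of the finitely many vertices of the sets $R(L,P(e^i))$ together with $\cone(\{r^j\}_{j\in E})$ --- and in particular is closed. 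As $R(L,P)$ is a pointed polyhedron (its recession cone is contained in $\cone(\{r^j\}_{j\in E})=0^+(P)$, which is pointed) with $0^+(R(L,P)) = \cone(\{r^j\}_{j\in E}) \subseteq 0^+(Y)$, it is the convex hull of its vertices together with the rays $r^j$, $j \in E$; so it suffices to check that every vertex of $R(L,P)$ lies in some $R(L,P(e^i))$.

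This is where the actual work lies, and it follows directly from Lemma~\ref{rel_descr}, which says a vertex of $R(L,P)$ has one of three forms. For each I would exhibit an index $i \in V^{\textrm{in}}(L)$ and multipliers $(\epsilon,\mu)$ realizing the point inside $P(e^i)$ and outside $\intt(L)$: (i) a vertex $v^k$ with $k \in V^{\textrm{out}}(L)$ is obtained from $P(e^i)$, for any $i$, by taking $\epsilon = e^k$ and $\mu = 0$, and $v^k \notin \intt(L)$ by definition of $V^{\textrm{out}}(L)$; (ii) the point $v^i + \beta_{i,k}(L)(v^k - v^i)$ is obtained by taking $\epsilon_k = \beta_{i,k}(L) \in ]0,1]$ with all other multipliers zero, and it lies on the boundary of $L$; (iii) the point $v^i + \alpha_{i,j}(L) r^j$ with $j \in E \setminus E^{\infty}(L)$ --- so that $\alpha_{i,j}(L) < +\infty$ --- is obtained by taking $\epsilon = 0$ and $\mu_j = \alpha_{i,j}(L)$, and it too lies on the boundary of $L$. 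In each case the point lies in $R(L,P(e^i)) \subseteq Y$, which finishes the proof.

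The only (mild) obstacle is bookkeeping: confirming that $Y$ is a polyhedron, so that ``generated by its vertices and the recession rays $r^j$'' really yields the containment $R(L,P) \subseteq Y$; and checking that the boundary intersection points in cases (ii)--(iii) are genuinely excluded from $\intt(L)$, which holds because $\intt(L)$ is open and the defining suprema in (\ref{alpha_def2}) and (\ref{beta_def2}) are finite for those points. Both are routine given the earlier lemmas.
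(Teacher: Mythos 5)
Your proof is correct and follows essentially the same route as the paper: the easy inclusion by monotonicity of the convex-hull operation, and the reverse inclusion by invoking Lemma~\ref{rel_descr} to place every vertex of $R(L,P)$ inside some $R(L,P(e^i))$ and then matching the recession cones. You are somewhat more careful than the paper about why the convex hull of the finitely many sets $R(L,P(e^i))$ is itself a closed polyhedron, but this is just filling in bookkeeping the paper leaves implicit.
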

\begin{proof}
Lemma  \ref{rel_descr} shows that every 
vertex of $R(L,P)$ is a vertex of a set $R(L,P(e^i))$ 
for some $i \in V^{\textrm{in}}(L)$. Furthermore, the 
union of the vertices of the sets $R(L,P(e^i))$ over all 
$i \in V^{\textrm{in}}(L)$ is exactly the set of vertices of 
$R(L,P)$. Since the extreme rays of $R(L,P)$ and 
the sets $R(L,P(e^i))$ for $i \in V^{\textrm{in}}(L)$ 
are the same, namely the vectors $\{ r^j {\}}_{j \in E}$, 
the result follows.
\end{proof}

Figure \ref{union_fig} illustrates 
Corollary \ref{x_space_char} on the example of 
Figure \ref{first_fig}. The sets $P(e^1)$ and $P(e^2)$ 
corresponding to the two vertices $v^1$ and $v^2$ of $P$ 
that are in the interior of $L$ are shown in 
Figure \ref{union_fig}.(b) and Figure \ref{union_fig}.(c). 
Observe that the sets $R(L,P(e^1))$ and $R(L,P(e^2))$ are 
both described by adding exactly one cut to 
$P(e^1)$ and $P(e^2)$ respectively. Corollary \ref{x_space_char} 
then shows that $R(L,P)$ can be obtained by taking the convex 
hull of the union of the sets $R(L,P(e^1))$ and $R(L,P(e^2))$.

\begin{figure}
\centering
\mbox{\subfigure[The polytope $P$ and the split polyhedron $L$ from
Figure \ref{first_fig}]{\includegraphics[width=5cm]{IMA3.eps}}\quad
\subfigure[The set $P(e^1)$ constructed from $v^1$]{\includegraphics[width=5cm]{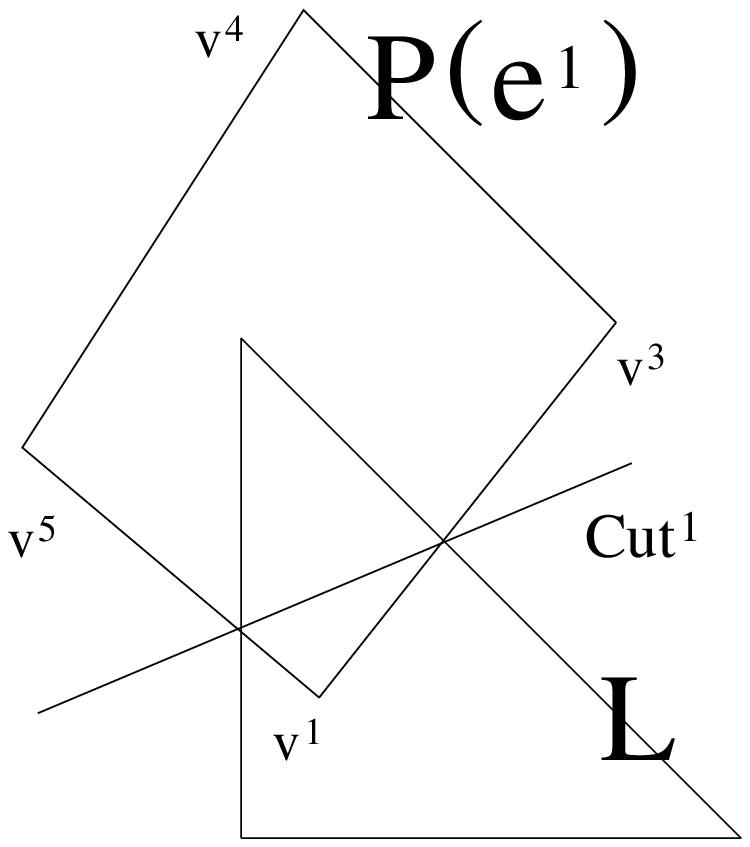}}
\subfigure[The set $P(e^2)$ constructed from $v^2$]{\includegraphics[width=5cm]{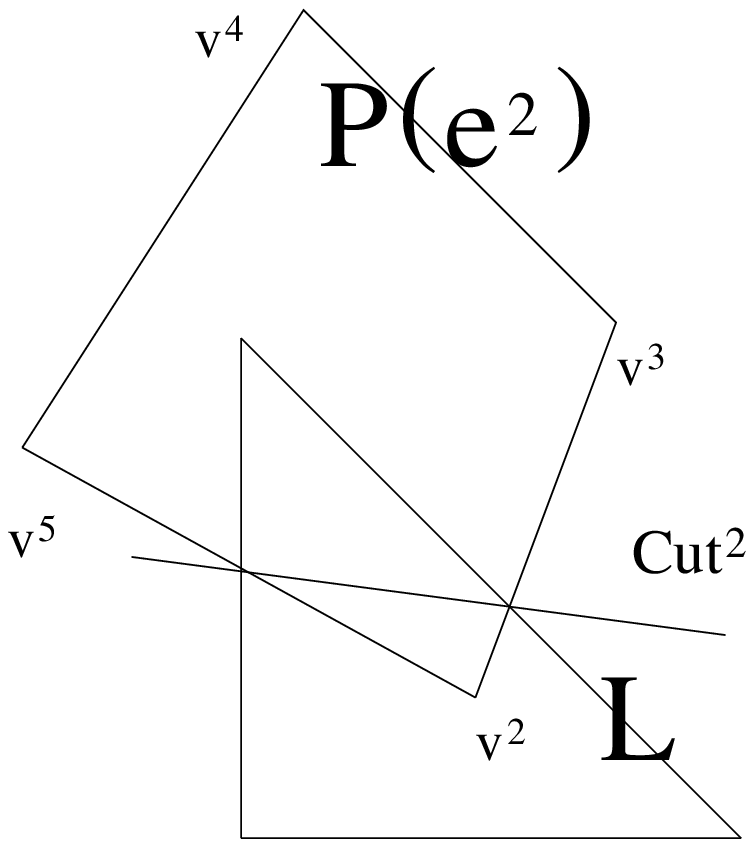}}}
\caption{Constructing $R(L,P)$ as the convex hull of the union of polyhedra}
\label{union_fig}
\end{figure}

\subsection{Polyhedrality of the
$w^{\textrm{th}}$ split closure}

We now use Theorem \ref{pol_thm} to prove that the
$w^{\textrm{th}}$ split closure of $P$ is a polyhedron. 
Let $L \in {\cal{L}}^w$ be an arbitrary mixed integer split polyhedron,
where $w>0$, and let $\delta^T x \geq \delta_0$ be a valid
inequality for $R(L,P)$ with integral coefficients which is not 
valid for $P$. To use Theorem \ref{pol_thm}, we 
consider potential intersection points between 
the hyperplane $\delta^T x = \delta_0$ and 
halflines of the form 
$\{ v^i + \alpha r^j : \alpha\geq 0 \}$, and 
of the form $\{ v^i + \beta (v^k-v^i) : \beta \geq 0 \}$, 
where $i \in V^{\textrm{in}}(L)$, $j \in E$ and 
$k \in V^{\textrm{out}}(L)$. 
The properties we derive of these intersection points do not 
depend on the particular halfline, so we only consider 
the halfline $\{ v^1 + \alpha r^1 : \alpha\geq 0 \}$. 
We will show that the rationality of 
$v^1$ and $r^1$ can 
be used to limit 
the number of possible intersection points. This then allows us 
to conclude that the $w^{\textrm{th}}$ split closure is a polyhedron. 
We first give a representation of 
$\alpha'_{1,1}(\delta,\delta_0)$ for a given 
valid inequality $\delta^T x \geq \delta_0$ for 
$v^1+\alpha_{1,1}(L) r^1$. 

\begin{lemma}\label{rep_alpha}
(Lemma 5 in \cite{andcorli}). 
Let $L \in {\cal{L}}^w$ be a mixed integer split polyhedron with max-facet-width at most 
$w>0$. Suppose $v^1 \in \intt(L)$ 
and $\alpha_{1,1}(L)<+\infty$, and also suppose $\delta^T x \geq
\delta_0$ is a non-negative cut for $\{ v^1 + \alpha r^1 : \alpha \geq
0 \}$ with integral 
coefficients that is valid for $v^1+\alpha_{1,1}(L) r^1$. 
\begin{enumerate}
\item[(i)] $0 < \alpha'_{1,1}(\delta,\delta_0) \leq \alpha_{1,1}(L)
< w$, and
\item[(ii)] $\alpha'_{1,1}(\delta,\delta_0)=\frac{s(\delta,\delta_0)}{g t(\delta,\delta_0)}$, where 
$g$, $s(\delta,\delta_0)$,$t(\delta,\delta_0) >0$ are 
integers satisfying $s(\delta,\delta_0) < g w$.
\end{enumerate}
\noindent (Note that the integer $g$ is independent of both $L$ and 
$\delta^T x \geq \delta_0$).
\end{lemma}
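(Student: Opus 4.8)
The plan is to treat the two assertions separately: (i) is a short geometric estimate, and (ii) is an arithmetic normalization of the rational number produced in (i) against a fixed denominator of $v^1$.

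\emph{For (i).} Since $\delta^T x\ge\delta_0$ is a cut for the halfline $\{v^1+\alpha r^1:\alpha\ge 0\}$, it cuts off its unique vertex $v^1$, so $\delta_0-\delta^T v^1>0$. Validity at $v^1+\alpha_{1,1}(L)r^1$ gives $\alpha_{1,1}(L)\,\delta^T r^1\ge\delta_0-\delta^T v^1>0$, and since $0<\alpha_{1,1}(L)<+\infty$ this forces $\delta^T r^1>0$; hence $\alpha'_{1,1}(\delta,\delta_0)=(\delta_0-\delta^T v^1)/(\delta^T r^1)>0$, and dividing the previous inequality by $\delta^T r^1$ yields $\alpha'_{1,1}(\delta,\delta_0)\le\alpha_{1,1}(L)$. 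To bound $\alpha_{1,1}(L)$ by $w$: since $\alpha_{1,1}(L)<+\infty$ the vector $r^1$ is not in $0^+(L)$, which is a linear space because $L$ is a mixed integer split polyhedron (Lemma \ref{inclusion_property_unbounded}); so the halfline leaves $L$ through some facet $\pi^T x\ge\pi_0$, with $(\pi,\pi_0)\in\Z^{n+1}$, $\pi$ supported on $N_I$, $\pi^T r^1<0$, and $\alpha_{1,1}(L)=(\pi^T v^1-\pi_0)/(-\pi^T r^1)$. Here $\pi_0=\min_{x\in L}\pi^T x$ (it is attained on the facet), while $v^1\in\intt(L)$ together with the full-dimensionality of $L$ (Lemma \ref{splitbody_bounded_full_rational}) and $\pi\ne 0$ imply $\pi^T v^1<\max_{x\in L}\pi^T x$ (the maximizing face is a proper, hence boundary, face, and $v^1$ is interior); therefore $\pi^T v^1-\pi_0<w(L,\pi)\le w_f(L)\le w$. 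Combined with $-\pi^T r^1\ge 1$ (integrality) this gives $\alpha_{1,1}(L)\le\pi^T v^1-\pi_0<w$, proving (i).

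\emph{For (ii).} Fix a positive integer $g$ with $gv^1\in\Z^n$ (for instance the lcm of the denominators of the coordinates of $v^1$); this $g$ depends only on $v^1$, hence is independent of $L$ and of $\delta$. Clearing denominators,
$$\alpha'_{1,1}(\delta,\delta_0)=\frac{\delta_0-\delta^T v^1}{\delta^T r^1}=\frac{g\delta_0-\delta^T(gv^1)}{g\cdot\delta^T r^1},$$
where by (i) both $g\delta_0-\delta^T(gv^1)$ and $\delta^T r^1$ are positive integers. One then rewrites this fraction in the form $s(\delta,\delta_0)/(g\,t(\delta,\delta_0))$ with $t(\delta,\delta_0)$ a positive integer (a divisor of $\delta^T r^1$) and $s(\delta,\delta_0)=g\,t(\delta,\delta_0)\,\alpha'_{1,1}(\delta,\delta_0)$ a positive integer, choosing the cancellation so that the part of $\delta^T r^1$ that would otherwise inflate the plain numerator is moved into $t(\delta,\delta_0)$. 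The bound then comes from (i): $s(\delta,\delta_0)=g\,t(\delta,\delta_0)\,\alpha'_{1,1}(\delta,\delta_0)$ and $\alpha'_{1,1}(\delta,\delta_0)\le\alpha_{1,1}(L)<w$, arranged so that this reads $s(\delta,\delta_0)=g\,\alpha'_{1,1}(\delta,\delta_0)\,t(\delta,\delta_0)<gw$. Concretely I would write $\alpha'_{1,1}(\delta,\delta_0)$ in lowest terms $p/q$, use the displayed identity to split $q$ as a divisor of $g$ times a factor forced by $\delta^T r^1$, and define $t(\delta,\delta_0)$ from that second factor.

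\emph{The main obstacle.} Part (i) is routine. The delicate step is the last one in (ii): part (i) bounds only the \emph{value} $\alpha'_{1,1}(\delta,\delta_0)$, whereas what is needed is a bound on the \emph{numerator} $s(\delta,\delta_0)$ that does not involve $t(\delta,\delta_0)$ (namely $s<gw$, not merely $s<gw\,t$). Producing such a $t$-free bound is exactly the normalization of $(\delta_0-\delta^T v^1)/(\delta^T r^1)$ against the fixed denominator $g$ of $v^1$, carried out uniformly over all admissible cuts $\delta$ and all $L\in{\cal{L}}^w$; it is precisely this uniform numerator bound that makes the lemma useful, since for any fixed $\alpha^*>0$ the relations $s<gw$ and $t\le s/(g\alpha^*)<w/\alpha^*$ leave only finitely many values $\alpha'_{1,1}(\delta,\delta_0)=s/(gt)$ that are at least $\alpha^*$, which is what is required to verify Assumption \ref{bas_ass} and apply Theorem \ref{pol_thm} to the $w^{\textrm{th}}$ split closure.
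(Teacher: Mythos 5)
Your part (i) is correct and is essentially the paper's own argument: the halfline leaves $L$ through an integral facet $(\pi,\pi_0)$, the interiority of $v^1$ together with the max-facet-width bound gives $0<\pi^T v^1-\pi_0<w(L,\pi)\le w$, integrality of $\pi^T r^1$ supplies a denominator of absolute value at least $1$, and validity at $v^1+\alpha_{1,1}(L)r^1$ gives $\alpha'_{1,1}(\delta,\delta_0)\le\alpha_{1,1}(L)$. (Your sign bookkeeping here is in fact cleaner than the paper's.)

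Part (ii) contains a genuine gap, and you have located it yourself without closing it. After clearing denominators you have $\alpha'_{1,1}(\delta,\delta_0)=N/(gT)$ with $N:=g(\delta_0-\delta^T v^1)$ and $T:=\delta^T r^1$ positive integers, and you must exhibit a representation $s/(g\,t)$ with $s<gw$. Your derivation ``$s=g\,t\,\alpha'_{1,1}(\delta,\delta_0)<gw$'' needs $t\,\alpha'_{1,1}(\delta,\delta_0)<w$, whereas (i) only gives $\alpha'_{1,1}(\delta,\delta_0)<w$; and the reduction-to-lowest-terms device does not repair this: if $\alpha'_{1,1}(\delta,\delta_0)=p/q$ in lowest terms, the smallest numerator attainable in the form $s/(g\,t)$ is $s=pg/\gcd(g,q)$, so what is needed is $p<w\gcd(g,q)$, which is not a consequence of $p/q<w$. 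The paper does not perform any cancellation at all: it sets $s:=g\delta_0-\sum_m d_m p_m\delta_m=g(\delta_0-\delta^T v^1)$ and $t:=\delta^T r^1$, so that $s<gw$ is the assertion that the \emph{numerator} $\delta_0-\delta^T v^1$ is itself smaller than $w$ --- a statement about $\delta_0-\delta^T v^1$, not about the ratio $\alpha'_{1,1}(\delta,\delta_0)$, and exactly the piece of information your write-up never produces. Note also that this bound cannot be extracted from the facts you have established (a bound on the value of $\alpha'_{1,1}$ plus integrality): the value $\alpha'_{1,1}(\delta,\delta_0)$ is invariant under replacing $(\delta,\delta_0)$ by $(k\delta,k\delta_0)$ for a positive integer $k$, while $\delta_0-\delta^T v^1$ scales with $k$, so any $t$-free bound on $s$ must invoke some normalization or additional property of the cut beyond validity at the single point $v^1+\alpha_{1,1}(L)r^1$. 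The step you describe as ``the main obstacle'' is therefore the entire content of (ii), and it is not established in your proposal.
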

\begin{proof}
We may write 
$L=\{ x \in \R^n : (\pi^k)^T x \geq \pi^k_0 \textrm{ for
}k \in N_f \}$, where 
$N_f:=\{1,2,\ldots,n_f \}$, 
$n_f$ denotes 
the number of facets of $L$ and $(\pi^k,\pi^k_0) \in \Z^{n+1}$ 
for $k \in N_f$. Since $v^1 \in \intt(L)$, 
we have $(\pi^k)^T v^1 < \pi^k_0$ for all 
$k \in N_f$, and therefore 
$\alpha_{1,1}(L)=\frac{\pi^{\bar{k}}_0-(\pi^{\bar{k}})^T
  v^1}{(\pi^{\bar{k}})^T r^1}$ for some 
$\bar{k} \in N_f$. Since 
$L$ has max-facet-width at most $w$ and $v^1 \in \intt(L)$, we have 
$0< \pi^{\bar{k}}_0-(\pi^{\bar{k}})^T v^1 < w$. 
Hence, since $(\pi^{\bar{k}})^T r^1$ is integer, we have 
$(\pi^{\bar{k}})^T r^1 \geq 1$, and therefore $\alpha_{1,1}(L) < w$. 
Furthermore, since $\delta^T x \geq \delta_0$ is a 
non-negative cut for the set $\{ v^1 + \alpha r^1 : \alpha \geq 0 \}$ 
that is valid 
for $v^1+\alpha_{1,1}(L) r^1$, we have 
$\alpha'_{1,1}(\delta,\delta_0) \leq \alpha_{1,1}(L)$.

Recall that we assumed $v^1 \in \Q^n$ and $r^1 \in \Z^n$. We can 
therefore write 
$v^1=(\frac{p_1}{q_1},\frac{p_2}{q_2},\ldots,\frac{p_n}{q_n})$, 
where $p_k \in \Z$ and 
$q_k \in \N$ for $k=1,2,\ldots,n$. 
Define the integers $g:=\Pi_{k=1}^n q_k$, 
$d_m:=\Pi_{k=1, k \neq m}^n q_k$ for $m \in \{1,2,\ldots,n \}$, 
$s(\delta,\delta_0):=g \delta_0-\sum_{m=1}^n d_m p_m
\delta_m$ and $t(\delta,\delta_0):=\delta^T r^1$. 
Observe that 
$\frac{s(\delta,\delta_0)}{g}=\delta_0-\delta^T v^1$. With 
these choices, (ii) is satisfied.
\end{proof}

By using the above lemma, we can now bound the number of 
possible intersection points with a halfline of the form 
$\{ v^1 + \alpha r^1 : \alpha\geq \alpha^* \}$ for some 
$\alpha^* > 0$.

\begin{lemma}\label{finite_over}
(Lemma 6 in \cite{andcorli}).
Let $\alpha^* > 0$ and $w>0$. Also let 
$\{ (\delta^l)^T x \geq \delta^l_0 {\}}_{l \in I}$ be 
a set of non-negative cuts for $\{ v^1 + \alpha r^1 : \alpha \geq 0
\}$ 
with integral coefficients that are all valid for 
a point $v^1+\alpha_{1,1}(L) r^1$ 
for some $L \in {\cal{L}}^w$. The set 
$\{ \alpha'_{1,1}(\delta^l,\delta^l_0) : l \in I \textrm{ and }
\alpha'_{1,1}(\delta^l,\delta^l_0) \geq \alpha^*\}$ is finite.
\end{lemma}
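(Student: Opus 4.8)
The plan is to reduce the statement to a finite counting argument built entirely on the representation of intersection points supplied by Lemma \ref{rep_alpha}. First I would dispose of the cuts $l \in I$ with $\alpha'_{1,1}(\delta^l,\delta^l_0) = +\infty$: all of these contribute at most the single element $+\infty$ to the set in question, so it suffices to bound the number of \emph{finite} values $\alpha'_{1,1}(\delta^l,\delta^l_0) \geq \alpha^*$. Fix such an $l$. By hypothesis $(\delta^l)^T x \geq \delta^l_0$ is a non-negative cut for $\{ v^1 + \alpha r^1 : \alpha \geq 0 \}$ with integral coefficients which is valid for $v^1 + \alpha_{1,1}(L^l) r^1$ for some $L^l \in {\cal{L}}^w$; the existence of this point forces $v^1 \in \intt(L^l)$ and $\alpha_{1,1}(L^l) < +\infty$, so Lemma \ref{rep_alpha} applies to the pair $(L^l, (\delta^l,\delta^l_0))$. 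It produces positive integers $g$, $s_l := s(\delta^l,\delta^l_0)$, $t_l := t(\delta^l,\delta^l_0)$ with $\alpha'_{1,1}(\delta^l,\delta^l_0) = \frac{s_l}{g\, t_l}$ and $s_l < g w$, where — and this is the key point — $g$ depends only on $v^1$, hence is the same for every $l$.

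Next I would turn the lower bound $\alpha'_{1,1}(\delta^l,\delta^l_0) \geq \alpha^*$ into an upper bound on the denominator. From $\alpha^* \leq \frac{s_l}{g\, t_l}$ and $s_l < g w$ we get $t_l \leq \frac{s_l}{g \alpha^*} < \frac{w}{\alpha^*}$. Thus every relevant $l$ satisfies $1 \leq s_l < g w$ and $1 \leq t_l < w/\alpha^*$, and since $g$, $w$ and $\alpha^*$ are all fixed, the integer pair $(s_l, t_l)$ ranges over a finite set that does not depend on $l$. Consequently $\alpha'_{1,1}(\delta^l,\delta^l_0) = \frac{s_l}{g\, t_l}$ assumes only finitely many values; adding back the at-most-one value $+\infty$ shows that $\{ \alpha'_{1,1}(\delta^l,\delta^l_0) : l \in I \text{ and } \alpha'_{1,1}(\delta^l,\delta^l_0) \geq \alpha^* \}$ is finite.

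I do not expect a real obstacle: all the substance lives in Lemma \ref{rep_alpha}, whose whole purpose is to express an intersection point on this halfline coming from a width-$\le w$ mixed integer split polyhedron as a fraction $s/(g t)$ with $g$ fixed and numerator bounded by $gw$. The one step that needs a little care is the passage to the bound on $t_l$, since it genuinely uses \emph{both} $s_l < gw$ and $\alpha'_{1,1}(\delta^l,\delta^l_0) \geq \alpha^*$ (this is precisely why the hypothesis $\alpha^* > 0$ cannot be dropped), and one must not forget that cuts with $\delta^T r^1 = 0$, i.e. $\alpha'_{1,1}(\delta^l,\delta^l_0) = +\infty$, lie outside the scope of Lemma \ref{rep_alpha}(ii) and are handled separately by the trivial remark that they all give the same value.
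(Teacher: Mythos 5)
Your proof is correct and follows essentially the same route as the paper: apply Lemma \ref{rep_alpha} to write each finite $\alpha'_{1,1}(\delta^l,\delta^l_0)$ as $s_l/(g\,t_l)$ with the fixed denominator factor $g$, bound $s_l$ by $gw$, and use the lower bound $\alpha^*$ to bound $t_l$, leaving finitely many pairs $(s_l,t_l)$. Your explicit separation of the $\delta^T r^1=0$ (i.e.\ $+\infty$) cuts, and your direct bound $t_l < w/\alpha^*$ in place of the paper's device of first writing $\alpha^*$ as $s^*/(g\,t^*)$, are only cosmetic differences.
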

\begin{proof}
Let $l \in I$ satisfy 
$\alpha^* \leq \alpha'_{1,1}(\delta^l,\delta^l_0) \leq +\infty$. 
We may assume $\alpha^*$ is of the form $\alpha^*=\frac{s^*}{g t^*}$ 
for some integers $s^*,t^* >0$ satisfying $0 < s^* < g w$. 

Let $s(\delta^l,\delta^l_0)$ and $t(\delta^l,\delta^l_0)$ 
be as in Lemma \ref{rep_alpha}. Hence we have 
$\alpha'_{1,1}(\delta^l,\delta^l_0)=\frac{s(\delta^l,\delta^l_0)}{g t(\delta^l,\delta^l_0)}$. 
This implies $s(\delta^l,\delta^l_0)  \in \{ 1,2,\ldots,(g w-1) \}$,
so there is only a finite number of possible values for $s(\delta^l,\delta^l_0)$. 
Finally, Lemma \ref{rep_alpha}.(i) and 
$\alpha'_{1,1}(\delta^l,\delta^l_0) \geq \alpha^*$ 
gives 
$\frac{s^*}{g t^*} \leq \frac{s(\delta^l,\delta^l_0)}{g
t(\delta,\delta_0)} < w$, and therefore 
$\frac{s(\delta^l,\delta^l_0)}{g w} < t(\delta^l,\delta^l_0) \leq
\frac{s(\delta^l,\delta^l_0) t^*}{s^*}$. Hence, for a fixed value 
$s(\delta^l,\delta^l_0)  \in \{ 1,2,\ldots,(g w-1) \}$, 
there is only a finite number of possible values 
for $t(\delta^l,\delta^l_0)$.
\end{proof}

By using Lemma \ref{finite_over}, we can now conclude that 
the $w^{\textrm{th}}$ split closure is a polyhedron.
\begin{theorem}\label{poly_spl_cl}
Let $\bar{\cal{L}} \subseteq {\cal{L}}^w$ be any family of mixed 
integer split polyhedra that have max-facet-width at most $w>0$. 
The set $\cap_{L \in} R(L,P)$ is a polyhedron.
\end{theorem}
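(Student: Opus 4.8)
The plan is to reduce the statement to an application of Theorem \ref{pol_thm}. Let $\bar{\cal L}\subseteq{\cal L}^w$ be the given family. For each $L\in\bar{\cal L}$, the relaxation $R(L,P)$ is a polyhedron by Lemma \ref{poly_lem}, so it is described by finitely many valid inequalities; collect into a family $\{(\delta^l)^Tx\ge\delta^l_0\}_{l\in I}$ all of these inequalities, taken over all $L\in\bar{\cal L}$, discarding those valid for $P$ (which do not affect the intersection). Then $\cap_{L\in\bar{\cal L}}R(L,P)=\{x\in P:(\delta^l)^Tx\ge\delta^l_0\text{ for all }l\in I\}=:X$. By scaling we may assume each $(\delta^l,\delta^l_0)$ is integral, and every such inequality is a non-negative cut for $P$ (non-negativity holds because the extreme rays of $R(L,P)$ coincide with those of $P$, cf. the proof of Lemma \ref{int_vert_nec}). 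As noted in Section 3 before Assumption \ref{bas_ass}, we may partition $I$ according to the set $V^c(\delta^l,\delta^l_0)$ of cut-off vertices and treat each class separately, so we may assume a common $V^c\subseteq V$ for all $l\in I$. It then suffices to verify that Assumption \ref{bas_ass} holds for $\{(\delta^l)^Tx\ge\delta^l_0\}_{l\in I}$; Theorem \ref{pol_thm} then gives that $X$ is a polyhedron.

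To check Assumption \ref{bas_ass}, fix a pair $(i,j)\in V^c\times E$ — say $(i,j)=(1,1)$, the general case being identical — and fix $\alpha^*>0$. We must show $\{\alpha'_{1,1}(\delta^l,\delta^l_0):l\in I,\ \alpha'_{1,1}(\delta^l,\delta^l_0)\ge\alpha^*\}$ is finite. For each $l\in I$ the inequality $(\delta^l)^Tx\ge\delta^l_0$ is valid for $R(L,P)$ for the particular $L=L(l)\in\bar{\cal L}$ it came from. Since $v^1\in V^c$, the vertex $v^1$ is cut off by $(\delta^l)^Tx\ge\delta^l_0$ and hence $v^1\notin R(L,P)$, which by Lemma \ref{int_vert_nec} forces $v^1\in\intt(L)$; thus $1\in V^{\textrm{in}}(L)$. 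By Lemma \ref{rel_descr}, the point $v^1+\alpha_{1,1}(L)r^1$ is (when finite) a vertex of $R(L,P)$, hence lies in $R(L,P)$, so $(\delta^l)^Tx\ge\delta^l_0$ is valid for it; if $\alpha_{1,1}(L)=+\infty$ then $r^1$ is a ray of $L$ and one argues directly that $\alpha'_{1,1}(\delta^l,\delta^l_0)=+\infty$ is excluded from the set under consideration (or handled by the non-negativity $\delta^l\cdot r^1\ge0$). Therefore $(\delta^l)^Tx\ge\delta^l_0$ is a non-negative cut for the halfline $\{v^1+\alpha r^1:\alpha\ge0\}$ with integral coefficients that is valid for $v^1+\alpha_{1,1}(L)r^1$, which is exactly the hypothesis of Lemma \ref{finite_over}. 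Since all $L\in\bar{\cal L}$ have max-facet-width at most $w$, Lemma \ref{finite_over} yields that the set $\{\alpha'_{1,1}(\delta^l,\delta^l_0):l\in I,\ \alpha'_{1,1}(\delta^l,\delta^l_0)\ge\alpha^*\}$ is finite, establishing part (1) of Assumption \ref{bas_ass}.

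Part (2) is handled the same way: fix $(i,k)\in V^c\times(V\setminus V^c)$ and $\beta^*\in]0,1]$. For $l\in I$ coming from $L\in\bar{\cal L}$, again $v^i\in V^{\textrm{in}}(L)$, and $v^k$ satisfies $(\delta^l)^Tx\ge\delta^l_0$ so $k\in V^{\textrm{out}}(L)$; by Lemma \ref{rel_descr} the intersection point $v^i+\beta_{i,k}(L)(v^k-v^i)$ is a vertex of $R(L,P)$, hence the cut is valid for it. The line segment $\{v^i+\beta(v^k-v^i):\beta\in[0,1]\}$ plays the role that the halfline $\{v^1+\alpha r^1\}$ played above, with $v^k-v^i$ a rational vector and $v^i$ a rational point; running the argument of Lemma \ref{rep_alpha} and Lemma \ref{finite_over} verbatim with $r^1$ replaced by $v^k-v^i$ (the clearing-denominators computation is insensitive to which rational direction is used, and $\beta_{i,k}(L)\le 1\le w$ when $w\ge 1$; for $0<w<1$ one notes $\beta_{i,k}(L)<w$ by the max-facet-width bound just as for $\alpha$) shows $\{\beta'_{i,k}(\delta^l,\delta^l_0):l\in I,\ \beta'_{i,k}(\delta^l,\delta^l_0)\ge\beta^*\}$ is finite. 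Hence Assumption \ref{bas_ass} holds in full, and Theorem \ref{pol_thm} gives that $X=\cap_{L\in\bar{\cal L}}R(L,P)$ is a polyhedron. The main obstacle is the bookkeeping that links a given cut $l\in I$ back to a specific $L\in\bar{\cal L}$ and identifies $v^i+\alpha_{i,j}(L)r^j$ (resp. $v^i+\beta_{i,k}(L)(v^k-v^i)$) as a point of $R(L,P)$ for which $l$ is valid — i.e. correctly invoking Lemma \ref{int_vert_nec} and Lemma \ref{rel_descr} so that Lemma \ref{finite_over} applies; once that is in place the width bound does all the work.
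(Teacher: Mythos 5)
Your proof is correct and follows the same route as the paper: collect the facet-defining inequalities of the sets $R(L,P)$ over $L\in\bar{\cal{L}}$, partition them by the set of cut-off vertices, verify Assumption \ref{bas_ass} via Lemma \ref{finite_over} (and its verbatim analogue for the segments $v^i+\beta(v^k-v^i)$), and invoke Theorem \ref{pol_thm}. The paper's own proof is a three-sentence sketch of exactly this; you supply the bookkeeping that the paper leaves implicit (why each cut is a non-negative cut with integral coefficients, why $i\in V^c$ forces $v^i\in\intt(L)$, and why the cut is valid at the point $v^i+\alpha_{i,j}(L)r^j$ so that Lemma \ref{finite_over} applies), with only the same terseness as the paper on the degenerate case $r^j\in 0^+(L)$.
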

\begin{proof}
Let $\{ (\delta^l)^T x \geq \delta^l_0 {\}}_{l \in I}$ denote 
the family of all cuts for $P$ that are valid and 
facet defining for $R(L,P)$ for some $L \in \bar{\cal{L}}$. As 
discussed in Sect. 3.3, we can partition the cuts in $I$ 
into a finite number of subsets $I^c(S) \subseteq I$ according to 
which set $S \subseteq V$ of vertices they cut off. Lemma 
\ref{finite_over} shows that Assumption \ref{bas_ass} of Sect. 3.3 is 
satisfied by each set $\{ (\delta^l)^T x \geq \delta^l_0 {\}}_{l \in
I^c(S)}$.
\end{proof}

\section{Finite split polyhedron proofs}

Mixed integer split polyhedra can be used to design finite cutting 
plane proofs for the validity of an inequality for $P_I$ 
as follows. Let 
$\delta^T x \geq \delta_0$ be a 
valid inequality for $P_I$. 
Observe that, if $\delta^T x \geq \delta_0$ 
is valid for $R(L,P)$ for some mixed integer split 
polyhedron $L$, then $L$ provides 
a finite cutting plane proof of validity of 
$\delta^T x \geq \delta_0$ 
for $P_I$. More generally, 
a family $\mathcal{S}$ of mixed integer split polyhedra 
gives an approximation of $P_I$ of the form
\begin{equation*}
\textrm{Cl}(\mathcal{S},P) := \bigcap_{L \in \mathcal{S}} R(L,P).
\end{equation*}
The set $\textrm{Cl}(\mathcal{S},P)$ gives the closure 
wrt. the family $\mathcal{S}$. Improved 
approximations of $P_I$ can be obtained by 
iteratively computing closures 
$P^1(\mathcal{S},P),P^2(\mathcal{S},P),P^3(\mathcal{S},P)\ldots$, 
where $P^0(\mathcal{S},P)=P$, 
$P^1(\mathcal{S},P)=\textrm{Cl}(\mathcal{S},P^0(\mathcal{S},P))$, 
$P^2(\mathcal{S},P)=\textrm{Cl}(\mathcal{S},P^1(\mathcal{S},P))$ 
etc. A \emph{finite split polyhedron proof} of validity of 
$\delta^T x \geq \delta_0$ for 
$P_I$ is a finite family $\mathcal{S}$ of mixed integer split 
polyhedra such that 
$\delta^T x \geq \delta_0$ is 
valid for $P^k(\mathcal{S},P)$ for some $k<\infty$, and 
a finite cutting plane proof is given from a finite 
split polyhedron proof by the valid inequalities 
for the polyhedron $P^k(\mathcal{S},P)$. 

A measure of the complexity of a finite split polyhedron 
proof $\mathcal{S}$ is the max-facet-width of the mixed integer 
split polyhedron 
$L \in \mathcal{S}$ with the largest max-facet-width. We 
call this number the \emph{width size} of a split polyhedron 
proof. A measure of the complexity of a valid inequality 
$\delta^T x \geq \delta_0$ for $P_I$ 
is then the smallest number $w$ 
for which there exists 
a finite split polyhedron proof of validity of 
$\delta^T x \geq \delta_0$ for $P_I$ 
of width size $w$. 
This number is called the width size of 
$\delta^T x \geq \delta_0$, and 
it is denoted $\textrm{width-size}(\delta,\delta_0)$.
Finally, 
since validity of every facet defining inequality for 
$\conv(P_I)$ must be proved to generate $\conv(P_I)$, 
the largest of the numbers $\textrm{width-size}(\delta,\delta_0)$ over 
all facet defining inequalities 
$\delta^T x \geq \delta_0$ 
for $\conv(P_I)$ gives a measure of 
the complexity of $P_I$. We call this number the width 
size of $P_I$, and it is denoted 
$\textrm{width-size}(P_I)$. We give 
an example to show that $\textrm{width-size}(P_I)$ 
can be as large as the number of integer constrained variables 
at the end of this section.

We now characterize exactly which max-facet-width is necessary 
to prove validity of an inequality $\delta^T x \geq \delta_0$
for $P_I$ with a finite split polyhedron proof, {\it i.e.}, 
we characterize the number $\textrm{width-size}(\delta,\delta_0)$. 
We will partition the inequality $\delta^T x \geq \delta_0$ into 
its integer part and its continuous part. 
Throughout the remainder of this section, 
$(\delta^x)^T x + (\delta^y)^T y \geq \delta_0$ denotes 
an arbitrary valid inequality for $P_I$, 
where $\delta^x \in \Q^p$, $\delta^y \in \Q^q$ and 
$\delta_0 \in \Q$. We assume 
$(\delta^x)^T x + (\delta^y)^T y \geq \delta_0$ 
is tight at a mixed integer point of $P_I$. 

It is possible to prove validity of 
$(\delta^x)^T x + (\delta^y)^T y \geq \delta_0$ 
for $\conv(P_I)$ by solving the 
mixed integer linear problem (MIP)
\begin{alignat}{3}
\min & \,(\delta^x)^T x + (\delta^y)^T y & &\notag\\
s.t. & & &\notag\\
& (x,y) \in \,\,P_I. & &\notag
\end{alignat}

The following notation is used. 
The point $(x^*,y^*) \in P_I$ denotes an 
optimal solution to MIP, 
and $(x^{\textrm{lp}},y^{\textrm{lp}}) \in P$ denotes an optimal 
solution to the linear relaxation of MIP. 
We assume $\delta_0 = (\delta^x)^T x^* + (\delta^y)^T y^*$ 
and 
$(\delta^x)^T x^{\textrm{lp}} + (\delta^y)^T y^{\textrm{lp}} < \delta_0$. 
From the inequality $(\delta^x)^T x + (\delta^y)^T y \geq \delta_0$, 
we can create the following subsets of $P$ and $P_I$
\begin{alignat}{2}
P(\delta,\delta_0) & := \{ (x,y) \in P : (\delta^x)^T x + (\delta^y)^T y \leq \delta_0 \}\textrm{ and }\notag\\
P_I(\delta,\delta_0) & := \{ (x,y) \in P(\delta,\delta_0) : x \in \Z^p \}.\notag
\end{alignat}

To prove validity of $(\delta^x)^T x + (\delta^y)^T y \geq \delta_0$ 
for $\conv(P_I)$, we consider the following projections 
of $P(\delta,\delta_0)$ and $P_I(\delta,\delta_0)$ 
onto the space of the integer constrained $x$ variables 
\begin{alignat}{2}
P^x(\delta,\delta_0) & := \{ x \in \R^p : \exists y\in\R^q\textrm{ such that } (x,y)
\in P(\delta,\delta_0) \}\textrm{ and }\notag\\
P^x_I(\delta,\delta_0) & := P^x(\delta,\delta_0) \cap \Z^p.\notag
\end{alignat}

The validity proofs we derive for 
$(\delta^x)^T x + (\delta^y)^T y \geq \delta_0$ are based on the 
following important property.
\begin{lemma}
The polyhedron $P^x(\delta,\delta_0)$ is lattice point free.
\end{lemma}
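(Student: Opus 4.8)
The plan is to show that the projection $P^x(\delta,\delta_0)$ cannot contain an integer point in its relative interior by relating such a point back to the optimal value of the MIP. First I would recall the setup: $(\delta^x)^Tx+(\delta^y)^Ty\ge\delta_0$ is valid for $P_I$ and tight at the optimal mixed integer point $(x^*,y^*)$, so $\delta_0=(\delta^x)^Tx^*+(\delta^y)^Ty^*=\min\{(\delta^x)^Tx+(\delta^y)^Ty:(x,y)\in P_I\}$. The set $P(\delta,\delta_0)$ is exactly the slice of $P$ on which the objective is at most its integer optimum, and $P^x(\delta,\delta_0)$ is its shadow on the integer-variable space.

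Suppose for contradiction that some $\bar x\in\Z^p$ lies in $\ri(P^x(\delta,\delta_0))$. The key step is to produce a mixed integer point of $P$ lying strictly below the level $\delta_0$, or at least to contradict tightness. Since $\bar x$ is in the relative interior of the projection, there is $\bar y$ with $(\bar x,\bar y)\in P(\delta,\delta_0)$, i.e. $(\delta^x)^T\bar x+(\delta^y)^T\bar y\le\delta_0$ and $(\bar x,\bar y)\in P$. But $(\bar x,\bar y)\in P_I$ since $\bar x$ is integral, so by validity $(\delta^x)^T\bar x+(\delta^y)^T\bar y\ge\delta_0$, forcing equality — so $(\bar x,\bar y)$ is itself an optimal MIP solution lying on the hyperplane $(\delta^x)^Tx+(\delta^y)^Ty=\delta_0$. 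The point of using \emph{relative} interior is now this: because $\bar x$ is relatively interior to $P^x(\delta,\delta_0)$, one can move from $\bar x$ a little in some direction $d$ staying inside $P^x(\delta,\delta_0)$, i.e. $\bar x\pm\varepsilon d\in P^x(\delta,\delta_0)$ for small $\varepsilon>0$; lifting both to points of $P(\delta,\delta_0)$ and invoking validity of the inequality on the two lifts (which need not be integral, so this requires care) should show the objective is constant $=\delta_0$ on a neighborhood of $\bar x$ within $P^x(\delta,\delta_0)$, hence that the whole face $P^x(\delta,\delta_0)\cap\{(\delta^x)^Tx+(\delta^y)^Ty=\delta_0\}$ — which is all of $P^x(\delta,\delta_0)$ along the relevant directions — is contained in that hyperplane's preimage.

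I expect the cleanest route is slightly different and avoids the subtlety above: argue directly that $P^x(\delta,\delta_0)$ is contained in the halfspace-type region where, for every $x$ in it, the minimum of $(\delta^y)^Ty$ over $y$ with $(x,y)\in P$ is a concave (piecewise-linear) function $\phi(x)$, and $P^x(\delta,\delta_0)=\{x\in P^x: (\delta^x)^Tx+\phi(x)\le\delta_0\}$ where $P^x$ is the projection of all of $P$. Validity of the cut for $P_I$ says $(\delta^x)^Tx+\phi(x)\ge\delta_0$ for every integer $x\in P^x$ (choosing the minimizing $y$, which keeps $x$ integral). Hence every integer point of $P^x(\delta,\delta_0)$ satisfies $(\delta^x)^Tx+\phi(x)=\delta_0$, i.e. lies on the boundary of the convex region $\{(\delta^x)^Tx+\phi(x)\le\delta_0\}$ — since $\phi$ is concave this region is convex and $\delta^Tx+\phi(x)$ concave, so its $\le\delta_0$ superlevel... one checks an integer point attaining equality cannot be in the relative interior of $P^x(\delta,\delta_0)$ unless $\delta^Tx+\phi(x)\equiv\delta_0$ on all of $P^x(\delta,\delta_0)$, and that latter case is excluded because $(x^{\textrm{lp}},y^{\textrm{lp}})\in P(\delta,\delta_0)$ gives a point with $(\delta^x)^Tx^{\textrm{lp}}+(\delta^y)^Ty^{\textrm{lp}}<\delta_0$, whose $x$-projection then lies in $P^x(\delta,\delta_0)$ with strict inequality, contradicting constancy.

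The main obstacle will be handling the degenerate possibility that $P^x(\delta,\delta_0)$ is itself lower-dimensional (lies in a hyperplane), so that \emph{every} point of it, integer or not, is on the "boundary'' — here the relative interior is taken within that affine hull, and one must check that the concavity argument still forbids an integer point there, which it does precisely because the strict point $x^{\textrm{lp}}$ cannot then lie in $P^x(\delta,\delta_0)$, contradicting its construction; so in fact $P^x(\delta,\delta_0)$ is full-dimensional and the argument goes through cleanly. I would write the proof in the second form, stating $\phi$'s concavity as a standard fact (it is the support-type function already used in Lemma \ref{alpha_concave}), then deriving the contradiction from tightness of the cut at $(x^*,y^*)$ together with strict violation at $(x^{\textrm{lp}},y^{\textrm{lp}})$.
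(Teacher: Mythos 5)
Your second route is genuinely different from the paper's and can be made to work, but it contains a sign error at the decisive step. The value function $\phi(x)=\min\{(\delta^y)^Ty:(x,y)\in P\}$ is \emph{convex}, not concave: it is the optimal value of a minimization LP whose right-hand side is $b-Ax$, hence by duality a pointwise maximum of linear functions of $x$. (The concave function of Lemma \ref{alpha_concave} is a $\sup$ of the set $\{\mu:(x,\mu)\in C\}$, which is the opposite situation.) Your sentence ``since $\phi$ is concave this region is convex'' is therefore a non sequitur as written — sublevel sets of concave functions need not be convex — but the conclusion you want is true for the right reason: $g(x):=(\delta^x)^Tx+\phi(x)$ is convex, so $P^x(\delta,\delta_0)=\{x\in P^x: g(x)\le\delta_0\}$ is convex, every integer point of it satisfies $g=\delta_0$ by validity plus optimality of $\delta_0$, and a convex function that attains its maximum over a convex set at a relatively interior point is constant there; constancy is excluded by $g(x^{\textrm{lp}})<\delta_0$. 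With the convexity direction corrected, that chain is a complete proof. Your closing claim that $P^x(\delta,\delta_0)$ must be full dimensional is neither justified nor needed — the maximum principle argument is dimension-free once you work with the relative interior.

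For comparison, the paper's proof is a one-liner using two standard facts from Rockafellar: the relative interior of a linear image is the image of the relative interior, and $\ri(C_1\cap C_2)=\ri(C_1)\cap\ri(C_2)$ when the relative interiors meet (which they do here because the LP optimum strictly violates the inequality). This gives
$\ri(P^x(\delta,\delta_0))=\{x:\exists y \textrm{ with }(x,y)\in\ri(P)\textrm{ and }(\delta^x)^Tx+(\delta^y)^Ty<\delta_0\}$,
so any integer point in the relative interior would lift to a mixed integer point of $P$ with objective strictly below $\delta_0$, contradicting optimality. This is exactly the strict lift your abandoned first route was missing: reaching $g(\bar x)=\delta_0$ for an integer $\bar x$ is not yet a contradiction, and the projection identity (or, equivalently, your convexity argument) is what converts membership in the relative interior into strict violation.
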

\begin{proof}
The relative interior of $P^x(\delta,\delta_0)$ is 
given by
$$\ri(P^x(\delta,\delta_0))=\{ x \in \R^p : \exists y \in \R^q\textrm{
  such that }(x,y) \in \ri(P)\textrm{ and }(\delta^x)^T x + (\delta^y)^T y
< \delta_0 \}.$$
Since $\delta_0$ is the optimal objective value of MIP, 
$\ri(P^x(\delta,\delta_0))$ does not contain lattice points.
\end{proof}

It is well known that split 
polyhedra with max-facet-width equal to one 
are sufficient to generate the integer 
hull of a pure integer set. It follows from 
this result that there exists a finite number 
of split polyhedra with max-facet-width equal to one such that 
a polyhedron $\overline{P}$ can be obtained 
in a finite number of iterations that satisfies 
$\overline{P}^x(\delta,\delta_0)=\conv(\overline{P}^x_I(\delta,\delta_0))$. 
Hence, since the purpose in this section is to provide finite 
split polyhedron 
proofs, we can assume 
$P^x(\delta,\delta_0)=\conv(P^x_I(\delta,\delta_0))$ 
in the remainder of this section. 

The split polyhedra that are needed to 
prove validity of  
$(\delta^x)^T x+(\delta^y)^T y \geq \delta_0$ for $P_I$ 
depend on the facial structure of 
$P^x(\delta,\delta_0)$. To obtain a description of the 
faces of $P^x(\delta,\delta_0)$, we 
need the following reformulation of 
$P^x(\delta,\delta_0)$.

\begin{lemma}\label{reform_P_x_alpha_beta}
Assume $P^x(\delta,\delta_0)=\conv(P^x_I(\delta,\delta_0))$. 
For every $x \in P^x(\delta,\delta_0)$, there 
exists $y \in \R^q$ such that 
$(x,y) \in P$ and 
$(\delta^x)^T x + (\delta^y)^T y = \delta_0$. Hence 
\begin{alignat}{2}
P^x(\delta,\delta_0) & =\{ x \in \R^p : \textrm{ there exists }y \in \R^q
\textrm{ s.t. }(x,y) \in P\textrm{ and } (\delta^x)^T x + (\delta^y)^T y =
\delta_0 \}.\notag
\end{alignat}
\end{lemma}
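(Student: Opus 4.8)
The plan is to establish the first assertion of the statement --- that for every $x \in P^x(\delta,\delta_0)$ there is a $y \in \R^q$ with $(x,y) \in P$ and $(\delta^x)^T x + (\delta^y)^T y = \delta_0$ --- from which the displayed reformulation is immediate. Indeed, the reverse inclusion in the displayed identity is trivial: if $(x,y) \in P$ satisfies $(\delta^x)^T x + (\delta^y)^T y = \delta_0$, then in particular $(\delta^x)^T x + (\delta^y)^T y \le \delta_0$, so $(x,y) \in P(\delta,\delta_0)$ and $x$ lies in its projection $P^x(\delta,\delta_0)$.

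The engine of the argument is an observation about the \emph{lattice} points of $P^x(\delta,\delta_0)$. Fix $\bar{x} \in P^x_I(\delta,\delta_0) = P^x(\delta,\delta_0) \cap \Z^p$ and pick $\bar{y}$ with $(\bar{x},\bar{y}) \in P(\delta,\delta_0)$. Since $\bar{x} \in \Z^p$ has all its $N_I$-coordinates integral and $(\bar{x},\bar{y}) \in P$, we have $(\bar{x},\bar{y}) \in P_I$. Now $\delta_0 = (\delta^x)^T x^* + (\delta^y)^T y^*$ is the optimal value of MIP, so $(\delta^x)^T x + (\delta^y)^T y \ge \delta_0$ is valid for $P_I$, which forces $(\delta^x)^T \bar{x} + (\delta^y)^T \bar{y} \ge \delta_0$; on the other hand $(\bar{x},\bar{y}) \in P(\delta,\delta_0)$ gives the reverse inequality. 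Hence $(\delta^x)^T \bar{x} + (\delta^y)^T \bar{y} = \delta_0$. In words: over integral $x$-values of $P^x(\delta,\delta_0)$, the hyperplane $(\delta^x)^T x + (\delta^y)^T y = \delta_0$ is already reachable inside $P$.

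It remains to lift this from the lattice points to all of $P^x(\delta,\delta_0)$, which is exactly where the standing assumption $P^x(\delta,\delta_0) = \conv(P^x_I(\delta,\delta_0))$ enters. Given an arbitrary $x \in P^x(\delta,\delta_0)$, write it --- by the very definition of the convex hull of the (possibly infinite) set $P^x_I(\delta,\delta_0)$ --- as a finite convex combination $x = \sum_{t=1}^m \lambda_t \bar{x}^t$ with $\bar{x}^t \in P^x_I(\delta,\delta_0)$, $\lambda_t \ge 0$, $\sum_{t=1}^m \lambda_t = 1$. For each $t$ choose $\bar{y}^t$ as in the previous paragraph, so that $(\bar{x}^t,\bar{y}^t) \in P$ and $(\delta^x)^T \bar{x}^t + (\delta^y)^T \bar{y}^t = \delta_0$, and set $y := \sum_{t=1}^m \lambda_t \bar{y}^t$. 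Then $(x,y) = \sum_{t=1}^m \lambda_t (\bar{x}^t,\bar{y}^t)$ is a convex combination of points of $P$, hence $(x,y) \in P$, and
$$ (\delta^x)^T x + (\delta^y)^T y = \sum_{t=1}^m \lambda_t\bigl((\delta^x)^T \bar{x}^t + (\delta^y)^T \bar{y}^t\bigr) = \sum_{t=1}^m \lambda_t \delta_0 = \delta_0, $$
which is what was required.

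There is no real obstacle here; the argument is short and the only steps worth flagging are the innocuous-looking implication ``$\bar{x} \in \Z^p$ and $(\bar{x},\bar{y}) \in P(\delta,\delta_0)$ imply $(\bar{x},\bar{y}) \in P_I$'', which is what converts validity (the \emph{inequality} $\ge \delta_0$) into the \emph{equality} we want, and the use of the hypothesis $P^x(\delta,\delta_0) = \conv(P^x_I(\delta,\delta_0))$ to reduce the general case to lattice points. In particular no recession-cone bookkeeping is needed, because a point of the convex hull is a finite convex combination of points, with no recession directions involved.
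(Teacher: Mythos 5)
Your proof is correct, but the reduction from a general point to the lattice points takes a genuinely different route from the paper's. Your first step --- forcing equality at lattice points of $P^x(\delta,\delta_0)$ from the optimality of $\delta_0$ for MIP --- is exactly the paper's first step. Where you diverge is in the second half: you invoke the hypothesis $P^x(\delta,\delta_0)=\conv(P^x_I(\delta,\delta_0))$ literally, write an arbitrary point as a finite convex combination of lattice points, and average the corresponding $y$'s. The paper instead decomposes a general point via Minkowski--Weyl as $\sum_i\lambda_i(x^i+d)$ with $x^i$ the (integral) vertices and $d$ a recession direction, and for this it needs an intermediate step for points of the form $\bar{x}+\mu x^r$ with $x^r$ an extreme ray: choose an integer $\mu^I\geq\mu$ with $\bar{x}+\mu^I x^r$ integral and interpolate with weight $\lambda=\mu/\mu^I$ between the $y$'s attached to $\bar{x}$ and to $\bar{x}+\mu^I x^r$. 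Your shortcut is legitimate under the standard definition of $\conv$ as the set of finite convex combinations (which is how the paper uses $\conv$ throughout), and for rational data the two readings of the hypothesis coincide by a Meyer-type argument. Still, your closing remark that ``no recession-cone bookkeeping is needed'' is slightly glib: the fact that a point of an unbounded integral polyhedron is a finite convex combination of its lattice points alone (rather than such a combination plus a recession direction) is itself a small argument of exactly the kind the paper's ray step carries out explicitly. So your proof is shorter at the price of packing that content into the interpretation of the hypothesis, whereas the paper's version makes it explicit and is therefore robust to reading the hypothesis as ``$P^x(\delta,\delta_0)$ equals the integer hull polyhedron of its lattice points.''
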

\begin{proof}
First suppose $\bar{x} \in P^x(\delta,\delta_0)$ is integer. By 
definition of $P^x(\delta,\delta_0)$, 
there exists $\bar{y} \in\R^q$ such that 
$(\bar{x},\bar{y}) \in P$ and 
$(\delta^x)^T \bar{x} + (\delta^y)^T \bar{y} \leq \delta_0$. 
We can not have 
$(\delta^x)^T \bar{x} + (\delta^y)^T \bar{y} < \delta_0$, 
since $\delta_0$ is the optimal 
objective of MIP. Hence 
$(\bar{x},\bar{y}) \in P$ 
and $(\delta^x)^T \bar{x} + (\delta^y)^T \bar{y} = \delta_0$.

Now suppose $x^r \in \Q^p$ is a ray of $P^x(\delta,\delta_0)$. 
We claim that for every $\mu \geq 0$ and 
$\bar{x} \in P_I^x(\delta,\delta_0)$, there exists $\bar{y} \in \R^q$ 
such that $(\bar{x}+\mu x^r,\bar{y}) \in P$ 
and $(\delta^x)^T (\bar{x}+\mu x^r)+$
$(\delta^y)^T \bar{y}=\delta_0$.  
Indeed, let $\mu \geq 0$ and $\bar{x} \in P_I^x(\delta,\delta_0)$ 
be arbitrary. We can choose 
a non-negative integer $\mu^I \geq \mu$ 
such that $\bar{x}+\mu^I x^r$ is integer. We 
therefore have that there exists $y^1 \in \R^q$ 
such that $(\bar{x}+\mu^I x^r,y^1) \in P$ 
and $(\delta^x)^T (\bar{x}+\mu^I x^r)+(\delta^y)^T y^1 =\delta_0$. 
Since $\bar{x} \in P_I^x(\delta,\delta_0)$, we also have that there 
exists $y^2 \in \R^q$ such that 
$(\bar{x},y^2) \in P$ and $(\delta^x)^T \bar{x}+(\delta^y)^T y^2 =\delta_0$. 
By choosing $\lambda:=\frac{\mu}{\mu_I}$ 
and $\bar{y}:=\lambda y^1+(1-\lambda) y^2$, we 
have $(\bar{x}+\mu x^r,\bar{y})=$
$\lambda (\bar{x}+\mu^I x^r,y^1) +$
$(1-\lambda) (\bar{x},y^2)$, and therefore 
$(\bar{x}+\mu x^r,\bar{y}) \in P$. 
In addition we have that 
$(\delta^x)^T (\bar{x}+\mu x^r)+(\delta^y)^T \bar{y}=\delta_0$.

Finally let $\bar{x} \in P^x(\delta,\delta_0)$ be arbitrary. 
We may write 
$\bar{x}=\sum_{i=1}^k \lambda_i x^i + d=$
$\sum_{i=1}^k \lambda_i (x^i+d)$, 
where $\{ x^i {\}}_{i=1}^k$ are the vertices of 
$P^x(\delta,\delta_0)$, $d \in \Q^p$ is a non-negative 
combination of the extreme rays of $P^x(\delta,\delta_0)$, 
$\lambda_1,\lambda_2,\ldots,\lambda_k \geq 0$ and 
$\sum_{i=1}^k \lambda_i = 1$. From what was shown 
above, we have that for every $i \in \{1,2,\ldots,k \}$, 
there exists $y^i \in \R^q$ such that 
$(x^i+d,y^i) \in P$ and 
$(\delta^x)^T (x^i+d) + (\delta^y)^T y^i = \delta_0$. 
By letting $\bar{y}:=\sum_{i=1}^k \lambda_i y^i$, 
we have that $(\bar{x},\bar{y}) \in P$ and 
$(\delta^x)^T \bar{x} + (\delta^y)^T \bar{y} = \delta_0$.
\end{proof}

The faces of $P^x(\alpha,\beta)$ can now be characterized. 
Let 
$P = \{ (x,y) \in \R^p \times \R^q : A x + D y \leq b \}$ be 
an outer description of $P$, 
where $A \in \Q^{m \times p}$, 
$D \in \Q^{m \times q}$ and $b \in \Q^m$, and let 
$M:=\{1,2,\ldots,m \}$. Lemma 
\ref{reform_P_x_alpha_beta} shows that 
$P^x(\delta,\delta_0)$ can be written in the form 
\begin{alignat}{3}
P^x(\delta,\delta_0) & = \{ x \in \R^p : & \,a_{i.}^T x + d_{i.}^T y = b_i, & \,i\in M^=,\notag\\
& & \,a_{i.}^T x + d_{i.}^T y \leq b_i, & \,i \in M \setminus
M^=,\notag\\
& & \,(\delta^x)^T x + (\delta^y)^T y = \delta_0 & \},\notag
\end{alignat}
where $M^= \subseteq M$ denotes those constrains 
$i \in M$ for which $a_{i.}^T x + d_{i.} y = b_i$ 
for all $(x,y) \in P(\delta,\delta_0)$ 
that satisfy $(\delta^x)^T x + (\delta^y)^T y = \delta_0$. 
Also, for every $i \in M \setminus M^=$, there exists 
$(x,y) \in P(\delta,\delta_0)$ that satisfies 
$(\delta^x)^T x + (\delta^y)^T y = \delta_0$ 
and $a_{i.}^T x + d_{i.} y < b_i$.

A non-empty face $F$ of $P^x(\delta,\delta_0)$ can be 
characterized by a set $M^F \subseteq M$ of inequalities 
that satisfies 
$M^= \subseteq M^F$. 
Every face $F$ of $P^x(\delta,\delta_0)$ 
can be written in the form
\begin{alignat}{3}
F & = \{ x \in \R^p : & \,a_{i.}^T x + d_{i.}^T y = b_i, & \,i\in M^F,\notag\\
& & \,a_{i.}^T x + d_{i.}^T y \leq b_i, & \,i \in M \setminus
M^F,\notag\\
& & \,(\delta^x)^T x + (\delta^y)^T y = \delta_0 & \}.\notag
\end{alignat}

Consider an arbitrary proper face $F$ of $P^x(\delta,\delta_0)$. 
In order for $(\delta^x)^T x + (\delta^y)^T y \geq \delta_0$ 
to be valid for $P_I$, $(\delta^x)^T x + (\delta^y)^T y \geq \delta_0$ 
must be valid for all $(x,y) \in P$ such that 
$x \in F$. The following lemma shows that $F$ is 
of exactly one of two types depending on the 
coefficient vectors on the continous variables 
in the tight constraints.

\begin{lemma}\label{facelem-morecon} (A characterization of the faces
  of $P^x(\delta,\delta_0)$)\newline
Assume $P^x(\delta,\delta_0)=\conv(P_I^x(\delta,\delta_0))$. 
Let $F$ be a face of $P^x(\delta,\delta_0)$. 
\begin{enumerate}
\item[(i)] If 
$\delta^y \notin \spann(\{ d_{i.} {\}}_{i \in M^F})$:
\begin{enumerate}
\item[(a)] $F$ is lattice point free.
\item[(b)] For every $x \in \ri(F)$, 
there exists $y \in \R^q$ 
s.t. $(x,y) \in P$ and 
$(\delta^x)^T x + (\delta^y)^T y < \delta_0$. 
\end{enumerate}
\item[(ii)] If 
$\delta^y \in \spann(\{ d_{i.} {\}}_{i \in M^F})$:\newline
The inequality $(\delta^x)^T x + (\delta^y)^T y \geq \delta_0$ 
holds for all $(x,y) \in P$ satisfying 
$x \in \ri(F)$. 
\end{enumerate}
\end{lemma}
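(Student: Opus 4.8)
The plan is to analyze the linear system that defines $F$ together with the fixed objective equation $(\delta^x)^T x + (\delta^y)^T y = \delta_0$, and to distinguish the two cases according to whether $\delta^y$ lies in the span of the $d_{i\cdot}$ for $i \in M^F$. The key algebraic fact is that for a point $x \in F$, the set of admissible $y$ with $(x,y) \in P$ and $a_{i\cdot}^T x + d_{i\cdot}^T y = b_i$ for $i \in M^F$ is a translate of a fixed polyhedron whose recession cone contains the kernel directions $\{ y^r : d_{i\cdot}^T y^r = 0 \text{ for } i \in M^F \}$; call this space $K$. Whether we can move $(\delta^x)^T x + (\delta^y)^T y$ strictly below $\delta_0$ by varying $y$ within this set depends precisely on whether $(\delta^y)^T y^r \ne 0$ for some such $y^r$, i.e.\ on whether $\delta^y \in K^\perp = \spann(\{ d_{i\cdot} \}_{i \in M^F})$.

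For part (ii), if $\delta^y \in \spann(\{ d_{i\cdot} \}_{i \in M^F})$, write $\delta^y = \sum_{i \in M^F} \gamma_i d_{i\cdot}$. Then for any $(x,y) \in P$ with $x \in \ri(F)$ (so that $a_{i\cdot}^T x + d_{i\cdot}^T y \le b_i$ for all $i$, with equality for $i \in M^F$), I would use Lemma \ref{reform_P_x_alpha_beta} to pick a companion point $(x, y')$ with $(\delta^x)^T x + (\delta^y)^T y' = \delta_0$ and $a_{i\cdot}^T x + d_{i\cdot}^T y' = b_i$ for $i \in M^F$ (such $y'$ exists since $x \in F$). Subtracting, $(\delta^y)^T(y - y') = \sum_{i \in M^F}\gamma_i d_{i\cdot}^T(y-y') = 0$, hence $(\delta^x)^T x + (\delta^y)^T y = (\delta^x)^T x + (\delta^y)^T y' = \delta_0 \ge \delta_0$. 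The mild technical point here is handling the signs of the $\gamma_i$ together with the fact that not all constraints of $M^F$ need be tight at the particular $y$; this is why I work relative to the companion point $y'$ where the $M^F$-constraints \emph{are} tight, which kills the $\gamma_i$ terms outright and avoids any sign bookkeeping.

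For part (i), suppose $\delta^y \notin \spann(\{ d_{i\cdot} \}_{i \in M^F})$. Then there is a direction $y^r$ with $d_{i\cdot}^T y^r = 0$ for all $i \in M^F$ but $(\delta^y)^T y^r \ne 0$; replacing $y^r$ by $-y^r$ if needed, assume $(\delta^y)^T y^r < 0$. Take any $x \in \ri(F)$; by Lemma \ref{reform_P_x_alpha_beta} choose $y$ with $(x,y) \in P$, $(\delta^x)^T x + (\delta^y)^T y = \delta_0$, and (arguing as in that lemma, or by moving into the relative interior) with $a_{i\cdot}^T x + d_{i\cdot}^T y < b_i$ for all $i \in M \setminus M^F$. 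Now $(x, y + \varepsilon y^r)$ still satisfies all $M^F$-equations and, for $\varepsilon > 0$ small, all remaining inequalities, so $(x, y+\varepsilon y^r) \in P$; and $(\delta^x)^T x + (\delta^y)^T(y + \varepsilon y^r) = \delta_0 + \varepsilon (\delta^y)^T y^r < \delta_0$. This proves (i)(b). For (i)(a), observe that by (i)(b) every integer point $\bar x \in \ri(F)$ would yield $(\bar x, y) \in P$ with $(\delta^x)^T \bar x + (\delta^y)^T y < \delta_0$, contradicting that $\delta_0$ is the optimal value of MIP; hence $\ri(F) \cap \Z^p = \emptyset$, i.e.\ $F$ is lattice point free.

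The main obstacle I anticipate is the strictness in (i)(b): getting a single $y$ that simultaneously realizes the objective value $\delta_0$ \emph{and} is strictly interior with respect to the non-$M^F$ constraints. The clean way around this is to first take any $x \in \ri(F)$ and a witness $y_0$ with $(x,y_0) \in P$, $(\delta^x)^T x + (\delta^y)^T y_0 = \delta_0$ (Lemma \ref{reform_P_x_alpha_beta}), then take a second witness $(x, y_1)$ lying in $\ri(P)$ with $(\delta^x)^T x + (\delta^y)^T y_1 \le \delta_0$ coming from $x \in \ri(F)$ being a relative-interior point, and finally perturb slightly toward $\ri(P)$; since $x \in \ri(F)$, exactly the constraints in $M^F$ (equivalently, those forcing $x$ into $F$) stay tight while all others can be made slack. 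Once strictness is secured, the argument is a one-line perturbation in the $y^r$ direction, and (a) follows immediately from optimality of MIP.
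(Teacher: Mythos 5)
Your part (i) follows the paper's proof essentially verbatim: take $\bar x\in\ri(F)$, a witness $\bar y$ with $(\delta^x)^T\bar x+(\delta^y)^T\bar y=\delta_0$ and the constraints in $M\setminus M^F$ strictly slack, pick $\bar r$ with $d_{i.}^T\bar r=0$ for $i\in M^F$ and $(\delta^y)^T\bar r<0$ (possible exactly because $\delta^y\notin\spann(\{d_{i.}\}_{i\in M^F})$), and perturb; (a) then follows from optimality of $\delta_0$ for MIP. Your extra care in constructing a witness that is simultaneously objective-tight and strictly slack on $M\setminus M^F$ is a point the paper merely asserts, so no complaint there.

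The problem is in part (ii), and it is precisely the point you yourself flag as a ``mild technical point.'' The cancellation $(\delta^y)^T(y-y')=\sum_{i\in M^F}\gamma_i\,d_{i.}^T(y-y')=0$ needs $d_{i.}^T(y-y')=0$ for every $i\in M^F$ with $\gamma_i\neq 0$, i.e.\ it needs the $M^F$-constraints to be tight at the given point $(x,y)$. But $x\in\ri(F)$ only guarantees that \emph{some} companion $y'$ makes them tight; at an arbitrary $(x,y)\in P$ as in the statement one only gets $a_{i.}^Tx+d_{i.}^Ty\le b_i$, hence $d_{i.}^T(y-y')\le 0$, possibly strictly. (Concretely: for $P=\{(x,y): y-x\le 0,\ y+x\le 1,\ -y\le 10\}$ with cut $-y\ge 0$, the face $F=\{0\}$ can be written with $M^F$ consisting of the constraint $y-x\le 0$; the point $(0,-5)\in P$ has $x\in\ri(F)$ but that constraint is slack.) Working ``relative to the companion point'' does not kill the terms, because the slack sits in $y$, not in $y'$; the surviving sum $\sum_i\gamma_i\,d_{i.}^T(y-y')$ has no definite sign unless the signs of the $\gamma_i$ are controlled, and membership of $\delta^y$ in the \emph{span} of the $d_{i.}$ (as opposed to $-\delta^y$ lying in their \emph{cone}) gives no such control. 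For what it is worth, the paper's own proof of (ii) makes the identical unjustified step (it asserts $d_{i.}^T\bar r=0$ for $\bar r=\bar y-\tilde y$), so you have reproduced its argument, gap included. To close it one needs more than the span hypothesis: e.g.\ an LP-duality/complementary-slackness argument at the companion point $y'$ showing $-\delta^y\in\cone(\{d_{i.}\}_{i\in M^F})$, after which every term $\gamma_i\,d_{i.}^T(y-y')$ is nonnegative and $(\delta^x)^Tx+(\delta^y)^Ty\ge\delta_0$ follows.
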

\begin{proof}
(i) Suppose $\delta^y \notin \spann(\{ d_{i.} {\}}_{i \in M^F})$, 
and let 
$\bar{x} \in \ri(F)$ be arbitrary. 
This implies there exists $\bar{y} \in \R^q$ 
such that $a_{i.}^T \bar{x} + d_{i.}^T \bar{y} < b_i$ 
for all $i \in M \setminus M^F$. Since 
$\delta^y \notin \spann(\{ d_{i.} {\}}_{i \in M^F})$, 
the linear program 
$\min\{ (\delta^y)^T r : d_{i.}^T r = 0, \forall i \in M^F \}$ 
is unbounded. Choose $\bar{r} \in \R^q$ such that 
$(\delta^y)^T \bar{r} < 0$ and 
$d_{i.}^T \bar{r} = 0$ for all $i \in M^F$. 
We have that 
$(\delta^x)^T \bar{x} + (\delta^y)^T (\bar{y}+\mu \bar{r})<$ 
$(\delta^x)^T \bar{x} + (\delta^y)^T \bar{y}=\delta_0$ for every 
$\mu>0$. Furthermore, since $(\bar{x},\bar{y})$ 
satifies $a_{i.}^T \bar{x} + d_{i.}^T \bar{y} < b_i$ 
for all $i \in M \setminus M^F$, there exists 
$\bar{\mu} > 0$ such that 
$(\bar{x},\bar{y}+\bar{\mu} \bar{r}) \in P$ 
and 
$(\delta^x)^T \bar{x} + (\delta^y)^T (\bar{y}+\bar{\mu} \bar{r}) < \delta_0$. 
We can not have $\bar{x}$ integer, since this 
would contradict that $\delta_0$ is the optimal 
objective of MIP.\\

\noindent (ii) Let $(\bar{x},\bar{y}) \in P$ 
satisfy $\bar{x} \in \ri(F)$, 
and suppose $\delta^y \in \spann(\{ d_{i.} {\}}_{i \in M^F})$. 
If $(\delta^x)^T \bar{x} + (\delta^y)^T \bar{y} \geq \delta_0$, 
we are done, so suppose for a contradiction 
that 
$(\delta^x)^T \bar{x} + (\delta^y)^T \bar{y} < \delta_0$. 
Since $\bar{x} \in \ri(F)$, there 
exists $\tilde{y} \in \R^q$ such 
that $(\bar{x},\tilde{y}) \in P$, 
$(\delta^x)^T \bar{x} + (\delta^y)^T \tilde{y} = \delta_0$ 
and $a_{i.}^T \bar{x} + d_{i.}^T \tilde{y} < b_i$ 
for all $i \in M \setminus M^F$. Consider the vector 
$\bar{r}:=\bar{y}-\tilde{y}$. 
We have $d_{i.}^T \bar{r}=0$ for all $i \in M^F$ 
and $(\delta^y)^T \bar{r} < 0$. However, this contradicts 
$\delta^y \in \spann(\{ d_{i.} {\}}_{i \in M^F})$.
\end{proof}

We can now identify the mixed integer split 
polyhedra that are needed 
to provide a finite split polyhedron proof of validity of 
$(\delta^x)^T x + (\delta^y)^T y \geq \delta_0$ for 
$P_I$. Let $\mathcal{F}$ denote the finite set of 
all faces of $P^x(\delta,\delta_0)$, and let 
$\mathcal{F}^V:=\{ F \in \mathcal{F} :$
$\exists (x,y) \in P \textrm{ s.t. }$
$x \in F \textrm{ and }(\delta^x)^T x + (\delta^y)^T y < \delta_0 \}$ denote 
those faces $F \in \mathcal{F}$ for which there exists 
$(x,y) \in P$ such that $x \in F$ and $(x,y)$ violates the 
inequality $(\delta^x)^T x + (\delta^y)^T y \geq \delta_0$. A face 
$F \in \mathcal{F}^V$ is called a \emph{violated face}. 
Lemma \ref{facelem-morecon}.(i) shows that every 
violated face is lattice point free. 
A mixed integer split polyhedron 
$L \subseteq \R^n$ that satisfies 
$(\delta^x)^T x + (\delta^y)^T y \geq \delta_0$ for 
every $(x,y) \in R(L,P)$ such that $x \in F$ is said 
to \emph{prove validity of $(\delta^x)^T x + (\delta^y)^T y \geq
  \delta_0$ on $F$}. 
Given a violated face $F \in \mathcal{F}^V$, the following 
lemma gives a class of split polyhedra that can prove validity of 
$(\delta^x)^T x + (\delta^y)^T y \geq \delta_0$ on $F$.

\begin{lemma}(Split polyhedra for proving validity of 
$(\delta^x)^T x + (\delta^y)^T y \geq \delta_0$ on a face
  of $P^x(\delta,\delta_0)$)\newline 
Assume $P^x(\delta,\delta_0)=\conv(P_I^x(\delta,\delta_0))$. 
Let $F \in \mathcal{F}^V$ be a violated 
face of $P^x(\delta,\delta_0)$, and suppose 
$G \notin \mathcal{F}^V$ for every 
proper face $G$ of $F$.  Every mixed integer split polyhedron 
$L \subseteq \R^n$ that satisfies 
$\ri(F) \subseteq \intt(L)$ proves validity of 
$(\delta^x)^T x + (\delta^y)^T y \geq \delta_0$ on $F$.
\end{lemma}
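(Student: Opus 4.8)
The plan is to check the inequality $(\delta^x)^T x + (\delta^y)^T y \ge \delta_0$ on the polyhedron $Q_F := R(L,P) \cap \{(x,y) : x \in F\}$ (if $Q_F = \emptyset$ there is nothing to prove), and for this it suffices to verify it on the vertices and recession directions of $Q_F$. Write $L = \{(x,y) : x \in L^x\}$ with $L^x$ a split polyhedron, so $\ri(F) \subseteq \intt(L)$ means exactly $\ri(F) \subseteq \intt(L^x)$, and $0^+(L) = 0^+(L^x) \times \R^q$ is a linear space, whence $R(L,P)$ is a polyhedron by Lemma~\ref{poly_lem} and $0^+(Q_F) \subseteq 0^+(R(L,P)) \subseteq 0^+(P) = \cone(\{r^j\}_{j\in E})$. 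Every recession direction of $Q_F$ is therefore a nonnegative combination $\sum_j \mu_j r^j$; since each $r^j \in \Z^n$, the ray $r^j$ is a recession direction of $P_I$ and hence of $\conv(P_I)$, and validity of $(\delta^x)^T x + (\delta^y)^T y \ge \delta_0$ for $P_I$ forces $(\delta^x)^T r^j_x + (\delta^y)^T r^j_y \ge 0$ for all $j \in E$. Thus the inequality holds along every recession direction of $Q_F$, and the task reduces to its vertices.

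Let $(\bar x, \bar y)$ be a vertex of $Q_F$. Since $F$ is the disjoint union of the relative interiors of its faces, either $\bar x \in \ri(G)$ for some \emph{proper} face $G$ of $F$, or $\bar x \in \ri(F)$. In the first case the hypothesis gives $G \notin \mathcal{F}^V$, so $G$ is not violated; by the contrapositive of Lemma~\ref{facelem-morecon}(i) this forces $\delta^y \in \spann(\{d_{i.}\}_{i \in M^G})$, and then Lemma~\ref{facelem-morecon}(ii) yields $(\delta^x)^T x + (\delta^y)^T y \ge \delta_0$ for \emph{every} $(x,y) \in P$ with $x \in \ri(G)$, in particular for $(\bar x, \bar y) \in R(L,P) \subseteq P$. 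So it remains to treat the case $\bar x \in \ri(F)$, where $\bar x \in \intt(L^x)$ and hence $(\bar x, \bar y) \in \intt(L)$.

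Assume for contradiction that $(\bar x, \bar y)$ violates the inequality; then $(\bar x, \bar y) \in P(\delta,\delta_0)$, so $\bar x \in P^x(\delta,\delta_0)$. Write $(\bar x, \bar y) = \sum_t \lambda_t q^{(t)} + \sum_j \mu_j r^j$ with the $q^{(t)}$ vertices of $R(L,P)$ (Lemma~\ref{rel_descr}; equivalently via Corollary~\ref{x_space_char} and the vertex lists of the $R(L,P(e^i))$), $\lambda_t > 0$, $\sum_t \lambda_t = 1$, $\mu_j \ge 0$. Since the $r^j$ satisfy the inequality homogeneously, $\sum_t \lambda_t\big((\delta^x)^T q^{(t)}_x + (\delta^y)^T q^{(t)}_y\big) \le (\delta^x)^T \bar x + (\delta^y)^T \bar y < \delta_0$, so some vertex $q^{(t_0)}$ of $R(L,P)$ itself violates the inequality. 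By Lemma~\ref{rel_descr}, $q^{(t_0)}_x$ is either a vertex of $P$ lying outside $\intt(L^x)$ or an intersection point lying on the boundary of $L^x$; in either case $q^{(t_0)}_x \notin \intt(L^x)$, hence $q^{(t_0)}_x \notin \ri(F)$. Moreover $q^{(t_0)} \in P$ violates the inequality, so $q^{(t_0)} \in P(\delta,\delta_0)$ and $q^{(t_0)}_x \in P^x(\delta,\delta_0)$. If one can show $q^{(t_0)}_x \in F$, then $q^{(t_0)}_x$ lies in the relative boundary of $F$, i.e. in $\ri(G)$ for a proper face $G \notin \mathcal{F}^V$, and Lemma~\ref{facelem-morecon}(ii) gives $(\delta^x)^T q^{(t_0)}_x + (\delta^y)^T q^{(t_0)}_y \ge \delta_0$, contradicting that $q^{(t_0)}$ violates the inequality. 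Hence no violating vertex of $Q_F$ exists, which finishes the proof.

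The hard part is precisely the claim $q^{(t_0)}_x \in F$: because $F$ is a face of $P^x(\delta,\delta_0)$ but \emph{not} of $P^x$, this does not follow from the decomposition of $\bar x$ read in $P^x$. The fix I would use is to first normalise the representation so that every piece lies over $P(\delta,\delta_0)$ — for each $t$ with $(\delta^x)^T q^{(t)}_x + (\delta^y)^T q^{(t)}_y > \delta_0$, exchange $q^{(t)}$ for a point of $P$ with the same $x$-coordinate and $\delta$-value $\le \delta_0$ (which exists once $q^{(t)}_x \in P^x(\delta,\delta_0)$, and which automatically stays out of $\intt(L)$ because its $x$-coordinate is $q^{(t)}_x \notin \intt(L^x)$), and retain only those rays $r^j$ with $(\delta^x)^T r^j_x + (\delta^y)^T r^j_y = 0$, which are exactly the recession directions of $P^x(\delta,\delta_0)$. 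Once $\bar x \in \ri(F)$ is exhibited as a convex-plus-conic combination of points of $P^x(\delta,\delta_0)$ and recession directions of $P^x(\delta,\delta_0)$, the extremality of the face $F$ forces every piece — in particular $q^{(t_0)}_x$ — into $F$. Carrying out this normalisation rigorously (i.e. showing each $q^{(t)}_x$ already lies in $P^x(\delta,\delta_0)$, or can be traded along fibres for one that does, without leaving $R(L,P)$, so that the face-extremality argument applies) is where the genuine work of the proof sits.
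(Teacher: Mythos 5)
Your argument is incomplete, and you concede as much: everything funnels into the claim $q^{(t_0)}_x \in F$, which you do not establish. The difficulty you identify is real --- $F$ is a face of $P^x(\delta,\delta_0)$, not of the projection of $P$, so face-extremality cannot be applied to a decomposition of $\bar{x}$ read off inside $P$ --- but your proposed repair does not close it. You want to trade each piece $q^{(t)}$ with $(\delta^x)^T q^{(t)}_x+(\delta^y)^T q^{(t)}_y>\delta_0$ for a point of $P$ over the same $x$-coordinate whose $\delta$-value is at most $\delta_0$; such a point exists only if $q^{(t)}_x \in P^x(\delta,\delta_0)$, and nothing forces that. A vertex of $R(L,P)$ may project to a point of the projection of $P$ whose entire fibre lies strictly above $\delta_0$; then the trade is impossible, the pieces never all land in $P^x(\delta,\delta_0)$, and the extremality of $F$ is never applicable. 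So the step you defer as ``the genuine work'' is precisely the missing proof, and your sketch of it does not go through as written.

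The paper's proof is shorter and sidesteps this entirely by certifying validity through the restricted polyhedron $P_F:=\{(x,y)\in P : x\in F\}$ and its relaxation $R(L,P_F)=\conv(\{(x,y)\in P_F : (x,y)\notin \intt(L)\})$. For that set the argument is immediate: any generator $(x,y)\in P_F$ with $(x,y)\notin\intt(L)$ has $x\notin\ri(F)$ (because $\ri(F)\subseteq\intt(L)$), hence $x$ lies on some proper face $G$ of $F$; by hypothesis $G\notin\mathcal{F}^V$, which by the very definition of $\mathcal{F}^V$ means every $(x,y)\in P$ with $x\in G$ satisfies the cut (you do not even need Lemma \ref{facelem-morecon} here); and a linear inequality valid on a generating set is valid on its convex hull. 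No vertex decomposition of $R(L,P)$, no Lemma \ref{rel_descr}, and no fibre-trading is needed. If you insist on the literal reading ``every $(x,y)\in R(L,P)$ with $x\in F$,'' you must additionally establish the inclusion $R(L,P)\cap\{(x,y):x\in F\}\subseteq R(L,P_F)$ --- exactly the point on which your attempt founders --- so the workable route is to prove the statement for $R(L,P_F)$, as the paper does, which is also what the subsequent iteration over the finitely many violated faces actually uses.
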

\begin{proof}
Let $L$ be a mixed integer split polyhedron that 
satisfies $\ri(F) \subseteq \intt(L)$, 
and let 
$(\bar{x},\bar{y}) \in P$ satisfy 
$\bar{x} \in F$ and $\bar{x} \notin \intt(L)$. Since 
$\ri(F) \subseteq \intt(L)$, it follows 
that $\bar{x} \notin \ri(F)$. Since 
$\bar{x} \in F \setminus \ri(F)$, $\bar{x}$ must 
be on some proper face $G$ of $F$. 
Since $G \notin \mathcal{F}^V$, 
we have 
$(\delta^x)^T \bar{x} + (\delta^y)^T \bar{y} \geq \delta_0$. 
Since $R(L,F)=\conv(\{ (x,y) \in F : x \notin \intt(L) \})$, 
the result follows.
\end{proof}

By iteratively considering the finite number $|\mathcal{F}^V|$ 
of violated faces of $P^x(\delta,\delta_0)$, we obtain a finite 
split polyhedron proof for the validity of the inequality 
$(\delta^x)^T x + (\delta^y)^T y \geq \delta_0$ for $P_I$.
\begin{corollary} (Upper bound on the width size of the 
inequality $(\delta^x)^T x + (\delta^y)^T y \geq \delta_0$)\newline
There exists a split polyhedron proof for the validity of 
$(\delta^x)^T x + (\delta^y)^T y \geq \delta_0$ for $P_I$ 
of width size
\begin{alignat}{2}
& \max\{ \textrm{width-size}(F) : F
\in \mathcal{F}^V \}.\notag
\end{alignat}
\end{corollary}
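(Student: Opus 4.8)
The plan is to prove validity of $(\delta^x)^Tx+(\delta^y)^Ty\ge\delta_0$ one violated face at a time, handling the faces in $\mathcal{F}^V$ in order of non-decreasing dimension and invoking the preceding lemma at each step. We may assume $\mathcal{F}^V\ne\emptyset$ (otherwise the inequality is already valid for $P$), and set $w^*:=\max\{\textrm{width-size}(F):F\in\mathcal{F}^V\}\ge 1$. Recall that, as noted just before Lemma \ref{reform_P_x_alpha_beta}, finitely many split polyhedra of max-facet-width one bring us, after finitely many closure rounds, to a relaxation $\overline{P}$ of $P$ with $\overline{P}^x(\delta,\delta_0)=\conv(\overline{P}^x_I(\delta,\delta_0))$. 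Moreover every relaxation $Q$ of $P$ with $P_I\subseteq Q$ satisfies $P^x_I(\delta,\delta_0)\subseteq Q^x(\delta,\delta_0)\subseteq P^x(\delta,\delta_0)=\conv(P^x_I(\delta,\delta_0))$, so in fact $Q^x(\delta,\delta_0)=P^x(\delta,\delta_0)$, and the face lattice of $P^x(\delta,\delta_0)$ is unchanged under passing to such a $Q$. Two monotonicity facts will drive the induction: $R(L,\cdot)$ is inclusion-monotone, and a face of $P^x(\delta,\delta_0)$ that is not violated with respect to $Q'$ is also not violated with respect to any $Q\subseteq Q'$, since shrinking the polyhedron cannot create new violating points.

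Enumerate $\mathcal{F}^V=\{F_1,\dots,F_N\}$ with $\dim F_1\le\cdots\le\dim F_N$, and construct, starting from $Q_0:=\overline{P}$, a chain $Q_0\supseteq Q_1\supseteq\cdots\supseteq Q_N$ so that for every $t$ the inequality $(\delta^x)^Tx+(\delta^y)^Ty\ge\delta_0$ holds on $F_s$ throughout $Q_t$, for all $s\le t$. At step $t+1$: if $F_{t+1}$ is not violated with respect to $Q_t$, set $Q_{t+1}:=Q_t$. Otherwise $F_{t+1}$ is violated with respect to $Q_t\subseteq P$, hence violated with respect to $P$, so $F_{t+1}\in\mathcal{F}^V$ and, by Lemma \ref{facelem-morecon}(i)(a), $F_{t+1}$ is lattice point free. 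Every proper face $G$ of $F_{t+1}$ has $\dim G<\dim F_{t+1}$, so by the dimension ordering either $G\notin\mathcal{F}^V$, and then $G$ is not violated with respect to $P\supseteq Q_t$, or $G=F_s$ for some $s\le t$, and then $G$ is not violated with respect to $Q_t$ by the induction hypothesis; in both cases $G$ is non-violated with respect to $Q_t$. Viewing $F_{t+1}$ as the mixed integer lattice point free rational polyhedron $\{(x,y):x\in F_{t+1}\}$ and using the result of \cite{farkas}, pick a mixed integer split polyhedron $L_{t+1}$ whose interior contains $\ri(F_{t+1})$; by the definition of width size we may take $w_f(L_{t+1})=\textrm{width-size}(F_{t+1})\le w^*$. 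Applying the preceding lemma with $Q_t$ in place of $P$ — legitimate because $Q_t^x(\delta,\delta_0)=P^x(\delta,\delta_0)=\conv(P^x_I(\delta,\delta_0))$, $F_{t+1}$ is a violated face of it, and all proper faces of $F_{t+1}$ are non-violated with respect to $Q_t$ — we conclude that $L_{t+1}$ proves validity of $(\delta^x)^Tx+(\delta^y)^Ty\ge\delta_0$ on $F_{t+1}$; set $Q_{t+1}:=R(L_{t+1},Q_t)$. Then the inequality holds on $F_{t+1}$ throughout $Q_{t+1}$, and it still holds on $F_1,\dots,F_t$ there because $Q_{t+1}\subseteq Q_t$.

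To conclude, let $\mathcal{S}$ be the union of the finite width-one family used to produce $\overline{P}$ with the set of those $L_t$ actually constructed above; this is a finite family of mixed integer split polyhedra with $\max_{L\in\mathcal{S}}w_f(L)=w^*$. Since $\textrm{Cl}(\mathcal{S},Q)\subseteq R(L,Q)$ for every $Q$ and every $L\in\mathcal{S}$, and each $R(L,\cdot)$ is monotone, a straightforward induction (using $\overline{P}=P^{k_0}(\mathcal{S}_1,P)\supseteq P^{k_0}(\mathcal{S},P)$ for the base case, where $\mathcal{S}_1\subseteq\mathcal{S}$ is the width-one subfamily) gives $P^{k_0+N}(\mathcal{S},P)\subseteq Q_N$. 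Finally, if some $(x,y)\in Q_N$ satisfied $(\delta^x)^Tx+(\delta^y)^Ty<\delta_0$, then $x\in P^x(\delta,\delta_0)=\conv(P^x_I(\delta,\delta_0))$ would lie in the relative interior of a unique face, which would then be a violated face $F_s$ with $s\le N$, contradicting the validity of the inequality on $F_s$ throughout $Q_N$. Hence $(\delta^x)^Tx+(\delta^y)^Ty\ge\delta_0$ is valid for $Q_N$, and a fortiori for $P^{k_0+N}(\mathcal{S},P)$, so $\mathcal{S}$ is a finite split polyhedron proof of width size $w^*$, as required.

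The step I expect to be the main obstacle is the bookkeeping: checking that the hypotheses of the preceding lemma — notably $P^x(\delta,\delta_0)=\conv(P^x_I(\delta,\delta_0))$ and "every proper face of the current face is non-violated" — remain in force after each relaxation step, and reconciling the "apply one split polyhedron at a time" construction with the definition of a finite split polyhedron proof as finitely many iterations of a single closure operator. Both issues are resolved by the two monotonicity facts isolated in the first paragraph, and I do not anticipate any genuinely new difficulty beyond that.
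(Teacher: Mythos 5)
Your proof is correct and follows essentially the same route as the paper, which states the corollary with only the one-line justification ``by iteratively considering the finite number $|\mathcal{F}^V|$ of violated faces'' after the preceding lemma. Your dimension-ordered induction, with the monotonicity observations needed to keep the hypotheses of that lemma in force at each step, is a faithful and careful elaboration of exactly that iteration.
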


We can now prove the main theorem of this section.
\begin{theorem}\label{split_dim_ineq}(A formula for the width size of the 
inequality $(\delta^x)^T x + (\delta^y)^T y \geq \delta_0$)\newline
Let $\textrm{width-size}(\delta,\delta_0)$ denote the smallest 
number $w$ for which there exists a finite split polyhedron proof 
of validity of $(\delta^x)^T x + (\delta^y)^T y \geq \delta_0$ 
for $P_I$ of width size $w$. Then 
\begin{alignat}{2}
\textrm{width-size}(\delta,\delta_0) & = \max\{ \textrm{width-size}(F) : F
\in \mathcal{F}^V \}.\notag
\end{alignat}
\end{theorem}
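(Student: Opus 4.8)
The plan is to prove the two inequalities in the formula separately. For "$\le$" there is nothing to do: the Corollary immediately preceding the theorem already constructs a finite split polyhedron proof of validity of $(\delta^x)^T x + (\delta^y)^T y \ge \delta_0$ for $P_I$ of width size $\max\{\textrm{width-size}(F):F\in\mathcal F^V\}$, so $\textrm{width-size}(\delta,\delta_0)\le\max\{\textrm{width-size}(F):F\in\mathcal F^V\}$. The whole content is therefore the reverse inequality, which I would phrase as: for every $F\in\mathcal F^V$, every finite split polyhedron proof of validity of $(\delta^x)^T x + (\delta^y)^T y \ge \delta_0$ for $P_I$ has width size at least $\textrm{width-size}(F)$. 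Fixing such an $F$, writing $w^*:=\textrm{width-size}(F)$, and arguing by contradiction, I assume $\mathcal S=\{L_1,\dots,L_m\}$ is a finite family of mixed integer split polyhedra with $w_f(L_t)<w^*$ for all $t$ and that $(\delta^x)^T x + (\delta^y)^T y \ge \delta_0$ is valid for $P^k(\mathcal S,P)$ for some finite $k$; here $L_t=\{(x,y):x\in L_t^x\}$ with $L_t^x$ a split polyhedron in $\R^p$ of max-facet-width $<w^*$. The goal is to produce, for every $i$, a point of $P^i(\mathcal S,P)$ violating the inequality, which contradicts validity at level $k$.

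First I would assemble four ingredients. (1) Since $P^x(\delta,\delta_0)=\conv(P^x_I(\delta,\delta_0))$ and $F$ is a face of it, $F$ is a rational polyhedron generated by $F\cap\Z^p$ together with its extreme rays, and by Lemma \ref{facelem-morecon}(i)(a) it is lattice point free. (2) By the proof of Lemma \ref{reform_P_x_alpha_beta}, every $v\in F\cap\Z^p$ lifts to a point $(v,y_v)\in P_I$ with $(\delta^x)^T v+(\delta^y)^T y_v=\delta_0$, and every extreme ray of $F$ lifts to a recession direction of $P$ of value $0$; since $\conv(P_I)\subseteq P^i(\mathcal S,P)$ for every $i$, it follows that for each $x\in F$ there is a $y$ with $(x,y)\in P^i(\mathcal S,P)$ and $(\delta^x)^T x+(\delta^y)^T y=\delta_0$. (3) By Lemma \ref{facelem-morecon}(i)(b), for every $x\in\ri(F)$ there is $y$ with $(x,y)\in P$ and $(\delta^x)^T x+(\delta^y)^T y<\delta_0$. (4) Since $(\delta^x)^T x + (\delta^y)^T y \ge \delta_0$ is valid for $P_I$ it is a non-negative cut, so $(\delta^x,\delta^y)$ is nonnegative on every extreme ray of $P$, hence of every $P^i(\mathcal S,P)$; consequently, any violating point of $R(L_t,P^i(\mathcal S,P))$ is a convex combination of violating points $z$ of $P^i(\mathcal S,P)$ with $x_z\notin\intt(L_t^x)$, so $R(L_t,\cdot)$ can destroy a violating point only by forcing its $x$-coordinate inside $\intt(L_t^x)$.

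I would then induct on $i$ with the invariant: $P^i(\mathcal S,P)$ contains a point $(x,y)$ with $x\in\ri(F)$ and $(\delta^x)^T x+(\delta^y)^T y<\delta_0$. The base $i=0$ is ingredient (3). For the step, two cases arise. If $\ri(F)\not\subseteq\bigcup_{t=1}^m\intt(L_t^x)$ — possible precisely because each $w_f(L_t^x)<w^*=\textrm{width-size}(F)$ means no single $L_t^x$ has $\ri(F)$ in its interior — I pick $x^\circ\in\ri(F)\setminus\bigcup_t\intt(L_t^x)$ and a violating $y^\circ$ from (3); as $x^\circ\notin\intt(L_t^x)$ for all $t$, the point $(x^\circ,y^\circ)\in P$ is preserved by every $R(L_t,\cdot)$ and hence lies in $P^i(\mathcal S,P)$ for all $i$, finishing the proof at once. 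The remaining case is $\ri(F)\subseteq\bigcup_t\intt(L_t^x)$, and here no single violating point over $\ri(F)$ can be claimed to survive, because the convex-hull step in $R(L_t,\cdot)$ may recreate a point whose $x$-coordinate is inside $\intt(L_t^x)$ from points whose $x$-coordinates lie on $\partial L_t^x$. To handle this I would track the whole section of violating points of $P^i(\mathcal S,P)$ lying over $\ri(F)$ — equivalently pass to the polyhedron $P^\infty:=\bigcap_i P^i(\mathcal S,P)$, which satisfies $R(L_t,P^\infty)=P^\infty$ for every $t$ — and show that $w_f(L_t)<w^*$ for all $t$ is incompatible with this section being empty. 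The argument I would try is an induction on $\dim F$ that invokes the statement for the proper violated subfaces of $F$ (on which the inequality behaves as in Lemma \ref{facelem-morecon}), combined with the vertex description of $R(L_t,P^\infty)$ from Lemma \ref{rel_descr} and ingredient (2) (the tight copy of $F$ present in every $P^i(\mathcal S,P)$): emptying the violating section over $\ri(F)$ would certify that this fiber is generated from the part of $\partial F$ where the inequality already holds, and unwinding this certificate through the intersection-point structure forces one of the $L_t^x$ to contain $\ri(F)$ in its interior, contradicting $w^*=\textrm{width-size}(F)$.

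The main obstacle is exactly this last case: the convex-hull operation inside $R(L,\cdot)$ prevents a naive "follow one violating point through the closures" proof, so one must control an entire violating slab over $\ri(F)$ and show, via the vertex description of $R(L,P)$ and an induction over the face lattice of $P^x(\delta,\delta_0)$, that split polyhedra of width below $\textrm{width-size}(F)$ are powerless to remove it. Getting this bookkeeping right — in particular handling the recession directions of $F$ and the interaction between $\ri(F)$ and its proper faces, and ruling out that finitely many narrow strips can jointly "certify away" the slab — is the delicate part; once the persistence of a violating point over $\ri(F)$ is established, the contradiction with validity of $(\delta^x)^T x + (\delta^y)^T y \ge \delta_0$ for $P^k(\mathcal S,P)$, and hence the inequality $\textrm{width-size}(\delta,\delta_0)\ge\textrm{width-size}(F)$, is immediate.
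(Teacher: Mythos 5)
Your ``$\le$'' direction and the engine of your ``$\ge$'' direction coincide with the paper's: for a face $F$ attaining the maximum and any $L$ with $w_f(L)<\textrm{width-size}(F)$, the definition of $\textrm{width-size}(F)$ yields a point $x'\in\ri(F)\setminus\intt(L)$, and Lemma \ref{facelem-morecon}.(i).(b) lifts it to a violating point $(x',y')\in P$ that survives in $R(L,P)$. The paper's proof consists of exactly this observation for a single $L$ applied once to $P$, and stops there.

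The genuine gap is the case you yourself flag as ``the main obstacle'': when $\ri(F)\subseteq\bigcup_t\intt(L_t^x)$ you only describe ``the argument I would try'' (an induction over the face lattice of $P^x(\delta,\delta_0)$ combined with Lemma \ref{rel_descr} and the persistence of a violating slab over $\ri(F)$) without carrying any of it out, so the proposal is a plan rather than a proof precisely where the work is. This case is not vacuous: in the paper's own closing example with $p=2$ one has $F=S^2$ with $\textrm{width-size}(F)=2$, yet $\ri(F)=\intt(S^2)$ is covered by the interiors of the two width-one split sets $\{0\le x_1\le 1\}$ and $\{0\le x_2\le 1\}$, so no single point of $\ri(F)$ survives all members of the family and the pointwise argument of your first case cannot close the proof. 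You should note, however, that the paper's own proof does not treat this case either --- it tacitly identifies ``no single $L$ of max-facet-width less than $w^*$ removes every violating point over $\ri(F)$ from $P$'' with ``no finite, finitely iterated proof of width size less than $w^*$ exists,'' which is exactly the step your set-up (contradiction against a family $\mathcal S$ and an iteration count $k$) correctly refuses to skip. So your formulation is the more faithful one to what the theorem asserts, but to have a proof you must actually establish the persistence of a violating point over $\ri(F)$ through intersections and iterations, not merely sketch how one might.
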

\begin{proof}
Let $L$ be a mixed integer split polyhedron of smaller 
width size than 
$\max\{ \textrm{width-size}(F) : F \in \mathcal{F}^V \}$. 
This implies there exists $F \in \mathcal{F}^V$ and 
$x' \in \ri(F)$ such that 
$x' \notin \intt(L)$. Furthermore, since 
$x' \in \ri(F)$, it 
follows from Lemma \ref{facelem-morecon}.(i) that there exists 
$y' \in \R^q$ such that $(x',y') \in P$ and 
$(\delta^x)^T x' + (\delta^y)^T y' < \delta_0$. We now 
have $(x',y') \in R(L,P)$ and 
$(\delta^x)^T x' + (\delta^y)^T y' < \delta_0$.
\end{proof}

\begin{example}
Consider the mixed integer linear program (MILP)
\begin{alignat}{3}
\max & \,y & &\notag\\
s.t. & & &\notag\\
-x_i + y & \leq 0, & \textrm{ for }i=1,2,\ldots,p,&\label{leq-0-in}\\
\sum_{i=1}^p x_i + y& \leq p, & &\label{leq-p-in}\\
y & \geq 0, & &\\
x_i & \textrm{ integer } & \textrm{ for }i=1,2,\ldots,p.&
\end{alignat}
The optimal solutions to MILP are of the 
form $(x^*,y^*)=(x^*,0)$ with $x^* \in S^p \cap \Z^p$, 
where $S^p:=\{ x \in \R^p : x \geq 0 \textrm{ and }\sum_{i=1}^p x_i \leq p \}$. 
The unique optimal solution to the LP relaxation of 
MILP is given by $x_i^{\textrm{lp}}=\frac{p}{p+1}$ for 
$i=1,2,\ldots,p$ and $y^{\textrm{lp}}=\frac{p}{p+1}$. 
Hence the 
only missing inequality to describe $\conv(P_I)$ 
is the inequality $y \leq 0$. We have 
$\delta^x=0$, $\delta^y=-1$ and $\delta_0=0$. 

Observe that any proper face $G$ of $P^x(\delta,\delta_0)$ 
contains mixed integer points in their relative interior. 
It follows that the inequality 
$y \leq 0$ is valid for every $(x,y) \in P$ such 
that $x$ belongs to a proper face of $P^x(\delta,\delta_0)$. 
Hence the only interesting face of $P^x(\delta,\delta_0)$ 
to consider is the improper face $F:=P^x(\delta,\delta_0)=S^p$. 
The only mixed integer split polyhedron $L$ that satisfies 
$\ri(F) \subseteq \intt(L)$ 
is the split polyhedron $L=S^p$, and this 
mixed integer split polyhedron has max-facet-width 
$p$. It follows from Theorem \ref{split_dim_ineq} that no cutting 
plane algorithm that only uses mixed integer split polyhedra 
of max-facet-width 
smaller than $p$ can solve MILP in a finite number of steps.
\end{example}

\small
\nocite{*}
\bibliography{template}
\bibliographystyle{plain}

\end{document}